\title[Equivariant Kirchberg-Phillips-type absorption]{Equivariant Kirchberg-Phillips-type absorption for amenable group actions}
\author{Gábor Szabó}
\address{Department of Mathematical Sciences, University of Copenhagen \phantom{----------}\linebreak \text{}\hspace{3.0mm} Universitetsparken 5, DK-2100 Copenhagen {\O}, Denmark.}
\email{gabor.szabo@math.ku.dk}
\thanks{\emph{Supported by:} SFB 878 \emph{Groups, Geometry and Actions}, EPSRC grant EP/N00874X/1, the Danish National Research Foundation through the \emph{Centre for Symmetry and Deformation} (DNRF92), and the European Union's Horizon 2020 research and innovation programme under the Marie Sklodowska-Curie grant agreement 746272.}
\subjclass[2010]{46L55 (primary); 46L05, 19K35 (secondary)}
\numberwithin{equation}{section}
\begin{document}

\renewcommand\matrix[1]{\left(\begin{array}{*{10}{c}} #1 \end{array}\right)}  
\newcommand\set[1]{\left\{#1\right\}}  

\newcommand{\IC}[0]{\mathbb{C}}
\newcommand{\IN}[0]{\mathbb{N}}
\newcommand{\IQ}[0]{\mathbb{Q}}
\newcommand{\IR}[0]{\mathbb{R}}
\newcommand{\IT}[0]{\mathbb{T}}
\newcommand{\IZ}[0]{\mathbb{Z}}

\newcommand{\CC}[0]{\mathcal{C}}
\newcommand{\CD}[0]{\mathcal{D}}
\newcommand{\CK}[0]{\mathcal{K}}
\newcommand{\CL}[0]{\mathcal{L}}
\newcommand{\CM}[0]{\mathcal{M}}
\newcommand{\CO}[0]{\mathcal{O}}
\newcommand{\CU}[0]{\mathcal{U}}
\newcommand{\CZ}[0]{\mathcal{Z}}

\renewcommand{\phi}[0]{\varphi}
\newcommand{\eps}[0]{\varepsilon}

\newcommand{\id}[0]{\operatorname{id}}		
\renewcommand{\sp}[0]{\operatorname{Sp}}		
\newcommand{\eins}[0]{\mathbf{1}}			
\newcommand{\diag}[0]{\operatorname{diag}}
\newcommand{\ad}[0]{\operatorname{Ad}}
\newcommand{\ev}[0]{\operatorname{ev}}
\newcommand{\fin}[0]{{\subset\!\!\!\subset}}
\newcommand{\Aut}[0]{\operatorname{Aut}}
\newcommand{\dimrok}[0]{\dim_{\mathrm{Rok}}}
\newcommand{\dst}[0]{\displaystyle}
\newcommand{\cstar}[0]{\ensuremath{\mathrm{C}^*}}
\newcommand{\dist}[0]{\operatorname{dist}}
\newcommand{\ann}[0]{\operatorname{Ann}}
\newcommand{\cc}[0]{\simeq_{\mathrm{cc}}}
\newcommand{\scc}[0]{\simeq_{\mathrm{scc}}}
\newcommand{\vscc}[0]{\simeq_{\mathrm{vscc}}}

\newtheorem{theorem}{Theorem}[section]	
\newtheorem{cor}[theorem]{Corollary}
\newtheorem{lemma}[theorem]{Lemma}
\newtheorem{prop}[theorem]{Proposition}

\newcounter{theoremintro}
\newtheorem{theoremi}[theoremintro]{Theorem}
\renewcommand*{\thetheoremintro}{\Alph{theoremintro}}

\theoremstyle{definition}
\newtheorem{defi}[theorem]{Definition}
\newtheorem{nota}[theorem]{Notation}
\newtheorem*{notae}{Notation}
\newtheorem{rem}[theorem]{Remark}
\newtheorem*{reme}{Remark}
\newtheorem{example}[theorem]{Example}
\newtheorem*{examplee}{Example}

\begin{abstract} 
We show an equivariant Kirchberg-Phillips-type absorption theorem for pointwise outer actions of discrete amenable groups on Kirchberg algebras with respect to natural model actions on the Cuntz algebras $\CO_\infty$ and $\CO_2$. 
This generalizes results known for finite groups and poly-$\IZ$ groups.
The model actions are shown to be determined, up to strong cocycle conjugacy, by natural abstract properties, which are verified for some examples of actions arising from tensorial shifts.
We also show the following homotopy rigidity result, which may be understood as a precursor to a general Kirchberg-Phillips-type classification theory:\ If two outer actions of an amenable group on a unital Kirchberg algebra are equivariantly homotopy equivalent, then they are conjugate. 
This marks the first \cstar-dynamical classification result up to cocycle conjugacy that is applicable to actions of all amenable groups.
\end{abstract}

\maketitle

\tableofcontents


\section*{Introduction}

The problem of classifying group actions has a long history within the theory of operator algebras.
A typical example is the Connes-Haagerup classification of injective factors \cite{Connes76, Haagerup87}, which initially involved a classification of cyclic group actions \cite{Connes75, Connes77}.
This has sparked a lot of work towards classifying group actions on von Neumann algebras, the highlight of which is the classification of actions of countable, amenable groups on injective  factors \cite{Connes75, Connes77, Jones80, Ocneanu85, SutherlandTakesaki89, KawahigashiSutherlandTakesaki92, KatayamaSutherlandTakesaki98, Masuda07} achieved by many hands.
See also \cite{Popa95, Popa10} for Popa's subfactor approach to these kinds of results.

Drawing the comparison to von Neumann algebras, the classification of group actions on \cstar-algebras has been less successful.
This is not least because of $K$-theoretical obstructions that do not appear in the context of von Neumann algebras.
Let us summarize the current state for \cstar-algebras, at least for discrete groups.
In his pioneering work \cite{Kishimoto95, Kishimoto96, Kishimoto98, Kishimoto98II}, Kishimoto has outlined a method to use the Rokhlin property in order to classify single automorphisms (i.e.~$\IZ$-actions) on certain inductive limit \cstar-algebras such as A$\IT$ algebras.
Finite group actions with the Rokhlin property were successfully classified by Izumi \cite{Izumi04, Izumi04II} on classifiable classes of \cstar-algebras.
The seminal technique of the so-called Evans-Kishimoto intertwining, introduced in \cite{EvansKishimoto97}, plays a key role in this regard. 
Classification via this method has been developed further by work of Matui \cite{Matui10} for AH algebras, Nakamura \cite{Nakamura00} for Kirchberg algebras, and Lin \cite{Lin15} for TAI algebras.
It is noteworthy that the Evans-Kishimoto intertwining technique has given some impetus back towards actions on von Neumann algebras, for example in Masuda's unified treatment \cite{Masuda07} of amenable group actions on injective factors.
Recently, Matui-Sato have announced a classification of single automorphisms with the weak Rokhlin property on classifiable \cstar-algebras; with the present technology available \cite{GongLinNiu15, ElliottGongLinNiu15, TikuisisWhiteWinter15}, this means that the involved \cstar-algebras are separable, unital, simple UCT \cstar-algebras with finite nuclear dimension.
The first abstract classification of single automorphisms on stably projectionless \cstar-algebras was in turn obtained recently by Nawata \cite{Nawata17}.
The aforementioned techniques and results for $\IZ$-actions are being pushed to $\IZ^d$-actions and even beyond, see \cite{Nakamura99, KatsuraMatui08, Matui08, Matui10, Matui11, IzumiMatui10, Izumi12OWR}. 

The \cstar-algebraic results cited above are impressive achievements, but they are all in some way underpinned by the Rokhlin property.
This causes the present theory to have some limitations in its applicability; for example, it is well-known that for finite group actions on Kirchberg algebras, the Rokhlin property is a fairly special occurrence, and actions satisfying it might not even exist on \cstar-algebras like $\CO_\infty$.
But even if one were to accept restrictions on the class of actions to consider, there are to my knowledge no sensible \cstar-algebraic candidates for the Rokhlin property for actions of groups that are not residually finite; see \cite[Section 3]{Izumi10}.
The only obvious exception is Matui-Sato's weak Rokhlin property \cite{MatuiSato12, MatuiSato14} originating in \cite{Sato10}.

Comparing the state of the theory of actions on \cstar-algebras vs.\ the aforementioned developments in von Neumann algebras, this calls for new ideas to handle more general actions of any amenable group.
Given the increasingly clear parallels between concepts for von Neumann algebras and for \cstar-algebras, which has driven the \cstar-algebraic theory to its recent breakthroughs \cite{BBSTWW, GongLinNiu15, ElliottGongLinNiu15, TikuisisWhiteWinter15}, one would expect that there is much more to discover about amenable group actions on \cstar-algebras.
In the long run, it is not far-fetched to expect that on classifiable \cstar-algebras, certain classes of amenable group actions could be classified with an invariant incorporating $K$-theory and traces (and likely more), matching the analogy between the well-established classification of injective factors and the recent breakthroughs in the classification of \cstar-algebras.  

Motivated by the fundamental importance of strongly self-absorbing \cstar-algebras in the Elliott program, an appropriate substitute for the Rokhlin property could be the absorption of certain strongly self-absorbing actions in the sense of \cite{Szabo16ssa, Szabo16ssa2, Szabo17ssa3}.
Tensorial stability has recurringly played an important role since Connes' proof of the uniqueness of the injective II${}_1$-factor, and has in part given rise to the idea of requiring a \cstar-algebraic replacement for the McDuff property for the purpose of classification.
This vague idea is fleshed out in the form of Toms-Winter's concept of strongly self-absorbing \cstar-algebras \cite{TomsWinter07}, which ought to be considered as the closest analogues of the hyperfinite II${}_1$-factor in this regard.
Indeed, for the Jiang-Su algebra $\CZ$, which is the initial object in the category of strongly self-absorbing \cstar-algebras \cite{Winter11}, the most general analogue of the McDuff property is given by $\CZ$-stability and plays a major role in the Elliott program through the Toms-Winter regularity conjecture \cite{ElliottToms08, WinterZacharias10, Winter12}.

The theory of strongly self-absorbing \cstar-dynamical systems was in turn fleshed out in previous work of the author, and forms an equivariant generalization of Toms-Winter's original concept;
the primary motivation being that they ought to play a similarly important role in equivariant classification as the strongly self-absorbing \cstar-algebras do within the Elliott classification program for simple, nuclear \cstar-algebras.
Although little evidence for this could be given in the initial theory-building work carried out in \cite{Szabo16ssa, Szabo16ssa2}, the most recent \cite{Szabo17ssa3} of these articles allowed for a small breakthrough concerning the uniqueness of strongly outer actions on strongly self-absorbing \cstar-algebras, which is now verified for torsion-free abelian groups.
In contrast to this work, however, the approach has implicitly still relied on the Rokhlin property by building on results from \cite{Matui08, Matui11, IzumiMatui10}.
I am confident that this paper represents a major step towards further establishing the importance of strongly self-absorbing actions in the classification theory of group actions on \cstar-algebras, as well as a novel approach to extend the current theory's applicability beyond residually finite groups.

If one considers pointwise outer actions of amenable groups on Kirchberg algebras, then one might expect that equivariant Kirchberg-Phillips-type absorption results should be the first step towards the classification of such actions up to cocycle conjugacy;
 the latter would constitute a far-reaching equivariant generalization of the celebrated Kirchberg-Phillips classification. 
The classical Kirchberg-Phillips absorption results \cite{KirchbergPhillips00} assert that all Kirchberg algebras absorb the Cuntz algebra $\CO_\infty$ of infinitely many generators and are absorbed by the Cuntz algebra $\CO_2$ of two generators tensorially, which Phillips' approach \cite{Phillips00} to the classification of these algebras made use of; see also \cite{KirchbergC}. 
In this way, one can regard the Cuntz algebras $\CO_2$ and $\CO_\infty$ as cornerstones of the classification of Kirchberg algebras.
For finite group actions, existing equivariant Kirchberg-Phillips-type absorption results are due to Izumi \cite{Izumi04} and Goldstein-Izumi \cite{GoldsteinIzumi11}.
A variant of this has been announced in ongoing work by Izumi-Matui \cite{Izumi12OWR} for actions of poly-$\IZ$ groups.
Work in progress by Phillips suggests that one can build on such absorption theorems to classify (pointwise outer) actions of finite groups on Kirchberg algebras via equivariant $KK$-theory. 
In light of the discussion above, it is natural to expect that this type of classification could be possible for more general amenable groups.

For a given amenable group, one can construct natural examples of pointwise outer model actions on $\CO_\infty$ and $\CO_2$ as follows:

\begin{examplee}[see Examples \ref{the-model-Oinf} and \ref{the-model-O2}]
Let $G$ be a countable, discrete and amenable group. Since $\cstar(G)$ unitally embeds into $\CO_2$, there exists a unitary representation $v: G\to\CU(\CO_2)$ that induces a $*$-monomorphism on $\cstar(G)$. Fix some non-zero $*$-homomorphism $\iota:\CO_2\to\CO_\infty$. Then $u_g=\iota(v_g)+\eins-\iota(\eins)$ yields a unitary representation $u: G\to\CU(\CO_\infty)$ that still induces a $*$-monomorphism on $\cstar(G)$. Then we may consider the actions
\[
\gamma=\bigotimes_\IN\ad(u): G\curvearrowright\bigotimes_\IN\CO_\infty\cong\CO_\infty
\]
and
\[
\delta=\bigotimes_\IN\ad(v): G\curvearrowright\bigotimes_\IN\CO_2\cong\CO_2.
\]
\end{examplee}

As explained in \cite[Section 5]{Szabo16ssa}, these actions are models for the actions in the finite group case that fit into the known equivariant absorption theorem. They are moreover strongly self-absorbing actions. The approach that we follow in this paper is to take precisely these two model actions as candidates fitting into an equivariant absorption theorem for all amenable groups. The following are the main results.

\begin{theoremi}[see Corollary \ref{model-uniqueness} and Example \ref{the-model-O2}] \label{choice-independence-intro}
Up to strong cocycle conjugacy, the actions $\gamma$ and $\delta$ above do not depend on the choice of unitary representations.
\end{theoremi}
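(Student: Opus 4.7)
The plan is to reduce the statement to the general uniqueness principle for strongly self-absorbing \cstar-dynamical systems from \cite{Szabo16ssa, Szabo16ssa2}: two such actions are strongly cocycle conjugate as soon as each tensorially absorbs the other. I will focus on $\delta$; the argument for $\gamma$ is analogous (or obtained by transporting through the embedding $\iota$).

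Fix two unitary representations $v_1,v_2:G\to\CU(\CO_2)$ that each induce a $*$-monomorphism on $\cstar(G)$, and set $\delta_i=\bigotimes_\IN\ad(v_i)$. First I would record that each $\delta_i$ is a strongly self-absorbing action, which is exactly the point of \cite[Section 5]{Szabo16ssa} and is the assertion in the paragraph preceding the theorem. Next, the unitary representation $v_1\otimes v_2:G\to\CU(\CO_2\otimes\CO_2)$ again induces a $*$-monomorphism on $\cstar(G)$, and under any identification $\CO_2\otimes\CO_2\cong\CO_2$ a standard rearrangement of the countable tensor factors yields
\[
\delta_1\otimes\delta_2\ \scc\ \bigotimes_\IN\ad(v_1\otimes v_2),
\]
so the tensor product is itself of the model form built from a new choice of representation.

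The key step is then to show that any two model actions mutually tensorially absorb each other, for instance that $\delta_1\otimes\delta_2\scc\delta_1$. By the equivariant McDuff-type characterization of $\delta_2$-absorption, it suffices to produce a unital $G$-equivariant $*$-homomorphism from $(\CO_2,\delta_2)$ into the equivariant central sequence algebra $F_\infty(\CO_2,\delta_1)$. The shift structure of $\delta_1$ on $\bigotimes_\IN\CO_2$ already supplies an abundance of almost-central copies of $\CO_2$ (one for each tail of the tensor product) together with a $G$-action on each factor given by $\ad(v_1)$. To convert this into an embedding with the \emph{same} dynamics as $\delta_2$, I would apply the Kirchberg-Phillips-type uniqueness of unital $*$-homomorphisms $\cstar(G)\to\CO_2$: any two such maps are approximately unitarily equivalent, and hence $v_1$ and $v_2$ are approximately unitarily equivalent as unitary representations. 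Choosing unitaries $w_n\in\CO_2$ that approximately intertwine $v_1$ and $v_2$ on larger and larger finite subsets of $G$, and placing them along the tensor factors, yields a sequence of unital $*$-homomorphisms $\CO_2\to\bigotimes_\IN\CO_2$ that become asymptotically $G$-equivariant and asymptotically central; this gives the desired embedding after passing to $F_\infty(\CO_2,\delta_1)$.

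Finally, combining this mutual absorption with the strong self-absorption from the first step and the general uniqueness principle for strongly self-absorbing actions produces $\delta_1\scc\delta_2$, which is the theorem. The main obstacle is the central step: producing an embedding into the equivariant central sequence algebra that is \emph{honestly} equivariant for $\delta_2$, not merely approximately so. Handling this carefully requires controlling both the approximate unitary equivalence of the two $\cstar(G)$-embeddings and the asymptotic centrality simultaneously, presumably via an iterative perturbation along the tensor structure of $\bigotimes_\IN\CO_2$; the flexibility of $\CO_2$ (amplifying a unital representation to a unital $*$-homomorphism, and vanishing of $2$-cohomology obstructions for unitary perturbations) should be what makes the approximation strong enough to land in strong cocycle conjugacy rather than merely cocycle conjugacy.
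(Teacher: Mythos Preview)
Your approach is essentially correct and more elementary than the paper's, though you understate how easy the ``main obstacle'' actually is. Once you know that the two unital $*$-monomorphisms $\cstar(G)\to\CO_2$ induced by $v_1,v_2$ are approximately unitarily equivalent (immediate from the $\CO_2$-embedding uniqueness theorem, since $G$ is amenable), choosing intertwining unitaries $w_n$ and setting $\psi_n=\iota_n\circ\ad(w_n)$ for the $n$-th tensor-factor inclusion $\iota_n$ already gives a sequence of unital $*$-homomorphisms $\CO_2\to\bigotimes_\IN\CO_2$ that is simultaneously asymptotically central and asymptotically $(\ad(v_2),\delta_1)$-equivariant; passing to $F_\omega$ produces an \emph{honest} equivariant embedding with no further perturbation or cohomology-vanishing argument needed. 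For $\gamma$ the analogue works, but you should say explicitly that approximate unitary equivalence of $u_1,u_2:\cstar(G)\to\CO_\infty$ now requires the $KL$-uniqueness theorem and hence uses that both representations are $K$-trivial by construction --- this is not automatic in $\CO_\infty$ the way it is in $\CO_2$.

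The paper takes a quite different route: it does not compare two model actions directly, but instead builds the machinery of saturated $G$-\cstar-dynamical systems (Sections~1--2), proves that for any pointwise outer amenable action on a simple purely infinite \cstar-algebra the fixed-point algebra of the induced central-sequence action is again simple and purely infinite (Lemma~\ref{fixed point}), and uses this together with a Connes-style $2\times 2$-matrix cohomology-vanishing trick (Lemmas~\ref{O2 coboundary}, \ref{coboundary}) to obtain the full absorption Theorems~\ref{model-absorption} and~\ref{O2-absorbing-uniqueness}; the choice-independence then drops out as the special case where the absorbing action is itself a model action. Your argument buys a short, self-contained proof of Theorem~\ref{choice-independence-intro} using only classical Kirchberg--Phillips uniqueness and the equivariant McDuff criterion; the paper's argument buys the much stronger Theorems~\ref{model-absorption-intro} and~\ref{O2-uniqueness-intro}, which your method does not yield since it relies on the explicit infinite-tensor-product structure of the target.
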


The following is the equivariant analogue of the Kirchberg-Phillips $\CO_\infty$-absorption theorem.

\begin{theoremi}[see Theorem \ref{model-absorption}] \label{model-absorption-intro}
Every pointwise outer cocycle action $(\alpha,u): G\curvearrowright A$ on a Kirchberg algebra is strongly cocycle conjugate to $(\alpha\otimes\gamma,u\otimes\eins): G\curvearrowright A\otimes\CO_\infty$.
\end{theoremi}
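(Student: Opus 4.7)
The plan is to deduce the theorem from the abstract McDuff-type absorption machinery for strongly self-absorbing cocycle actions developed in \cite{Szabo16ssa, Szabo16ssa2, Szabo17ssa3}, combined with an equivariant embedding result specific to Kirchberg algebras. A short calculation shows that the model action $\gamma$ is strongly self-absorbing in that sense, and the general absorption criterion then says that for any separable cocycle action $(\alpha,u): G\curvearrowright A$ one has
\[
(\alpha,u) \scc (\alpha\otimes\gamma,\, u\otimes\eins)
\]
as soon as there is a unital equivariant $*$-homomorphism from $(\CO_\infty,\gamma)$ into a suitable equivariant central sequence algebra $F_\infty(A)$ of $(A,\alpha,u)$. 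The problem therefore reduces to producing such an embedding when $A$ is Kirchberg and $(\alpha,u)$ is pointwise outer.

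The construction splits into two largely independent pieces. First, Kirchberg-Phillips $\CO_\infty$-absorption in ultrapowers provides a unital $*$-homomorphism $\CO_\infty\to F_\infty(A)$; this piece ignores the $G$-action entirely. Second, and substantively, one constructs a $u$-twisted unitary representation $w: G\to\CU(F_\infty(A))$ whose adjoint action recovers the induced action of $(\alpha,u)$ on $F_\infty(A)$. Pointwise outerness enters here: by Kishimoto-Nakamura-type arguments \cite{Nakamura00}, generalised to arbitrary pointwise outer actions on Kirchberg algebras, for each finite $F\fin G$ and $\eps>0$ one produces approximately central unitaries $(w_g)_{g\in F}$ with $\ad(w_g)\approx_\eps\alpha_g$ on prescribed finite subsets of $A$ and $w_g w_h\approx_\eps u(g,h)\,w_{gh}$. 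Amenability of $G$ then allows a F{\o}lner averaging procedure, carried out inside $F_\infty(A)$, to upgrade such approximate data to an honest unitary $u$-cocycle representation.

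A standard reindexing trick --- running the $\CO_\infty$-embedding inside the relative commutant of the image of $w$, which in the Kirchberg setting is again absorbing enough to host such an embedding --- merges the two ingredients into one unital $(\gamma,\tilde\alpha)$-equivariant $*$-homomorphism $(\CO_\infty,\gamma)\to F_\infty(A)$. Equivariance is automatic from the way $\gamma=\bigotimes_\IN\ad(u_g)$ is defined on its first tensor factor, and Theorem \ref{choice-independence-intro} lets us choose any convenient representation when verifying compatibility. Invoking the abstract absorption criterion then delivers the desired strong cocycle conjugacy.

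The principal obstacle is the second piece. For finite $G$ a direct averaging over the group suffices and underlies the absorption theorems of \cite{Izumi04, GoldsteinIzumi11}; for $\IZ^d$ and poly-$\IZ$ groups Rokhlin towers play that role in the work of Izumi-Matui \cite{Izumi12OWR}. For general amenable $G$ no exact Rokhlin tower is available, so one must perform F{\o}lner-style averaging intrinsically in the equivariant ultrapower while simultaneously controlling the $2$-cocycle $u$ and preserving asymptotic centrality in $A$. This averaging, executed in a way that is robust under iteration and that interfaces cleanly with the strongly self-absorbing framework of \cite{Szabo17ssa3}, is the new technical input that lifts the theorem beyond the finite-group and poly-$\IZ$ cases.
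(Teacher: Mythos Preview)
Your overall architecture is right --- reduce via the equivariant McDuff machinery of \cite{Szabo16ssa} to producing a unital $(\gamma,\tilde{\alpha}_\omega)$-equivariant $*$-homomorphism $(\CO_\infty,\gamma)\to(F_\omega(A),\tilde{\alpha}_\omega)$ --- but the construction you propose for the ``second piece'' cannot work. You ask for approximately central unitaries $w_g\in A$ with $\ad(w_g)\approx\alpha_g$ on large finite sets; but if $w_g$ is approximately central then $w_g a w_g^*\approx a$ for all $a$, which forces $\alpha_g\approx\id$ and contradicts outerness. Phrased in the central sequence algebra, you are asking the induced action $\tilde{\alpha}_\omega$ on $F_\omega(A)$ to be inner, and the Kishimoto--Nakamura type arguments you invoke prove precisely the opposite: if $\alpha_g$ is outer on $A$ then $\tilde{\alpha}_{\omega,g}$ is outer on $F_\omega(A)$ (Theorem~\ref{central seq outer}). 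No F{\o}lner averaging can produce the representation $w$ you want; the obstruction is one of existence, not of control.

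The paper exploits the outerness of $\tilde{\alpha}_\omega$ rather than fighting it. F{\o}lner averaging is used, but for a different purpose: to show (Lemma~\ref{fixed point}) that the fixed-point algebra $F_\omega(A)^{\tilde{\alpha}_\omega}$ is itself simple and purely infinite --- this is where amenability genuinely enters. One then takes any unital embedding $\nu:\CO_\infty\to F_\omega(A)^{\tilde{\alpha}_\omega}$ and observes that $\{\nu(u_g)\}_{g\in G}$, with $u$ the $K$-trivial representation defining $\gamma$, is an $\tilde{\alpha}_\omega$-cocycle. A Connes $2\times 2$-matrix trick (Lemmas~\ref{O2 coboundary} and~\ref{coboundary}), which again relies on simple pure infiniteness of the fixed-point algebra of a perturbed action, shows this cocycle is a coboundary, say $\nu(u_g)=v\,\tilde{\alpha}_{\omega,g}(v^*)$. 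Then $\ad(v^*)\circ\nu$ is the required equivariant embedding of $(\CO_\infty,\ad(u))$ into $(F_\omega(A),\tilde{\alpha}_\omega)$, and a reindexation handles the infinite tensor product form of $\gamma$.
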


The following is the equivariant analogue of the Kirchberg-Phillips $\CO_2$-absorption theorem.

\begin{theoremi}[see Theorem \ref{O2-absorbing-uniqueness}] \label{O2-uniqueness-intro}
For every action $\alpha: G\curvearrowright A$ on a unital Kirchberg algebra, the action $\alpha\otimes\delta: G\curvearrowright A\otimes\CO_2$ is strongly cocycle conjugate to $\delta: G\curvearrowright\CO_2$. If $\alpha$ is moreover pointwise outer, then $\alpha\otimes\id_{\CO_2}$ is strongly cocycle conjugate to $\delta$.
\end{theoremi}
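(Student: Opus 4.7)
The plan is to treat the two assertions of the theorem separately, with the first providing the key reduction for the second.

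For the first assertion, namely $\alpha\otimes\delta\scc\delta$, the strategy is to invoke the abstract uniqueness of the model action $\delta$ established by Corollary~\ref{model-uniqueness} underpinning Theorem~\ref{choice-independence-intro}. Since $A\otimes\CO_2\cong\CO_2$ by the classical Kirchberg-Phillips theorem, both $\alpha\otimes\delta$ and $\delta$ act on isomorphic copies of $\CO_2$. Tensoring the strong self-absorption $\delta\otimes\delta\scc\delta$ on the left with $\alpha$ shows that $\alpha\otimes\delta$ tensorially absorbs $\delta$, and in particular is itself strongly self-absorbing when viewed as an action on $\CO_2$. The remaining verification is that $\alpha\otimes\delta$ has the abstract structure prescribed by the uniqueness criterion; this reduces to checking that the compatible unitary representation of $G$ is inherited through the natural equivariant unital inclusion $(\CO_2,\delta)\hookrightarrow(A\otimes\CO_2,\alpha\otimes\delta)$ as the second tensor factor.

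For the second assertion, assume $\alpha$ is pointwise outer. The plan is a two-step chain: first, show that $\alpha\otimes\id_{\CO_2}$ is $\delta$-absorbing, that is, $\alpha\otimes\id_{\CO_2}\scc(\alpha\otimes\id_{\CO_2})\otimes\delta$; second, apply the first assertion to the action $\alpha\otimes\id_{\CO_2}$ on the Kirchberg algebra $A\otimes\CO_2\cong\CO_2$ to conclude $(\alpha\otimes\id_{\CO_2})\otimes\delta\scc\delta$. Chaining these two yields $\alpha\otimes\id_{\CO_2}\scc\delta$.

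The main obstacle is the first step. By the standard McDuff-type criterion for absorption of a strongly self-absorbing action, it reduces to producing a unital, $\delta$-equivariant $*$-homomorphism
\[
(\CO_2,\delta)\longrightarrow\bigl((A\otimes\CO_2)_\infty\cap(A\otimes\CO_2)',(\alpha\otimes\id_{\CO_2})_\infty\bigr).
\]
Pointwise outerness of $\alpha$ is essential here, since it ensures the equivariant central sequence algebra is rich enough to host such an embedding. A natural route is to use the equivariant $\CO_\infty$-absorption of Theorem~\ref{model-absorption-intro} to first realize $(\CO_\infty,\gamma)$ equivariantly in the central sequence algebra, and then extract a $\delta$-equivariant copy of $\CO_2$ using the corner embedding $\iota:\CO_2\to\CO_\infty$ (which is $\delta$-to-$\gamma$ equivariant by construction of $\gamma$) together with the $\CO_2$-absorbing nature of the ambient algebra.
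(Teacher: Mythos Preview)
Your plan for the first assertion has a circularity problem. Corollary~\ref{model-uniqueness} is a statement about the $\CO_\infty$-model $\gamma$, not about $\delta$; the uniqueness of $\delta$ up to strong cocycle conjugacy (the $\delta$-half of Theorem~\ref{choice-independence-intro}) is established in Example~\ref{the-model-O2} as a \emph{consequence} of Theorem~\ref{O2-absorbing-uniqueness}, which is precisely the result you are trying to prove. So invoking an ``abstract uniqueness criterion'' for $\delta$ at this stage is assuming the conclusion. Separately, the step ``$\alpha\otimes\delta$ absorbs $\delta$, hence is strongly self-absorbing'' is a non sequitur: absorbing a strongly self-absorbing action does not by itself imply strong self-absorption. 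The paper proceeds in the opposite direction: it first proves the absorption statement $\beta\otimes\alpha\scc\alpha$ for any pointwise outer, equivariantly $\CO_2$-absorbing $\alpha:G\curvearrowright\CO_2$ (Theorem~\ref{O2-absorbing-uniqueness}), by producing an equivariant unital embedding $(B,\beta)\hookrightarrow F_\omega(\CO_2)$ via Corollary~\ref{O2-morph-ex-2}, establishing $G$-unitary uniqueness of such embeddings via Lemma~\ref{O2-morph-uniqueness-2}, and then running an equivariant one-sided intertwining. The first assertion of the theorem is the specialization $\alpha=\delta$.

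Your reduction for the second assertion is logically sound, but the proposed proof that $\alpha\otimes\id_{\CO_2}$ absorbs $\delta$ does not work as stated. The corner embedding $\iota:\CO_2\to\CO_\infty$ is equivariant only from $(\CO_2,\ad(v))$ to $(\CO_\infty,\ad(u))$, not from $(\CO_2,\delta)$ to $(\CO_\infty,\gamma)$; passing to the infinite tensor products would require $\bigotimes_\IN\iota$, which is not even well-defined because $\iota(\eins)\neq\eins$. Having $(\CO_\infty,\gamma)$ unitally in the central sequence algebra therefore does not by itself yield a unital equivariant copy of $(\CO_2,\delta)$. What actually drives this existence in the paper is Proposition~\ref{O2-morph-ex-1} (and its corollary~\ref{O2-morph-ex-2}): one uses that the fixed-point algebra of the central sequence action contains $\CO_2$, embeds $B\rtimes_\beta G$ into that copy via Kirchberg--Phillips, and then applies the cocycle-vanishing Lemma~\ref{O2 coboundary} to straighten the resulting inner action into an honest equivariant embedding. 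This is the step you are missing, and Theorem~\ref{model-absorption-intro} alone does not substitute for it.
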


In particular, this yields the desired Kirchberg-Phillips-type result motivated above. 
Due to Theorem \ref{model-absorption-intro}, the action $\gamma$ is characterized as the unique action satisfying a university-like condition.
However, it is natural to expect that $\gamma$ should have a kind of natural abstract characterization among all pointwise outer actions on $\CO_\infty$ that can actually be checked on the level of examples. In this vein, we will indeed prove:

\begin{theoremi}[see Corollary \ref{abstract-models-Oinf}] \label{abstract-models-intro}
A pointwise outer action $\beta: G\curvearrowright\CO_\infty$ is strongly cocycle conjugate to $\gamma$ if and only if $\beta$ is approximately representable and the inclusion $\cstar(G)\subset\CO_\infty\rtimes_\beta G$ is a $KK$-equivalence.
\end{theoremi}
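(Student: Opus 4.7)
Since approximate representability and the $KK$-class of the canonical inclusion $\cstar(G) \hookrightarrow A \rtimes_\alpha G$ are invariants of strong cocycle conjugacy, it suffices to verify both properties for $\gamma$ itself. Approximate representability is built into the construction: the truncated representations $w_g^{(N)} := u_g^{\otimes N} \otimes \eins$ are genuine group homomorphisms implementing $\gamma$ that become asymptotically central as $N \to \infty$. For the $KK$-equivalence, write $(\CO_\infty, \gamma)$ as the inductive limit of $(\CO_\infty^{\otimes N}, \gamma^{(N)})$ over $N \in \IN$; each inner action $\gamma^{(N)} = \bigotimes^N \ad(u)$ is cocycle conjugate to the trivial action, giving a natural equivariant identification $\CO_\infty^{\otimes N} \rtimes_{\gamma^{(N)}} G \cong \CO_\infty^{\otimes N} \otimes \cstar(G)$ compatible with the $\cstar(G)$-inclusion. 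Passing to the inductive limit and invoking $\CO_\infty \equiv_{KK} \IC$ (i.e.\ the classical $KK$-equivalence of $\CO_\infty$ with the scalars) yields the desired $KK$-equivalence.

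\textbf{Reverse direction.} Assume $\beta$ is pointwise outer, approximately representable, and the inclusion $\cstar(G) \hookrightarrow \CO_\infty \rtimes_\beta G$ is a $KK$-equivalence. Theorem \ref{model-absorption-intro} gives $\beta \scc \beta \otimes \gamma$, while Theorem \ref{choice-independence-intro} (strong self-absorption of $\gamma$) gives $\gamma \otimes \gamma \scc \gamma$. It therefore suffices to establish $\beta \otimes \gamma \scc \gamma$. The expected route is via an abstract absorption theorem---one of the central technical results underpinning Corollary \ref{abstract-models-Oinf}---asserting that $\gamma$ absorbs every pointwise outer, approximately representable action on $\CO_\infty$ whose crossed product satisfies the stated $KK$-equivalence condition. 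Since $\beta$ fulfills precisely these hypotheses, that absorption theorem yields $\beta \otimes \gamma \scc \gamma$, and combining with the reduction above closes the argument.

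\textbf{Main obstacle.} The essential content is the absorption theorem invoked above. One expects its proof to proceed via an Evans-Kishimoto-style intertwining carried out in the equivariant central sequence algebra: approximate representability of $\beta$ supplies a genuine approximate unitary representation of $G$ with which one can build equivariant asymptotic $*$-homomorphisms into $F_\infty(\CO_\infty, \gamma)^G$ (and dually), while the $KK$-equivalence condition provides the $K$-theoretic matching needed to show that such homomorphisms are approximately unitarily equivalent through equivariant $2$-cocycles. Translating the abstract $KK$-class of the inclusion $\cstar(G) \subset \CO_\infty \rtimes_\beta G$ into asymptotic dynamical data on the action side, in a manner that is compatible with the $G$-cocycle structure and can be fed into the intertwining machinery, is the technically demanding step and the heart of the paper's contribution beyond the Rokhlin-based approaches cited in the introduction.
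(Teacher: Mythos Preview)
Your forward direction has a subtle gap: the $KK$-class of the canonical inclusion $\cstar(G)\subset\CO_\infty\rtimes_\beta G$ is \emph{not} an invariant of strong cocycle conjugacy in general. Strong cocycle conjugacy involves approximate coboundaries, which implement only approximate unitary equivalence on crossed products and hence preserve only the $KL$-class (see Proposition~\ref{model-KK-condition}\ref{model-KK:1}). The paper closes this gap by showing separately (Proposition~\ref{KK-or-KL}) that for these specific actions strong cocycle conjugacy coincides with very strong cocycle conjugacy, which does preserve $KK$; this uses unitary regularity of $\gamma$ and is not automatic.

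Your reverse direction correctly reduces to an absorption statement $\beta\otimes\gamma\scc\gamma$, but the mechanism you conjecture is off. The paper does \emph{not} run an Evans--Kishimoto intertwining, nor does it work with equivariant $2$-cocycles in the central sequence algebra. The actual route (Theorem~\ref{approx-rep-ssa-uniqueness}) is:
\begin{enumerate}
\item Use the $KL$-hypothesis together with Kirchberg--Phillips classification on the crossed products to realise a suitable $KL$-element by a unital $*$-homomorphism $\phi:\CO_\infty\rtimes_\beta G\to\CO_\infty$, then correct by unitaries so that the restriction to $\CO_\infty$ becomes an approximately equivariant embedding $(\CO_\infty,\beta)\to(\CO_\infty,\ad(u))$ (Proposition~\ref{embedding-into-model}).
\item Use approximate representability not to build maps into the central sequence algebra, but to transfer approximate unitary equivalence of $*$-homomorphisms between crossed products back to approximate $G$-unitary equivalence on the dynamical systems (Proposition~\ref{approx-rep-uniqueness}); this yields that $\beta$ has approximately $G$-inner half-flip.
\item Feed the half-flip property and the embedding from step~(1) into the existing strongly self-absorbing machinery \cite[Theorem~2.7]{Szabo16ssa} to conclude $\gamma\scc\beta\otimes\gamma$.
\end{enumerate}
So the $KK$-hypothesis is consumed by ordinary (non-equivariant) Kirchberg--Phillips classification applied to crossed products, and approximate representability is the bridge that carries the resulting conclusions back to the equivariant world. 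No new intertwining argument is needed beyond what is already packaged in \cite{Szabo16ssa}.
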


In fact, an analogous result is proved with respect to actions on any strongly self-absorbing Kirchberg algebra in place of $\CO_\infty$; see Theorem \ref{approx-rep-ssa-uniqueness}.

It is tempting to conjecture that in fact all $G$-actions on $\CO_\infty$ (or more generally $G$-actions on strongly self-absorbing Kirchberg algebras) are approximately representable, but this problem remains open.
The $K$-theoretic condition on the crossed product is necessary in general (see Remark \ref{Z2-example}), but we will show that it becomes redundant if $G$ has no torsion.
If this is the case, it turns out that for every strongly self-absorbing \cstar-algebra $\CD$ and any $G$-action on $\CD$, the $K$-theoretic data of the crossed product $\CD\rtimes G$ does not depend on the action, and in particular coincides with that of $\CD\otimes\cstar(G)$ naturally.
This follows from a combination of the structure theory of strongly self-absorbing \cstar-algebras and Baum-Connes for amenable groups \cite{MeyerNest06}. 
In this case, the uniqueness result becomes:

\begin{theoremi}[see Theorem \ref{approx-rep-tf-uniqueness}]
Let $G$ be torsion-free and $\CD$ a strongly self-absorbing Kirchberg algebra. Then up to strong cocycle conjugacy, the action $\gamma\otimes\id_\CD$ is the unique approximately representable and pointwise outer action on $\CO_\infty\otimes\CD\cong\CD$.
\end{theoremi}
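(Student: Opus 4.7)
The plan is to derive this from Theorem \ref{approx-rep-ssa-uniqueness} (the strongly self-absorbing analog of Theorem \ref{abstract-models-intro}) by showing that its $KK$-theoretic hypothesis becomes automatic when $G$ is torsion-free. Let $\beta: G \curvearrowright \CO_\infty \otimes \CD \cong \CD$ be approximately representable and pointwise outer. By Theorem \ref{approx-rep-ssa-uniqueness} applied with $\CD$ in place of $\CO_\infty$, $\beta \scc \gamma \otimes \id_\CD$ will follow once one shows that the canonical inclusion $\cstar(G) \hookrightarrow \CD \rtimes_\beta G$ is a $KK$-equivalence. The main task is thus to verify this automatically in the torsion-free case.

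For this, I would combine the Meyer-Nest triangulated-category formulation of Baum-Connes \cite{MeyerNest06}, which holds for amenable $G$, with the structure theory of strongly self-absorbing \cstar-algebras. The torsion-freeness of $G$ is essential: since the only finite subgroup of $G$ is trivial, the Meyer-Nest class of compactly induced $G$-\cstar-algebras reduces to those of the form $C_0(G) \otimes B$ with shift action, leading to a clean form of the assembly map computing $K_*(\CD \rtimes_\beta G)$. From the strongly self-absorbing side, I would use the classical fact that $\Aut(\CD)$ acts trivially on $K_*(\CD)$ for strongly self-absorbing $\CD$, a consequence of the approximately inner half-flip, which forces the induced $G$-action on $K_*(\CD)$ to be trivial. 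The Baum-Connes computation then yields a natural identification $K_*(\CD \rtimes_\beta G) \cong K_*(\CD \otimes \cstar(G))$. Since both $\cstar(G)$ (by Baum-Connes for amenable $G$, placing it in the bootstrap class) and $\CD \rtimes_\beta G$ (by amenability of $G$ and nuclearity of $\CD$) lie in the bootstrap class, this K-theory identification upgrades to a $KK$-equivalence via the UCT.

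The main obstacle will be to ensure that this identification is implemented by the canonical map $\iota: \cstar(G) \to \CD \rtimes_\beta G$, rather than by an abstract isomorphism emerging from the Baum-Connes assembly. I would address this by threading $\iota$ through the Meyer-Nest resolution, exploiting the $G$-equivariance of the unit $\IC \to (\CD, \beta)$ together with the compatibility of the assembly with tensor products, so that $KK(\iota)$ corresponds precisely to the assembly isomorphism on both sides. A subtlety worth noting is that some strongly self-absorbing Kirchberg algebras (notably $\CO_2$) are $KK$-equivalent to zero, so the K-theoretic content of the condition can be vacuous; for $\CD = \CO_2$ the statement also follows directly from Theorem \ref{O2-uniqueness-intro}, which provides a useful sanity check on the overall argument.
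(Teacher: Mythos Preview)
There is a genuine gap in your reduction step. The hypothesis of Theorem~\ref{approx-rep-ssa-uniqueness} is \emph{not} that $j_\beta:\cstar(G)\hookrightarrow\CD\rtimes_\beta G$ be a $KK$-equivalence, but rather that $\id_\CD\otimes j_\beta:\CD\otimes\cstar(G)\to\CD\otimes(\CD\rtimes_\beta G)$ be a $KL$-equivalence, together with $\CD$-stability of $\CD\rtimes_\beta G$. Your stated goal is in fact false whenever $K_*(\CD)\not\cong K_*(\IC)$: for $\CD=\CO_\infty\otimes M_{p^\infty}$ the two sides have different $K$-theory, and for $\CD=\CO_2$ your own sanity check backfires, since $j_\beta$ being a $KK$-equivalence would force $\cstar(G)$ to be $KK$-contractible. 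Your intermediate computation $K_*(\CD\rtimes_\beta G)\cong K_*(\CD\otimes\cstar(G))$ is on the right track, but it pertains to a different map than the one you set out to analyze, and you would still need to address naturality for $\id_\CD\otimes j_\beta$ rather than for $j_\beta$. You also do not verify the $\CD$-stability hypothesis at all.

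The paper handles both hypotheses more directly and avoids the UCT detour. For the $KK$-condition it works at the $KK^G$ level: Meyer--Nest for torsion-free amenable groups (Theorem~\ref{KK-vanish}) says that a non-equivariant $KK$-equivalence automatically becomes a $KK^G$-equivalence, so since $\id_\CD\otimes\eins_\CD:\CD\to\CD\otimes\CD$ is a $KK$-equivalence, the equivariant first-factor embedding $(\CD,\id)\to(\CD\otimes\CD,\id_\CD\otimes\beta)$ is a $KK^G$-equivalence (Lemma~\ref{D-absorbing-KKG}); descending to crossed products gives exactly the required map. For $\CD$-stability, the paper uses a homotopy argument: any $G$-action on a strongly self-absorbing $\CD$ is connected by a continuous path of actions to the trivial action (Proposition~\ref{ssa-all-homotopic}), and Meyer--Nest again makes the evaluation maps $KK^G$-equivalences (Proposition~\ref{homotopic-actions}), so $\CD\rtimes_\beta G$ is $KK$-equivalent to $\CD\otimes\cstar(G)$ and hence $\CD$-stable by Kirchberg--Phillips. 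Neither step requires computing $K$-groups or invoking the UCT.
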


Finally, we will provide some hard evidence towards the existence of a possible equivariant Kirchberg-Phillips classification theory via equivariant $KK$-theory that applies to all outer actions of amenable group on Kirchberg algebras.
In particular, we prove the following rigid behavior with respect to homotopy equivalence:

\begin{theoremi} \label{homotopy-rigid-intro}
Let $\alpha: G\curvearrowright A$ and $\beta: G\curvearrowright B$ be two pointwise outer actions on unital Kirchberg algebras.
If $(A,\alpha)$ and $(B,\beta)$ are $G$-equivariantly homotopy equivalent, then they are conjugate.
\end{theoremi}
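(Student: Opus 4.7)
The plan is to reduce to the case where both $\alpha$ and $\beta$ absorb the model action $\gamma$, promote the equivariant homotopies to approximate unitary equivalences implemented by nearly $G$-invariant unitaries, and then run a two-sided equivariant Elliott intertwining to produce the desired $G$-equivariant isomorphism. To set this up, since $\alpha$ and $\beta$ are pointwise outer actions on unital Kirchberg algebras, Theorem~\ref{model-absorption-intro} gives $\alpha\scc\alpha\otimes\gamma$ on $A\cong A\otimes\CO_\infty$ and $\beta\scc\beta\otimes\gamma$ on $B\cong B\otimes\CO_\infty$. Because strong cocycle conjugacy preserves equivariant homotopy type, one may replace $(A,\alpha)$ and $(B,\beta)$ by their $\gamma$-stabilizations throughout, and hence assume in the sequel that both actions absorb the strongly self-absorbing $G$-action $\gamma$.

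An equivariant homotopy equivalence then supplies equivariant unital $*$-homomorphisms $\phi:(A,\alpha)\to(B,\beta)$ and $\psi:(B,\beta)\to(A,\alpha)$ together with equivariant paths of equivariant unital $*$-homomorphisms connecting $\psi\circ\phi$ to $\id_A$ and $\phi\circ\psi$ to $\id_B$. The heart of the argument is an equivariant counterpart to the classical Dadarlat-Winter phenomenon: if the target action absorbs $\gamma$, then any two equivariantly homotopic equivariant unital $*$-homomorphisms are approximately unitarily equivalent via a sequence of unitaries that are asymptotically fixed by the ambient $G$-action. To prove this, one reinterprets a given homotopy as a single equivariant $*$-homomorphism into the path algebra $C([0,1],B)$ with its induced $G$-action, and exploits that tensoring with $\gamma$ produces a plentiful supply of approximately $G$-invariant unitaries in the equivariant central sequence algebra of $(B,\beta)$, enough to rectify any such path via inner perturbations.

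The resulting approximate unitary equivalences $\psi\circ\phi\approx\id_A$ and $\phi\circ\psi\approx\id_B$, implemented by approximately $G$-invariant unitaries, feed into a two-sided equivariant approximate intertwining \`a la Elliott (compare \cite{Szabo16ssa, Szabo17ssa3}) to yield a $G$-equivariant $*$-isomorphism $(A,\alpha)\cong(B,\beta)$, i.e.\ the desired conjugacy.

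The principal obstacle is the equivariant Dadarlat-Winter step. Whereas its non-equivariant incarnation for $\CO_\infty$-absorbing targets is by now a standard technique, the equivariant refinement demands careful bookkeeping of how $G$ acts on the implementing unitaries: they must be approximately fixed, rather than merely approximately cocycle-equivariant, so that the subsequent intertwining delivers genuine conjugacy and not merely cocycle conjugacy. This forces a genuine use of the strongly self-absorbing nature of the model action $\gamma$ in the dynamical sense of \cite{Szabo16ssa, Szabo16ssa2, Szabo17ssa3}, and is the true technical core of the theorem.
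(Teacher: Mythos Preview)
Your high-level architecture is right: reduce to an absorbing situation, upgrade $G$-homotopy to approximate $G$-unitary equivalence, then run an equivariant intertwining. The last step is indeed \cite[Corollary 1.16]{Szabo16ssa}, exactly as the paper does. But the middle step, which you correctly identify as the principal obstacle, is not actually carried out in your proposal, and the mechanism you sketch is not the one that works.

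The paper does \emph{not} proceed via an equivariant Dadarlat--Winter argument using $\gamma$-absorption. Instead it goes through $\CO_2$ and Phillips' interpolation trick. Concretely: first one shows (Theorem~\ref{O2-abs-map-uniqueness}, via Lemma~\ref{O2-morph-uniqueness-1}) that \emph{any} two unital equivariant $*$-homomorphisms from $(A,\alpha)$ into $(B\otimes\CO_2,\beta\otimes\id_{\CO_2})$ are approximately $G$-unitarily equivalent; no homotopy hypothesis is needed here, and the engine is the $2\times 2$ matrix trick together with simple pure infiniteness of the fixed-point algebra of the saturated action on the ultrapower (Lemma~\ref{fixed point}). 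Second, a Phillips-style telescoping (Lemma~\ref{homotopy-uniqueness-Phillips}) bootstraps this: if $\Phi_t\otimes\eins_{\CO_2}$ are all mutually approximately $G$-unitarily equivalent along a homotopy $\Phi$, then $\phi_0\otimes\eins_{\CO_\infty}$ and $\phi_1\otimes\eins_{\CO_\infty}$ are approximately $G$-unitarily equivalent. Third, plain equivariant $\CO_\infty$-absorption (Theorem~\ref{equ-Oinf-absorption}, i.e.\ $\beta\cc\beta\otimes\id_{\CO_\infty}$, \emph{not} absorption of $\gamma$) lets one descend back to $(B,\beta)$.

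Your description --- ``reinterpret the homotopy as a map into $C([0,1],B)$ and rectify the path using approximately $G$-invariant unitaries from the central sequence algebra coming from $\gamma$'' --- does not correspond to a technique that is known to produce the required uniqueness. In particular, $\gamma$-absorption by itself gives you approximately central, approximately fixed copies of $(\CO_\infty,\gamma)$, but it is unclear how you would use these to compare two arbitrary $G$-homotopic maps without something playing the role of the unconditional $\CO_2$-level uniqueness. That $\CO_2$ step, and the Phillips interpolation that leverages it, is the genuine content you are missing.
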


To my knowledge, the combination of Theorems \ref{O2-uniqueness-intro}, \ref{abstract-models-intro} and \ref{homotopy-rigid-intro} marks the first \cstar-dynamical classification result up to cocycle conjugacy that is applicable to actions of all amenable groups.
It is worth mentioning that Sato has earlier proved a classification \cite[Theorem 3.1 and Corollary 3.5]{Sato11} of strongly outer actions on certain monotracial \cstar-algebras, which is also applicable to all amenable groups, and with respect to conjugacy modulo weakly inner automorphisms. 

The technical parts of the proofs of these main results largely involve the notion of countably quantifier-free (semi-)saturated \cstar-dynamical systems over discrete groups, which we introduce in the first section. 
This is a concept inspired by continuous model theory and is a generalization of a countably quantifier-free saturated \cstar-algebra (see \cite{FarahHart13}) to the situation of group actions. 
Intuitively speaking, a saturated action of a discrete group $G$ on a \cstar-algebra shares most of the important properties of an ultraproduct $G$-action, whereas a semi-saturated action shares most of the important properties of a $G$-action induced on a generalized central sequence algebra in Kirchberg's sense \cite{Kirchberg04}. 
The main motivation for this notion comes from the fact that the class of (semi-)saturated \cstar-dynamical systems has reasonable permanence properties, unlike the class of ultraproduct-type actions; these permanence properties later play key roles in the proofs related to the main results.
This leads to overall cleaner, more transparent and conceptual proofs in the main body of the paper.
It would likely be possible to prove the main results of this paper in an ad-hoc way only using ultraproducts and appealing to Kirchberg's $\eps$-test repeatedly, but this would lead to a messier presentation.

In the second section, we will study saturated and pointwise outer actions on unital, simple and purely infinite \cstar-algebras.
Perhaps the most important technical observation for amenable groups in this context is that the fixed point algebras of such actions are again simple and purely infinite, even if the acting group $G$ is infinite.
This allows one to mimic some techniques that would otherwise only be available for finite group actions, such as Connes' $2\times 2$-matrix trick from \cite{Connes77} as it was utilized in \cite{Izumi04}.
This technique will lead us to prove a special cohomology vanishing result for specific kinds of cocycles, which will form a useful technical tool for our purposes.
As an application of this technical tool, we prove an existence result for $G$-equivariant $*$-homomorphisms from certain model actions into certain saturated $G$-\cstar-dynamical systems, in part drawing inspiration from \cite{GoldsteinIzumi11}.
This approach is in line with the philosophy that the concept of saturation (in its various forms), just like ultrapowers, allows one to carry over certain techniques to solve approximation problems that would otherwise be inaccessible outside of a finitary situation.   

The technical results obtained in the second section are then applied in the other sections to induced actions on central sequence algebras in order to deduce the aforementioned main results.
Other important ingredients are techniques from \cite{Szabo16ssa, Szabo16ssa2, Szabo17ssa3} related to strongly self-absorbing actions and $KK$-theoretic computations with the help of Baum-Connes \cite{MeyerNest06} for amenable groups.

In the sixth section, we use a modified argument of Phillips \cite{Phillips97} in order to prove a uniqueness theorem for equivariant $*$-homomorphisms with respect to homotopy, leading to Theorem \ref{homotopy-rigid-intro}.
We then discuss a highly non-trivial application:\ any faithful tensorial Bernoulli shift action of an amenable group $G$ on $\CO_\infty$ satisfies the requirements in Theorem \ref{abstract-models-intro}, and is therefore strongly cocycle conjugate to the model action $\gamma$ defined above; see Corollary \ref{bern-models}.

In the last section, we provide, as a byproduct of our methods, a short proof that pointwise outer actions of amenable, residually finite groups on Kirchberg algebras have Rokhlin dimension at most one in the sense of \cite{HirshbergWinterZacharias15, SzaboWuZacharias15}.\vspace{1em}

\textbf{Acknowledgements.} The work presented in this paper has benefited from visits of the author to the Department of Mathematics at the University of Kyoto and the Mittag-Leffler institute for the program {\em Classification of Operator Algebras:\ Complexity, Rigidity, and Dynamics}. 
I thank both institutes for their kind hospitality and support.
I would moreover like to express my gratitude to Yuki Arano, Sel{\c c}uk Barlak, Masaki Izumi and Aaron Tikuisis for helpful discussions related to this paper.


\section{Saturated \cstar-dynamical systems}

In this section, we will define a notion of countably quantifier-free (semi-)saturation in the context of model theory for \cstar-dynamical systems. This is in analogy to the definition from \cite{FarahHart13} without group actions. We then show some basic properties of (semi-)saturated \cstar-dynamical systems. Our motivation is to apply the results within this abstract setting to certain ultrapowers or central sequence algebras of \cstar-dynamical systems. 

\begin{nota}
Unless specified otherwise, we will stick to the following notation:
\begin{itemize}
\item For a \cstar-algebra $A$, we denote by $A_1$ the closed unit ball.
\item If $F\subset M$ is a finite subset of some set, we write $F\fin M$.
\item If $\alpha: G\curvearrowright A$ is a discrete group action on a \cstar-algebra, we denote by $\lambda^\alpha: G\to\CU(\CM(A\rtimes_\alpha G))$ the canonical unitary representation. Since $G$ will usually be amenable in applications, we will not worry about the distinction between reduced and full crossed products, but we will always mean the full one if it is unspecified.
\item For two discrete group actions $\alpha: G\curvearrowright A$ and $\beta: G\curvearrowright B$ on \cstar-algebras, we write $\alpha\cc\beta$ (resp.\ $\alpha\scc\beta$) to mean that $\alpha$ and $\beta$ are (strongly) cocycle conjugate.
\end{itemize}
\end{nota}

\begin{defi}
Let $D$ be a \cstar-algebra, $G$ a countable, discrete group and $\alpha: G\curvearrowright D$ an action. An $\alpha$-$*$-polynomial $P$ with coefficients in $D$ in the variables $x_1,\dots,x_n$ is the formal expression 
\[
P(x_1,\dots, x_n) = Q\big( \alpha_{g_{1,1}}(x_1),\dots,\alpha_{g_{1,m_1}}(x_1),\dots,\alpha_{g_{n,1}}(x_n),\dots,\alpha_{g_{n,m_n}}(x_n) \big)
\]
for a (noncommutative) $*$-polynomial $Q$ with coefficients in $D$ in $m=m_1+\dots+m_n$ variables and some finite subset $\set{g_{l,j} ~|~ 1\leq l\leq n,~1\leq j\leq m_l }\fin G$. 
\end{defi}

\begin{rem}
Given an $\alpha$-$*$-polynomial $P$, we will naturally identify $P$ as an $\alpha$-$*$-polynomial in countably many variables $\bar{x}=(x_k)_{k\in\IN}$, by extending $P$ trivially as to not depend on the variables $x_k$ with $k>n$. This makes some notation easier.
\end{rem}

\begin{defi} Let $D$ be a \cstar-algebra, $G$ a countable, discrete group and $\alpha: G\curvearrowright D$ an action. Let $\Phi$ be a family of $\alpha$-$*$-polynomials with coefficients in $D$. The action $\alpha$ is called $\Phi$-saturated, if the following holds:
 
For every countable family of $\alpha$-$*$-polynomials $P_n\in\Phi$ and every countable family of compact sets $K_n\subset\IR^{\geq 0}$, 
the following are equivalent:
\begin{enumerate}[label={(\roman*)},leftmargin=*]
\item There is a sequence $b_n\in D_{1}$, such that $\|P_l(\bar{b})\|\in K_l$ for all $l\in\IN$.
\item For every $m\in\IN$ and $\eps>0$, there is a sequence $b_n\in D_1$ such that $\|P_l(\bar{b})\|\in_\eps K_l$ for all $l\leq m$.
\end{enumerate}
\end{defi}

\begin{rem} \label{farah-hart}
Obviously, the above definition recovers ordinary $\Phi$-saturation in the sense of \cite{FarahHart13}, for $\Phi$ consisting of ordinary $*$-polynomials, by just inserting the trivial group as $G$. 
As in the classical situation, an elementary compactness argument shows that it suffices to consider the compact sets $K_n\subset\IR^{\geq 0}$ above to be singletons. If $\Phi$ is closed under scalar multiples, we may further renormalize, so it is even enough to consider the one-point sets $\set{0}$ and $\set{1}$.
Upon dropping even the latter and only considering $\set{0}$, one arrives at what we call semi-saturation.
\end{rem}

\begin{defi}
Let $D$ be a \cstar-algebra, $G$ a countable, discrete group and $\alpha: G\curvearrowright D$ an action. Let $\Phi$ be a family of $\alpha$-$*$-polynomials with coefficients in $D$. The action $\alpha$ is called semi-$\Phi$-saturated, if the following holds:
 
For every countable family of $\alpha$-$*$-polynomials $P_n\in\Phi$, 
the following are equivalent:
\begin{enumerate}[label={(\roman*)},leftmargin=*]
\item There is a sequence $b_n\in D_{1}$, such that $\|P_l(\bar{b})\|=0$ for all $l\in\IN$.
\item For every $m\in\IN$ and $\eps>0$, there is a sequence $b_n\in D_1$ such that $\|P_l(\bar{b})\|\leq\eps$ for all $l\leq m$.
\end{enumerate}
\end{defi}

\begin{nota}
The following special case will be of particular interest. Let $(D,\alpha,G)$ be as above and let $\Phi_{\mathrm{all}}$ be the set of \emph{all} $\alpha$-$*$-polynomials with coefficients in $D$. Then $\alpha$ is called countably quantifier-free (semi-)saturated, if it is (semi-)$\Phi_{\mathrm{all}}$-saturated. For brevity, we will only say (semi-)saturated and omit the term {\it countably quantifier-free} for the rest of this paper.
\end{nota}

\begin{reme}
Potentially, the sentence ``the action $\alpha$ is saturated'', interpreted as above, is in conflict with a different notion called saturation for certain group actions on \cstar-algebras.
In the case that two notions ever actually came into conflict somewhere, it might be appropriate to avoid the abbreviation above and use the long expression {\it countably quantifier-free saturated}. 
Since this conflict of terminology is entirely non-existent within this paper, we will ignore this issue. 
An analogous remark applies for the notion of semi-saturation.
\end{reme}

\begin{example} \label{ex:ultra}
For any sequence $(A^{(n)},\alpha^{(n)})$ of $G$-\cstar-dynamical systems, we get an ultraproduct system $\bigl( \prod_\omega A^{(n)}, \prod_\omega \alpha^{(n)} \bigl)$ via the action 
\[
\prod_\omega \alpha^{(n)}: G\curvearrowright \prod_\omega A^{(n)} = \prod_{n\in\IN} A^{(n)}/\set{(a_n)_n \mid \lim_{n\to\omega} \|a_n\| = 0}.
\]
that is induced componentwise.
A \cstar-dynamical system of this form is saturated. 
\end{example}
\begin{proof}
To see this, one can either appeal to equivariant model theory of metric structures \cite{GardellaLupini16} or apply Kirchberg's $\eps$-test directly, cf.\ \cite[A.4]{Kirchberg04} or \cite[3.1]{KirchbergRordam14}. We shall do the latter. 

With slight abuse of notation, we use the shortcut $\bigl( \prod_\omega A^{(n)}, \prod_\omega \alpha^{(n)} \bigl) = (A,\alpha)$. Let $\pi_\omega: \prod_{n\in\IN} A^{(n)}\to A$ be the quotient map.
Let $P_k$ be a sequence of $\alpha$-$*$-polynomials with coefficients in $A$ and let $\lambda_k\geq 0$ be a sequence of numbers. For each $k\in\IN$, we may choose a sequence $P_k^{(n)}$ of $\alpha^{(n)}$-$*$-polynomials with coefficients in $A^{(n)}$ such that
\[
\pi_\omega\Big( \big( P_k^{(n)}(\bar{x}_n) \big)_n \Big) = P_k\Big( \pi_\omega\big( (\bar{x}_n)_n \big) \Big)\quad\text{for all}~(\bar{x}_n)_n\in \Big( \prod_\IN A^{(n)} \Big)_1^\IN.
\] 
For every $n\in\IN$, let $X_n$ be the set of all contractions in $A^{(n)}$. For every $k, n\in\IN$, define the map
\[
f_k^{(n)}: X_n^\IN\to [0,\infty), \quad f_k^{(n)}(\bar{x}) = \Big| \|P_k^{(n)}(\bar{x})\|-\lambda_k \Big|. 
\]
Also set
\[
f_k: \prod_{n\in\IN} X_n^\IN\to [0,\infty], \quad f_k\big( (\bar{x}_n)_n \big) = \lim_{n\to\omega} f_k^{(n)}(\bar{x}_n). 
\]
Now suppose that for every $\eps>0$ and $m\in\IN$, there is an infinite tuple of contractions $\bar{y}=(y_l)_l\in A^\IN$ such that $\lambda_k-\eps\leq \|P_k(\bar{y})\|\leq \lambda_k+\eps$ for all $k\leq m$. By lifting every contraction $y_l$ to a sequence of contractions $x_{l,n}\in A^{(n)}$, we obtain an element
\[
(x_{l,n})_{l,n\in\IN} = (\bar{x}_n)_n \in \prod_{n\in\IN} X_n^\IN
\]
with
\[
f_k(\bar{x}) = \lim_{n\to\omega} f^{(n)}_k(\bar{x}_n) = \lim_{n\to\omega} \Big| \|P^{(n)}_k(\bar{x_n})\|-\lambda_k \Big| \leq \eps
\]
for every $k\leq m$. As $m$ and $\eps$ were arbitrary, it follows from the $\eps$-test that there exists some element
\[
\bar{z}=(\bar{z}_n) = (z_{l,n})_{l,n\in\IN} \in \prod_{n\in\IN} X_n^\IN
\]
with
\[
0 = f_k(\bar{z}) = \lim_{n\to\omega} f_k^{(n)}(\bar{z}_n) = \lim_{n\to\omega} \Big| \|P_k^{(n)}(\bar{z}_n)\|-\lambda_k \Big|
\]
for every $k\in\IN$. But this means that 
\[
\bar{s}=\pi_\omega\big( (\bar{z}_n)_n \big) \in A_1^\IN
\]
satisfies
\[
\|P_k(\bar{s})\| = \lambda_k\quad\text{for all}~k\in\IN.
\]
Since the $P_k$ and $\lambda_k$ were arbitrary, this shows our claim.
\end{proof}

For the rest of this section, we will establish elementary properties of saturated and semi-saturated \cstar-dynamical systems. The proofs are somewhat tedious and elementary, but necessary for a slick presentation in the main body of the paper. Upon the first few readings, the reader might thus prefer to skip most of the proofs in this section.

Our first observation is that semi-saturation is equivalent to saturation as long as the underlying \cstar-algebra is simple and purely infinite. This follows from the following elementary observations:

\begin{prop}
\label{norms-polynomials}
Let $A$ be a \cstar-algebra and $x\in A$ an element. Let $\lambda>0$.
\begin{enumerate}[label=\textup{(\arabic*)},leftmargin=*]
\item $\|x\|\leq\lambda$ if and only if there exists $y\in A$ such that $\frac{x^*x}{\lambda^2}+y^*y-y-y^*=0$. If this is the case, then $y$ can be chosen to be a positive contraction.\label{norms-polynomials:1}
\item Assume that $A$ is simple and purely infinite and $e\in A$ a positive element of norm one. Then $\|x\|\geq\lambda$ if and only if there is $a\in A, \|a\|\leq\lambda^{-1}$ with $a^*x^*xa=e$. \label{norms-polynomials:2}
\end{enumerate}
\end{prop}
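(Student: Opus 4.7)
The plan is to handle part (1) by an elementary functional calculus manipulation and part (2) by combining functional calculus with the standard Cuntz comparison theory for simple purely infinite \cstar-algebras.

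For part (1), I would rewrite the given identity as $\lambda^{-2}x^*x = y + y^* - y^*y$. Passing to the unitization, this reads
\[
\lambda^{-2}x^*x \;=\; \eins - (\eins-y)^*(\eins-y),
\]
so the right-hand side is manifestly $\leq \eins$; hence existence of any solution $y\in A$ forces $\lambda^{-2}x^*x\leq\eins$, i.e.\ $\|x\|\leq\lambda$. Conversely, assuming $\|x\|\leq\lambda$, the element $\eins - \lambda^{-2}x^*x$ is a positive contraction in the unitization, and I would define
\[
y \;:=\; \eins - \bigl(\eins - \lambda^{-2}x^*x\bigr)^{1/2}
\]
via continuous functional calculus. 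Since the scalar function $t\mapsto 1-\sqrt{1-t}$ vanishes at $0$, this $y$ lies in $A$ (not merely in the unitization), is evidently a positive contraction, and satisfies $(\eins-y)^*(\eins-y) = \eins - \lambda^{-2}x^*x$, which rearranges to the required identity.

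For part (2), the ``if'' direction is immediate from submultiplicativity: $1 = \|e\| = \|a^*x^*xa\| \leq \|a\|^2\|x\|^2 \leq \lambda^{-2}\|x\|^2$. For the converse, set $p=x^*x$ so that $\|p\|\geq\lambda^2$, and I would construct $a$ as a product $a = h(p)\,v$ in two stages. In the \emph{functional calculus step}, I would choose a continuous function $h\in C_0((0,\|p\|])$ with $\|h\|_\infty\leq\lambda^{-1}$ such that $q := h(p)\,p\,h(p)\in A$ is a nonzero positive element with $\|q\|\geq 1$; a concrete choice takes $h\equiv\lambda^{-1}$ on $[\delta,\|p\|]$ for some small $\delta>0$ and linearly interpolates down to $h(0)=0$, which yields $\|q\|=\|p\|/\lambda^2\geq 1$. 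In the \emph{Cuntz absorption step}, I would use that $A$ is simple and purely infinite to invoke the standard Cuntz comparison theory (Kirchberg--R\o{}rdam) and produce $v\in A$ with $\|v\|\leq 1$ and $v^*qv = e$. The element $a := h(p)v$ then satisfies $a^*x^*xa = v^*qv = e$ and $\|a\|\leq\|h(p)\|\|v\|\leq\lambda^{-1}$.

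The main technical obstacle is the Cuntz absorption step: one must realise the equation $v^*qv = e$ by a \emph{strict} contraction $v$, not merely by an element of norm $\leq 1+\varepsilon$. This is precisely where the combination of simplicity and pure infiniteness is essential, since then every nonzero positive element of $A$ is properly infinite in the Cuntz semigroup, giving enough room to implement an exact conjugation by a norm-one element (otherwise one would only reach $\|a\|\leq\lambda^{-1}+\varepsilon$).
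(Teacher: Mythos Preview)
Your argument is correct and matches the paper's approach. Part (1) is identical: both you and the paper rewrite the identity as $\eins-\lambda^{-2}x^*x=(\eins-y)^*(\eins-y)$ and take $y=\eins-\sqrt{\eins-\lambda^{-2}x^*x}$ for the forward direction. For part (2), the paper's proof is a single sentence (``follows directly from the assumption that $A$ is simple and purely infinite''), so your functional-calculus-plus-Cuntz-comparison decomposition is a reasonable unpacking of what the paper leaves implicit; your explicit flagging of the norm control in the Cuntz step is more careful than the paper itself.
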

\begin{proof}
\ref{norms-polynomials:1}: ``$\Longrightarrow$'' follows from setting $y=\eins-\sqrt{\eins-\frac{x^*x}{\lambda^2}}$.

``$\Longleftarrow$'' follows from computing
\[
0=\frac{x^*x}{\lambda^2}+y^*y-y-y^* \iff \eins-\frac{x^*x}{\lambda^2}=\eins+y^*y-y-y^* = (\eins-y)^*(\eins-y)
\]
In particular, $\eins-\frac{x^*x}{\lambda^2}\geq 0$, which is equivalent to $\|x\|\leq\lambda$.

\ref{norms-polynomials:2}: The implication ``$\Longleftarrow$'' is trivial in every \cstar-algebra, and ``$\Longrightarrow$'' follows directly from the assumption that $A$ is simple and purely infinite.
\end{proof}

\begin{cor}
\label{spi-sat}
Let $D$ be a simple and purely infinite \cstar-algebra. Let $\alpha: G\curvearrowright D$ be an action of a countable, discrete group. Then $\alpha$ is saturated if and only if $\alpha$ is semi-saturated.
\end{cor}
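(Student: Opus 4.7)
The forward direction (saturated $\Rightarrow$ semi-saturated) is immediate from the definitions, so the content lies in the converse. Assume $\alpha$ is semi-saturated. By Remark \ref{farah-hart}, applied to the scalar-closed family $\Phi_{\mathrm{all}}$, it suffices to verify the saturation property for systems in which each compact set $K_l$ is one of the singletons $\{0\}$ or $\{1\}$. The conditions with $K_l=\{0\}$ are already in the form semi-saturation handles, so the task is to recode each condition ``$\|P_l(\bar{b})\|=1$'' in the form ``$\|Q(\bar{b},\ldots)\|=0$'' by introducing fresh witness variables, and then apply semi-saturation to the resulting expanded system.

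This is exactly what Proposition \ref{norms-polynomials} is set up for. Fix once and for all a positive element $e\in D$ of norm one. For each index $l$ with $K_l=\{1\}$, I introduce two fresh variables $y_l,a_l$ (both ranging over $D_1$) and replace the condition at $l$ by the pair of zero-conditions coming from the $\alpha$-$*$-polynomials
\begin{align*}
Q_l^{(1)}(\bar{x},y_l) &= P_l(\bar{x})^* P_l(\bar{x}) + y_l^* y_l - y_l - y_l^*, \\
Q_l^{(2)}(\bar{x},a_l) &= a_l^* P_l(\bar{x})^* P_l(\bar{x}) a_l - e.
\end{align*}
An exact solution $(\bar{b},\bar{y},\bar{a})$ of the enlarged (semi-saturation) system then forces $\|P_l(\bar{b})\|\leq 1$ by Proposition \ref{norms-polynomials}\ref{norms-polynomials:1} and $\|P_l(\bar{b})\|\geq 1$ by Proposition \ref{norms-polynomials}\ref{norms-polynomials:2}, hence $\|P_l(\bar{b})\|=1$, so $\bar{b}$ witnesses the original saturation condition (i).

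The one point requiring genuine work is to check that if the original system satisfies (ii) — i.e.\ admits $\eps$-approximate solutions on every finite truncation — then so does the enlarged system in the sense of semi-saturation. Given contractions $\bar{b}$ with $\bigl|\|P_l(\bar{b})\|-1\bigr|<\eps$ for $l\leq m$, standard rescaling produces the required auxiliary witnesses: for $Q_l^{(1)}$, take $y_l=\eins-\sqrt{\eins-c_l^* c_l}$ with $c_l=P_l(\bar{b})/(1+\eps)$, a positive contraction making $\|Q_l^{(1)}(\bar{b},y_l)\|\leq(1+\eps)^2-1$; for $Q_l^{(2)}$, apply Proposition \ref{norms-polynomials}\ref{norms-polynomials:2} at $\lambda=1-\eps$ to obtain $a\in D$ with $\|a\|\leq(1-\eps)^{-1}$ and $a^* P_l(\bar{b})^* P_l(\bar{b}) a=e$, and then set $a_l=(1-\eps)a$, a contraction, giving $\|Q_l^{(2)}(\bar{b},a_l)\|\leq 2\eps$. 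The main subtlety is bookkeeping, namely that a single choice of $\eps$ has to control all of the auxiliary defects uniformly across $l\leq m$; this is routine since the scaling estimates are $O(\eps)$ and $m$ is finite. Semi-saturation applied to the expanded system then produces an exact solution, which in turn solves the original.
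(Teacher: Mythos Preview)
Your proposal is correct and follows essentially the same approach as the paper's proof: both introduce auxiliary variables via Proposition \ref{norms-polynomials} to recode each norm-one condition as a pair of norm-zero conditions, then appeal to semi-saturation on the enlarged system. You spell out the explicit estimates for the approximate witnesses $y_l$ and $a_l$ where the paper is content to say that $\|P_k(\bar{b})\|$ is close to $1$ if and only if suitable $Q_k^{(i)}$ are close to zero, but the underlying argument is identical.
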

\begin{proof}
Assume that $D$ is simple and purely infinite, and that $\alpha$ is semi-saturated. Let us show that $\alpha$ is saturated. Let $\set{P_k}_{k\in\IN}$ be a countable collection of $\alpha$-$*$-polynomials, and let $\lambda_k\in\set{0,1}$ be a sequence. Assume that for every $\eps>0$ and $m\in\IN$, there exists a tuple $\bar{b}\in D_1^\IN$ with 
\[
\lambda_k-\eps\leq \|P_k(\bar{b})\| \leq \eps+\lambda_k \quad\text{for all}~k\leq m.
\]
By Remark \ref{farah-hart}, it suffices to show that we can find a tuple $\bar{b}\in D_1^\IN$ with $\|P_k(\bar{b})\|=\lambda_k$ for all $k$. Choose some element $e\in D$ of norm one. Let us consider the countable collection of $\alpha$-$*$-polynomials $Q_k^{(i)}$ for $k\in\IN$ and $i=1,2$ given by
\[
Q_k^{(i)}(\bar{b},\bar{x},\bar{y}) = \begin{cases} P_k(\bar{b}) &,\quad \lambda_k=0 \\
P_k(\bar{b})^*P_k(\bar{b})+x_k^*x_k-x_k-x_k^* &,\quad \lambda_k=1,\ i=1 \\
y_k^*P_k(\bar{b})^*P_k(\bar{b})y_k-e &,\quad \lambda_k=1,\ i=2.
\end{cases}
\]
Then by Proposition \ref{norms-polynomials}, we see that for a given $k\in\IN$ with $\lambda_k=1$ and a tuple $\bar{b}\in D_1^\IN$, the norm $\|P_k(\bar{b})\|$ is close to $1$ if and only if there exist tuples $\bar{x},\bar{y}\in D_1^\IN$ such that $\|Q_k^{(1)}(\bar{b},\bar{x},\bar{y})\|$ and $\|Q_k^{(2)}(\bar{b},\bar{x},\bar{y})\|$ are close to zero. 
In particular, for every $m\in\IN$, the norm $\|P_k(\bar{b})\|$ is close to $\lambda_k$ for all $k\leq m$ if and only if there exist tuples $\bar{x},\bar{y}\in D_1^\IN$ such that $\|Q_k^{(1)}(\bar{b},\bar{x},\bar{y})\|$ and $\|Q_k^{(2)}(\bar{b},\bar{x},\bar{y})\|$ are close to zero for all $k\leq m$. 
By our initial assumption, it thus follows that for every $m\in\IN$ and $\eps>0$, we find tuples $\bar{b},\bar{x},\bar{y}\in D_1^\IN$ with 
\[
\|Q_k^{(1)}(\bar{b},\bar{x},\bar{y})\| + \|Q_k^{(2)}(\bar{b},\bar{x},\bar{y})\|\leq\eps\quad\text{for all}~k\leq m.
\]
As $\alpha$ is semi-saturated, it follows that we find such $\bar{b},\bar{x},\bar{y}$ with $\|Q_k^{(i)}(\bar{b},\bar{x},\bar{y})\|=0$ for all $i=1,2$ and $k\in\IN$. By Proposition \ref{norms-polynomials}, this implies $\|P_k(\bar{b})\|=\lambda_k$ for all $k\in\IN$, which shows our claim.
\end{proof}

As we are only going to study actions on purely infinite \cstar-algebras in this paper, Corollary \ref{spi-sat} leads us to mainly restrict our attention to semi-saturated actions for the rest of this section.

The following is a trivial, but useful observation:

\begin{prop} \label{polynomial subalgebra}
Let $(D,\alpha)$ be a (semi-)saturated \cstar-dynamical system. Let $A\subset D$ be some $\alpha$-invariant \cstar-subalgebra. If there exists a collection $\set{P_n}_{n\in\IN}$ of $\alpha$-$*$-polynomials in one variable with coefficients in $D$ such that
\[
A=\set{ d\in D ~|~ P_n(d)=0~\text{for all}~n },
\]
then the restricted \cstar-dynamical system $(A,\alpha|_A)$ is also (semi-)saturated.
\end{prop}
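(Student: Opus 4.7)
The plan is to reduce the saturation question for $(A,\alpha|_A)$ to the assumed saturation of $(D,\alpha)$ by encoding membership in $A$ as additional polynomial constraints. Concretely, given a countable family of $\alpha|_A$-$*$-polynomials $\{Q_k\}_{k\in\IN}$ with coefficients in $A$ and compact sets $K_k\subset\IR^{\geq 0}$, observe that each $Q_k$ is automatically an $\alpha$-$*$-polynomial with coefficients in $D$, and for any single variable $x_k$, the expression $P_n(x_k)$ is an $\alpha$-$*$-polynomial in $x_k$ with coefficients in $D$. I would consider the enlarged countable family obtained by adjoining the polynomials $\{P_n(x_k)\}_{n,k\in\IN}$ to $\{Q_k\}_k$, with the associated compact sets being $K_k$ for the $Q_k$ and the singleton $\{0\}$ for each $P_n(x_k)$.

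Next, I would verify that approximate realizability in $A_1$ of the original system implies approximate realizability in $D_1$ of the enlarged system. Suppose for some $m\in\IN$ and $\eps>0$ we are given contractions $b_k\in A_1$ with $\|Q_k(\bar b)\|\in_\eps K_k$ for all $k\leq m$. Since $A\subset D$ is a $\cstar$-subalgebra, each $b_k$ is also a contraction in $D$, and the equality $P_n(b_k)=0$ holds exactly by the characterization of $A$. So the enlarged approximate conditions are satisfied in $D_1$. By the (semi-)saturation of $(D,\alpha)$, we obtain contractions $b_k\in D_1$ for which $\|Q_k(\bar b)\|\in K_k$ for all $k$ and $\|P_n(b_k)\|=0$ for all $n,k$. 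The latter forces $b_k\in A$, and thus $b_k\in A_1$, yielding the exact realization in $A_1$ we need.

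For the semi-saturated case, the same argument applies verbatim with all $K_k=\{0\}$; the adjoined constraints $\|P_n(x_k)\|=0$ are already of this form, so nothing needs to change. I would also briefly remark that in the saturated case, one could alternatively reduce directly via Remark \ref{farah-hart} to singletons $K_k\in\{\{0\},\{1\}\}$, but this reduction is unnecessary for the argument above, which handles general compact target sets in one stroke.

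The argument is essentially bookkeeping and I do not anticipate a serious obstacle. The only point requiring care is that the norm in $A$ agrees with the restriction of the norm in $D$, so the distinction between ``contraction in $A$'' and ``contraction in $D$ lying in $A$'' disappears; this is standard for $\cstar$-subalgebras and ensures that the translation between the two saturation statements loses nothing.
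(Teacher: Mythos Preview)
Your proposal is correct and is precisely the argument the paper has in mind; the paper itself omits the proof entirely, calling the proposition a ``trivial, but useful observation,'' and your bookkeeping reduction via adjoining the constraints $P_n(x_k)=0$ is the intended one-line justification.
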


As a consequence, we can immediately record some permanence properties of semi-saturated \cstar-dynamical systems. We note that even though \ref{semi-sat-permanence:6} below seems obvious at first sight, the detailed proof requires some work.
 
\begin{prop} \label{semi-sat-permanence}
Let $(D,\alpha)$ be a semi-saturated $G$-\cstar-dynamical system.
\begin{enumerate}[label=\textup{(\roman*)},leftmargin=*]
\item If $p\in D$ is an $\alpha$-invariant projection, then the induced system on the corner $(pDp, \alpha|_{pDp})$ is semi-saturated. \label{semi-sat-permanence:1}
\item The fixed point algebra $D^\alpha$ is a semi-saturated \cstar-algebra. \label{semi-sat-permanence:2}
\item If $\beta: G\curvearrowright D$ is another action such that $\alpha_g$ and $\beta_g$ are unitarily equivalent for all $g\in G$, then the system $(D, \beta)$ is semi-saturated. \label{semi-sat-permanence:3}
\item For every group $H$ and group homomorphism $\pi: H\to G$, the system $(D,\alpha\circ\pi,H)$ is semi-saturated. If $\pi$ is surjective, then the converse holds. \label{semi-sat-permanence:4}
\item For any separable and $\alpha$-invariant \cstar-subalgebra $A\subset D$, the system $(D\cap A', \alpha|_{D\cap A'})$ is semi-saturated. \label{semi-sat-permanence:5}
\item For any $n\in\IN$, the system $(M_n\otimes D, \id_{M_n}\otimes\alpha)$ is semi-saturated. \label{semi-sat-permanence:6}
\end{enumerate}
\end{prop}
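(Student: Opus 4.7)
The plan is to reduce semi-saturation of $(M_n\otimes D,\id_{M_n}\otimes\alpha)$ to that of $(D,\alpha)$ by expanding each $M_n\otimes D$-variable into its $n^2$ matrix entries. Fix the standard matrix units $e_{ij}\in M_n$, and for each formal variable $x$ in $M_n\otimes D$ introduce $n^2$ formal variables $x_{i,j}$ meant to represent the matrix entries of $x=\sum_{i,j}e_{ij}\otimes x_{i,j}$. Since $(\id_{M_n}\otimes\alpha)_g(e_{ij}\otimes d)=e_{ij}\otimes\alpha_g(d)$ and $e_{ij}e_{i'j'}=\delta_{ji'}e_{ij'}$, every $(\id_{M_n}\otimes\alpha)$-$*$-polynomial $P(x_1,\dots,x_k)$ with coefficients in $M_n\otimes D$ can be rewritten, after expanding its coefficients in the form $\sum_{p,q}e_{pq}\otimes c_{p,q}$, as $P(\bar{x})=\sum_{p,q}e_{pq}\otimes R^{P}_{p,q}$, where each $R^{P}_{p,q}$ is a genuine $\alpha$-$*$-polynomial in the $x_{l,i,j}$ with coefficients in $D$. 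Using that matrix entries of $b\in M_n\otimes D$ have norm at most $\|b\|$, any approximate relation $\|P_k(\bar{b})\|\leq\eps$ passes to $\|R^{P_k}_{p,q}(\bar{c})\|\leq\eps$ for the entries.

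The one asymmetry to manage is that while matrix entries of a contraction in $M_n\otimes D$ are always contractions in $D$, the converse fails: a tuple of contractions $c_{l,i,j}$ in general only yields $\|\sum_{i,j}e_{ij}\otimes c_{l,i,j}\|\leq n$. I would compensate by adding a contraction constraint for each variable via Proposition \ref{norms-polynomials}\ref{norms-polynomials:1}: the element $b_l=\sum_{i,j}e_{ij}\otimes c_{l,i,j}$ is a contraction iff there exists $\tilde{z}_l=\sum_{p,q}e_{pq}\otimes z_{l,p,q}\in M_n\otimes D$ satisfying $b_l^*b_l+\tilde{z}_l^*\tilde{z}_l-\tilde{z}_l-\tilde{z}_l^*=0$; expanding this identity entrywise yields a finite family of $*$-polynomial relations $S^{(l)}_{j,j'}(\bar{c}_l,\bar{z}_l)=0$ in the variables $c_{l,i,j}$ and $z_{l,p,q}$ with coefficients in $D$ (involving no nontrivial $G$-translates).

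Given a countable family $(P_k)$ of $(\id_{M_n}\otimes\alpha)$-$*$-polynomials for which approximate solutions in $(M_n\otimes D)_1^\IN$ exist, I would then apply semi-saturation of $(D,\alpha)$ to the combined countable collection of $\alpha$-$*$-polynomials $\{R^{P_k}_{p,q}\}_{k,p,q}\cup\{S^{(l)}_{j,j'}\}_{l,j,j'}$. Starting from an approximate solution $(b_l)_l\in(M_n\otimes D)_1^\IN$ of $(P_k)$, taking matrix entries $c_{l,i,j}\in D_1$ and setting $\tilde{z}_l=\eins-\sqrt{\eins-b_l^*b_l}\in M_n\otimes D$ (a positive contraction obtained via functional calculus, which stays in $M_n\otimes D$ even when $D$ is non-unital since the function vanishes at $0$) with entries $z_{l,p,q}\in D_1$ yields exact vanishing of the $S^{(l)}_{j,j'}$ and $\eps$-vanishing of the $R^{P_k}_{p,q}$. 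Semi-saturation of $(D,\alpha)$ therefore furnishes exact solutions $c_{l,i,j},z_{l,p,q}\in D_1$ of the combined system, whereupon reassembling $b_l=\sum_{i,j}e_{ij}\otimes c_{l,i,j}\in M_n\otimes D$ gives contractions (by the $S^{(l)}_{j,j'}$-vanishing and Proposition \ref{norms-polynomials}\ref{norms-polynomials:1}) satisfying $P_k(\bar{b})=0$ for every $k$, as required.

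The main point to keep track of is the contraction-constraint bookkeeping in the second step; once the witness polynomials from Proposition \ref{norms-polynomials}\ref{norms-polynomials:1} are folded into the system, everything else is a routine translation between the matrix level and the entry level, with the norm-$n$ slack absorbed by those extra constraints.
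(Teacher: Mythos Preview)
Your proposal is correct and follows essentially the same route as the paper: decompose each $(\id_{M_n}\otimes\alpha)$-$*$-polynomial into its $n^2$ entry $\alpha$-$*$-polynomials, adjoin the contraction-witness relations from Proposition~\ref{norms-polynomials}\ref{norms-polynomials:1} to compensate for the fact that reassembling contractions in $D$ need not give a contraction in $M_n\otimes D$, and then invoke semi-saturation of $(D,\alpha)$ on the enlarged system. The paper organises the bookkeeping slightly differently (it first interleaves the original polynomials $Q_k$ with the witness polynomials $R_k$ at the matrix level into a single list $P_k$, and only then passes to the entrywise decompositions $P_k^{(i,j)}$), but the content and the key idea of using Proposition~\ref{norms-polynomials}\ref{norms-polynomials:1} to encode the contraction constraint are the same.
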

\begin{proof}
\ref{semi-sat-permanence:1}, \ref{semi-sat-permanence:2}, \ref{semi-sat-permanence:5}: We have
\[
pDp = \set{ x\in D \mid xp=x=px}
\] 
and
\[
D^\alpha = \set{ x\in D \mid x=\alpha_g(x) ~\text{for all}~g\in G}.
\]
If in \ref{semi-sat-permanence:5}, $a_k\in A$ is some dense sequence, then moreover
\[
D\cap A' = \set{ x\in D \mid xa_k=a_kx ~\text{for all}~ k\in\IN }.
\]
Thus, these are special cases of Proposition \ref{polynomial subalgebra}.

\ref{semi-sat-permanence:3}, \ref{semi-sat-permanence:4}: This is obvious, since in this case, the set of equivariant $*$-polynomials either remains the same or becomes smaller.

\ref{semi-sat-permanence:6}: Denote $D^{(n)}=M_n(D)=M_n\otimes D$ and $\alpha^{(n)}=\id_{M_n}\otimes\alpha$.
Every tuple $\bar{x}\in M_n(D)^\IN$ can be written as $(\bar{x}_{i,j})_{1\leq i,j\leq n}$ for tuples $\bar{x}_{i,j}\in D^\IN$. Accordingly, define the bijection
\[
c: M_n(D)^\IN \to D^{n^2\times\IN},\quad c(\bar{x}) = (\bar{x}_{i,j})_{i,j}
\]
in the obvious way. Observe that the map $c$ sends every tuple of contractions to a tuple of contractions.

Let $P$ be an $\alpha^{(n)}$-$*$-polynomial with coefficients in $D^{(n)}$. Since the multiplication on $M_n(D)$ is given by matrix multiplication, it is not hard to see that one has
\[
P(\bar{x}) = \Big( P^{(i,j)}(c(\bar{x})) \Big)_{1\leq i,j\leq n}
\]
for some $\alpha$-$*$-polynomials $P^{(i,j)}$, $1\leq i,j\leq n$, with coefficients in $D$.

Now let $Q_k$ be a sequence of $\alpha^{(n)}$-$*$-polynomials with coefficients in $D^{(n)}$ and variables $\bar{x}$. Let $\bar{y}$ be another countable tuple free variables, and set $\bar{z}=(\bar{x},\bar{y})$. We let
\[
R_k(\bar{z})=x_k^*x_k+y_k^*y_k-y_k-y_k^*,\quad k\in\IN.
\]
For notational convenience, let us consider the countable collection of $\alpha^{(n)}$-$*$-polynomials given by
\[
P_{2k}(\bar{z})=Q_k(\bar{x}),\quad P_{2k-1}(\bar{z})=R_k(\bar{z}),\quad k\in\IN.
\]
For every $k$, choose a decomposition $P_k^{(i,j)}$ for $P_k$ as above. 

Assume that there exists a sequence $\bar{x}^{(l)}\in M_n(D)_1^\IN$ such that
\[
\|Q_k(\bar{x}^{(l)})\|\stackrel{l\to\infty}{\longrightarrow} 0 \quad\text{for every}~ k.
\]
As $\bar{x}^{(l)}$ consists of contractions, there exists (by Proposition \ref{norms-polynomials}) a tuple $\bar{y}^{(l)}\in M_n(D)_1^\IN$ such that $R_k(\bar{x}^{(l)},\bar{y}^{(l)})=0$ for all $k$. In particular, we obtain a sequence $\bar{z}^{(l)}\in M_n(D)_1^\IN$ such that $\|P_k(\bar{z}^{(l)})\|\stackrel{l\to\infty}{\longrightarrow} 0$ for every $k$.

Then the tuples $c(\bar{z}^{(l)})\in D_1^{n^2\times\IN}$ satisfy
\[
\|P^{(i,j)}_k(c(\bar{z}^{(l)}))\|\leq \|P_k(\bar{z}^{(l)})\|\stackrel{l\to\infty}{\longrightarrow} 0
\]
for all $k$ and $1\leq i,j\leq n$. Since $(D,\alpha)$ is semi-saturated, this implies that there is some tuple $\bar{b}\in D_1^{n^2\times\IN}$ with $P^{(i,j)}_k(\bar{b})=0$ for all $k$ and $0\leq i,j\leq n$. Setting $\bar{z}=c^{-1}(\bar{b})\in M_n(D)^\IN$ yields a tuple satisfying
\[
P_k(\bar{z}) = \Big( P^{(i,j)}(c(\bar{z})) \Big)_{1\leq i,j\leq n} = 0
\]
for all $k$. From the way we defined the polynomials $P_k$ on $\bar{z}=(\bar{x},\bar{y})$, it follows from Proposition \ref{norms-polynomials} that 
\[
Q_k(\bar{x})=0\quad\text{and}\quad \|x_k\|\leq 1
\]
for all $k$.
Since the $Q_k$ were arbitrary, this finishes the proof.
\end{proof}

In addition to the relative commutant, it can often be more useful to consider a generalized relative commutant, which is analogous to how Kirchberg has defined a corrected version of the central sequence algebra in \cite{Kirchberg04}.

\begin{defi}[cf.~{\cite[1.1]{Kirchberg04}}] \label{Kirchberg nota}
Let $D$ be a semi-saturated \cstar-algebra and $C\subset D$ some \cstar-subalgebra. Denote the two-sided annihilator of $C$ inside $D$ by 
\[
\ann(C,D)=\set{x\in D ~|~ ax=xa=0~\text{for all}~a\in C}.
\] 
Then $\ann(C,D)$ is easily seen to be a closed and two-sided ideal in the relative commutant $D\cap C'$.
Define the generalized relative commutant of $C$ in $D$ as
\[
F(C,D) = D\cap C'/\ann(C,D).
\]
We will write $\pi_C: D\cap C'\to F(C,D)$ for the quotient map.
\end{defi}

\begin{rem} \label{product map}
Let us make the following few observations about generalized relative commutants.
\begin{enumerate}[label={(\roman*)},leftmargin=*]
\item If $C$ is $\sigma$-unital, then semi-saturation of $D$ enables us to find a positive element $e\in D$ acting as a unit on $C$. Then the element $e+\ann(C,D)\in F(C,D)$ defines a unit. This means that $F(C,D)$ is unital, whenever $C\neq 0$ is $\sigma$-unital. \label{product map:1}
\item One has a canonical $*$-homomorphism 
\[
\Phi_C: F(C,D)\otimes_{\max} C\to \overline{CDC}\subset D
\]
given by 
\[
\big( x+\ann(C,D) \big)\otimes a \mapsto x\cdot a.
\]
Assuming that $C$ is $\sigma$-unital, the element $\eins\otimes a$ is sent to $a$ for every $a\in C$ under this $*$-homomorphism. Moreover, the norm of an element $y\in F(C,D)$ is given by 
\[
\|y\|=\sup\set{ \|\Phi_C(y\otimes a)\| \mid 0\leq a\in A,\ \|a\|\leq 1 }. 
\] 
\label{product map:2}
\item Given an automorphism $\phi\in\Aut(D)$ with $\phi(C)=C$, one has 
\[
\phi(D\cap C') = D\cap C'\quad\text{and}\quad \phi(\ann(C,D))=\ann(C,D). 
\]
In particular, we obtain an automorphism $\tilde{\phi}$ on $F(C,D)$ in a natural way. If more generally $\alpha: G\curvearrowright D$ is an action such that $C$ is $\alpha$-invariant, then we have an induced action $\tilde{\alpha}: G\curvearrowright F(C,D)$. The $*$-homomorphism $\Phi_C$ from above is then equivariant with regard to $\tilde{\alpha}\otimes\alpha|_C$ and $\alpha$. \label{product map:3}
\end{enumerate}
\end{rem}
  
It seems to be unclear whether the following assertion holds for saturation instead of semi-saturation.

\begin{prop} \label{F(A)-saturated}
Let $\alpha: G\curvearrowright D$ be a semi-saturated action. For any separable and $\alpha$-invariant \cstar-subalgebra $A\subset D$, the system $( F(A,D), \tilde{\alpha} )$ is semi-saturated.
\end{prop}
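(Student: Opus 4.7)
The plan is to encode the desired semi-saturation statement for $(F(A,D),\tilde\alpha)$ as a system of conditions inside $(D,\alpha)$, and then apply semi-saturation of $\alpha$ directly. Concretely, I would start with a countable family of $\tilde\alpha$-$*$-polynomials $P_n$ with coefficients in $F(A,D)$ that is approximately satisfiable by contractions in $F(A,D)$, and work to produce an exact solution by "lifting" everything across the quotient map $\pi_A:D\cap A'\to F(A,D)$.

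First, for each $n$ I would lift the (finitely many) coefficients of $P_n$ to elements of $D\cap A'$, which is possible because $\pi_A$ is surjective, obtaining $\alpha$-$*$-polynomials $\tilde P_n$ with coefficients in $D\cap A'\subset D$. Since $A$ is separable, I would fix a countable dense sequence $(a_j)_{j\in\IN}$ in $A_1$, and then introduce the countable family of $\alpha$-$*$-polynomials
\[
Q^{(1)}_{l,j}(\bar b)=b_la_j-a_jb_l,\quad Q^{(2)}_{k,j}(\bar b)=\tilde P_k(\bar b)\,a_j,\quad Q^{(3)}_{k,j}(\bar b)=a_j\,\tilde P_k(\bar b)
\]
for $k,l,j\in\IN$. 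Note that all coefficients live in $D$, so this is a legitimate family of $\alpha$-$*$-polynomials to feed into the semi-saturation hypothesis on $(D,\alpha)$.

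The key step is to verify the hypothesis of semi-saturation for $(D,\alpha)$ against this family. Given $m\in\IN$ and $\eps>0$, I would pick (by the hypothesis for $F(A,D)$) a tuple $\bar b\in F(A,D)_1^\IN$ with $\|P_k(\bar b)\|\leq\eps$ for $k\leq m$. Because quotients of \cstar-algebras have contractive lifts, I would choose $\tilde b_l\in (D\cap A')_1$ with $\pi_A(\tilde b_l)=b_l$. Automatically $Q^{(1)}_{l,j}(\bar{\tilde b})=0$. For the other two, the characterization of the quotient norm noted in Remark~\ref{product map}\ref{product map:2} (together with $\|\pi_A(x)\|=\inf_{z\in\ann(A,D)}\|x+z\|$ applied to both $\tilde P_k(\bar{\tilde b})$ and its adjoint) ensures $\|\tilde P_k(\bar{\tilde b})\,a_j\|\leq\eps$ and $\|a_j\,\tilde P_k(\bar{\tilde b})\|\leq\eps$ for $k\leq m$ and every $j$. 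Thus $(D,\alpha)$ satisfies the approximate hypothesis for the $Q^{(i)}_{k,j}$.

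By semi-saturation of $\alpha$, there exists $\bar b\in D_1^\IN$ with $Q^{(i)}_{k,j}(\bar b)=0$ for all $i,k,j$. Density of $(a_j)$ in $A$ implies $b_l\in D\cap A'$ for every $l$, and $\tilde P_k(\bar b)\in\ann(A,D)$ for every $k$. Finally, since $\pi_A$ is an $\alpha$-to-$\tilde\alpha$ equivariant surjective $*$-homomorphism, applying it yields contractions $b'_l:=\pi_A(b_l)\in F(A,D)_1$ with $P_k(\bar b')=\pi_A(\tilde P_k(\bar b))=0$, which is exactly what is required. I expect the only mildly subtle point to be the bookkeeping around the norm formula, namely converting the approximate vanishing of $P_k(\bar b)$ in $F(A,D)$ into the approximate vanishing of both one-sided products $\tilde P_k(\bar{\tilde b})\,a_j$ and $a_j\,\tilde P_k(\bar{\tilde b})$ in $D$; but this is exactly the content of Remark~\ref{product map}\ref{product map:2} applied to $\tilde P_k(\bar{\tilde b})$ and to its adjoint.
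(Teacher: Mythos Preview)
Your proof is correct and follows essentially the same approach as the paper: lift the coefficients to $D\cap A'$, encode membership in the relative commutant and vanishing in the quotient via countably many $\alpha$-$*$-polynomial conditions indexed by a dense set in $A_1$, and then invoke semi-saturation of $(D,\alpha)$. The only minor difference is that your third family $Q^{(3)}_{k,j}(\bar b)=a_j\,\tilde P_k(\bar b)$ is redundant: once the $b_l$ lie in $D\cap A'$ and the coefficients of $\tilde P_k$ are in $D\cap A'$, one has $\tilde P_k(\bar b)\in D\cap A'$, so left and right multiplication by $a_j$ agree and the paper gets by with only the right-sided products.
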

\begin{proof}
Let $P_k$ be a sequence of $\tilde{\alpha}$-$*$-polynomials with coefficients in $F(A,D)$. Lift each $P_k$ to an $\alpha$-$*$-polynomial $Q_k$ with coefficients in $D\cap A' \subseteq D$. Let $\bar{y}^{(n)}\in F(A,D)_1^\IN$ be a sequence with $\|P_k(\bar{y}^{(n)})\|\to 0$ for every $k$. Pick $\bar{x}^{(n)}\in (D\cap A')_1^\IN$ with $\pi_A^\IN(\bar{x}^{(n)})=\bar{y}^{(n)}$. We have for every $k$ and $a\in A$ that
\[
\|Q_k(\bar{x}^{(n)})\cdot a\| = \|\Phi_A\big( P_k(\bar{y}^{(n)})\otimes a\big)\|\stackrel{n\to\infty}{\longrightarrow} 0. 
\]
Let $S\subset A_1$ be a countable, dense set in the unit ball. Consider the countable family of $\alpha$-$*$-polynomials with coefficients in $D$ given by
\[
H^{(1)}_{l,a}(\bar{x}) = [x_l, a],\quad l\in\IN,~a\in S
\]
and
\[
H^{(2)}_{k,a}(\bar{x}) = Q_k(\bar{x})\cdot a,\quad k\in\IN,~a\in S.
\]
We then have
\[
H^{(1)}_{l,a}(\bar{x}^{(n)})=0,\quad \|H^{(2)}_{k,a}(\bar{x}^{(n)})\|\stackrel{n\to\infty}{\longrightarrow} 0
\]
for all $k,l\in\IN$ and $a\in S$. As $\alpha$ is semi-saturated, there exists a tuple $\bar{x}\in D_1^\IN$ with
\[
H^{(1)}_{l,a}(\bar{x}^{(n)})=0=H^{(2)}_{k,a}(\bar{x}^{(n)})
\]
for all $k,l\in\IN$ and $a\in S$. The first equation yields $\bar{x}\in (D\cap A')_1^\IN$ and the second equation yields
\[
\|P_k(\pi_A^\IN(\bar{x}))\| = \sup_{a\in S} \|Q_k(\bar{x})\cdot a\| = 0
\]
for all $k$. As the $P_k$ were arbitrary, this shows our claim.
\end{proof}

Let us now record a very useful characterization of simple and purely infinite \cstar-algebras, which is due to Kirchberg \cite[Chapter 3, Section 1]{KirchbergC}.

\begin{theorem} \label{1-step ai}
Let $D$ be a non-zero \cstar-algebra. Then $D$ is simple and purely infinite if and only if the following holds:\
For every separable \cstar-subalgebra $C\subseteq D$ and every nuclear c.p.c.~map $V: C\to D$, there are two sequences of contractions $d_n, e_n\in D$ with $V(c)=\lim_{n\to\infty} d_n^*cd_n = \lim_{n\to\infty} e_n^*ce_n$ for all $c\in C$ and $0=\lim_{n\to\infty} e_n^*d_n$.
\end{theorem}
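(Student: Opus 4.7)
The plan is to prove both implications of this well-known characterization from \cite{KirchbergC}.

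For the nontrivial direction ($\Rightarrow$), assume $D$ is simple and purely infinite. Given a separable \cstar-subalgebra $C \subseteq D$ and a nuclear c.p.c.\ map $V: C \to D$, first use nuclearity to approximately factorize $V$ through matrix algebras: for every $F\fin C$ and $\eps>0$ there exist $n\in\IN$ and c.p.c.\ maps $\psi: C \to M_n$, $\phi: M_n \to D$ with $\|\phi\circ\psi(c)-V(c)\|\leq\eps$ for all $c\in F$. This reduces the task to finding, for each c.p.c.\ map $\phi: M_n \to D$, a single element $d\in D$ satisfying $\phi(x)\approx d^* x d$ on matrix units. Via a Stinespring-type dilation, one writes $\phi(x)=w^*(x\otimes\eins_n)w$ for some $w\in M_{n,1}(D)$, and then assembles $w$ into a scalar element of $D$ using an approximate embedding $M_n\hookrightarrow D$, which exists because any simple purely infinite \cstar-algebra contains a system of $n$ isometries with mutually orthogonal range projections. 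This produces a sequence $d_n\in D$ with $d_n^* c d_n\to V(c)$.

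To obtain the second sequence $e_n$ with $e_n^* d_n\to 0$, I would invoke the local Cuntz-type structure of simple purely infinite \cstar-algebras: for any finite set in $D$ and tolerance $\eps$, there exist isometries $s_1, s_2\in D$ with $s_1^* s_2=0$ which commute with the set within $\eps$. Setting $d'_n := s_1 d_n$ and $e'_n := s_2 d_n$ then gives $(e'_n)^*(d'_n)=d_n^* s_2^* s_1 d_n = 0$, while approximate commutation of the $s_i$ with elements of $C$ ensures that both $(d'_n)^* c (d'_n)$ and $(e'_n)^* c (e'_n)$ still converge to $V(c)$.

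For the easy direction ($\Leftarrow$), simplicity follows by contradiction: if $\set{0}\neq J\subsetneq D$ is a proper closed two-sided ideal and $0\neq a\in J$, apply the hypothesis to the separable nuclear (commutative) subalgebra $C=\cstar(a)$ and a c.p.c.\ map $V: C\to D$ chosen so that $V(a^{1/2})\notin J$. The approximants $d_n^* a^{1/2} d_n$ all lie in $J$, so their norm limit $V(a^{1/2})$ does too---contradiction. For pure infiniteness, take $0\neq a\geq 0$ in $D$, apply the hypothesis with $C=\cstar(a)$ and $V=\id_C$, and obtain sequences with $d_n^* a d_n\to a$, $e_n^* a e_n\to a$, and $e_n^* d_n\to 0$. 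This last relation says that two approximate copies of $a$ fit orthogonally inside $\overline{aDa}$, which is exactly Cuntz-subequivalence of $a\oplus a$ to $a$; combined with simplicity, this yields proper (hence pure) infiniteness. The main obstacle lies in the forward direction, specifically in producing the two approximately orthogonal sequences $d_n, e_n$ simultaneously realizing $V$: this requires carefully combining a Stinespring dilation of $V$ with a locally compatible Cuntz pair inside $D$, and is only possible because simple purely infinite \cstar-algebras carry approximate copies of $\CO_2$ that can be made almost central on any prescribed separable subset.
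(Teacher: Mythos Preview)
The paper does not prove this statement; it is recorded as a known result due to Kirchberg \cite[Chapter 3, Section 1]{KirchbergC} and then used as a black box in the proof of Theorem~\ref{F() pis}. There is therefore no argument in the paper to compare your proposal against.

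As for your sketch on its own merits: the overall shape is reasonable, but two steps are softer than you indicate. In the forward direction, your key move is the existence of isometries $s_1,s_2\in D$ with $s_1^*s_2=0$ that approximately commute with a prescribed finite (or separable) subset. This is true, but it is itself a nontrivial structural fact about simple purely infinite \cstar-algebras and essentially part of what Kirchberg proves; invoking it without further comment comes close to assuming what you want. In the backward direction, your pure-infiniteness argument has a genuine gap: from $d_n^*ad_n\to a$, $e_n^*ae_n\to a$ and $e_n^*d_n\to 0$ you claim $a\oplus a\precsim a$, but the natural witness $x_n=\bigl(\begin{smallmatrix} d_n & e_n \\ 0 & 0 \end{smallmatrix}\bigr)$ produces off-diagonal terms $d_n^*ae_n$, and $e_n^*d_n\to 0$ does not by itself force $d_n^*ae_n\to 0$. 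Closing this requires an additional step (for instance working with $a^{1/2}d_n$ and $a^{1/2}e_n$, or a more careful Cuntz-comparison argument).
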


The following is a modification of a well-known theorem of Kirchberg-Phillips \cite{KirchbergPhillips00}, which has fundamental importance for the classification of Kirchberg algebras; see also \cite[Chapter 7, Section 1]{Rordam}.

\begin{theorem}[cf.~{\cite[Lemma 3.3 and Proposition 3.4]{KirchbergPhillips00}}] \label{F() pis}
Let $D$ be a semi-saturated, simple and purely infinite \cstar-algebra. Let $A\subset D$ be a separable, simple and nuclear \cstar-subalgebra. Then $F(A,D)$ is simple and purely infinite.
\end{theorem}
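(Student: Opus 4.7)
The strategy is to mimic the classical argument of Kirchberg--Phillips \cite[Lemma~3.3, Proposition~3.4]{KirchbergPhillips00}, using the semi-saturation of $D$ as a replacement for the countable saturation of an ultrapower. First, I would observe that $F(A,D)$ is unital: since $A$ is separable and hence $\sigma$-unital, Remark \ref{product map}\ref{product map:1} applies, producing a positive $e\in D$ that acts as a unit on $A$ (and whose existence uses semi-saturation of $D$); the class $[e]$ is then the unit of $F(A,D)$, which is moreover nonzero because $e\notin\ann(A,D)$. It is a standard fact that a unital \cstar-algebra is simple and purely infinite if and only if for every positive element $y$ of norm $1$ there exists $z$ with $z^*yz=\eins$. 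So, fixing such a $y\in F(A,D)$, I would lift it to a positive contraction $x\in D\cap A'$ with $\|x\|=1$, and aim to produce $s\in D\cap A'$ so that $s^*xs-e\in\ann(A,D)$; its image in $F(A,D)$ gives the desired $z$.

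Next I would reformulate this as a semi-saturation problem. Choose a countable dense sequence $(a_k)_k\subset A_1$ and a scaling constant $M>0$ (to be fixed in the approximate step below). Consider the countable family of $*$-polynomials in one variable $s$ with coefficients in $D$ consisting of $[s,a_k]$, $(M^2s^*xs-e)\cdot a_k$, and $a_k\cdot(M^2s^*xs-e)$, for all $k\in\IN$. A simultaneous zero on a contraction $s_0\in D_1$ yields the required element via $s=Ms_0$. By semi-saturation of $D$ it is enough to produce, for every $\eps>0$ and every finite subfamily, a contraction $s_0\in D_1$ that approximately annihilates all the relevant polynomials to within $\eps$.

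The heart of the argument is constructing these approximate solutions, which is essentially Lemma~3.3 of \cite{KirchbergPhillips00}. Given $F\fin A_1$ and $\eps>0$, I would feed the nuclear c.p.c.~map $V:A\to D$ defined by $V(a)=x^{1/2}ax^{1/2}$ (nuclear because $A$ is nuclear) into Theorem~\ref{1-step ai}, applied to the separable \cstar-subalgebra $\cstar(A\cup\{x\})\subseteq D$, to obtain sequences of contractions $d_n,e_n\in D$ with $V(a)=\lim d_n^*ad_n=\lim e_n^*ae_n$ for $a\in A$ and $\lim e_n^*d_n=0$. The orthogonality $e_n^*d_n\to 0$ is the crucial ingredient: it allows one to extract from the $d_n$ a ``partial inverse'' of $x^{1/2}$ along $A$. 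Using this, one manufactures $s_0$ out of $d_n$ (for large $n$) and functional calculus of $x$, so that $M^2s_0^*xs_0\cdot a\approx a$ for $a\in F$ while $s_0$ almost commutes with $F$; nuclearity of $A$ (in particular c.p.~approximation by finite-rank maps) is what allows the estimates to be achieved simultaneously on the whole set $F$ with a uniform norm bound on $s_0$.

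The main obstacle is precisely this approximate construction: balancing the ``invertibility'' estimate $M^2s_0^*xs_0\cdot a\approx a$ against the commutation estimate $[s_0,a]\approx 0$, uniformly in $a\in F$, without losing control of $\|s_0\|$. Once it is in place, semi-saturation of $D$ upgrades the approximations into an exact simultaneous zero $s_0\in D_1$, and $z:=M[s_0]\in F(A,D)$ satisfies $z^*yz=\eins$, completing the proof.
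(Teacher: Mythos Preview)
Your overall plan---reduce to finding, for each norm-one positive $y\in F(A,D)$, an element $z$ with $z^*yz=\eins$, then use Theorem~\ref{1-step ai} plus semi-saturation---is the right one, and it is also the paper's route. However, the specific map you feed into Theorem~\ref{1-step ai} goes in the wrong direction, and this is a genuine gap. With $V(a)=x^{1/2}ax^{1/2}$ on $A$, the conclusion $d_n^*ad_n\to x^{1/2}ax^{1/2}$ tells you that $d_n$ \emph{compresses} $A$ by $x^{1/2}$; it gives no control over $d_n^*xd_n$ (since $x\notin A$, and your $V$ is not defined on $x$), and there is no mechanism here to ``extract a partial inverse of $x^{1/2}$''. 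Your invocation of the orthogonality $e_n^*d_n\to 0$ as ``the crucial ingredient'' for this inversion is a misdiagnosis: in the correct argument that orthogonality only ensures the resulting isometry in $F(A,D)$ is proper, not that it inverts $x$.

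The fix, and what the paper does, is to enlarge the domain and reverse the direction. Since $x\in D\cap A'$ and $A$ is simple and nuclear, one has $C:=\cstar(A,x)\cong \CC(\sp(x))\otimes A$; now define the \emph{$*$-homomorphism} $\psi:C\to D$ by evaluation at $1\in\sp(x)$, i.e.\ $\psi(f(x)\cdot a)=f(1)a$ (using $\|x\|=1$, which follows from $\|y\|=1$). Applying Theorem~\ref{1-step ai} to $\psi$ and then semi-saturation yields contractions $d,e\in D$ with $d^*cd=e^*ce=\psi(c)$ for all $c\in C$ and $e^*d=0$. In particular $d^*ad=a$ for all $a\in A$, which forces $[d,a]=0$ by the positivity trick $(da-ad)^*(da-ad)\le 0$; hence $d\in D\cap A'$. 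Then $d^*(xa)d=a$ becomes $(d^*xd)\cdot a=a$, so $[d]^*y[d]=\eins$ in $F(A,D)$. No scaling constant $M$ is needed, and no delicate balancing of invertibility against commutation arises---both come for free from the single identity $d^*cd=\psi(c)$ on the larger algebra $C$.
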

\begin{proof}
In order to show our claim, it suffices to show the following statement: For every pair of positive contractions $0\leq a,b\in F(A,D)$ with $\sp(b)\subset\sp(a)$, there exists a non-unitary isometry $s\in F(A,D)$ with $b=s^*as$. Clearly, the case $b=\eins$ is enough to deduce that $F(A,D)$ is simple and purely infinite.

So let $a, b$ be given as above. Using functional calculus, we identify $\mathrm{C}^*(a,\eins)$ with $\CC(\sp(a))$ and $\mathrm{C}^*(b,\eins)$ with $\CC(\sp(b))$.
Then by Remark \ref{product map}, we have two $*$-homomorphisms
\[
\phi_a: \CC(\sp(a))\otimes A\to D, \quad f\otimes x\mapsto \Phi_A(f(a)\otimes x)
\]
and
\[
\phi_b: \CC(\sp(b))\otimes A\to D, \quad f\otimes x\mapsto \Phi_A(f(b)\otimes x).
\]
Then both of these $*$-homomorphisms are injective. For example, if $\phi_a$ was not, then by simplicity of $A$, the kernel would be isomorphic to $\CC_0(U)\otimes A$ for some non-empty open subset $U\subset\sp(a)$. But this would imply for some $0\neq f\in\CC(\sp(a))$ that $\Phi_A(f(a)\otimes x)=0$ for every $x\in A$, prompting $f(a)=0$. But this is impossible. The same argument is valid for $\phi_b$.

Let $C\subset D$ be the image of $\phi_a$. Since we have a natural surjection $\pi: \CC(\sp(a))\otimes A\to\CC(\sp(b))\otimes A$ given by restricting functions, we can find a $*$-homomorphism $\psi: C\to D$ with $\psi\circ\phi_a=\phi_b\circ\pi$. 
Since $C\cong\CC(\sp(a))\otimes A$ is nuclear, $\psi$ is a nuclear c.p.c.~map. Since $C$ is also separable, we can apply Theorem \ref{1-step ai} to find two sequences of contractions $d_n,e_n\in D$ with $\psi(c)=\lim_{n\to\infty} d_n^*cd_n=\lim_{n\to\infty} e_n^*ce_n$ for all $c\in C$ and such that $0=\lim_{n\to\infty} e_n^*d_n$.
By semi-saturation of $D$ and separability of $C$, we can then find two contractions $d,e\in D$ with $\psi(c)=d^*cd=e^*ce$ for all $c\in C$ and $e^*d=0$.\footnote{Given a countable, dense set $S\subset C$, finding such elements amounts to solving countably many $*$-polynomial equations given by inserting only $c\in S$.}

For all $x\in A$, the equation 
\[
x=\phi_b(\eins_{\CC(\sp(b))}\otimes x)=\phi_b\circ\pi(\eins_{\CC(\sp(a))}\otimes x) = \psi\circ\phi_a(\eins_{\CC(\sp(a))}\otimes x) = \psi(x)
\]
yields the identity $d^*xd=x=e^*xe$. If $x\in A$ is positive, this implies
\[
(dx-xd)^*(dx-xd) = xd^*dx-d^*xdx-xd^*xd+d^*x^2d = xd^*dx-x^2 \leq 0.
\]
In particular, $dx=xd$ for all $x\in A$. With the same calculation for $e$, we thus get $d,e\in D\cap A'$. Since then both $d^*d$ and $e^*e$ act as units on $A$, their images $s=d+\ann(A,D)$ and $t=e+\ann(A,D)$ are isometries in $F(A,D)$. Moreover, the equation $e^*d=0$ implies $t^*s=0$. In particular, $s$ is a proper isometry in $F(A,D)$.

Finally, let $a=a'+\ann(A,D)$ for some $a'\in D\cap A'$. Then we have for all $x\in A$ that 
\[
\begin{array}{ccccl}
\Phi_A(b\otimes x) &=& \phi_b(\id_{\sp(b)}\otimes x) &=& \phi_b\circ\pi(\id_{\sp(a)}\otimes x) \\
&=& \psi\circ\phi_a(\id_{\sp(a)}\otimes x) &=& \psi\circ\Phi_A(a\otimes x) \\
&=& d^*\Phi_A(a\otimes x)d &=& d^*a'xd \\
&=& d^*a'dx  &=& \Phi_A((s^*as)\otimes x).
\end{array}
\] 
This implies $b=s^*as$ by Remark \ref{product map}\ref{product map:2} and finishes the proof. 
\end{proof}

Lastly, recall the following notion, which we will use throughout:

\begin{defi}
Let $G$ be a countable and discrete group. Let $B$ and $C$ be two unital \cstar-algebras with actions $\beta: G\curvearrowright B$ and $\gamma: G\curvearrowright C$. Let $\phi_1,\phi_2: (B,\beta)\to (C,\gamma)$ be unital and equivariant $*$-homomorphisms. 
\begin{enumerate}[label={(\roman*)}, leftmargin=*]
\item $\phi_1$ and $\phi_2$ are called $G$-unitarily equivalent, if there exists a unitary $v\in\CU(C^\gamma)$ with $\phi_2=\ad(v)\circ\phi_1$.  
\item $\phi_1$ and $\phi_2$ are called approximately $G$-unitarily equivalent, if the following holds:\
For all $F\fin B, M\fin G$ and $\eps>0$, there is a unitary $v\in\CU(C)$ with $\|\phi_2(x)-\ad(v)\circ\phi_1(x)\|\leq\eps$ for all $x\in F$ and $\|\gamma_g(v)-v\|\leq\eps$ for all $g\in M$. 
\end{enumerate}
\end{defi}


\section{Saturated actions on simple and purely infinite \cstar-algebras}

In what follows, we will mainly be interested in those (semi-)saturated \cstar-dynamical systems where the underlying \cstar-algebra is simple and purely infinite, and the action is pointwise outer. 

The earlier part of this section consists of observations that are very reminiscent of \cite[Section 3]{IzumiMatui10}. The following is a straightforward variation of \cite[Lemma 3]{Nakamura99}:

\begin{lemma} \label{Kishimoto Lemma}
Let $D$ be a simple and purely infinite \cstar-algebra. Let $\phi\in\Aut(D)$ be an automorphism such that the induced $\IZ$-action on $D$ is saturated. Assume that $\phi$ is outer. Then for every positive element $a\in D$ of norm one, there exists a non-zero projection $q\leq a$ with $q\phi(q)=0$.
\end{lemma}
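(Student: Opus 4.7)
The plan is to combine a standard Kishimoto-type approximate existence result with the saturation hypothesis on the $\IZ$-action to produce an exact positive element $b\in\overline{aDa}$ whose hereditary subalgebra is orthogonal to its $\phi$-translate, and then to extract the desired projection using pure infiniteness. Throughout I interpret ``$q\leq a$'' as $q\in\overline{aDa}$.

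First I would apply Nakamura's version \cite[Lemma 3]{Nakamura99} of Kishimoto's lemma, valid since $D$ is simple and purely infinite and $\phi$ is outer: for every $\epsilon>0$ there exists a positive contraction $b_\epsilon$ lying inside the non-zero hereditary subalgebra $\overline{(a-\tfrac{1}{2})_+\, D\, (a-\tfrac{1}{2})_+}\subseteq\overline{aDa}$ with $\|b_\epsilon\|=1$ and $\|b_\epsilon\phi(b_\epsilon)\|<\epsilon$. In particular, $b_\epsilon$ has bidual support contained in the spectral projection $\chi_{(1/2,1]}(a)$.

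Next I would encode the desired element $b\in\overline{aDa}_+$ with $\|b\|=1$ and $b\phi(b)=0$ as a countable system of $\phi$-$*$-polynomial conditions on contraction variables $b,z\in D_1$: selfadjointness $\|b-b^*\|=0$, positivity $\|b-z^*z\|=0$, norm one $\|b\|=1$ (encoded via Proposition \ref{norms-polynomials} as in the proof of Corollary \ref{spi-sat}), orthogonality $\|b\phi(b)\|=0$, and containment $\|b-f_n(a)\,b\,f_n(a)\|=0$ for each $n\geq 2$, where $f_n$ is a continuous function on $[0,1]$ vanishing at $0$ and equal to $1$ on $[1/n,1]$. For any $\epsilon>0$ and any finite initial segment of these conditions, the pair $(b_\epsilon,\,b_\epsilon^{1/2})$ from the previous step is an $\epsilon$-approximate solution; crucially, the containment conditions hold \emph{exactly}, because $b_\epsilon$ is supported under $\chi_{(1/2,1]}(a)$ and $f_n\equiv 1$ on $[1/2,1]$ for every $n\geq 2$, so $f_n(a)$ acts as the identity on $b_\epsilon$. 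Saturation of the $\IZ$-action generated by $\phi$ therefore yields an exact positive contraction $b\in\overline{aDa}$ with $\|b\|=1$ and $b\phi(b)=0$.

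Finally, the hereditary subalgebra $\overline{bDb}\subseteq\overline{aDa}$ is non-zero and is itself simple and purely infinite (as a hereditary subalgebra of $D$), so it contains a non-zero projection $q$. By construction $q\in\overline{bDb}\subseteq\overline{aDa}$, which is the sense of $q\leq a$ we need. The relation $b\phi(b)=0$, together with its adjoint $\phi(b)b=0$, implies that the hereditary subalgebras $\overline{bDb}$ and $\overline{\phi(b)D\phi(b)}=\phi(\overline{bDb})$ annihilate each other, because any product of generators satisfies $(bxb)\cdot(\phi(b)y\phi(b))=bx\,(b\phi(b))\,y\phi(b)=0$. Since $q\in\overline{bDb}$ and $\phi(q)\in\overline{\phi(b)D\phi(b)}$, we conclude $q\phi(q)=0$. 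The main obstacle I anticipate is the polynomial encoding of $b\in\overline{aDa}$ in the saturation step; the specific device of placing $b_\epsilon$ inside the hereditary subalgebra of $(a-\tfrac{1}{2})_+$ is exactly what makes the containment-type constraints hold exactly on the approximate solutions, thereby letting saturation deliver an exact element with all the required properties simultaneously.
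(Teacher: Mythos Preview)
Your proof is correct and uses the same ingredients as the paper's: Kishimoto's lemma \cite[Lemma 1.1]{Kishimoto81} for an approximate positive element with $b\phi(b)\approx 0$, saturation to pass from approximate to exact relations, and pure infiniteness to obtain the projection. The only difference is organizational --- the paper first extracts (via real rank zero and one appeal to saturation) a non-zero projection $q$ with $bq=q$, hence $\|q\phi(q)\|\leq\eps$, and then invokes saturation a second time to make this exact, whereas you invoke saturation once to produce an exact $b\in\overline{aDa}$ with $b\phi(b)=0$ and only afterwards extract $q$ from $\overline{bDb}$; your device of placing $b_\eps$ inside $\overline{(a-\tfrac12)_+D(a-\tfrac12)_+}$ so that the containment-type constraints hold exactly on the approximants is in fact more explicit than the paper's terse final sentence.
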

\begin{proof}
We apply \cite[Lemma 1.1]{Kishimoto81}; this means that given any $\eps>0$, we may find a positive norm one element $b\in\overline{aDa}$ with $\|b\phi(b)\|\leq\eps$.
Since $D$ is simple and purely infinite, it has real rank zero.
As $b$ has norm one, it follows that we can find a sequence of non-zero projections $q_n\in D$ with $\|q_nb-q_n\|\to 0$. 
By saturation of $D$, we can find a non-zero projection $q\in D$ with $bq=qb=q$.
In particular, this prompts $q\in\overline{aDa}$ and thus $q\leq a$, as well as $\|q\phi(q)\|\leq\eps$.
Because of saturation and as $\eps>0$ was arbitrary, our claim follows.
\end{proof}

\begin{prop} \label{Nakamura Lemma}
Let $G$ be a countable and discrete group. Let $D$ be a simple and purely infinite \cstar-algebra and let $\alpha: G\curvearrowright D$ be a pointwise outer and saturated action. Then for all positive elements $a\in D$ of norm one, there is a non-zero projection $q\leq a$ with $q\alpha_g(q)=0$ for all $g\in G\setminus\set{1_G}$.
\end{prop}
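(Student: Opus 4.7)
The plan is to encode the existence of the desired projection as a countable system of $\alpha$-$*$-polynomial equations in a single variable, use the saturation hypothesis to reduce to a finitary version, and then solve the finitary version exactly by iterating Lemma~\ref{Kishimoto Lemma} down a descending chain of projections.

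Enumerate $G\setminus\set{1_G}=\set{g_n\mid n\in\IN}$ and consider the $\alpha$-$*$-polynomials with coefficients in $D$ in a single variable $x$ given by
\[
P_0(x)=x-x^*,\quad P_1(x)=x-x^2,\quad P_2(x)=x-ax,\quad P_{2+n}(x)=x\alpha_{g_n}(x)\ (n\geq 1),
\]
together with $Q(x)=x$. A contraction $x\in D_1$ with $\|P_k(x)\|=0$ for every $k$ and $\|Q(x)\|=1$ is precisely a nonzero projection $q$ satisfying $aq=q$ (equivalent to $q\leq a$, even in the non-unital setting) and $q\alpha_{g_n}(q)=0$ for every $n$. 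Since $\alpha$ is saturated --- I need genuine saturation rather than semi-saturation here in order to enforce the norm-one condition $\|Q(x)\|=1$, which is legitimate since $D$ is simple and purely infinite (Corollary~\ref{spi-sat}) --- Remark~\ref{farah-hart} reduces the task to the finitary approximate version: for every $m\in\IN$ and $\eps>0$, exhibit some $x\in D_1$ with $\|P_k(x)\|\leq\eps$ for $k\leq 2+m$ and $\|Q(x)\|\geq 1-\eps$.

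For each $i$, the $\IZ$-action on $D$ generated by $\alpha_{g_i}$ is semi-saturated by Proposition~\ref{semi-sat-permanence}\ref{semi-sat-permanence:4}, hence saturated by Corollary~\ref{spi-sat}, while $\alpha_{g_i}$ is outer by pointwise outerness of $\alpha$; thus Lemma~\ref{Kishimoto Lemma} applies to each $\alpha_{g_i}$. Using this, I inductively build a descending chain $a\geq q_1\geq q_2\geq\dots\geq q_m$ of nonzero projections in $D$: set $q_1\leq a$ with $q_1\alpha_{g_1}(q_1)=0$ via Lemma~\ref{Kishimoto Lemma} applied to $a$ and $\alpha_{g_1}$, and then apply the same lemma to the positive norm-one element $q_{i-1}$ and the outer automorphism $\alpha_{g_i}$ to obtain $q_i\leq q_{i-1}$ with $q_i\alpha_{g_i}(q_i)=0$. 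For $j<i$, the projection inequality $q_i\leq q_j$ gives $q_iq_j=q_i=q_jq_i$, which forces
\[
q_i\alpha_{g_j}(q_i)=q_iq_j\alpha_{g_j}(q_j)\alpha_{g_j}(q_i)=0,
\]
so $q_m$ solves the finitary problem exactly. The only truly conceptual move is the encoding of simultaneous vanishing as a saturation problem: from that point on, nothing beyond single-automorphism techniques and descending chains of projections is needed, and I do not expect any serious obstacle. The mild checks involved --- transfer of saturation to cyclic subactions, and the observation that pair-disjointness relations propagate down a chain of subprojections --- are entirely routine.
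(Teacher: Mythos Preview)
Your proof is correct and follows essentially the same approach as the paper: both build a descending chain of nonzero projections $q_1\geq q_2\geq\cdots$ by iterating Lemma~\ref{Kishimoto Lemma} (after transferring saturation to each cyclic subaction via Proposition~\ref{semi-sat-permanence}\ref{semi-sat-permanence:4} and Corollary~\ref{spi-sat}), and then invoke saturation of $(D,\alpha)$ to pass to the limit. You are simply more explicit about the polynomial encoding of the saturation step, which the paper leaves as a one-line remark.
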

\begin{proof}
For all $g\in G$, the system $(D,\alpha_g,\IZ)$ is also saturated by Corollary \ref{spi-sat} and Proposition \ref{semi-sat-permanence}\ref{semi-sat-permanence:4}.
Let $\IN\to G\setminus\set{1_G}, n\mapsto g_n$ be a surjection.
Apply Lemma \ref{Kishimoto Lemma} and find a non-zero projection $q_1\leq a$ with $q_1\alpha_{g_1}(q_1)=0$.
Inductively, once we have found a non-zero projection $q_n\leq a$ with $q_n\alpha_{g_j}(q_n)=0$ for all $j=1,\dots,n$, we can apply Lemma \ref{Kishimoto Lemma} again to find a non-zero projection $q_{n+1}\leq q_n$ with $q_{n+1}\alpha_{g_{n+1}}(q_{n+1})=0$.
Then this projection automatically satisfies $q_{n+1}\leq a$ and $q_{n+1}\alpha_{g_j}(q_{n+1})=0$ for all $j=1,\dots,n+1$.
Using that $(D,\alpha)$ is saturated, we can find a non-zero projection $q\leq a$ with $q\alpha_g(q)=0$ for all $g\in G\setminus\set{1_g}$, as desired.
\end{proof}

The following lemma is the quintessential ingredient enabling the proofs of virtually all the main results in this paper.

\begin{lemma} \label{fixed point}
Let $G$ be a discrete, countable and amenable group. Let $D$ be a unital, simple and purely infinite \cstar-algebra and let $\alpha: G\curvearrowright D$ be a saturated and pointwise outer action.
Then the fixed point algebra $D^\alpha$ is simple and purely infinite.
\end{lemma}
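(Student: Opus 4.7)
The plan is to verify the Cuntz-domination criterion for $D^\alpha$: for every non-zero positive $a \in D^\alpha$ of norm one, I will produce an isometry $W \in D^\alpha$ with $W^* a W = 1_D$. This places $1_{D^\alpha}$ in the closed ideal generated by $a$, yielding simplicity; pure infiniteness will follow by additionally arranging $W$ to be a proper (non-unitary) isometry in $D^\alpha$, so that $1_{D^\alpha}$ is an infinite projection in the resulting simple unital algebra.

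First, I apply Proposition \ref{Nakamura Lemma} to obtain a non-zero projection $q \leq a$ with $q\alpha_g(q) = 0$ for all $g \neq 1_G$, which makes $\{\alpha_g(q)\}_{g \in G}$ pairwise orthogonal. Since $D$ is simple and purely infinite, pick $V \in D$ with $V^*V = 1_D$ and $VV^* = q$; the relation $qa = q$ gives $V^*aV = 1_D$. The identities $V = qV$ and $V^* = V^*q$, combined with the orthogonality of the orbit of $q$ and the $\alpha$-invariance of $a$, yield the bi-orthogonality relation
\[
\alpha_g(V^*)\, a\, \alpha_h(V) \;=\; \delta_{g,h}\, 1_D,
\]
which also specializes to $\alpha_g(V^*)\alpha_h(V) = \delta_{g,h}\, 1_D$.

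Next, amenability enters through a Følner averaging. For any finite $F \fin G$, set
\[
W_F \;=\; |F|^{-1/2} \sum_{g \in F} \alpha_g(V).
\]
The bi-orthogonality relations collapse the double sums, giving $W_F^*W_F = 1_D$ and $W_F^* a W_F = 1_D$. Moreover $\alpha_h(W_F) = W_{hF}$, and the same relations yield
\[
\|\alpha_h(W_F) - W_F\|^2 \;=\; \frac{|hF \,\triangle\, F|}{|F|}.
\]
Choosing $F$ to be sufficiently Følner with respect to any prescribed finite subset of $G$ therefore makes $W_F$ approximately $\alpha$-invariant while preserving $W_F^*aW_F = 1_D$ exactly. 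The countable family of $\alpha$-$*$-polynomial equations $\{\alpha_g(x) - x : g \in G\} \cup \{x^*ax - 1_D\}$ is thus approximately satisfiable in $D_1$, and by saturation of $\alpha$ it admits an exact solution $W \in D^\alpha$ with $W^*aW = 1_{D^\alpha}$. This establishes simplicity.

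For pure infiniteness, I additionally force $W$ to be a proper isometry, ruling out $D^\alpha = \IC$. Fix any $h_0 \in G \setminus \{1_G\}$ and restrict attention to Følner sets $F$ with $h_0 \notin F$ (permissible, since removing a single element preserves the Følner property in the limit). Then $W_FW_F^* \leq \sum_{g \in F} \alpha_g(q) \leq 1_D - \alpha_{h_0}(q)$, so the inequality $\|WW^* + \alpha_{h_0}(q)\| \leq 1$ --- encoded as an $\alpha$-$*$-polynomial equation via Proposition \ref{norms-polynomials} --- can be added to the system, and saturation yields a $W$ with $WW^* \neq 1_D$. The main obstacle this plan negotiates is the tension between approximate $\alpha$-invariance and the exact algebraic relations $W^*W = W^*aW = 1_D$; amenable averaging of the partial isometries $\alpha_g(V)$ over Følner sets achieves both simultaneously, with bi-orthogonality ensuring that off-diagonal terms vanish identically and the Cuntz relation is maintained exactly rather than merely approximately.
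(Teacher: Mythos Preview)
Your proof is correct and follows essentially the same approach as the paper's: both apply Proposition~\ref{Nakamura Lemma} to get a projection $q\leq a$ with orthogonal $G$-orbit, both take an isometry landing in $q$ (your $V$ with $VV^*=q$, the paper's $t$ with $t^*qt=\eins$), both average the $G$-translates over a F{\o}lner set to produce an approximately invariant isometry, and both invoke saturation to pass to an exact fixed-point solution with $W^*aW=\eins$; the non-unitarity is forced in both cases via $W_FW_F^*\leq\sum_{g\in F}\alpha_g(q)\leq\eins-\alpha_{h_0}(q)$ for $h_0\notin F$. One small point: your device of fixing $h_0\neq 1_G$ in advance and restricting to F{\o}lner sets avoiding $h_0$ tacitly uses that $G$ is infinite (so that $|F|\to\infty$ and removing one element is harmless), whereas the paper explicitly disposes of the finite-group case first as well-known --- you should add that reduction.
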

\begin{proof}
Since this is well-known when $G$ is finite (and under much milder conditions), we may assume that $G$ is countably infinite. 
So let $a\in D^\alpha$ be a positive contraction of norm one.

Applying Proposition \ref{Nakamura Lemma}, we can find a non-zero projection $q\leq a$ in $D$ with $q\alpha_g(q)=0$ for all $g\in G\setminus\set{1_G}$. 
Notice that we have $qa=aq=q$. 
As $a$ is fixed by $\alpha$, we also have $\alpha_g(q)a=\alpha_g(q)$ for all $g\in G$. 
Since $D$ is simple and purely infinite, we can find an isometry $t\in D$ with $\eins=t^*qt$. 
Let $F\fin G$ and $\eps>0$ be given, and choose an $(F,\eps)$-invariant F{\o}lner set $K\fin G$. Define 
\[
s=|K|^{-1/2}\sum_{g\in K} \alpha_g(qt).
\] 
Then $s$ is an isometry because
\[
s^*s=|K|^{-1}\cdot\sum_{g,h\in K} \alpha_g(t^*)\underbrace{ \alpha_g(q)\alpha_h(q)}_{=0~\text{for}~g\neq h} \alpha_h(t) = |K|^{-1}\cdot \sum_{g\in K} \alpha_g(t^*qt) = \eins.
\]
Note that $s$ is not a unitary, as for any $g_0\in G\setminus K$, we have
\[
s = \Big( \sum_{g\in K} \alpha_g(q) \Big) s \quad\text{and}\quad \sum_{g\in K} \alpha_g(q) \leq \eins-\alpha_{g_0}(q).
\]
Let us now show that $s$ is $(F,\eps)$-approximately fixed by $\alpha$.
For an element $g\in F$, denote 
\[
R_g=|K|^{1/2}\cdot \big( s-\alpha_g(s) \big)=\sum_{h_1\in K\setminus gK} \alpha_{h_1}(qt)- \sum_{h_2\in gK\setminus K} \alpha_{h_2}(qt).
\]
Using again that $q$ translates orthogonally with $\alpha$, we obtain
\[
R_g^*R_g = \sum_{h_1\in K\setminus gK} \alpha_{h_1}(t^*qt)+\sum_{h_2\in gK\setminus K} \alpha_{h_2}(t^*qt) = |K\Delta gK|\cdot\eins.
\]
This implies
\[
\begin{array}{ccl}
\|s-\alpha_g(s)\|^2  &=& \|(s-\alpha_g(s))^*(s-\alpha_g(s)\| \\
&=& \dst |K|^{-1}\cdot\|R_g^*R_g\| \\
&=& \dst \frac{|K\Delta gK|}{|K|} ~\leq~ \eps.
\end{array}
\]
Moreover, we also clearly have $as=s$.
Since $F\fin G$ and $\eps>0$ were arbitrary, we can use that $\alpha$ is saturated and find a proper isometry $s\in D^\alpha$ satisfying $as=s$, which in particular implies $s^*as=\eins$.
This finishes the proof. 
\end{proof}

\begin{reme}
It should be pointed out that the proof of Lemma \ref{fixed point} and some of its subsequent applicatione below bear some resemblance to the approach behind \cite[Proposition 4.5]{MatuiSato14} in Matui-Sato's work.
The proper isometry $s$ constructed above can be used to generate a suitable copy of the Cuntz algebra $\CO_\infty$ in a central sequence algebra, leading to equivariant absorption of the trivial action on $\CO_\infty$.
In the proof of \cite[Proposition 4.5]{MatuiSato14}, a sequence of elements $s_n$ in a \cstar-algebra is constructed using techniques related to property (SI) in place of pure infiniteness, but using a very similar idea.
This sequence gives rise to an element in the central sequence algebra, which in a sense generates a copy of the Jiang-Su algebra $\CZ$, leading to \cite[Theorem 4.9]{MatuiSato14}.
For all intents and purposes, Lemma 2.3 might therefore be regarded as a purely infinite analogue of \cite[Proposition 4.5]{MatuiSato14}.
\end{reme}

The following is a fairly low-powered cohomology vanishing result for special cocycles, but has the advantage to be applicable for actions of all amenable groups.
Moreover, it is strong enough to be useful for certain model actions that we will consider in our main results.
It is obtained by adapting a known $2\times 2$ matrix trick originating in work of Connes \cite{Connes77}, which is made possible due to Lemma \ref{fixed point}.

\begin{lemma} \label{O2 coboundary}
Let $G$ be a discrete, countable and amenable group. Let $D$ be a unital, simple and purely infinite \cstar-algebra and let $\alpha: G\curvearrowright D$ be a saturated and pointwise outer action.
Let $u: G\to\CU(\CO_2)$ be a unitary representation. 
Let $\nu: \CO_2\to D^\alpha$ be a unital $*$-homomorphism. 
Then the collection $\set{\nu(u_g)}_{g\in G}$, viewed as an $\alpha$-cocycle in $D$, is a coboundary. 
\end{lemma}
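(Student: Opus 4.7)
The plan is to apply the classical Connes $2\times 2$-matrix trick, by perturbing the diagonal $G$-action on $M_2(D)$ by the cocycle and then invoking Lemma \ref{fixed point} on the resulting action.

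Set $v_g := \nu(u_g) \in D^\alpha$ and $V_g := \diag(\eins, v_g) = e_{11}\otimes\eins + e_{22}\otimes v_g \in \CU(M_2(D))$. Since $\nu$ is a unital $*$-homomorphism, $u$ is a unitary representation, and $v_g\in D^\alpha$, one immediately computes $V_g\cdot(\id_{M_2}\otimes\alpha)_g(V_h) = \diag(\eins, v_g v_h) = V_{gh}$, so $V$ is a $1$-cocycle for the diagonal action $\tilde\alpha := \id_{M_2}\otimes\alpha$. Define the perturbed action $\tilde\alpha^V_g := \ad(V_g)\circ\tilde\alpha_g$.

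The main work is to verify that $\tilde\alpha^V$ is saturated and pointwise outer on the unital, simple, purely infinite algebra $M_2(D)$. Saturation proceeds in two steps: $\tilde\alpha$ is semi-saturated by Proposition \ref{semi-sat-permanence}\ref{semi-sat-permanence:6}, hence saturated by Corollary \ref{spi-sat}; then saturation passes to $\tilde\alpha^V$ via Proposition \ref{semi-sat-permanence}\ref{semi-sat-permanence:3} and another application of Corollary \ref{spi-sat}, since $\tilde\alpha^V_g$ is unitarily equivalent to $\tilde\alpha_g$ for every $g$. Pointwise outerness of $\tilde\alpha^V$ reduces via the unitary $V_g$ to that of $\tilde\alpha$; and pointwise outerness of $\tilde\alpha$ reduces in turn to that of $\alpha$, for if $\id_{M_2}\otimes\alpha_g = \ad(U)$ for some unitary $U\in M_2(D)$, then $U$ commutes with the fixed projection $e_{11}\otimes\eins$, whence its upper-left corner is a unitary in $D$ that implements $\alpha_g$, contradicting our hypothesis on $\alpha$.

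Lemma \ref{fixed point} then yields that the fixed-point algebra $M_2(D)^{\tilde\alpha^V}$ is simple and purely infinite. The projections $e_{11},e_{22}$ commute with each $V_g$ (as $V_g$ is block diagonal) and are fixed by $\tilde\alpha$, hence lie in this fixed-point algebra; being non-zero, they are Murray--von Neumann equivalent therein. A witness must have the form $s = e_{21}\otimes w \in M_2(D)^{\tilde\alpha^V}$ for some unitary $w\in\CU(D)$, and unraveling the fixed-point equation $\tilde\alpha^V_g(s) = s$ gives $v_g\alpha_g(w) = w$, i.e.\ $\nu(u_g) = w\alpha_g(w^*)$, which exhibits $\set{\nu(u_g)}_{g\in G}$ as an $\alpha$-coboundary in $D$. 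The only genuinely non-routine step is the permanence verification that saturation and pointwise outerness both survive passing to $M_2(D)$ and then perturbing by a pointwise-unitary cocycle; once that is in hand, Lemma \ref{fixed point} does the heavy lifting.
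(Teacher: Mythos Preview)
Your argument follows the same Connes $2\times 2$-matrix strategy as the paper, and the permanence checks (saturation and pointwise outerness of $\tilde\alpha^V$) are handled correctly. However, there is a genuine gap at the crucial step.

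You assert that the projections $e_{11}\otimes\eins$ and $e_{22}\otimes\eins$, being non-zero in the simple purely infinite fixed-point algebra $M_2(D)^{\tilde\alpha^V}$, are automatically Murray--von Neumann equivalent. This is false in general: in a simple purely infinite \cstar-algebra, two non-zero projections are Murray--von Neumann equivalent if and only if they have the same $K_0$-class (Cuntz), and $K_0$ of the fixed-point algebra is a priori unknown. Think of $\CO_\infty$, where there are plenty of inequivalent non-zero projections. Nothing you have written forces $[e_{11}\otimes\eins]=[e_{22}\otimes\eins]$ in $K_0\big(M_2(D)^{\tilde\alpha^V}\big)$.

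This is precisely where the hypothesis that the representation lands in $\CO_2$ must be used, and your argument never uses it. The paper closes the gap as follows: since $\CO_2$ admits approximately central unital self-embeddings and $D^\alpha$ is semi-saturated, one produces a unital $*$-homomorphism $\mu:\CO_2\to D^\alpha\cap\nu(\CO_2)'$. The diagonal map $x\mapsto\diag(\mu(x),\mu(x))$ then lands in $M_2(D)^{\tilde\alpha^V}\cap\{e_{11}\otimes\eins,\,e_{22}\otimes\eins\}'$, so each $e_{ii}\otimes\eins$ sits over a unital copy of $\CO_2$ and therefore has trivial $K_0$-class. Only then does Cuntz's theorem yield the required partial isometry. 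You should insert this step; the rest of your argument is fine.
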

\begin{proof}
Denote $w_g=\nu(u_g)$ for all $g\in G$.
Consider the action $\beta: G\curvearrowright M_2\otimes D$ given by $\beta_g = \ad\matrix{\eins & 0 \\ 0 & w_g}\circ(\id_{M_2}\otimes\alpha_g)$.
Then $\beta$ is saturated and pointwise outer by Proposition \ref{semi-sat-permanence}\ref{semi-sat-permanence:3}+\ref{semi-sat-permanence:6} and Corollary \ref{spi-sat}.
Hence we can apply Lemma \ref{fixed point} to deduce that the fixed point algebra $(M_2\otimes D)^\beta$ is simple and purely infinite. 

Now $\CO_2$ admits approximately central embeddings into itself, i.e., we can find a sequence of unital $*$-homomorphisms $\mu_n: \CO_2\to\CO_2$ such that $\|[x,\mu_n(y)]\|\stackrel{n\to\infty}{\longrightarrow} 0$ for all $x,y\in\CO_2$. 
We have
\[
\big\| [\nu(x), (\nu\circ\mu_n)(y)] \big\| \stackrel{n\to\infty}{\longrightarrow} 0 \quad\text{for all}~ x,y\in\CO_2.
\] 
Applying that $D^\alpha$ is semi-saturated, there exists a unital $*$-homomorphism $\mu: \CO_2\to D^\alpha\cap\nu(\CO_2)'$.\footnote{Note that for this purpose, it is only necessary to find a solution to finitely many polynomial equations in four variables, namely the ones analogous to the universal relations determining $\CO_2\otimes\CO_2$.}
The $*$-homomorphism $\theta: \CO_2\to M_2\otimes D$ given by $\theta(x) = \diag(\mu(x),\mu(x))$ has image in the \cstar-algebra 
\[
(M_2\otimes D)^\beta\cap\set{e_{1,1}\otimes\eins,e_{2,2}\otimes\eins}'.
\] 
In particular, it follows that $[e_{1,1}\otimes\eins]=[e_{2,2}\otimes\eins]=0$ in $K_0((M_2\otimes D)^\beta)$. Since $(M_2\otimes D)^\beta$ is simple and purely infinite, we can find (cf.\ \cite{Cuntz81}) a partial isometry $r\in (M_2\otimes D)^\beta$ with $r^*r=e_{1,1}\otimes\eins$ and $rr^*=e_{2,2}\otimes\eins$. But then $r$ must necessarily be of the form $r=v\otimes e_{2,1}$ for a unitary $v\in D$ satisfying
\[
\matrix{ 0 & 0 \\ v & 0} = \matrix{\eins & 0 \\ 0 & w_g} \matrix{ 0 & 0 \\ \alpha_g(v) & 0} \matrix{\eins & 0 \\ 0 & w_g^*} = \matrix{0 & 0 \\ w_g\alpha_g(v) & 0}
\]
for all $g\in G$. In particular, we get $w_g = v\alpha_g(v^*)$ for all $g\in G$, which shows that $\set{w_g}_{g\in G}$ is a coboundary. This proves the claim
\end{proof}

The proof of Lemma \ref{O2 coboundary} was inspired in large part by the proof of \cite[Proposition 2.5]{Izumi04}.
In fact, the full power of that (potentially very useful) result can be recovered in the saturated setting for amenable group actions, so let us record it.
The proof is completely analogous as in \cite{Izumi04} by making use of Lemma \ref{fixed point}.
However, since we will have no use of this general result in this paper, we omit the proof.

\begin{theorem}
Let D be a unital, simple, purely infinite \cstar-algebra, $G$ a countable, discrete, amenable group and let $\alpha: G\curvearrowright D$ be a saturated and pointwise outer action.
Then there exists a one-to-one correspondence between $H^1(G,\CU(D))$ and the set
\[
\ker(K_0(\iota)) + [\eins] = \set{ x \in K_0(D^\alpha) ~|~ K_0(\iota)(x) = [\eins] },
\]
where $\iota$ is the inclusion map from $D^\alpha$ into $D$.
\end{theorem}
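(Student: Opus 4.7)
\emph{Proof plan.} The strategy mirrors Izumi's argument \cite[Proposition 2.5]{Izumi04} for the finite-group case: the $2\times 2$-matrix trick from the proof of Lemma \ref{O2 coboundary}, combined with Lemma \ref{fixed point}, takes the place of the finiteness of $G$. The output is an explicit bijection $\Psi: H^1(G,\CU(D))\to\ker K_0(\iota)+[\eins]$ sending a cocycle class to the $K_0$-class of the range projection of a suitable isometry.

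Given a cocycle $w:G\to\CU(D)$, form the action $\beta:G\curvearrowright M_2(D)$ defined by
\[
\beta_g=\ad\matrix{w_g & 0 \\ 0 & \eins}\circ(\id_{M_2}\otimes\alpha_g).
\]
Exactly as in the proof of Lemma \ref{O2 coboundary}, $\beta$ is saturated (by Proposition \ref{semi-sat-permanence}\ref{semi-sat-permanence:3}+\ref{semi-sat-permanence:6} and Corollary \ref{spi-sat}) and pointwise outer, so Lemma \ref{fixed point} yields that $(M_2\otimes D)^\beta$ is unital, simple and purely infinite. Since any two non-zero projections in such an algebra are Cuntz subequivalent, I pick a partial isometry $r\in(M_2\otimes D)^\beta$ with $r^*r=e_{11}\otimes\eins$ and $rr^*\leq e_{22}\otimes\eins$. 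Writing $r=e_{21}\otimes s$ and unravelling $\beta$-invariance, $s\in D$ is an isometry satisfying $\alpha_g(s)=sw_g$ and $p:=ss^*$ lies in $D^\alpha$. Define $\Psi([w]):=[p]\in K_0(D^\alpha)$; since $s\in D$ is an isometry with range $p$, automatically $K_0(\iota)([p])=[\eins_D]$.

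Well-definedness splits into two checks. For a fixed $w$, any two isometries $s,s'\in D$ with $\alpha_\bullet(s)=sw_\bullet$ and $\alpha_\bullet(s')=s'w_\bullet$ give a partial isometry $s'(s)^*\in D^\alpha$ between $ss^*$ and $s'(s')^*$, so their classes in $K_0(D^\alpha)$ agree. If $w$ is replaced by a cohomologous cocycle $w'_g=uw_g\alpha_g(u^*)$ with $u\in\CU(D)$, then $s':=su^*$ is an isometry with $\alpha_g(s')=s'w'_g$ and $s'(s')^*=p$ literally unchanged. For injectivity, suppose $[p]=[\eins_{D^\alpha}]$; Murray--von Neumann equivalence of $p$ and $\eins$ in the simple purely infinite algebra $D^\alpha$ provides a partial isometry $t\in D^\alpha$ with $t^*t=\eins$ and $tt^*=p$, and then $u:=t^*s\in\CU(D)$ satisfies
\[
u^*\alpha_g(u)=s^*t\cdot t^*\alpha_g(s)=s^*p\alpha_g(s)=s^*\alpha_g(s)=w_g,
\]
exhibiting $w$ as a coboundary. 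For surjectivity, given $x\in K_0(D^\alpha)$ with $K_0(\iota)(x)=[\eins]$, represent $x=[p]$ for some projection $p\in D^\alpha$ (possible since $D^\alpha$ is unital, simple and purely infinite), use $[p]=[\eins]$ in $K_0(D)$ together with pure infiniteness of $D$ to find an isometry $s\in D$ with $ss^*=p$, and set $w_g:=s^*\alpha_g(s)$; this is a unitary $\alpha$-cocycle with $\Psi([w])=x$ by construction.

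The crux is the construction of the isometry $s$: once one knows that the twisted fixed-point algebra $(M_2\otimes D)^\beta$ is simple and purely infinite, everything else is a formal verification. This last fact is precisely the content of Lemma \ref{fixed point}, and it is here that the amenability of $G$, together with the saturation and pure infiniteness hypotheses on $(D,\alpha)$, enters in an essential way.
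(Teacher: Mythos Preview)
Your approach is exactly what the paper indicates (the paper omits the proof but points to Izumi \cite[Proposition 2.5]{Izumi04} together with Lemma \ref{fixed point}, which is precisely the $2\times 2$-matrix trick you carry out). The construction of $s$, the well-definedness checks, and the surjectivity argument are all correct.

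There is one small gap. Your ``injectivity'' paragraph only treats the case $\Psi([w])=[\eins_{D^\alpha}]$, i.e., it shows that the preimage of the basepoint is the basepoint. Since $H^1(G,\CU(D))$ is in general only a pointed set (the coefficients are non-abelian) and the target is a coset rather than a group, this does not automatically give injectivity of $\Psi$. You need to handle the general case $\Psi([w_1])=\Psi([w_2])$ directly: if $s_i\in D$ are isometries with $\alpha_g(s_i)=s_iw_{i,g}$ and $p_i=s_is_i^*$, and $t\in D^\alpha$ is a partial isometry with $t^*t=p_1$, $tt^*=p_2$, then $u:=s_2^*ts_1\in\CU(D)$ satisfies
\[
uw_{1,g}\alpha_g(u^*)=s_2^*ts_1w_{1,g}\,\alpha_g(s_1)^*t^*\alpha_g(s_2)=s_2^*tp_1t^*s_2w_{2,g}=s_2^*p_2s_2w_{2,g}=w_{2,g},
\]
so $w_1$ and $w_2$ are cohomologous. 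With this addition the argument is complete.
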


The following definition will serve a similar purpose as \cite[Definition 6.3]{GoldsteinIzumi11}.\footnote{We note that it is not a direct generalization, as we do not a priori restrict ourselves necessarily to unitary representations that induce embeddings of the group \cstar-algebra. Instead this assumption will appear in several intermediate proofs.}

\begin{defi}
Let $B$ be a unital \cstar-algebra and $G$ a countable, discrete group.
We say that a unitary representation $u: G\to\CU(B)$ is $K$-trivial, if the $KL$-class of the induced unital $*$-homomorphism $u: \cstar(G)\to B$ is equal to the $KL$-class of the canonical character $\chi: \cstar(G)\to\IC\subseteq B$.

If moreover these maps have the same $KK$-classes, we will say that $u$ is $KK$-trivial.
\end{defi}

\begin{rem}
Note that group \cstar-algebras of amenable groups satisfy the UCT due to Tu \cite{Tu99}, and thus Dadarlat-Loring's UMCT \cite{DadarlatLoring96} yields a natural isomorphism between $KL(\cstar(G), B)$ and $\operatorname{Hom}_\Lambda\big( \underline{K}(\cstar(G)), \underline{K}(B) \big)$.
In particular, a unitary representation $u: G\to\CU(B)$ is $K$-trivial if and only if the associated $*$-homomorphism $u: \cstar(G)\to B$ induces the same map on total $K$-theory as the character $\chi: \cstar(G)\to B$.
In terms of ordinary $K$-theory, this is equivalent to $K_*(u)=K_*(\chi)$ and $K_*(u\otimes\id_{\CO_{n+1}})=K_*(\chi\otimes\id_{\CO_{n+1}})$ for all $n\geq 2$.
\end{rem}

\begin{lemma} \label{coboundary}
Let $G$ be a discrete, countable and amenable group.
Let $D$ be a unital, simple and purely infinite \cstar-algebra and let $\alpha: G\curvearrowright D$ be a saturated and pointwise outer action.
Let $B$ be a unital \cstar-algebra and $u: G\to\CU(B)$ a $K$-trivial unitary representation that induces an embedding from $\cstar(G)$ to $B$. 
Let $\nu: B\to D^\alpha$ be a unital embedding. Then the collection $\set{\nu(u_g)}_{g\in G}$, viewed as an $\alpha$-cocycle in $D$, is a coboundary. 
\end{lemma}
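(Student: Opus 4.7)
The strategy is to reduce to Lemma \ref{O2 coboundary} by conjugating the cocycle $\{\nu(u_g)\}$, via a unitary in $D^\alpha$, into one that factors through a copy of $\CO_2$. Since the cocycle depends only on the image of $\nu\circ u$, we may replace $B$ by $\cstar(G)$ and work with the unital embedding $\pi:=\nu\circ u:\cstar(G)\to D^\alpha$; by $K$-triviality, $[\pi]_{KL}=[\chi]_{KL}$ in $KL(\cstar(G),D^\alpha)$, where $\chi:\cstar(G)\to\IC\subseteq D^\alpha$ denotes the canonical character.

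By Lemma \ref{fixed point}, $D^\alpha$ is unital, simple and purely infinite. Since $\cstar(G)$ is unital, separable and nuclear (the latter by amenability of $G$), Kirchberg's embedding theorem provides a unital embedding $\sigma:\cstar(G)\hookrightarrow\CO_2$. Working in a suitable corner of $D^\alpha$ where $[\eins]=0$ in $K_0$ (such a corner exists by proper infiniteness of $\eins_{D^\alpha}$), or after stabilizing $D$ with $\CO_2$ tensorially, we obtain a unital $*$-homomorphism $\mu:\CO_2\to D^\alpha$ and set $\pi':=\mu\circ\sigma:\cstar(G)\to D^\alpha$. As $\pi'$ factors through $\CO_2$ (which has trivial total $K$-theory), $[\pi']_{KL}=[\chi]_{KL}=[\pi]_{KL}$. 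Applying Lemma \ref{O2 coboundary} to $\mu$ together with the unitary representation $g\mapsto\sigma(u_g)$ yields $v_0\in\CU(D)$ with $\pi'(u_g)=v_0\alpha_g(v_0^*)$ for all $g\in G$.

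The crucial remaining step is to establish unitary equivalence of $\pi$ and $\pi'$ inside $D^\alpha$. Both are unital embeddings of $\cstar(G)$ (separable, unital, nuclear, and UCT by Tu \cite{Tu99}) into the unital simple purely infinite algebra $D^\alpha$, with equal $KL$-classes. A Kirchberg-Phillips-type uniqueness argument---applying Theorem \ref{1-step ai} to the nuclear c.p.c.\ map $\pi\circ(\pi')^{-1}:\pi'(\cstar(G))\to D^\alpha$ to produce asymptotically implementing contractions, and then combining them into approximate unitary intertwiners using the equality of total $K$-theory classes together with the full embedding property of both maps---yields approximate $D^\alpha$-unitary equivalence of $\pi$ and $\pi'$. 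Upgrading this to an exact equivalence via semi-saturation of $D^\alpha$ (Proposition \ref{semi-sat-permanence}\ref{semi-sat-permanence:2}), we obtain $v_1\in\CU(D^\alpha)$ with $\pi(x)=v_1\pi'(x)v_1^*$ for all $x\in\cstar(G)$.

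Finally, using $\alpha_g(v_1)=v_1$ since $v_1\in D^\alpha$, we compute
\[
\nu(u_g)=\pi(u_g)=v_1\pi'(u_g)v_1^*=v_1 v_0\alpha_g(v_0^*)v_1^*=(v_1 v_0)\alpha_g\big((v_1 v_0)^*\big),
\]
so that $v:=v_1 v_0\in\CU(D)$ witnesses $\{\nu(u_g)\}$ as the desired $\alpha$-coboundary. The main technical challenge is the Kirchberg-Phillips-type approximate unitary equivalence in the third paragraph: since $D^\alpha$ need not be a Kirchberg algebra (nuclearity and separability of $D$ are not assumed), the standard uniqueness theorem for Kirchberg algebras does not apply off the shelf, and essential use must be made of the saturation of $D^\alpha$ (Corollary \ref{spi-sat}) combined with Theorem \ref{1-step ai} to convert abstract $K$-theoretic agreement into concrete approximate intertwiners.
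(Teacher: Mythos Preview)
Your overall strategy matches the paper's: conjugate the embedding $\nu\circ u$ by a unitary in $D^\alpha$ into one that factors through a copy of $\CO_2$, then invoke Lemma~\ref{O2 coboundary}. However, there is a genuine gap in your second paragraph. You claim to obtain a \emph{unital} $*$-homomorphism $\mu:\CO_2\to D^\alpha$, but such a map exists only if $[\eins_{D^\alpha}]=0$ in $K_0(D^\alpha)$, which is not assumed. Your two suggested workarounds do not help: passing to a corner $pD^\alpha p$ with $[p]=0$ produces a $\pi'$ that is unital into $pD^\alpha p$, not into $D^\alpha$, so it cannot be compared with the unital map $\pi:\cstar(G)\to D^\alpha$; and stabilizing $D$ by $\CO_2$ solves a different problem (in $D\otimes\CO_2$) rather than the one at hand.

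The paper's fix is elegant and worth internalizing: instead of seeking a unital $\CO_2\to D^\alpha$, take any non-zero $\iota':\CO_2\to D^\alpha$ and form the unital map $\iota:\CO_2\oplus\IC\to D^\alpha$, $x\oplus\lambda\mapsto\iota'(x)+\lambda(\eins-\iota'(\eins))$. Then $\iota\circ(\mu\oplus\chi):\cstar(G)\to D^\alpha$ is a faithful unital map with $KL$-class equal to that of $\chi$ (since the $\CO_2$-summand is $KK$-trivial), hence equal to that of $\nu\circ u$. For the approximate unitary equivalence step the paper simply cites Lin's uniqueness theorem \cite[Theorem~3.14]{Lin05}, which applies to unital embeddings of a separable nuclear UCT algebra into a unital simple purely infinite \cstar-algebra; your attempt to rebuild this from Theorem~\ref{1-step ai} is not a proof. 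After conjugating, one lands in the situation $B=\CO_2\oplus\IC$, projects to the $\CO_2$-corner $pDp$ with $p=\iota'(\eins)$, and applies Lemma~\ref{O2 coboundary} there (using Proposition~\ref{semi-sat-permanence}\ref{semi-sat-permanence:1} and Corollary~\ref{spi-sat} to see that the corner action remains saturated and pointwise outer).
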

\begin{proof}
Choose a unital embedding $\mu: \cstar(G)\to\CO_2$, which exists as $G$ is amenable.
By Lemma \ref{fixed point}, the fixed point algebra $D^\alpha$ is simple and purely infinite, and therefore we find a non-zero $*$-homom\-orphism $\iota': \CO_2\to D^\alpha$.
Then we obtain a unital $*$-homomorphism $\iota: \CO_2\oplus\IC\to D^\alpha$ via $\iota(x\oplus\lambda)=\iota'(x)+\lambda\cdot(\eins-\iota'(\eins))$.
By construction, the unital $*$-homomorphisms
\[
\nu\circ u : \cstar(G) \to D^\alpha
\]
and
\[
\iota\circ(\mu\oplus\chi): \cstar(G) \to D^\alpha
\]
are both faithful.
Since $\CO_2$ is $KK$-trivial and $u$ was assumed to be a $K$-trivial unitary represention, it follows that their $KL$-classes are the same.
By classification theory (see \cite[Theorem 3.14]{Lin05}), they are thus approximately unitarily equivalent.
As $D^\alpha$ is a saturated \cstar-algebra by Proposition \ref{semi-sat-permanence}\ref{semi-sat-permanence:2}, they are in fact unitarily equivalent.

Thus
\[
\nu\circ u = \ad(U)\circ\iota\circ(\mu\oplus\chi)
\]
for some unitary $U\in \CU(D^\alpha)$.
This allows us to reduce the problem to a special case via replacing $B$ by $\CO_2\oplus\IC$, $u$ by $(\mu\oplus\chi)$, and $\nu$ by $\ad(U)\circ\iota$.

Now denote $p=\nu(\eins_{\CO_2}\oplus 0_\IC)$ and $w: G\to\CU(\CO_2)$ the representation that arises from $u$ after projecting onto the first summand $\CO_2$.
The corner system $(pDp, \alpha|_{pDp})$ is pointwise outer and saturated by Proposition \ref{semi-sat-permanence}\ref{semi-sat-permanence:1} and Corollary \ref{spi-sat}.
As the underlying \cstar-algebra is simple and purely infinite, we are in the situation of Lemma \ref{O2 coboundary}.
It follows that the collection $\set{\nu(w_g)}_{g\in G}$, viewed as an $\alpha$-cocycle in $pDp$, is a coboundary.
So there exists a unitary $v_0\in pDp$ with $\nu(w_g))=v_0\alpha_g(v_0^*)$ for all $g\in G$.
Then $v=v_0+\eins-p$ is a unitary in $D$ with $\nu(u_g) = v\alpha_g(v^*)$ for all $g\in G$.
\end{proof}

\begin{lemma} \label{O embeddings}
Let $G$ be a discrete, countable and amenable group. 
Let $D$ be a unital, simple and purely infinite \cstar-algebra and let $\alpha: G\curvearrowright D$ be a saturated and pointwise outer action.
Let $B$ be a unital \cstar-algebra that embeds unitally into $D^\alpha$.
Then for any $K$-trivial unitary representation $u: G\to\CU(B)$ inducing an embedding from $\cstar(G)$ to $B$, there exists a unital and equivariant embedding from $(B, \ad(u))$ to $(D,\alpha)$.
\end{lemma}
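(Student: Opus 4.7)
My plan is to directly reduce this to the coboundary result in Lemma \ref{coboundary}, and then simply conjugate the given embedding into $D^\alpha$ by the resulting unitary.

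Fix a unital embedding $\nu: B \hookrightarrow D^\alpha$ guaranteed by hypothesis. The composition $\nu \circ u: G \to \CU(D^\alpha) \subseteq \CU(D)$ is again a $K$-trivial unitary representation that induces an embedding of $\cstar(G)$, since $K$-triviality is preserved under composition with a unital embedding (both $\nu \circ u$ and $\nu \circ \chi$ arise from composing the corresponding $\cstar(G) \to B$ maps with $\nu$, which preserves $KL$-classes) and since embeddings compose. The family $\set{\nu(u_g)}_{g \in G}$ is a unitary representation in $D^\alpha$, and in particular it satisfies the $\alpha$-cocycle condition $\nu(u_{gh}) = \nu(u_g)\alpha_g(\nu(u_h))$ because every $\nu(u_h)$ is $\alpha$-fixed.

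Applying Lemma \ref{coboundary} to the data $(B, u, \nu)$, we obtain a unitary $v \in \CU(D)$ such that
\[
\nu(u_g) = v \alpha_g(v^*) \quad \text{for all } g \in G.
\]
Now define $\phi := \ad(v^*) \circ \nu: B \to D$. This is visibly a unital $*$-monomorphism since $\nu$ is. To verify equivariance with respect to $(B, \ad(u))$ and $(D, \alpha)$, fix $b \in B$ and $g \in G$; using that $\nu(b) \in D^\alpha$ and the cocycle identity above, we compute
\[
\phi(\ad(u_g)(b)) = v^* \nu(u_g)\nu(b)\nu(u_g^*) v = v^*\bigl(v\alpha_g(v^*)\bigr)\nu(b)\bigl(\alpha_g(v) v^*\bigr) v = \alpha_g(v^*)\,\nu(b)\,\alpha_g(v),
\]
which equals $\alpha_g(v^* \nu(b) v) = \alpha_g(\phi(b))$ because $\alpha_g(\nu(b)) = \nu(b)$. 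Hence $\phi$ is an equivariant unital embedding, as required.

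The only genuine work is ensuring that Lemma \ref{coboundary} applies; essentially all technical content (saturation, pointwise outerness, $K$-triviality passing through $\nu$) is packaged into that lemma, so this last statement is an essentially formal corollary. No step here is an obstacle: the proof is just a cocycle-untwisting, with the cocycle having been supplied for free by the embedding of $B$ into the fixed point algebra.
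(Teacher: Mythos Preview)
Your proof is correct and follows essentially the same approach as the paper: apply Lemma \ref{coboundary} to the given embedding $\nu$ to trivialize the cocycle $\set{\nu(u_g)}_{g\in G}$, then conjugate $\nu$ by $v^*$ and verify equivariance directly. The paragraph arguing that $\nu\circ u$ is itself $K$-trivial is superfluous --- Lemma \ref{coboundary} already takes $(B,u,\nu)$ as its input with $K$-triviality required only for $u:\cstar(G)\to B$ --- but it does no harm.
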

\begin{proof}
Let $\mu: B\to D^\alpha$ be a unital embedding.
By Lemma \ref{coboundary}, we can find a unitary $v\in D$ with $\mu(u_g) = v\alpha_g(v^*)$ for all $g\in G$. Therefore, the map $\mu_0=\ad(v^*)\circ\mu$ yields a unital embedding from $B$ into $D$ satisfying
\[
(\alpha_g\circ\mu_0)(x) = \alpha_g(v^*) \mu(x) \alpha_g(v) = v^*\mu(u_g)\mu(x)\mu(u_g^*)v = \big( \mu_0\circ\ad(u_g) \big)(x)
\]
for all $g\in G$ and $x\in B$. This shows our claim.
\end{proof}

\begin{cor} \label{model-sat-embedding}
Let $G$ be a discrete, countable and amenable group.
Let $D$ be a unital, simple and purely infinite \cstar-algebra and let $\alpha: G\curvearrowright D$ be a saturated and pointwise outer action.
Let $u: G\to\CU(\CO_\infty)$ be a $K$-trivial unitary representation.
Then there exists an equivariant and unital $*$-homomorphism from $(\CO_\infty,\ad(u))$ to $(D,\alpha)$.
\end{cor}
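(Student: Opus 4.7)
The plan is to apply Lemma \ref{O embeddings} with $B = \CO_\infty$. By Lemma \ref{fixed point}, the fixed point algebra $D^\alpha$ is a unital, simple and purely infinite \cstar-algebra, so $\CO_\infty$ embeds unitally into $D^\alpha$, which handles the first hypothesis of that lemma. The subtlety is that Lemma \ref{O embeddings} additionally requires the unitary representation to induce an embedding of $\cstar(G)$, and this is not automatic for a general $K$-trivial $u$ --- as the trivial representation illustrates for any non-trivial $G$. I expect this enforcement of injectivity to be the main obstacle of the proof.

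To circumvent this, I would replace $u$ by an enlarged representation. Let $w: G \to \CU(\CO_\infty)$ be the model representation from Example \ref{the-model-Oinf}, which is $K$-trivial and induces an embedding of $\cstar(G)$. After fixing a $*$-isomorphism $\CO_\infty \otimes \CO_\infty \cong \CO_\infty$, I would view $\tilde u_g := u_g \otimes w_g$ as a unitary representation of $G$ on $\CO_\infty$. The associated $*$-homomorphism $\tilde u: \cstar(G) \to \CO_\infty \otimes \CO_\infty$ factors through the coproduct $\Delta: \cstar(G) \to \cstar(G) \otimes \cstar(G), \delta_g \mapsto \delta_g \otimes \delta_g$, followed by $u \otimes w$. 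The $K$-triviality of $\tilde u$ then follows from the external Kasparov product formula combined with the identity $(\chi \otimes \chi) \circ \Delta = \chi$ for the canonical character $\chi$ of $\cstar(G)$.

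For the verification that $\tilde u$ induces an embedding of $\cstar(G)$, I would exploit the specific form $w_g = \iota(v_g) + \eins - \iota(\eins)$ from Example \ref{the-model-Oinf} --- where $\iota: \CO_2 \to \CO_\infty$ is a nonzero $*$-homomorphism and $v: G \to \CU(\CO_2)$ induces an embedding of $\cstar(G)$. The projection $p := \eins \otimes \iota(\eins)$ commutes with the image of $\tilde u$, and the corresponding compression reduces to
\[
p \, \tilde u(\delta_g) \, p = u_g \otimes \iota(v_g).
\]
Injectivity of this compressed representation on $\cstar(G)$ --- which already forces injectivity of $\tilde u$ --- can then be verified by a slice-map argument: for any nonzero $a = \sum_g c_g \delta_g \in \cstar(G)$ and any $g_0$ with $c_{g_0} \neq 0$, since $u_{g_0}$ is a nonzero unitary there exists a state $\tau$ on $\CO_\infty$ with $\tau(u_{g_0}) \neq 0$; applying $\tau \otimes \id$ to the compressed sum then yields $\iota\bigl(\sum_g c_g \tau(u_g) v_g\bigr)$, which is nonzero by faithfulness of $\iota \circ v$ and uniqueness of Fourier coefficients in $\cstar(G)$.

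With both hypotheses of Lemma \ref{O embeddings} verified for $B = \CO_\infty$ and representation $\tilde u$, the lemma produces a unital equivariant $*$-homomorphism $\Phi: (\CO_\infty, \ad(\tilde u)) \to (D, \alpha)$. The canonical inclusion $\CO_\infty \to \CO_\infty \otimes \CO_\infty, x \mapsto x \otimes \eins$, is unital and $G$-equivariant from $\ad(u)$ to $\ad(\tilde u)$, since $w_g$ conjugates the unit of the second factor trivially. Composing $\Phi$ with this inclusion then yields the desired equivariant unital $*$-homomorphism from $(\CO_\infty, \ad(u))$ to $(D, \alpha)$.
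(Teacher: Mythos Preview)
Your proof is correct and follows essentially the same approach as the paper: both replace $u$ by its tensor product with the model representation from Example \ref{the-model-Oinf} to force injectivity on $\cstar(G)$, and then invoke Lemmas \ref{fixed point} and \ref{O embeddings}. You in fact supply more detail than the paper does on why the tensored representation remains $K$-trivial and becomes injective on $\cstar(G)$ --- the paper leaves both as implicit claims --- and your final composition with the first-factor embedding is exactly what underlies the paper's phrase ``we may assume $u$ to induce an embedding.''
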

\begin{proof}
As one sees further below in Example \ref{the-model-Oinf}, there certainly exist $K$-trivial unitary representations $u': G\to\CU(\CO_\infty)$ that induce embeddings from $\cstar(G)$ to $\CO_\infty$.
By replacing $u$ with $u\otimes u': G\to\CU(\CO_\infty\otimes\CO_\infty)$ and picking some isomorphism $\CO_\infty\otimes\CO_\infty\cong\CO_\infty$, we see that we may assume $u$ to induce an embedding of $\cstar(G)$ to $\CO_\infty$.

Then the claim follows from Lemmas \ref{fixed point} and \ref{O embeddings}, and the fact that $\CO_\infty$ embeds unitally into every unital, simple and purely infinite \cstar-algebra \cite{Cuntz77}.
\end{proof}


\section{Absorption of the model action on $\CO_\infty$}

In this section, we will prove our first main result, which asserts that pointwise outer amenable group actions on Kirchberg algebras absorb certain model actions on $\CO_\infty$ tensorially.

\begin{notae}
Using Definition \ref{Kirchberg nota}, we abbreviate $F_\omega(A)=F(A,A_\omega)$. In view of Remark \ref{product map}, every automorphism $\phi\in\Aut(A)$ gives rise to an automorphism $\tilde{\phi}_\omega$ on $F_\omega(A)$ in a natural way. 
\end{notae}

The following observation is a generalization of an observation \cite[Lemma 2]{Nakamura99} by Nakamura and has key importance:

\begin{theorem} \label{central seq outer}
Let $A$ be a separable and simple \cstar-algebra. Let $\phi\in\Aut(A)$ be an outer automorphism. For any free filter $\omega$ on $\IN$, the induced automorphism $\tilde{\phi}_\omega$ on $F_\omega(A)$ is outer.
\end{theorem}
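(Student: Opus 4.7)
I would argue by contradiction, adapting the classical Kishimoto technique from \cite{Kishimoto81} along the lines of Nakamura's proof for the unital central sequence algebra \cite[Lemma~2]{Nakamura99}, but with extra care to handle the annihilator ideal appearing in the definition of $F_\omega(A)$. Suppose $\tilde{\phi}_\omega = \ad(\tilde{u})$ for some unitary $\tilde{u} \in F_\omega(A)$, and lift $\tilde{u}$ via $\pi_A$ to a contraction $y \in A_\omega \cap A'$ represented by a sequence of contractions $(y_n)_{n\in\IN}$ in $A$. The fact that $y$ centralizes $A$, induces a unitary in the quotient, and implements $\tilde{\phi}_\omega$ translates, for arbitrary $a,b \in A$, into the following asymptotic statements along $\omega$:
\[
\|[y_n,a]\| \to 0, \quad \|a(\eins - y_n^*y_n)\| \to 0, \quad \|(\eins - y_ny_n^*)a\| \to 0, \quad \|(\phi(a) - y_n a y_n^*)b\| \to 0.
\]

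\textbf{Main algebraic step.} From these I aim to deduce the asymptotic intertwining relation
\[
\lim_{n\to\omega} \|\phi(a) y_n b - y_n a b\| = 0 \quad\text{for all } a,b \in A.
\]
This should follow by multiplying the implementation condition on the right by the contraction $y_n$, then using $[y_n,b]\to 0$ to swap $by_n \leftrightarrow y_nb$ inside both terms, and finally using $\|a - ay_n^*y_n\|\to 0$ to eliminate the trailing $y_n^*y_n$.

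\textbf{Kishimoto conclusion.} Since $\phi$ is outer on the simple \cstar-algebra $A$, Kishimoto's Lemma~1.1 from \cite{Kishimoto81} furnishes, for any $\eps>0$, a positive contraction $b \in A$ with $\|b\|=1$ and $\|b\phi(b)\| < \eps$. Substituting $a=b$ into the intertwining relation and multiplying by $b$ on the left gives $\|b\phi(b) y_n b - b y_n b^2\| \to 0$ along $\omega$, so $\limsup_{n\to\omega}\|by_nb^2\| \leq \|b\phi(b)\| < \eps$. On the other hand, $by_n b^2 \approx y_n b^3$ along $\omega$ by asymptotic centrality of $y_n$, while the adjoint version of the unitarity-modulo-annihilator condition gives $\|(\eins-y_n^*y_n)b^3\|\to 0$, whence
\[
\|y_n b^3\|^2 = \|b^3 y_n^* y_n b^3\| \longrightarrow \|b^6\| = 1.
\]
Combining, $\|by_nb^2\| \to 1$, forcing $1\leq\eps$, which is a contradiction as soon as $\eps<1$.

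\textbf{Main obstacle.} The core difficulty is mostly bookkeeping: the only substantive analytic input is Kishimoto's lemma. The one place requiring genuine care beyond Nakamura's unital setting is in unpacking how unitarity of $\tilde{u}$ in the quotient $F_\omega(A) = (A_\omega\cap A')/\ann(A,A_\omega)$ transfers into sequential estimates on a contractive lift, since one cannot work directly in $A_\omega\cap A'$ as in the unital case. Once this translation is made explicit, the intertwining step and the Kishimoto contradiction run largely in parallel to the classical argument.
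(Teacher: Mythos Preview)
Your translation of the implementation condition is incorrect, and this breaks the argument. The hypothesis $\tilde{\phi}_\omega=\ad(\tilde u)$ is a statement about how $\tilde u$ conjugates elements of $F_\omega(A)$, i.e.\ classes of elements of $A_\omega\cap A'$; it says nothing about $\phi(a)$ for $a\in A$. Concretely, the correct translation is: for every $x\in A_\omega\cap A'$ and every $b\in A$ one has $(\phi_\omega(x)-yxy^*)b=0$ in $A_\omega$. Your fourth displayed condition $\|(\phi(a)-y_nay_n^*)b\|\to 0$ for $a\in A$ does not follow. In fact, since $y\in A_\omega\cap A'$ one has $yay^*=ayy^*$, and your third condition forces $ayy^*=a$ in $A_\omega$; so your fourth condition would read $(\phi(a)-a)b=0$ for all $a,b\in A$, giving $\phi=\id_A$ immediately and making the entire Kishimoto step superfluous. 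That this shortcut appears is the tell that something has gone wrong upstream.

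The paper avoids this issue by taking a rather different route: it does not argue by contradiction against an implementing unitary at all. Instead it uses \cite[Lemma~2.1]{Kishimoto81} to obtain an irreducible representation $\pi$ with $\pi$ and $\pi\circ\phi$ disjoint, so that the central cover $c(\pi)$ in the bidual satisfies $c(\pi)\phi(c(\pi))=0$. Then the Akemann--Pedersen lemma \cite{AkemannPedersen79} supplies a bounded, approximately central net in $A$ converging strongly to $c(\pi)$; from this one extracts a bounded central sequence $(x_n)$ with $\liminf_n\|(x_n-\phi(x_n))b\|\geq 1/2$ for some $b\in A$, yielding an element of $F_\omega(A)$ genuinely moved by $\tilde\phi_\omega$. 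Finally a standard reindexation shows that a non-trivial induced automorphism on $F_\omega(A)$ is automatically outer. If you want to salvage a direct contradiction argument, you would need to feed the implementing relation an honest element of $A_\omega\cap A'$ (not of $A$), and producing such an element that witnesses outerness is exactly what the Akemann--Pedersen step accomplishes.
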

\begin{proof}
By a result of Kishimoto \cite[Lemma 2.1]{Kishimoto81}, there exists an irreducible representation $\pi$ of $A$ such that $\pi$ and $\pi\circ\phi$ are disjoint. Let $\rho: A\to\CL(H)$ denote the universal representation of $A$.
The central cover $c(\pi)\in Z(\rho(A)'')$ of $\pi$ then satisfies $c(\pi)\phi(c(\pi))=0$. Choose some $\xi\in H$ of norm one with $\| \big( c(\pi)-\phi(c(\pi) \big)\xi\|=1$.
Applying a result of Akemann-Pedersen \cite[Lemma 1.1]{AkemannPedersen79}, we can find a bounded net $x_\lambda\in A$ with
\[
\|[x_\lambda,a]\| = \|[x_\lambda-c(\pi),a]\| \stackrel{\lambda\to\infty}{\longrightarrow} 0
\]
for every $a\in A$, and such that $x_\lambda$ converges to $c(\pi)$ in the strong operator topology. 
Since $\rho$ is non-degenerate, we can find a positive contraction $b\in A$ with $\| \rho(b)\xi-\xi\|< 1/4$. By triangle inequality, we therefore get that $\|\big( c(\pi)-\phi(c(\pi)) \big) \rho(b) \xi\|> \frac{1}{2}$.
 But then there exists $\lambda_0$ such that for every $\lambda\geq\lambda_0$ we have $\|\rho((x_\lambda-\phi(x_\lambda))b)\xi\|\geq 1/2$, which implies $\|(x_\lambda-\phi(x_\lambda))b\|\geq 1/2$.
 Using that $A$ is separable, we can construct a bounded sequence $x_n\in A$ such that
\[
\|[x_n,a]\|\stackrel{n\to\infty}{\longrightarrow} 0\quad\text{and}\quad\liminf_{n\to\infty} \|(x_n-\phi(x_n))b\|\geq 1/2.
\]
The class of the sequence $(x_n)_n$ then yields an element $x\in F_\omega(A)$ with $\|x-\tilde{\phi}_\omega(x)\|\geq \|(x-\tilde{\phi}_\omega(x))b\|\geq 1/2$.
By virtue of a standard reindexation argument, one can see that $\tilde{\phi}_\omega\in\Aut(F_\omega(A))$ must automatically be outer, if it is non-trivial. This shows our claim.
\end{proof}

\begin{prop} \label{F(A)-sat}
Let $G$ be a countable and discrete group. Let $A$ be a separable \cstar-algebra and $(\alpha,w): G\curvearrowright A$ a cocycle action.
Let $\omega\in\beta\IN\setminus\IN$ be a free ultrafilter.
Then the induced action $\tilde{\alpha}_\omega: G\curvearrowright F_\omega(A)$ is semi-saturated.
If $A$ is moreover a Kirchberg algebra, then $\tilde{\alpha}_\omega$ is saturated and $F_\omega(A)$ is simple and purely infinite. 
\end{prop}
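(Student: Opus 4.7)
The plan has three parts.

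\textbf{First}, I would verify that $\tilde{\alpha}_\omega$ is a genuine group action on $F_\omega(A)$ despite $(\alpha,w)$ being only a cocycle action. Each $\alpha_g$ extends to an automorphism of the ultrapower $A_\omega$ preserving both $A_\omega\cap A'$ and $\ann(A,A_\omega)$, and hence descends to $\tilde{\alpha}_g\in\Aut(F_\omega(A))$. Because $w_{g,h}\in\CU(\CM(A))$ commutes with every $y\in A_\omega\cap A'$ modulo $\ann(A,A_\omega)$ (a short direct computation using that for any $a\in A$ one has $(w_{g,h}y-yw_{g,h})a=0=a(w_{g,h}y-yw_{g,h})$), the adjoint $\ad(w_{g,h})$ is trivial on $F_\omega(A)$, so the cocycle identity $\alpha_g\alpha_h=\ad(w_{g,h})\circ\alpha_{gh}$ descends to the honest group-homomorphism identity $\tilde{\alpha}_g\tilde{\alpha}_h=\tilde{\alpha}_{gh}$.

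\textbf{Second}, for semi-saturation of $(F_\omega(A),\tilde{\alpha}_\omega)$ I would invoke Proposition~\ref{F(A)-saturated} with $(D,\alpha)=(A_\omega,\alpha_\omega)$ and separable $\alpha_\omega$-invariant subalgebra $A\subset A_\omega$; by definition $F(A,A_\omega)=F_\omega(A)$ and the induced action is $\tilde{\alpha}_\omega$. The required semi-saturation of $(A_\omega,\alpha_\omega)$ is Example~\ref{ex:ultra}; although that example is formally stated for genuine actions, the Kirchberg $\eps$-test argument given there only manipulates individual automorphisms $\alpha^{(n)}_g$ and $\alpha^{(n)}$-$*$-polynomials and never uses the group-homomorphism property, so it adapts verbatim to the cocycle action setting.

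\textbf{Third}, for the Kirchberg case Corollary~\ref{spi-sat} reduces the claim to showing that $F_\omega(A)$ is simple and purely infinite; saturation of $\tilde{\alpha}_\omega$ then upgrades from the semi-saturation just established. The morally correct tool is Theorem~\ref{F() pis}, but it cannot be applied as a black box because its hypothesis requires $A_\omega$ itself to be simple, which is generally false when $A$ is a Kirchberg algebra. The remedy is to re-run the proof of Theorem~\ref{F() pis} while relocating the input of 1-step approximate innerness (Theorem~\ref{1-step ai}) from $A_\omega$ to the Kirchberg algebra $A$ itself: given positive contractions $a,b\in F_\omega(A)$ with $\sp(b)\subset\sp(a)$, the injective $*$-homomorphisms $\phi_a,\phi_b$ and the associated nuclear c.p.c.\ map $\psi$ are constructed exactly as before, $\psi$ is lifted to a sequence of nuclear c.p.c.\ maps $\psi_n:C\to A$, Theorem~\ref{1-step ai} applied in $A$ itself produces approximating sequences $d_n,e_n\in A$, and semi-saturation of $A_\omega$ consolidates these into exact solutions, yielding the required non-unitary isometry $s\in F_\omega(A)$ with $s^*as=b$.

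\textbf{The main obstacle} is precisely this last step: the clean statement of Theorem~\ref{F() pis} fails to apply because $A_\omega$ need not be simple, and one must extract pure infiniteness from $A$ itself rather than from $A_\omega$. By contrast the semi-saturation conclusion is essentially automatic from combining Example~\ref{ex:ultra} with Proposition~\ref{F(A)-saturated}, and the verification that the cocycle action descends to a genuine group action on $F_\omega(A)$ is routine.
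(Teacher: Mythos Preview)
Your first two steps are correct, though the paper handles the cocycle differently: rather than verifying directly that $\ad(w_{g,h})$ acts trivially on $F_\omega(A)$ and that the $\eps$-test of Example~\ref{ex:ultra} goes through without the group law, it invokes the Packer--Raeburn stabilization trick together with the stability of the central sequence algebra to obtain an equivariant isomorphism $(F_\omega(A),\tilde{\alpha}_\omega)\cong(F_\omega(A\otimes\CK),\tilde{\delta}_\omega)$ for a genuine action $\delta$ on $A\otimes\CK$, thereby reducing outright to the case $w=\eins$. Both routes work; the paper's is more modular and avoids having to revisit Example~\ref{ex:ultra}.

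Your ``main obstacle'' in the third step rests on a false premise. When $A$ is simple and purely infinite, the ultrapower $A_\omega$ \emph{is} simple and purely infinite; the failure of simplicity of ultrapowers you may have in mind occurs for simple \cstar-algebras admitting traces (where the trace-kernel ideal is proper), not in the purely infinite setting. A direct argument in the unital case: given nonzero positive $a=[(a_n)]\in A_\omega$ with $\|a_n\|\geq\delta$ along $\omega$ and a positive contraction $b=[(b_n)]$, choose projections $p_n$ in the hereditary subalgebra of $(a_n-\delta/2)_+$ with $[p_n]=[\eins_A]$ in $K_0(A)$, isometries $v_n$ with $v_nv_n^*=p_n$, and set $x_n=(p_na_np_n)^{-1/2}v_nb_n^{1/2}$; then $x_n^*a_nx_n=b_n$ with $\|x_n\|\leq(2/\delta)^{1/2}$. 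Hence Theorem~\ref{F() pis} applies to $D=A_\omega$ as a black box, which is exactly what the paper does (with a footnote crediting the underlying fact to Kirchberg--Phillips). Your proposed workaround is therefore unnecessary, and as sketched it is also incomplete: the domain $C$ of $\psi$ lives in $A_\omega$ rather than in $A$, so ``lifting $\psi$ to nuclear c.p.c.\ maps $\psi_n:C\to A$'' does not parse without substantial additional work.
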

\begin{proof}
By the stability of the central sequence algebra\footnote{The way we apply it here is the combination of \cite[Remark 1.8]{Szabo16ssa} and \cite[Proposition 1.9]{BarlakSzabo15}. The general fact that the central sequence algebra is stable is due to Kirchberg \cite[Corollary 1.10]{Kirchberg04}, but the version stated in that paper does not apply directly in this equivariant context.} and the Packer-Raeburn stabilization trick \cite[Theorem 3.4]{PackerRaeburn89}, we have
\[
\big( F_\omega(A), \tilde{\alpha}_\omega \big) \cong \big( F_\omega(A\otimes\CK), \tilde{\delta}_\omega \big)
\]
for some genuine action $\delta: G\curvearrowright A\otimes\CK$. In particular, we may assume without loss of generality that the cocycle $w=\eins$ is trivial.
In this case, the claim follows from Example \ref{ex:ultra} and Proposition \ref{F(A)-saturated}.
If $A$ is moreover a Kirchberg algebra, then the rest of the claim follows from Theorem \ref{F() pis} and Corollary \ref{spi-sat}.\footnote{Originally, the proof of the latter claim is due to Kirchberg-Phillips \cite{KirchbergPhillips00}. See also \cite[Theorem 2.12]{Kirchberg04}.}
\end{proof}

We will now use the results from the second section and start obtaining our first main results.

\begin{prop} \label{fixed point csa}
Let $A$ be a Kirchberg algebra and $G$ a countable, discrete and amenable group. Let $(\alpha,w): G\curvearrowright A$ be a cocycle action.
Let $\omega$ be a free ultrafilter on $\IN$.
Then the fixed point algebra $F_\omega(A)^{\tilde{\alpha}_\omega}$ of the  induced action $\tilde{\alpha}_\omega: G\curvearrowright F_\omega(A)$ is simple and purely infinite.
\end{prop}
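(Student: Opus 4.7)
The plan is to combine the structural results established in Proposition \ref{F(A)-sat} with the ``amenable Rokhlin-type'' statement of Lemma \ref{fixed point}, which was designed precisely for this kind of application.

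As a preliminary step, I would invoke the Packer-Raeburn stabilization trick exactly as in the proof of Proposition \ref{F(A)-sat} to dispose of the cocycle. This gives a canonical equivariant identification $(F_\omega(A), \tilde{\alpha}_\omega) \cong (F_\omega(A\otimes\CK), \tilde{\delta}_\omega)$ for some genuine $G$-action $\delta$ on the Kirchberg algebra $A\otimes\CK$, so without loss of generality we may treat $\tilde{\alpha}_\omega$ as arising from a genuine action $\alpha: G\curvearrowright A$. In particular $\tilde{\alpha}_\omega$ is itself a genuine $G$-action on $F_\omega(A)$.

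Next, Proposition \ref{F(A)-sat} supplies all the structural ingredients: $F_\omega(A)$ is a unital, simple and purely infinite \cstar-algebra, and the induced action $\tilde{\alpha}_\omega: G \curvearrowright F_\omega(A)$ is saturated. What remains to feed into Lemma \ref{fixed point} is pointwise outerness of $\tilde{\alpha}_\omega$. Under the standing hypothesis (compatible with the main theorems of the section) that $\alpha$ is pointwise outer, each $\alpha_g$ for $g\neq 1_G$ is an outer automorphism of the separable simple \cstar-algebra $A$, and Theorem \ref{central seq outer} immediately upgrades this to outerness of $\tilde{\alpha}_\omega(g)$ on $F_\omega(A)$.

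With all hypotheses of Lemma \ref{fixed point} verified — $G$ is countable, discrete and amenable; $F_\omega(A)$ is unital, simple and purely infinite; and $\tilde{\alpha}_\omega$ is saturated and pointwise outer — the conclusion is immediate: $F_\omega(A)^{\tilde{\alpha}_\omega}$ is simple and purely infinite. The only nontrivial step is the transfer of outerness from $\alpha$ to $\tilde{\alpha}_\omega$, which is exactly what Theorem \ref{central seq outer} accomplishes; everything else is a bookkeeping application of the general machinery developed in the previous two sections.
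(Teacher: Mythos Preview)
Your argument is essentially the paper's argument in the special case where $\alpha$ is pointwise outer, and in that case it is correct. The issue is that Proposition~\ref{fixed point csa} is stated for an \emph{arbitrary} cocycle action $(\alpha,w)$ on a Kirchberg algebra, with no outerness hypothesis. Your sentence ``Under the standing hypothesis (compatible with the main theorems of the section) that $\alpha$ is pointwise outer'' inserts an assumption that is simply not there; this proposition is deliberately more general, and is in fact used in Theorem~\ref{equ-Oinf-absorption} without any outerness assumption on $\alpha$.

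The paper closes this gap by passing to a quotient group. One considers the normal subgroup $H=\set{h\in G \mid \alpha_h~\text{is inner}}$; Theorem~\ref{central seq outer} shows that $H$ is exactly the kernel of $\tilde{\alpha}_\omega$, so one obtains a genuine pointwise outer action $\beta: G/H\curvearrowright F_\omega(A)$ with $\beta_{gH}=\tilde{\alpha}_{\omega,g}$. By Proposition~\ref{semi-sat-permanence}\ref{semi-sat-permanence:4} this $\beta$ is semi-saturated, hence saturated by Corollary~\ref{spi-sat}, and now Lemma~\ref{fixed point} applies to $\beta$. Since $F_\omega(A)^{\tilde{\alpha}_\omega}=F_\omega(A)^{\beta}$ by construction, the conclusion follows. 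Your use of Packer--Raeburn and Proposition~\ref{F(A)-sat} for the structural input is fine; what is missing is precisely this reduction-to-outer step.
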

\begin{proof}
Using a known trick from \cite[Proof of Theorem 3.1]{Matsumoto88}, we will reduce the general case to the pointwise outer case.
Consider the normal subgroup $H=\set{ h\in G ~|~ \alpha_h~\text{is inner}}\subset G$.
It follows from Theorem \ref{central seq outer} that $H$ coincides with the kernel of $\tilde{\alpha}_\omega$.
The induced action $\beta: G/H\curvearrowright F_\omega(A),~ \beta_{gH}=\tilde{\alpha}_{\omega,g}$ is then pointwise outer, and moreover semi-saturated by Proposition \ref{semi-sat-permanence}\ref{semi-sat-permanence:4}. 
Since $F_\omega(A)$ is simple and purely infinite by Theorem \ref{F() pis}, it follows from Corollary \ref{spi-sat} that $\beta$ is saturated, and by Lemma \ref{fixed point} that $F_\omega(A)^{\beta}$ is simple and purely infinite.
Observing that the fixed point algebras of $\tilde{\alpha}_\omega$ and of $\beta$ coincide by definition, this shows our claim.
\end{proof}

\begin{theorem} \label{equ-Oinf-absorption}
Let $A$ be a Kirchberg algebra and $G$ a countable, discrete and amenable group. Let $(\alpha,w): G\curvearrowright A$ be a cocycle action. Then $(\alpha,w)$ is strongly cocycle conjugate to $(\alpha\otimes\id_{\CO_\infty},w\otimes\eins)$.
\end{theorem}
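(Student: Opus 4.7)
The approach is to invoke the equivariant McDuff-type absorption theorem for strongly self-absorbing \cstar-dynamical systems. Since $\CO_\infty$ is strongly self-absorbing as a \cstar-algebra, the trivial $G$-action $\id_{\CO_\infty}$ forms a strongly self-absorbing $G$-\cstar-dynamical system in the sense of \cite{Szabo16ssa}. For a separable cocycle action $(\alpha,w)\colon G\curvearrowright A$ and any strongly self-absorbing action $(\CD,\gamma)$, the general absorption criterion (see \cite{Szabo16ssa,Szabo16ssa2}) asserts that $(\alpha,w)\scc(\alpha\otimes\gamma,w\otimes\eins)$ if and only if there exists a unital equivariant $*$-homomorphism $(\CD,\gamma)\to(F_\omega(A),\tilde{\alpha}_\omega)$.

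Specializing to $\gamma=\id_{\CO_\infty}$, a unital equivariant $*$-homomorphism $(\CO_\infty,\id)\to(F_\omega(A),\tilde{\alpha}_\omega)$ is the same thing as a unital $*$-homomorphism $\CO_\infty\to F_\omega(A)^{\tilde{\alpha}_\omega}$ into the fixed-point algebra of the induced action. By Proposition \ref{fixed point csa}, this fixed-point algebra is a unital, simple and purely infinite \cstar-algebra. Hence by Cuntz's embedding theorem \cite{Cuntz77}, $\CO_\infty$ admits a unital embedding into $F_\omega(A)^{\tilde{\alpha}_\omega}$; composing with the inclusion $F_\omega(A)^{\tilde{\alpha}_\omega}\hookrightarrow F_\omega(A)$ produces the desired equivariant map, and the theorem follows.

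The actual substance of this proof is therefore entirely packaged into Proposition \ref{fixed point csa}, which in turn rests on the saturation and pure infiniteness of $F_\omega(A)$ established in Proposition \ref{F(A)-sat} together with the F\o{}lner-type averaging trick carried out in Lemma \ref{fixed point} (applied, via the reduction by the kernel $H\subset G$ of the action on $F_\omega(A)$, to the pointwise outer quotient action of $G/H$). Once these inputs and the standard absorption criterion are in place, there is no further technical obstacle: the hardest part of the argument had already been done before the theorem was stated, and the present proof is essentially one line of reasoning on top of it.
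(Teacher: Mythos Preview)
Your proof is correct and follows essentially the same approach as the paper: apply Proposition \ref{fixed point csa} to see that $F_\omega(A)^{\tilde{\alpha}_\omega}$ is simple and purely infinite, embed $\CO_\infty$ unitally, and invoke the equivariant McDuff-type absorption criterion from \cite{Szabo16ssa}. The paper's proof is simply a terser version of what you wrote, citing \cite[Corollary 3.7]{Szabo16ssa} directly for the final step.
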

\begin{proof}
By Proposition \ref{fixed point csa}, the fixed point algebra of $\tilde{\alpha}_\omega: G\curvearrowright F_\omega(A)$ is simple and purely infinite. In particular, we can find some unital embedding of $\CO_\infty$ into $F_\omega(A)^{\tilde{\alpha}_\omega}$. The claim follows from \cite[Corollary 3.7]{Szabo16ssa}.
\end{proof}

\begin{theorem} \label{model-absorption}
Let $A$ be a Kirchberg algebra and $G$ a countable, discrete and amenable group. Let $(\alpha,w): G\curvearrowright A$ be a pointwise outer cocycle action.
For every $n\in\IN$, let $u_n: G\to\CU(\CO_\infty)$ be a $K$-trivial unitary representation.
Then $(\alpha,w)$ is strongly cocycle conjugate to 
\[
\Big( \alpha\otimes\bigotimes_{n\in\IN} \ad(u_n), w\otimes\eins^{\otimes\infty} \Big): G\curvearrowright A\otimes\bigotimes_\IN\CO_\infty.
\]
\end{theorem}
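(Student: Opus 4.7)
The strategy is to exhibit an equivariant unital $*$-homomorphism from the infinite-tensor-product model action into the generalized central sequence dynamical system $(F_\omega(A),\tilde{\alpha}_\omega)$, and then to invoke the absorption theorem for strongly self-absorbing $G$-\cstar-dynamical systems from \cite{Szabo16ssa, Szabo16ssa2}, in the same spirit as the proof of Theorem \ref{equ-Oinf-absorption}. By the Packer-Raeburn stabilization trick, exactly as indicated in the proof of Proposition \ref{F(A)-sat}, we may reduce to the case of a genuine pointwise outer action, i.e., $w=\eins$.

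First, I check that the hypotheses of Corollary \ref{model-sat-embedding} hold for $(F_\omega(A),\tilde{\alpha}_\omega)$: Proposition \ref{F(A)-sat} gives saturation together with simplicity and pure infiniteness of $F_\omega(A)$, and Theorem \ref{central seq outer} applied to each $g\in G\setminus\set{1_G}$ gives pointwise outerness of $\tilde{\alpha}_\omega$. Thus, for every $n\in\IN$, Corollary \ref{model-sat-embedding} produces an equivariant unital $*$-homomorphism $\phi_n:(\CO_\infty,\ad(u_n))\to(F_\omega(A),\tilde{\alpha}_\omega)$.

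The main technical step is to assemble these $\phi_n$ into a single equivariant unital $*$-homomorphism $\Phi:\bigl(\bigotimes_{n\in\IN}\CO_\infty,\bigotimes_{n\in\IN}\ad(u_n)\bigr)\to(F_\omega(A),\tilde{\alpha}_\omega)$ with pairwise commuting images. I would proceed by induction. Having constructed equivariant pairwise commuting $\phi_1,\dots,\phi_n$, set $C_n=\mathrm{C}^*\bigl(\bigcup_{k\leq n}\phi_k(\CO_\infty)\bigr)$, a separable, simple, nuclear, unital, $G$-invariant subalgebra of $F_\omega(A)$. By Propositions \ref{semi-sat-permanence}\ref{semi-sat-permanence:5} and \ref{F(A)-saturated} together with Theorem \ref{F() pis}, the induced action of $G$ on $F(C_n,F_\omega(A))$ is saturated and has simple, purely infinite underlying \cstar-algebra. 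An adaptation of the Kishimoto-Akemann-Pedersen argument from the proof of Theorem \ref{central seq outer}---requiring that the approximately central bounded sequence witnessing outerness additionally approximately commute with a countable dense subset of $C_n$---shows the induced action is still pointwise outer. Corollary \ref{model-sat-embedding} then yields an equivariant unital $*$-homomorphism from $(\CO_\infty,\ad(u_{n+1}))$ into $F(C_n,F_\omega(A))$ with its induced action; lifting it back to $F_\omega(A)$ produces $\phi_{n+1}$ whose image approximately commutes with each previous $\phi_k(\CO_\infty)$. A standard Kirchberg $\eps$-test passes from these approximate commutations to honest ones, delivering the desired $\Phi$.

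Since $\bigotimes_{n\in\IN}\ad(u_n)$ is a strongly self-absorbing $G$-action on $\CO_\infty$, the existence of $\Phi$ combined with the absorption theorem for strongly self-absorbing actions from \cite{Szabo16ssa, Szabo16ssa2} (applied in the same manner as in the proof of Theorem \ref{equ-Oinf-absorption}) yields the strong cocycle conjugacy $(\alpha,w)\scc\bigl(\alpha\otimes\bigotimes_{n\in\IN}\ad(u_n),w\otimes\eins^{\otimes\infty}\bigr)$. The main obstacle is the inductive preservation of pointwise outerness on successive generalized commutants $F(C_n,F_\omega(A))$, since this is not formally contained in Theorem \ref{central seq outer} and requires the Kishimoto-type modification sketched above.
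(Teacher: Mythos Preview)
Your overall strategy---embed the model action into $(F_\omega(A),\tilde{\alpha}_\omega)$ and invoke the equivariant McDuff theorem---matches the paper's. However, there is a genuine gap and an unnecessary detour.

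\textbf{The gap.} You assert that $\bigotimes_{n\in\IN}\ad(u_n)$ is strongly self-absorbing, but this is false in general: if, say, $u_1$ is faithful and $u_n=\eins$ for $n\geq 2$, the infinite tensor product is just $\ad(u_1)\otimes\id_{\CO_\infty^{\otimes\infty}}$, which has no reason to be self-absorbing. The paper handles this by first observing that one may assume without loss of generality that every representation appearing among the $u_n$ appears infinitely many times; \emph{then} \cite[Proposition 5.3]{Szabo16ssa2} applies. The reduction is legitimate because once the theorem is proved for the ``repeated'' sequence $\gamma$, one has $\alpha\scc\alpha\otimes\gamma$ and also $\alpha\otimes\bigotimes_n\ad(u_n)\scc\alpha\otimes\bigotimes_n\ad(u_n)\otimes\gamma$ (the latter action is still pointwise outer on a Kirchberg algebra), and $\gamma\cong\bigotimes_n\ad(u_n)\otimes\gamma$, so both sides coincide.

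\textbf{The detour.} Your inductive scheme through iterated generalized commutants $F(C_n,F_\omega(A))$ is much heavier than what the paper does. The paper simply notes that, by a standard reindexation argument in the ultrapower, it suffices to embed each single factor $(\CO_\infty,\ad(u_n))$ equivariantly into $(F_\omega(A),\tilde{\alpha}_\omega)$---which is exactly your Corollary \ref{model-sat-embedding} step. Reindexation then automatically produces pairwise commuting copies, so the whole inductive machinery is bypassed. Your sketch for preserving pointwise outerness on $F(C_n,F_\omega(A))$ is also not quite right as stated: the Kishimoto--Akemann--Pedersen argument in Theorem \ref{central seq outer} operates at the level of $A$, not of $F_\omega(A)$, so asking the approximating sequence to additionally commute with a dense subset of $C_n\subset F_\omega(A)$ does not make sense without first lifting $C_n$ to sequences in $A$---which is precisely the reindexation argument you were trying to avoid.
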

\begin{proof}
We may assume without loss of generality that any unitary representation occuring in the sequence $u_n$ is occuring infinitely many times.
Then it follows from \cite[Proposition 5.3]{Szabo16ssa2} that
\[
\gamma=\bigotimes_{n\in\IN} \ad(u_n): G\curvearrowright \bigotimes_\IN\CO_\infty
\]
is a strongly self-absorbing action.
By the equivariant McDuff-type theorem \cite[Theorem 3.6]{Szabo16ssa} for strongly self-absorbing actions, it suffices to show that there exists an equivariant and unital $*$-homomorphism from $\Big( \bigotimes_\IN \CO_\infty,\bigotimes_{n\in\IN} \ad(u_n)\Big)$ into $\big( F_\omega(A), \tilde{\alpha}_\omega \big)$.
By a standard reindexation argument, it suffices to show that for every $n\in\IN$, there exists an equivariant and unital $*$-homomorphism from $\big( \CO_\infty, \ad(u_n)\big)$ into $\big( F_\omega(A), \tilde{\alpha}_\omega \big)$.
The action $\tilde{\alpha}_\omega$ is saturated by Proposition \ref{F(A)-sat}, and it follows from Theorem \ref{central seq outer} that it is pointwise outer.
As $F_\omega(A)$ is simple and purely infinite, the claim follows follows from Corollary \ref{model-sat-embedding}.
\end{proof}

\begin{example} \label{the-model-Oinf}
Let $G$ be a countable, discrete and amenable group.
Since $\cstar(G)$ unitally embeds into $\CO_2$, there exists a unitary representation $v: G\to\CU(\CO_2)$ that induces a $*$-monomorphism on $\cstar(G)$.
Fix some non-zero $*$-homomorphism $\iota:\CO_2\to\CO_\infty$.
Then $u_g=\iota(v_g)+\eins-\iota(\eins)$ yields a unitary representation $u: G\to\CU(\CO_\infty)$, which by construction is $K$-trivial (in fact $KK$-trivial) and still induces a $*$-monomorphism on $\cstar(G)$.
Then we may consider the action
\[
\gamma=\bigotimes_\IN\ad(u): G\curvearrowright\bigotimes_\IN\CO_\infty\cong\CO_\infty,
\]
which for our purposes can be regarded as {\it the} standard model action on $\CO_\infty$. Such an action is strongly self-absorbing by \cite[Proposition 5.3]{Szabo16ssa2}.
\end{example}

The following two statement are then direct consequences of Theorem \ref{model-absorption}:

\begin{cor} \label{the-model-absorption}
Let $A$ be a Kirchberg algebra and $G$ a countable, discrete and amenable group.
Let $(\alpha,w): G\curvearrowright A$ be a pointwise outer cocycle action.
Let $\gamma$ be the action constructed in Example \ref{the-model-Oinf}.
Then $(\alpha,w)$ is strongly cocycle conjugate to $(\alpha\otimes\gamma,w\otimes\eins)$.
\end{cor}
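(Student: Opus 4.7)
The plan is simply to invoke Theorem \ref{model-absorption} with a constant choice of representation. Specifically, let $u: G \to \CU(\CO_\infty)$ be the unitary representation constructed in Example \ref{the-model-Oinf}, which was explicitly noted to be $KK$-trivial and hence in particular $K$-trivial. Set $u_n = u$ for every $n \in \IN$; this is a sequence of $K$-trivial unitary representations of $G$ on $\CO_\infty$, so the hypotheses of Theorem \ref{model-absorption} are met.

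Applying that theorem to the pointwise outer cocycle action $(\alpha, w)$ with this sequence yields strong cocycle conjugacy of $(\alpha,w)$ with
\[
\Big(\alpha \otimes \bigotimes_{n \in \IN} \ad(u_n), \, w \otimes \eins^{\otimes \infty}\Big): G \curvearrowright A \otimes \bigotimes_\IN \CO_\infty.
\]
By the very definition of $\gamma$ in Example \ref{the-model-Oinf}, the infinite tensor product action $\bigotimes_{n \in \IN} \ad(u_n) = \bigotimes_\IN \ad(u)$ is precisely $\gamma$, acting on $\bigotimes_\IN \CO_\infty \cong \CO_\infty$. Under this identification, the cocycle $w \otimes \eins^{\otimes \infty}$ is identified with $w \otimes \eins$, completing the proof.

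There is no substantive obstacle here since all the work has been done in Theorem \ref{model-absorption} and Example \ref{the-model-Oinf}; the corollary is just the specialization in which the varying sequence $(u_n)_n$ is replaced by a single fixed representation whose infinite tensor power recovers the standard model action $\gamma$ on $\CO_\infty$.
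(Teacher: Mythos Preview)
Your proposal is correct and matches the paper's approach: the paper simply states that this corollary is a direct consequence of Theorem \ref{model-absorption}, and your argument spells out precisely that specialization with $u_n=u$ for all $n$.
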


\begin{cor} \label{model-uniqueness}
Let $G$ be a countable, discrete and amenable group.
For every $n\in\IN$ and $i=0,1$, let $u_n^{(i)}: G\to\CU(\CO_\infty)$ be a $K$-trivial unitary representation.
If the two actions
\[
\gamma^{(i)}=\bigotimes_{n\in\IN}\ad(u_n^{(i)}): G\curvearrowright\bigotimes_\IN\CO_\infty, \quad i=1,2
\]
are both pointwise outer, then they are strongly cocycle conjugate.
In particular, the strong cocycle conjugacy class of the action in Example \ref{the-model-Oinf} does not depend on the choices to construct it.
\end{cor}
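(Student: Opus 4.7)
The plan is to apply Theorem \ref{model-absorption} symmetrically to each of $\gamma^{(1)}$ and $\gamma^{(2)}$, and then compare via the tensor flip. First, I would view $\gamma^{(1)}$ as a pointwise outer action of $G$ on the Kirchberg algebra $\bigotimes_\IN\CO_\infty\cong\CO_\infty$ and apply Theorem \ref{model-absorption} using the $K$-trivial unitary representations $(u_n^{(2)})_{n\in\IN}$. This yields
\[
\gamma^{(1)} \scc \gamma^{(1)}\otimes\gamma^{(2)}.
\]
Symmetrically, applying Theorem \ref{model-absorption} to $\gamma^{(2)}$ with the representations $(u_n^{(1)})_{n\in\IN}$ gives
\[
\gamma^{(2)} \scc \gamma^{(2)}\otimes\gamma^{(1)}.
\]

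Next, I would observe that the tensor flip isomorphism of $\bigotimes_\IN\CO_\infty\otimes\bigotimes_\IN\CO_\infty$ provides a genuine conjugacy between $\gamma^{(1)}\otimes\gamma^{(2)}$ and $\gamma^{(2)}\otimes\gamma^{(1)}$; in particular, these two actions are strongly cocycle conjugate. Chaining this with the two relations above via transitivity of $\scc$ then delivers $\gamma^{(1)}\scc\gamma^{(2)}$, proving the first assertion.

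For the ``in particular'' clause concerning Example \ref{the-model-Oinf}, the only remaining thing to verify is that such a model action is in fact pointwise outer, so that the first part of the corollary applies to it. This is routine: since the representation $u: G\to\CU(\CO_\infty)$ in Example \ref{the-model-Oinf} restricts to a $*$-monomorphism on $\cstar(G)$, each $\ad(u_g)$ with $g\neq 1_G$ is outer, and the infinite tensor product $\bigotimes_\IN\ad(u)$ inherits pointwise outerness by a standard argument. There is no substantive obstacle here; the bulk of the work is carried out already in Theorem \ref{model-absorption}, and the deduction of the corollary is little more than a symmetric use of absorption combined with the flip.
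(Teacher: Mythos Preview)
Your main argument is correct and is exactly the deduction the paper has in mind: Theorem \ref{model-absorption} applied symmetrically gives $\gamma^{(1)}\scc\gamma^{(1)}\otimes\gamma^{(2)}$ and $\gamma^{(2)}\scc\gamma^{(2)}\otimes\gamma^{(1)}$, and the tensor flip closes the loop.

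There is, however, a genuine slip in your final paragraph. You write that ``each $\ad(u_g)$ with $g\neq 1_G$ is outer''; this is false, since $\ad(u_g)$ is inner by definition. What the faithfulness of $u$ on $\cstar(G)$ actually gives you is that $u_g$ is not a scalar for $g\neq 1_G$, so $\ad(u_g)$ is a \emph{non-trivial} inner automorphism. Pointwise outerness of the infinite tensor product then follows from a different (though still standard) argument: if $\bigotimes_\IN\ad(u_g)$ were implemented by some unitary $V\in\bigotimes_\IN\CO_\infty$, one approximates $V$ within finitely many tensor factors and observes that the restriction of the automorphism to the remaining tail would have to be approximately trivial, forcing $u_g$ to be approximately central and hence scalar---a contradiction. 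So the conclusion stands, but the justification you gave for it is not right.
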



\section{More about the model action on $\CO_\infty$}

Consider the model $G$-action $\gamma$ on $\CO_\infty$ considered in Example \ref{the-model-Oinf}.
In this section, we investigate under what abstract conditions a pointwise outer $G$-action $\alpha$ on a strongly self-absorbing Kirchberg algebra $\CD\cong\CO_\infty\otimes\CD$ is strongly cocycle conjugate to $\gamma\otimes\id_\CD$.
We note that some key facts observed in this section, which our proof relies on, were observed prior by Izumi-Matui in \cite[Section 6]{IzumiMatui10}. 

In order to understand what is necessary for such an action $\alpha$ to be strongly cocycle conjugate to one of the model actions, we first need some preparation:

\begin{prop} \label{model-KK-1}
Let $u: G\to\CU(\CO_\infty)$ be a $KK$-trivial unitary representation. 
Then the canonical inclusion $\cstar(G)\subset \CO_\infty\rtimes_{\ad(u)} G$ induces a $KK$-equivalence.
\end{prop}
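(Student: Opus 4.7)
My plan is to exploit the fact that $\alpha := \ad(u)$ is an inner action, which causes the crossed product to split as a tensor product, and to then use $KK$-triviality of $u$ to identify the class of the canonical inclusion with that of a known $KK$-equivalence. Setting $\tilde{\lambda}_g := u_g^* \lambda_g^\alpha \in \CM(\CO_\infty \rtimes_\alpha G)$, a direct calculation (using $\lambda_g^\alpha a (\lambda_g^\alpha)^* = u_g a u_g^*$ and the representation property of $u$) gives that $\tilde{\lambda}_g$ commutes with $\CO_\infty$ and $\tilde{\lambda}_g \tilde{\lambda}_h = \tilde{\lambda}_{gh}$. Since $G$ is amenable, this yields a canonical $*$-isomorphism
\[
\Psi: \CO_\infty \rtimes_\alpha G \xrightarrow{\cong} \CO_\infty \otimes \cstar(G), \quad \Psi(a \lambda_g^\alpha) = a u_g \otimes \lambda_g.
\]
Under $\Psi$, the canonical inclusion $j: \cstar(G) \hookrightarrow \CO_\infty \rtimes_\alpha G$, $\lambda_g \mapsto \lambda_g^\alpha$, corresponds to the $*$-homomorphism $\iota: \cstar(G) \to \CO_\infty \otimes \cstar(G)$, $\lambda_g \mapsto u_g \otimes \lambda_g$. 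It therefore suffices to show that $\iota$ is a $KK$-equivalence.

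Next I would factor $\iota$ as $\iota = (\hat{u} \otimes \id_{\cstar(G)}) \circ \Delta$, where $\Delta: \cstar(G) \to \cstar(G) \otimes \cstar(G)$ is the coproduct $\lambda_g \mapsto \lambda_g \otimes \lambda_g$ and $\hat{u}: \cstar(G) \to \CO_\infty$ is the integrated form of $u$. The $KK$-triviality hypothesis on $u$ is precisely the statement that $[\hat{u}] = [\chi']$ in $KK(\cstar(G), \CO_\infty)$, where $\chi': \cstar(G) \to \IC \hookrightarrow \CO_\infty$ is the augmentation character composed with the unital embedding. Since $\cstar(G)$ is nuclear (as $G$ is amenable), tensoring on the right with $\id_{\cstar(G)}$ is a well-defined operation on $KK$, so $[\hat{u} \otimes \id] = [\chi' \otimes \id]$ in $KK(\cstar(G) \otimes \cstar(G), \CO_\infty \otimes \cstar(G))$. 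Taking the Kasparov product with $[\Delta]$ then yields $[\iota] = [(\chi' \otimes \id) \circ \Delta]$.

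Finally, the composition $(\chi' \otimes \id) \circ \Delta$ sends $\lambda_g \mapsto \eins_{\CO_\infty} \otimes \lambda_g$; it is precisely the canonical unital inclusion $a \mapsto \eins_{\CO_\infty} \otimes a$, which factors as $\cstar(G) \cong \IC \otimes \cstar(G) \xrightarrow{\iota_\IC \otimes \id_{\cstar(G)}} \CO_\infty \otimes \cstar(G)$, where $\iota_\IC: \IC \to \CO_\infty$ is the unital embedding. By Kirchberg--Phillips, $\iota_\IC$ is a $KK$-equivalence, and since $KK$-equivalences are preserved under tensoring with $\id$ on a nuclear algebra, the whole composition is a $KK$-equivalence. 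Hence $[\iota]$ is the class of a $KK$-equivalence, as required.

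The argument uses no deep input beyond the inner-action isomorphism and the Kirchberg--Phillips fact $\CO_\infty \sim_{KK} \IC$. The only real subtlety is bookkeeping: correctly identifying the canonical inclusion after passing through $\Psi$ (which is what justifies the factorization $\iota = (\hat{u} \otimes \id)\circ \Delta$) and then cleanly reducing the problem to the $KK$-triviality hypothesis via the Kasparov calculus. I do not anticipate any technical obstacle.
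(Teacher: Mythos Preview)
Your proof is correct and follows essentially the same approach as the paper: both identify the crossed product with $\CO_\infty\otimes\cstar(G)$ via the same isomorphism, observe that the canonical inclusion becomes $\lambda_g\mapsto u_g\otimes\lambda_g$, and then use $KK$-triviality of $u$ to identify this with the second-factor embedding $\lambda_g\mapsto\eins\otimes\lambda_g$, which is a $KK$-equivalence because $\IC\hookrightarrow\CO_\infty$ is. Your explicit factorization through the coproduct $\Delta$ makes more transparent the step the paper handles in one line, but the content is the same.
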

\begin{proof}
We have a natural isomorphism
\[
\psi: \CO_\infty\rtimes_{\ad(u)} G\to \CO_\infty\otimes \cstar(G)
\]
via
\[
\CO_\infty\ni a\mapsto a\otimes\eins ~\text{ and }~ \lambda^{\ad(u)}_g\mapsto u_g\otimes\lambda_g.
\]
As $u$ is $KK$-trivial, this implies
\[
KK(g\mapsto u_g\otimes\lambda_g) = KK(g\mapsto \eins_{\CO_\infty}\otimes\lambda_g)=KK(\eins_{\CO_\infty}\otimes\id_{\mathrm{C}^*(G)}).
\]
The third $KK$-element is represented by the second-factor embedding of $\cstar(G)$ into $\CO_\infty\otimes\cstar(G)$, and is therefore a $KK$-equivalence.
But this implies that the $*$-homomorphism $\mu: \cstar(G)\to\CO_\infty\otimes \cstar(G)$ induced by the unitary representation $g\mapsto u_g\otimes\lambda_g$ is also a $KK$-equivalence.
Note that $\psi^{-1}\circ\mu: \cstar(G)\to \CO_\infty\rtimes_{\ad(u)} G$ is nothing but the canonical inclusion, and therefore also induces a $KK$-equivalence.
\end{proof}

\begin{cor} \label{model-KK-2}
If $\gamma$ is any model action as in Example \ref{the-model-Oinf}, then the canonical inclusion $\cstar(G)\subset\CO_\infty\rtimes_\gamma G$ induces a $KK$-equivalence.
\end{cor}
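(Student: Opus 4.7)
The plan is to realize the action $\gamma$ as an inductive limit of inner actions on finite tensor powers and reduce to Proposition \ref{model-KK-1} at each finite stage. Concretely, I would write $\bigotimes_\IN \CO_\infty$ as the inductive limit $\varinjlim_N \bigotimes_{n=1}^N \CO_\infty$ along the canonical injective unital connecting maps. Under this identification, $\gamma$ becomes the inductive limit of its restrictions $\gamma_N := \bigotimes_{n=1}^N \ad(u) = \ad(u^{\otimes N})$, each of which is \emph{inner} on $\bigotimes_{n=1}^N \CO_\infty \cong \CO_\infty$. Because $G$ is amenable and the full crossed product is continuous with respect to injective inductive limits of $G$-\cstar-algebras, this produces
\[
\CO_\infty \rtimes_\gamma G \;\cong\; \varinjlim_N A_N, \qquad A_N := \Bigl(\bigotimes_{n=1}^N \CO_\infty\Bigr) \rtimes_{\gamma_N} G,
\]
and the canonical inclusion $\iota: \cstar(G) \hookrightarrow \CO_\infty \rtimes_\gamma G$ is realized as the inductive limit of the level-wise canonical inclusions $\iota_N: \cstar(G) \to A_N$ coming from the constant source system.

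Next I would verify that each $u^{\otimes N}$, viewed as a unitary representation into $\bigotimes_{n=1}^N \CO_\infty \cong \CO_\infty$, is again $KK$-trivial. The associated $*$-homomorphism $\cstar(G) \to \bigotimes_{n=1}^N \CO_\infty$ factors as the comultiplication $\cstar(G) \to \cstar(G)^{\otimes N}$ induced by the diagonal $g \mapsto (g,\dots,g)$, followed by the $N$-fold tensor $u^{\otimes N}: \cstar(G)^{\otimes N} \to \bigotimes_{n=1}^N \CO_\infty$. Multiplicativity of the Kasparov product under minimal tensor products, together with the hypothesis $[u]=[\chi]$ from the definition of $KK$-triviality, collapses this $KK$-class to the character $\cstar(G) \to \IC \hookrightarrow \bigotimes_{n=1}^N \CO_\infty$. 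Thus Proposition \ref{model-KK-1} applies to each $u^{\otimes N}$, so every $\iota_N$ is a $KK$-equivalence.

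Finally, to pass to the inductive limit I would invoke the UCT. By Tu's theorem \cite{Tu99}, crossed products of bootstrap \cstar-algebras by countable amenable groups again satisfy the UCT, so $\cstar(G)$, each $A_N$, and $\CO_\infty \rtimes_\gamma G$ all lie in the bootstrap class. Since $K_*$ is continuous under \cstar-algebraic inductive limits and each $K_*(\iota_N)$ is an isomorphism, it follows that $K_*(\iota)$ is an isomorphism as well; the UCT then upgrades this to a $KK$-equivalence. The main obstacle I would anticipate is simply the careful bookkeeping that identifies $\gamma$ as an inductive limit of inner actions compatibly with crossed products and with the inclusions $\iota_N$; once this compatibility is in place, the rest of the $KK$-theoretic input is essentially forced by the level-wise application of Proposition \ref{model-KK-1}.
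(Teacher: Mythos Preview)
Your proof is correct and follows essentially the same strategy as the paper: realize $\gamma$ as the inductive limit of the inner actions $\ad(u^{\otimes N})$, apply Proposition~\ref{model-KK-1} at each finite stage (using that $u^{\otimes N}$ is again $KK$-trivial), and then pass to the limit. The only difference lies in how you take the limit in $KK$-theory. The paper invokes \cite[Proposition~2.4]{Dadarlat09}, which says directly that if the maps from a fixed \cstar-algebra into all building blocks of an inductive system are $KK$-equivalences, then so is the map into the limit; this avoids any appeal to the UCT. Your route---continuity of $K$-theory plus the UCT via Tu's theorem---is equally valid here and perhaps more familiar, but it imports a heavier external result (Tu) that the paper's argument does not need.
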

\begin{proof}
By our assumptions on $\gamma$, there exists a $KK$-trivial unitary representation $u: G \to \CU(\CO_\infty)$ such that 
\[
(\CO_\infty,\gamma) \cong \lim_{\longrightarrow} \big( \CO_\infty^{\otimes n}, \ad(u^{\otimes n}) \big).
\]
We thus have some commutative diagram of the form
\[
\xymatrix@R+5mm@C+5mm{
\CO_\infty\rtimes_{\ad(u)} G \ar[r] & (\CO_\infty^{\otimes 2})\rtimes_{\ad(u^{\otimes 2})} G \ar[r] & \cdots\cdots \ar[r] & \CO_\infty\rtimes_\gamma G \\
\cstar(G) \ar[u] \ar[ur] \ar[urr] \ar[urrr] &&&
}
\]
where the maps from bottom to top are the canonical inclusions.
By Proposition \ref{model-KK-1}, every such map into one of the building blocks induces a $KK$-equivalence, as each representation $u^{\otimes n}$ is $KK$-trivial.
But then the inclusion $\cstar(G)\subset\CO_\infty\rtimes_\alpha G$ must also induce a $KK$-equivalence by virtue of this inductive limit structure and \cite[Proposition 2.4]{Dadarlat09}.
\end{proof}

\begin{defi}[see {\cite[Definition 1.4(iv)]{Szabo17ssa3}}]
Let $G$ be a countable, discrete group.
Let $(A,\alpha)$ and $(B,\beta)$ be two unital $G$-\cstar-dynamical systems.
We say that $\alpha$ and $\beta$ are very strongly cocycle conjugate, if there exists an isomorphism $\phi: A\to B$, a $\beta$-cocycle $\set{w_g}_{g\in G}\subset\CU(B)$ such that $\beta^w=\phi\circ\alpha\circ\phi^{-1}$, and such that moreover $w_g=\lim_{t\to\infty} v_t\alpha_g(v_t^*)$ for all $g\in G$, where $\set{v_t}_{t\geq 0}\subset\CU(B)$ is a continuous one-parameter family of unitaries with $v_0=\eins$.
We write $\alpha\vscc\beta$. A $\beta$-cocycle $\set{w_g}_{g\in G}$ with this property is called an asymptotic coboundary.
\end{defi}

\begin{prop} \label{model-KK-condition}
Let $\gamma$ be a model action as in Example \ref{the-model-Oinf}.
Let $\alpha: G\curvearrowright\CO_\infty$ be an action. 
\begin{enumerate}[label=\textup{(\arabic*)},leftmargin=*]
\item Suppose $\alpha\scc\gamma$. Then the canonical inclusion
\[
\mathrm{C}^*(G)\subset\CO_\infty\rtimes_\alpha G
\]
induces a $KL$-equivalence. \label{model-KK:1}
\item Suppose $\alpha\vscc\gamma$. Then the canonical inclusion
\[
\mathrm{C}^*(G)\subset\CO_\infty\rtimes_\alpha G
\]
induces a $KK$-equivalence. \label{model-KK:2}
\end{enumerate} 
\end{prop}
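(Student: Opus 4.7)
The plan is to transport the $KK$-equivalence of Corollary \ref{model-KK-2} through the cocycle conjugacy, exploiting the fact that the cocycle is an approximate (resp.\ asymptotic) coboundary to obtain an approximate (resp.\ asymptotic) unitary equivalence of the associated inclusions of $\cstar(G)$. First I would unpack $\alpha\scc\gamma$: one obtains an isomorphism $\phi:\CO_\infty\to\CO_\infty$, a $\gamma$-cocycle $\{w_g\}_{g\in G}$ with $\gamma^w=\phi\circ\alpha\circ\phi^{-1}$, and a sequence of unitaries $v_n\in\CO_\infty$ with $w_g=\lim_n v_n\gamma_g(v_n^*)$ for every $g\in G$. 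Composing the induced isomorphism $\CO_\infty\rtimes_\alpha G\to\CO_\infty\rtimes_{\gamma^w} G$ with the standard cocycle-induced isomorphism $\CO_\infty\rtimes_{\gamma^w} G\to\CO_\infty\rtimes_\gamma G$ that sends $\lambda^{\gamma^w}_g$ to $w_g\lambda^\gamma_g$, the canonical inclusion $\iota_\alpha:\cstar(G)\to\CO_\infty\rtimes_\alpha G$ is identified with the twisted inclusion $\iota_w: g\mapsto w_g\lambda^\gamma_g$ in $\CO_\infty\rtimes_\gamma G$.

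The central computation is then the following. Writing $\iota_\gamma: g\mapsto\lambda^\gamma_g$ for the canonical inclusion, and using the commutation relation $\lambda^\gamma_g v_n^*=\gamma_g(v_n^*)\lambda^\gamma_g$ inside the crossed product, we obtain
\[
\ad(v_n)\bigl(\iota_\gamma(g)\bigr)=v_n\lambda^\gamma_g v_n^*=v_n\gamma_g(v_n^*)\lambda^\gamma_g\xrightarrow{n\to\infty} w_g\lambda^\gamma_g=\iota_w(g).
\]
Since the $\lambda^\gamma_g$ linearly span a dense subalgebra of $\cstar(G)$ and all maps involved are contractive, a trivial density argument upgrades this to point-norm convergence $\ad(v_n)\circ\iota_\gamma\to\iota_w$ on all of $\cstar(G)$. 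Hence $\iota_w$ and $\iota_\gamma$ are approximately unitarily equivalent in $\CO_\infty\rtimes_\gamma G$.

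For (1), approximate unitary equivalence of $*$-homomorphisms between separable \cstar-algebras with the domain satisfying the UCT (here via Tu \cite{Tu99}) implies agreement of their $KL$-classes. Combined with Corollary \ref{model-KK-2}, which already gives that $\iota_\gamma$ is a $KK$-equivalence and in particular a $KL$-equivalence, this shows that $\iota_w$, and therefore $\iota_\alpha$, is a $KL$-equivalence. For (2), the definition of $\alpha\vscc\gamma$ supplies instead a norm-continuous path $(v_t)_{t\geq 0}\subset\CU(\CO_\infty)$ with $v_0=\eins$ and $w_g=\lim_{t\to\infty}v_t\gamma_g(v_t^*)$, and the identical calculation yields an asymptotic unitary equivalence $\ad(v_t)\circ\iota_\gamma\to\iota_w$. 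The only technical step that requires any real attention is the standard fact that asymptotic unitary equivalence preserves $KK$-classes (proved by using the path $(v_t)$ to build a homotopy through the asymptotic morphism / mapping cylinder $C_b([0,\infty),\CO_\infty\rtimes_\gamma G)/C_0(\cdots)$); granting this, a second application of Corollary \ref{model-KK-2} upgrades the conclusion of (1) to the desired $KK$-equivalence and completes the proof.
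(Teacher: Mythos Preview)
Your proof is correct and follows essentially the same route as the paper's: reduce to comparing the canonical inclusion $g\mapsto\lambda^\gamma_g$ with the twisted inclusion $g\mapsto w_g\lambda^\gamma_g$, observe via the computation $v_n\lambda^\gamma_g v_n^*=v_n\gamma_g(v_n^*)\lambda^\gamma_g\to w_g\lambda^\gamma_g$ that these are approximately (resp.\ asymptotically) unitarily equivalent, and then invoke Corollary~\ref{model-KK-2}. One small remark: the appeal to Tu's UCT result is not needed, since approximate unitary equivalence of $*$-homomorphisms implies equality of $KL$-classes in general.
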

\begin{proof}
Clearly it suffices to consider the case where $\gamma$ is a cocycle perturbation of $\alpha$ by an approximate or asymptotic coboundary, respectively.
So let $\set{w_g}_{g\in G}$ be an $\alpha$-cocycle. The canonical isomorphism $\psi: \CO_\infty\rtimes_{\alpha^w} G\to \CO_\infty\rtimes_\alpha G$ is given by the formulas $\psi|_{\CO_\infty}=\id_{\CO_\infty}$ and
\[
\psi(\lambda_g^{\alpha^w})=w_g\lambda_g^\alpha
\]
for all $g\in G$. Let us consider the following diagram:
\begin{equation} \label{eq:cp-diagram}
\xymatrix{
& \mathrm{C}^*(G) \ar[ld] \ar[rd] & \\
\CO_\infty\rtimes_{\alpha^w} G \ar[rr]^\psi && \CO_\infty\rtimes_\alpha G
}
\end{equation}

\ref{model-KK:1}: Suppose that $\set{w_g}_{g\in G}$ is an approximate coboundary, i.e., there exists a sequence of unitaries $v_n\in\CO_\infty$ with
\[
w_g=\lim_{n\to\infty} v_n\alpha_g(v_n)^*\quad\text{for all}\ g\in G.
\]
Thus
\[
\psi(\lambda_g^{\alpha^w})=w_g\lambda_g^\alpha = \lim_{n\to\infty} v_n\alpha_g(v_n^*)\lambda_g^\alpha = \lim_{n\to\infty} v_n \lambda_g^\alpha v_n^*
\]
for all $g\in G$. In particular, the diagram \eqref{eq:cp-diagram}
commutes up to approximate unitary equivalence.
So this diagram commutes on the level of $KL$-theory and the left vertical arrow induces a $KL$-equivalence if and only if the right vertical arrow induces a $KL$-equivalence. The claim then follows from Corollary \ref{model-KK-2}.

\ref{model-KK:2}: Suppose that $\set{w_g}_{g\in G}$ is an asymptotic coboundary, i.e., there exists a continuous map $v: [0,\infty)\to\CU(\CO_\infty)$ with $v_0=\eins$ and
\[
w_g=\lim_{t\to\infty} v_t\alpha_g(v_t)^*\quad\text{for all}\ g\in G.
\]
Thus
\[
\psi(\lambda_g^{\alpha^w})=w_g\lambda_g^\alpha = \lim_{t\to\infty} v_t\alpha_g(v_t^*)\lambda_g^\alpha = \lim_{t\to\infty} v_t \lambda_g^\alpha v_t^*
\]
for all $g\in G$. In particular, the diagram \eqref{eq:cp-diagram}
commutes up to homotopy.
So this diagram commutes on the level of $KK$-theory and the left vertical arrow induces a $KK$-equivalence if and only if the right vertical arrow induces a $KK$-equivalence. The claim then follows from Corollary \ref{model-KK-2}.
\end{proof}

The following shows that the two cases in Proposition \ref{model-KK-condition} are in fact equivalent. We will use this in the proof of the main result (Theorem \ref{approx-rep-ssa-uniqueness}) of this section in order to obtain the improvement from $KL$-equivalence to $KK$-equivalence.

\begin{prop} \label{KK-or-KL}
Let $G$ be a countable, discrete, amenable group and $\CD$ a strongly self-absorbing Kirchberg algebra.
Let $\gamma: G\curvearrowright\CD$ be a semi-strongly self-absorbing action. 
\begin{enumerate}[label=\textup{(\arabic*)},leftmargin=*]
\item If $\beta: G\curvearrowright B$ is an action on a separable, unital \cstar-algebra, then $\beta\scc\beta\otimes\gamma$ if and only if $\beta\vscc\beta\otimes\gamma$. \label{KK-or-KL:1}
\item For every action $\alpha: G\curvearrowright\CD$, one has $\alpha\scc\gamma$ if and only if $\alpha\vscc\gamma$. \label{KK-or-KL:2}
\end{enumerate}
\end{prop}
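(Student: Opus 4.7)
The implication $\vscc \Rightarrow \scc$ is immediate in both parts, since a continuous one-parameter family $\{v_t\}_{t\ge 0}$ discretizes to a sequence. We focus on $\scc \Rightarrow \vscc$.

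For part (1), suppose $\beta\scc\beta\otimes\gamma$, implemented by an isomorphism $\Psi: B\otimes\CD\to B$ and a $(\beta\otimes\gamma)$-cocycle $w$ with $w_g=\lim_n v_n(\beta\otimes\gamma)_g(v_n^*)$ for some sequence $v_n\in\CU(B\otimes\CD)$. My plan is to replace the approximating sequence $\{v_n\}$ by a continuous family. The key input is that, because $\gamma$ is semi-strongly self-absorbing, there is a continuous path $\{u_t\}_{t\ge 0}$ of unitaries in $\CD\otimes\CD$, with $u_0=\eins$, realizing $\gamma\vscc\gamma\otimes\gamma$ as an asymptotic coboundary. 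Embedding such paths into $B\otimes\CD$ through the second tensor factor (after identifying $\CD\cong\CD\otimes\CD$), I obtain continuous paths of unitaries in $B\otimes\CD$ that are asymptotically $(\beta\otimes\gamma)$-invariant. Via a one-sided Elliott-type intertwining argument in the spirit of \cite[Theorem 3.6]{Szabo16ssa}, I would then interpolate continuously between consecutive $v_n$ and $v_{n+1}$ along such paths; the diagonal of this construction yields a continuous family $\{\tilde v_t\}_{t\ge 0}$ with $\tilde v_0=\eins$ and $\tilde v_t(\beta\otimes\gamma)_g(\tilde v_t^*)\to w_g$, exhibiting $w$ as an asymptotic coboundary (after possibly modifying $\Psi$ accordingly).

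For part (2), I would reduce to part (1). Given $\alpha\scc\gamma$, tensoring with $\gamma$ yields $\alpha\otimes\gamma\scc\gamma\otimes\gamma$, and the semi-strongly self-absorbing property $\gamma\vscc\gamma\otimes\gamma$ then gives $\alpha\otimes\gamma\scc\gamma$. Transitivity of $\scc$ provides both $\alpha\scc\alpha\otimes\gamma$ and $\gamma\scc\alpha\otimes\gamma$. Applying part (1) with $\beta=\alpha$ yields $\alpha\vscc\alpha\otimes\gamma$, and a symmetric application of the same continuous-path construction, applied to the equivalence $\gamma\scc\alpha\otimes\gamma$, upgrades it to $\gamma\vscc\alpha\otimes\gamma$. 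Concatenating by transitivity of $\vscc$ produces $\alpha\vscc\gamma$. The hardest step throughout is the interpolation argument in part (1): without $\gamma$'s semi-strong self-absorption, the upgrade from $\scc$ to $\vscc$ genuinely fails, and the semi-ssa hypothesis is precisely what supplies the continuous, asymptotically central paths of unitaries needed for the one-sided intertwining to convert an approximate coboundary into an asymptotic one.
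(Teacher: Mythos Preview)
Your reduction in part (2) is essentially the paper's own argument: once $\alpha\scc\gamma$, the action $\alpha$ inherits semi-strong self-absorption, so $\alpha\scc\alpha\otimes\gamma\scc\gamma$, and one applies part (1) twice --- once with $(\beta,\gamma)=(\alpha,\gamma)$ and once with the roles swapped, using that $\alpha$ is now semi-strongly self-absorbing on $\CD$. Your phrasing ``symmetric application of the same continuous-path construction'' is slightly imprecise (you should say that part (1) applies with $\alpha$ playing the role of the semi-strongly self-absorbing action), but the structure is right.

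The genuine gap is in part (1). Your interpolation sketch does not work as written. The paths $\{u_t\}$ supplied by semi-strong self-absorption are specific paths starting at $\eins$ that asymptotically implement the half-flip; they do not let you connect two \emph{arbitrary} approximately invariant unitaries $v_n$ and $v_{n+1}$. What you actually need is that for large $n$, the unitary $v_{n+1}v_n^*$ --- which is approximately $(\beta\otimes\gamma)$-fixed --- can be connected to $\eins$ by a short path of approximately fixed unitaries. This is a statement about the approximate fixed-point unitary group being ``connected enough'', and semi-strong self-absorption alone does not obviously provide it.

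The paper supplies the missing mechanism: since $\CD$ is a Kirchberg algebra and $G$ is amenable, Theorem \ref{equ-Oinf-absorption} gives $\gamma\scc\gamma\otimes\id_{\CO_\infty}$, and hence $\gamma$ is \emph{unitarily regular} by \cite[Proposition 1.19]{Szabo16ssa2}. Unitary regularity is exactly the property that makes the upgrade from approximate to asymptotic coboundaries go through, and the conclusion then follows by citing \cite[Theorem 2.2]{Szabo17ssa3}. So the paper's proof of (1) is a two-line reduction to known structural results, whereas your attempt tries to reinvent the content of that cited theorem without isolating the key hypothesis (unitary regularity via $\CO_\infty$-absorption) that makes the interpolation possible.
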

\begin{proof}
\ref{KK-or-KL:1}: The action $\gamma$ is equivariantly $\CO_\infty$-absorbing by Theorem \ref{equ-Oinf-absorption}, and thus unitarily regular by \cite[Proposition 1.19]{Szabo16ssa2}.
Then the claim follows from \cite[Theorem 2.2]{Szabo17ssa3}.

\ref{KK-or-KL:2}: Suppose $\alpha\scc\gamma$.
Since $\gamma$ is semi-strongly self-absorbing, so is $\alpha$, and we therefore have $\alpha\scc\alpha\otimes\gamma\scc\gamma$.  Applying \ref{KK-or-KL:1} twice, we get $\alpha\vscc\alpha\otimes\gamma\vscc\gamma$.
\end{proof}

\begin{notae}
For the rest of this section, let us denote by $j_\alpha: \cstar(G)\to A\rtimes_\alpha G$ the canonical embedding for an action $\alpha: G\curvearrowright A$ of a discrete group on a unital \cstar-algebra.
\end{notae}

\begin{prop} \label{embedding-into-model}
Let $G$ be a countable, discrete, amenable group.
Let $\CD$ be a strongly self-absorbing Kirchberg algebra and let $\alpha: G\curvearrowright\CD$ be a pointwise outer action.
Suppose that the crossed product $\CD\rtimes_\alpha G$ is $\CD$-stable and that the canonical embedding $\id_\CD\otimes j_\alpha: \CD\otimes\cstar(G)\to \CD\otimes(\CD\rtimes_\alpha G)$ is a $KL$-equivalence.
Let $u: G\to\CU(\CD)$ be a $K$-trivial unitary representation yielding a $*$-monomorphism from $\mathrm{C}^*(G)$ to $\CD$.
Then there exist approximately equivariant and unital $*$-homom\-orphisms from $(\CD,\alpha)$ to $(\CD,\ad(u))$.
\end{prop}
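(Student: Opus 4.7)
The strategy is to reformulate the conclusion in terms of a unital $*$-homomorphism between crossed products and then appeal to Kirchberg-Phillips-type existence of embeddings. By a standard reindexation argument, the existence of approximately equivariant unital $*$-homomorphisms $(\CD,\alpha)\to(\CD,\ad(u))$ is equivalent to the existence of a single equivariant unital $*$-homomorphism $(\CD,\alpha)\to(\CD_\omega,\ad(u_\omega))$, where $\omega$ is a free ultrafilter on $\IN$ and $u_\omega: G\to\CU(\CD_\omega)$ denotes the constant representation induced by $u$. Since $\ad(u_\omega)$ is inner and $G$ is amenable, the universal property of the full crossed product reduces the task to producing a unital $*$-homomorphism $\tilde\phi: \CD\rtimes_\alpha G\to\CD_\omega$ with $\tilde\phi\circ j_\alpha = u$.

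I plan to construct $\tilde\phi$ by factoring through $\CD\rtimes_{\ad(u)} G$. Since $\ad(u)$ is inner, the universal property yields a canonical unital $*$-homomorphism $E: \CD\rtimes_{\ad(u)} G\to\CD$ with $E|_\CD=\id_\CD$ and $E(\lambda_g^{\ad(u)})=u_g$, so that $E\circ j_{\ad(u)}=u$. It therefore suffices to produce a unital $*$-homomorphism $\Psi: \CD\rtimes_\alpha G\to (\CD\rtimes_{\ad(u)} G)_\omega$ whose restriction $\Psi\circ j_\alpha$ is approximately unitarily equivalent inside the ultrapower to $j_{\ad(u)}$. Since the ultrapower is saturated (Proposition \ref{F(A)-sat}), this approximate unitary equivalence can be upgraded to an exact one, and after conjugating $\Psi$ by a suitable unitary in $(\CD\rtimes_{\ad(u)} G)_\omega$, the composition with the ultrapower of $E$ gives the desired $\tilde\phi$.

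To construct $\Psi$, I will match $KL$-classes. Pointwise outerness together with the $\CD$-stability hypothesis makes $\CD\rtimes_\alpha G$ a unital $\CD$-stable Kirchberg algebra in the UCT class (using Tu's theorem for amenable groups). The natural isomorphism $\CD\rtimes_{\ad(u)} G\cong\CD\otimes\cstar(G)$, combined with the $K$-triviality of $u$ and the strong self-absorption of $\CD$, shows by the same computation as in Proposition \ref{model-KK-1} that $\id_\CD\otimes j_{\ad(u)}$ is also a $KL$-equivalence. Together with the hypothesis on $\id_\CD\otimes j_\alpha$, this yields a distinguished class $\kappa\in KL(\CD\rtimes_\alpha G,\CD\rtimes_{\ad(u)} G)$ satisfying $\kappa\circ KL(j_\alpha)=KL(j_{\ad(u)})$ after $\CD$-stabilization. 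The main obstacle is to realize $\kappa$ by a unital $*$-homomorphism $\Psi$ whose restriction to $\cstar(G)$ is actually close to $j_{\ad(u)}$, and not merely agrees with it in $KL$; this requires a Kirchberg-Phillips-type existence theorem for unital $*$-homomorphisms out of UCT Kirchberg algebras into $\CD$-stable targets, applied inside the ultrapower so that the saturation and approximate unitary equivalence machinery developed in the earlier sections can be used to align the prescribed restrictions.
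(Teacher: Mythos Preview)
Your overall strategy --- reduce to constructing a unital $*$-homomorphism out of the crossed product $\CD\rtimes_\alpha G$ that sends $\lambda_g^\alpha$ (approximately) to $u_g$, then restrict to $\CD$ --- is exactly the paper's approach. The reformulation via the ultrapower and covariance is correct and matches what the paper does in its last paragraph.

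The gap is in the middle step: you route the construction through $\CD\rtimes_{\ad(u)} G\cong\CD\otimes\cstar(G)$ and then appeal to a ``Kirchberg-Phillips-type existence theorem for unital $*$-homomorphisms out of UCT Kirchberg algebras into $\CD$-stable targets.'' No such existence theorem is available in that generality. The target $\CD\otimes\cstar(G)$ is not simple, so neither Phillips' nor Kirchberg's existence results for realizing $KL$-classes by $*$-homomorphisms apply; Kirchberg's non-simple classification requires ideal-related $KK$-theory rather than a bare $KL$-class, and there is no reason the class $\kappa$ you write down should admit such a lift. The same non-simplicity obstructs the uniqueness step you need afterwards (aligning $\Psi\circ j_\alpha$ with $j_{\ad(u)}$ via approximate unitary equivalence).

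The paper avoids this entirely by never leaving the simple world: it builds the $KL$-class directly in $KL(\CD\rtimes_\alpha G,\CD)$ via
\[
\kappa \ =\ KL(\eins\otimes\id)\otimes KL(\id_\CD\otimes j_\alpha)^{-1}\otimes KL(\id_\CD\otimes\chi),
\]
where $\chi:\cstar(G)\to\IC$ is the trivial character. Since $\CD$ is itself a Kirchberg algebra, Kirchberg-Phillips genuinely applies to realize $\kappa$ by a unital $*$-monomorphism $\phi:\CD\rtimes_\alpha G\to\CD$. Then one checks $KL(\phi\circ j_\alpha)=KL(\eins_\CD\otimes\chi)=KL(u)$ (using $K$-triviality of $u$), and Lin's $KL$-uniqueness theorem for maps into a simple purely infinite target gives approximate unitary equivalence of $\phi\circ j_\alpha$ and $u$. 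Conjugating $\phi$ by the resulting unitaries produces the approximately equivariant embeddings directly. Your detour through $\CD\rtimes_{\ad(u)}G$ and the map $E$ is unnecessary; composing with $\chi$ instead of $E$ collapses everything to the simple target $\CD$ and makes both existence and uniqueness go through cleanly.
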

\begin{proof}
We consider the commutative diagram
\[
\xymatrix@R+5mm{
\mathrm{C}^*(G) \ar[rrd]_{j_\alpha} \ar@/^1.75pc/[rrrr]^{\eins\otimes\id_{\cstar(G)}} && \CD \ar[rrd]^{\id_\CD\otimes\eins} \ar[rr]^{\id_\CD\otimes\eins} && \CD\otimes \mathrm{C}^*(G) \ar[d]^{\id_\CD\otimes j_\alpha} \\
&& \CD\rtimes_\alpha G \ar@{-->}[u]_\phi \ar[rr]^{\eins\otimes\id}  && \CD\otimes(\CD\rtimes_\alpha G)
}
\]
consisting of the obvious embeddings.
By what we know, the right vertical map and the bottom horizontal map each induce a $KL$-equivalence. Let $\chi: \mathrm{C}^*(G)\to\IC$ be the canonical character.
Then $\id_\CD\otimes\chi$ yields a left-inverse for $\id_\CD\otimes\eins_{\cstar(G)}$. In particular, we obtain a $KL$-element
\[
\kappa=KL(\eins\otimes\id_{\CD\rtimes_\alpha G})\otimes KL(\id_\CD\otimes j_\alpha)^{-1}\otimes KL(\id_\CD\otimes\chi) \in KL(\CD\rtimes_\alpha G,\CD).
\]
By Kirchberg-Phillips classification \cite{KirchbergC, Phillips00}, this $KL$-element is represented by a unital $*$-monomorphism $\phi: \CD\rtimes_\alpha G\to\CD$, which still makes the above diagram commutative on the level of $KL$-theory.
In particular, we have
\[
KL(\phi\circ j_\alpha) = KL(\eins_\CD\otimes\chi) \in KL(\mathrm{C}^*(G),\CD).
\]
Notice that the unital $*$-monomorphism $\mu: \mathrm{C}^*(G)\to\CD$ induced by $u: G\to\CD$ also satisfies
\[
KL(\mu) = KL(\eins_\CD\otimes\chi)
\]
because $u$ was assumed to be $K$-trivial.

From the uniqueness theorem with respect to $KL$-theory in Kirchberg-Phillips classification (see \cite[Theorem 3.14]{Lin05}), it follows that $\mu$ and $\phi\circ j_\alpha$ are approximately unitarily equivalent.
In particular, we find a sequence of unitaries $v_n\in\CU(\CD)$ such that the $*$-homomorphisms
\[
\psi_n=\ad(v_n)\circ\phi: \CD\rtimes_\alpha G\to\CD
\]
satisfy
\[
\psi_n(\lambda^\alpha_g) \stackrel{n\to\infty}{\longrightarrow} u_g\quad\text{for all}~g\in G.
\]
But this implies for all $a\in\CD\subset\CD\rtimes_\alpha G$ and $g\in G$ that
\[
\renewcommand\arraystretch{1.3}
\begin{array}{cl}
\multicolumn{2}{l}{ \| \big( \ad(u_g)\circ\psi_n \big)(a)- \big( \psi_n\circ\alpha_g \big)(a)\| } \\
=& \| \big( \ad(u_g)\circ\psi_n \big)(a)- \big( \ad(\psi_n(\lambda^\alpha_g))\circ\psi_n \big)(a)\| \\
\stackrel{n\to\infty}{\longrightarrow} & 0.
\end{array}
\]
In particular, if we denote by $\iota_\alpha: \CD\to \CD\rtimes_\alpha G$ the inclusion map, then the sequence of $*$-homomorphisms $\psi_n\circ \iota_\alpha$ yields an approximately equivariant and unital embedding from $(\CD,\alpha)$ into $(\CD,\ad(u))$.
\end{proof}

The following observation is a variant of \cite[Remark 4.9]{IzumiMatui10}.
We note that the unitality assumption for the involved \cstar-algebras is not necessary, and can be dropped with some minor additional effort in the proof.
We will only apply this in the unital case.

\begin{prop} \label{approx-rep-uniqueness}
Let $G$ be a countable, discrete group.
Let $A$ and $B$ be two separable, unital \cstar-algebras.
Let $\alpha: G\curvearrowright A$ and $\beta: G\curvearrowright B$ be two actions, and assume that $\beta$ is approximately representable.
Let $\phi_1, \phi_2: (A,\alpha)\to (B,\beta)$ be two equivariant and unital $*$-homomorphisms.
Then $\phi_1$ and $\phi_2$ are approximately $G$-unitarily equivalent if and only if the two unital $*$-homomorphisms
\[
\phi_i\rtimes G: A\rtimes_\alpha G \to B\rtimes_\beta G,\quad i=1,2
\]
are approximately unitarily equivalent.
\end{prop}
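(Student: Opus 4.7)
The forward direction is a short verification. Given unitaries $v_n\in\CU(B)$ with $\|\phi_2(a)-\ad(v_n)(\phi_1(a))\|\to 0$ on any $F\fin A$ and $\|\beta_g(v_n)-v_n\|\to 0$ on any $M\fin G$, we view $v_n$ as an element of $\CU(B\rtimes_\beta G)$ via the canonical inclusion. On generators $\phi_i(a)$ ($a\in A$) of the crossed-product map, the estimate is given. On the remaining generator $\lambda^\beta_g=(\phi_i\rtimes G)(\lambda^\alpha_g)$, the covariance identity $\lambda^\beta_g v_n^*=\beta_g(v_n^*)\lambda^\beta_g$ gives
\[
\|v_n\lambda^\beta_g v_n^*-\lambda^\beta_g\|=\|v_n\beta_g(v_n^*)-\eins\|=\|v_n-\beta_g(v_n)\|\xrightarrow{\,n\to\infty\,}0,
\]
which confirms the approximate unitary equivalence of $\phi_1\rtimes G$ and $\phi_2\rtimes G$.

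For the converse, let $W_n\in\CU(B\rtimes_\beta G)$ witness approximate unitary equivalence of $\phi_i\rtimes G$ for $i=1,2$. Evaluating on $\lambda^\alpha_g$ and using equivariance of each $\phi_i$ (so that both $\phi_i\rtimes G$ send $\lambda^\alpha_g$ to $\lambda^\beta_g$) yields $\|[W_n,\lambda^\beta_g]\|\to 0$ for every $g\in G$; evaluating on $a\in A$ gives $\|\phi_2(a)-\ad(W_n)(\phi_1(a))\|\to 0$ with both sides lying in $B$. The task is to replace the $W_n\in B\rtimes_\beta G$ by a sequence of unitaries in $B$ with analogous properties.

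This is where approximate representability enters. It furnishes a unital $*$-homomorphism $\rho:B\rtimes_\beta G\to B_\omega$ extending the canonical inclusion $B\hookrightarrow B_\omega$, and sending $\lambda^\beta_g$ to unitaries $v_g\in\CU(B_\omega)$ satisfying $v_gv_h=v_{gh}$ and $\ad(v_g)|_B=\beta_g$. A direct computation from these relations shows that $\rho$ is $(\tilde\beta,\beta_\omega)$-equivariant, where $\tilde\beta:=\ad(\lambda^\beta):G\curvearrowright B\rtimes_\beta G$ is the inner action (on $B$ both sides give $\beta_g$, and on $\lambda^\beta_h$ both sides give $v_{ghg^{-1}}$). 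Setting $w_n:=\rho(W_n)\in\CU(B_\omega)$, we obtain $\ad(w_n)(\phi_1(a))=\rho(\ad(W_n)(\phi_1(a)))\to\phi_2(a)$ for $a\in A$, while equivariance of $\rho$ yields $\|\beta_\omega(w_n)-w_n\|=\|\rho(\tilde\beta_g(W_n)-W_n)\|\leq\|[\lambda^\beta_g,W_n]\|\to 0$. Thus the $w_n$ witness approximate $G$-unitary equivalence of $\phi_1,\phi_2$ at the level of $B_\omega$, and a standard Kirchberg $\eps$-test / reindexation argument extracts genuine unitaries $v_n\in\CU(B)$ with the required approximation properties.

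The main point requiring care is fixing the correct form of approximate representability so that it supplies the equivariant $*$-homomorphism $\rho$ above; the rest of the argument is essentially a diagram chase through the central sequence algebra, and everything rests on the interaction between the inner $G$-action on the crossed product and the original action on the ultrapower.
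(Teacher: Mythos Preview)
Your approach is essentially the same as the paper's: push the implementing unitaries $W_n$ from $B\rtimes_\beta G$ into $B_\omega$ via a map coming from approximate representability, then reindex. The paper does exactly this, citing the characterization from \cite{BarlakSzabo15} that $\beta$ is approximately representable if and only if the inclusion $(B,\beta)\hookrightarrow(B\rtimes_\beta G,\ad(\lambda^\beta))$ is $G$-equivariantly sequentially split, which hands you the equivariant $\psi:(B\rtimes_\beta G,\ad(\lambda^\beta))\to(B_\omega,\beta_\omega)$ directly.

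There is, however, a genuine slip in your equivariance check for $\rho$. You claim that from the two relations $v_gv_h=v_{gh}$ and $\ad(v_g)|_B=\beta_g$ alone, a direct computation gives $\beta_{\omega,g}(v_h)=v_{ghg^{-1}}$. This does not follow. The left-hand side $\rho(\tilde\beta_g(\lambda^\beta_h))=v_{ghg^{-1}}$ is fine, but on the right-hand side $\beta_{\omega,g}(\rho(\lambda^\beta_h))=\beta_{\omega,g}(v_h)$, and your relations only tell you that $\ad(v_g)$ agrees with $\beta_{\omega,g}$ on $B\subset B_\omega$, not on all of $B_\omega$; in particular not on $v_h$. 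The most you get is that $\beta_{\omega,g}(v_h)\,v_{ghg^{-1}}^*$ lies in $B_\omega\cap B'$, which is far from trivial. The condition $\beta_{\omega,g}(v_h)=v_{ghg^{-1}}$ must be built into the definition of approximate representability (as it is, implicitly, in the sequentially-split formulation the paper invokes), not derived from the weaker relations you listed. You flag in your final paragraph that the definition needs care, but the specific computation you wrote is incorrect as it stands; once you take the correct formulation of approximate representability, the equivariance of $\rho$ is part of the package and your argument goes through as in the paper.
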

\begin{proof}
The 'only if' part is trivial, so let us show the 'if' part.

Let $v_n\in B\rtimes_\beta G$ be a sequence of unitaries such that
\[
\ad(v_n)\circ (\phi_1\rtimes G) \stackrel{n\to\infty}{\longrightarrow} \phi_2\rtimes G.
\]
Recall from \cite[Section 4]{BarlakSzabo15} that $\beta$ is approximately representable if and only if the $G$-equivariant inclusion $(B,\beta)\to (B\rtimes_\beta G, \ad(\lambda^{\beta}))$ is equivariantly sequentially split.
Let $\omega$ be a free ultrafilter on $\IN$. For $i=1,2$, we then have a commutative diagram of $G$-equivariant $*$-homomorphisms of the form
\[
\xymatrix@C-3mm{
(A,\alpha) \ar[rd] \ar[rr]^{\phi_i} && (B,\beta) \ar[rd] \ar[rr] && (B_\omega,\beta_\omega) \\
& \big( A\rtimes_\alpha G, \ad(\lambda^\alpha) \big) \ar[rr]^{\phi_i\rtimes G} && \big( B\rtimes_\beta G, \ad(\lambda^\beta) \big) \ar[ru]_\psi &
}
\]
The maps without names are the canonical ones. Since the maps $\phi_i\rtimes G$ do the same on the canonical copy of $\mathrm{C}^*(G)\subset A\rtimes_\alpha G$, it follows that $\|[v_n,\lambda^\beta_g]\|\to 0$ for all $g\in G$.
By the commutativity of the above diagram, it follows for all $a\in A$ that
\[
\renewcommand\arraystretch{1.5}
\begin{array}{ccl}
\phi_2(a) &=& \big( \psi\circ(\phi_2\rtimes G) \big)(a) \\
&=& \dst\lim_{n\to\infty} \big( \ad(\psi(v_n))\circ\psi\circ(\phi_1\rtimes G) \big)(a) \\
&=& \dst\lim_{n\to\infty} \big(\ad(\psi(v_n))\circ\phi_1\big)(a).
\end{array}
\]
Moreover also
\[
\|\beta_{\omega,g}(\psi(v_n))-\psi(v_n)\| \leq \| \ad(\lambda^\beta_g)(v_n)-v_n\| \stackrel{n\to\infty}{\longrightarrow} 0
\]
for all $g\in G$. By representing each $\psi(v_n)\in \CU(B_\omega)$ via a sequence of unitaries in $B$, we can apply a standard reindexation argument and obtain a sequence of unitaries $u_n\in\CU(B)$ satisfying
\[
\|u_n-\beta_g(u_n)\|\stackrel{n\to\infty}{\longrightarrow} 0
\]
and
\[
\phi_2(a) = \lim_{n\to\infty} (\ad(u_n)\circ\phi_1)(a)
\]
for all $a\in A$ and $g\in G$. This finishes the proof.
\end{proof}

\begin{prop} \label{approx-rep-D-stable}
Let $G$ be a countable, discrete group. Let $A$ be a separable \cstar-algebra and $\alpha: G\curvearrowright A$ an action.
Let $\CD$ be a strongly self-absorbing \cstar-algebra. Assume that $\alpha$ is approximately representable. Then $A\rtimes_\alpha G$ is $\CD$-stable if and only if $\alpha\cc\alpha\otimes\id_\CD$.
\end{prop}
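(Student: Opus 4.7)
The backward direction is routine. If $\alpha\cc\alpha\otimes\id_\CD$, then via the cocycle conjugacy we get an isomorphism
\[
A\rtimes_\alpha G\cong (A\otimes\CD)\rtimes_{\alpha\otimes\id_\CD}G\cong (A\rtimes_\alpha G)\otimes\CD,
\]
exhibiting $A\rtimes_\alpha G$ as $\CD$-stable.

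For the forward direction, the plan is to reduce the desired cocycle conjugacy to the existence of a unital equivariant $*$-homomorphism $(\CD,\id_\CD)\to (F_\omega(A),\tilde{\alpha}_\omega)$ via the equivariant McDuff-type theorem for strongly self-absorbing actions (as applied, e.g., in the proof of Theorem \ref{model-absorption}). Assuming $A\rtimes_\alpha G$ is $\CD$-stable, separability of $A\rtimes_\alpha G$ yields a unital $*$-homomorphism $\theta: \CD\to F_\omega(A\rtimes_\alpha G)$. A routine computation shows that the induced action of $\ad(\lambda^\alpha)$ on $F_\omega(A\rtimes_\alpha G)$ is trivial, since $\lambda^\alpha_g$ is a multiplier unitary: for any asymptotically central sequence $(x_n)$ and any $a\in A\rtimes_\alpha G$, one has $a\lambda^\alpha_g\in A\rtimes_\alpha G$, hence $a\lambda^\alpha_g x_n(\lambda^\alpha_g)^*\approx x_n a$ modulo the annihilator. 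So $\theta$ is automatically equivariant with respect to trivial actions on both sides.

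To push $\theta$ down to $F_\omega(A)$, the key tool is approximate representability. As recalled in the proof of Proposition \ref{approx-rep-uniqueness}, this is equivalent (by \cite{BarlakSzabo15}) to the equivariant inclusion $(A,\alpha)\hookrightarrow (A\rtimes_\alpha G,\ad(\lambda^\alpha))$ being equivariantly sequentially split, so there exists an equivariant $*$-homomorphism $\psi:(A\rtimes_\alpha G,\ad(\lambda^\alpha))\to (A_\omega,\alpha_\omega)$ extending the canonical embedding $\iota_A: A\to A_\omega$. I would represent $\theta(d)$ for each $d$ in a countable dense set of $\CD$ by an asymptotically central sequence in $A\rtimes_\alpha G$ and apply $\psi$ componentwise, landing a priori in $(A_\omega)_\omega$.

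The main technical step, and the one requiring the most care, is a Kirchberg $\eps$-test / diagonal reindexation argument producing an actual unital $*$-homomorphism $\bar\theta:\CD\to A_\omega$ (rather than into a double sequence algebra). Because $\psi|_A=\iota_A$ and $\theta(d)$ asymptotically commutes with $A\subset A\rtimes_\alpha G$, the image of $\bar\theta$ will land in $A_\omega\cap A'$ and, modding out the annihilator of $A$, in $F_\omega(A)$. Because $\psi$ is equivariant and $\theta$ takes values in the fixed point algebra of the trivial induced action on $F_\omega(A\rtimes_\alpha G)$, the resulting $\bar\theta$ will be equivariant for $\id_\CD$ and $\tilde{\alpha}_\omega$, i.e., it lands in $F_\omega(A)^{\tilde{\alpha}_\omega}$. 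Invoking the equivariant McDuff-type theorem then yields $\alpha\cc\alpha\otimes\id_\CD$.
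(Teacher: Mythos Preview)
Your argument is correct, and the reindexation you flag as the delicate point is indeed standard once one tracks that $\psi|_A$ is the canonical inclusion and that $\psi$ intertwines $\ad(\lambda^\alpha)$ with $\alpha_\omega$; the approximate centrality and approximate fixedness survive the passage through $\psi$ exactly as you sketch.

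The paper takes a different and somewhat slicker route that avoids the explicit reindexation entirely by working in the abstract framework of equivariantly sequentially split $*$-homomorphisms from \cite{BarlakSzabo15}. Namely, $\CD$-stability of $A\rtimes_\alpha G$ is reformulated as $\id\otimes\eins_\CD: A\rtimes_\alpha G\to (A\rtimes_\alpha G)\otimes\CD$ being sequentially split, which is automatically $G$-equivariant for $\ad(\lambda^\alpha)$. Approximate representability says $\iota_\alpha:(A,\alpha)\to (A\rtimes_\alpha G,\ad(\lambda^\alpha))$ is equivariantly sequentially split. Composing these two and using the factorization $(\id\otimes\eins_\CD)\circ\iota_\alpha=(\iota_\alpha\otimes\id_\CD)\circ(\id_A\otimes\eins_\CD)$, one deduces that $\id_A\otimes\eins_\CD:(A,\alpha)\to(A\otimes\CD,\alpha\otimes\id_\CD)$ is itself equivariantly sequentially split, whence $\alpha\cc\alpha\otimes\id_\CD$ by the equivariant McDuff-type criterion. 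Your approach unpacks this same machinery by hand in the central sequence algebra; the paper's version trades the diagonal argument for permanence properties of sequentially split maps, which makes for a shorter proof but relies more heavily on the imported formalism.
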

\begin{proof}
Since the ``if'' part is trivial, let us show the ``only if'' part. Assume that $A\rtimes_\alpha G$ is $\CD$-stable.
By \cite[Theorem 2.6]{BarlakSzabo15}\footnote{This is nothing but a reformulation of Toms-Winter's criterion \cite[Theorem 2.3]{TomsWinter07}.}, this means that
\[
\id\otimes\eins: A\rtimes_\alpha G \to (A\rtimes_\alpha G)\otimes\CD
\]
is sequentially split. But then we see that
\[
\id\otimes\eins: \big( A\rtimes_\alpha G, \ad(\lambda^\alpha)\big) \to \big( ( A\rtimes_\alpha G)\otimes\CD, \ad(\lambda^\alpha)\otimes\id_\CD \big)
\]
is $G$-equivariantly sequentially split. Since $\alpha$ is approximately representable, it follows (cf.\ \cite[Subsection 4.4]{BarlakSzabo15}) that the canonical inclusion
\[
\iota_\alpha : (A,\alpha)\to (A\rtimes_\alpha G, \ad(\lambda^\alpha))
\]
is sequentially split. Hence the composition
\[
(\id\otimes\eins)\circ\iota_\alpha = (\iota_\alpha\otimes\id_\CD)\circ (\id_A\otimes\eins)
\]
is also $G$-equivariantly sequentially split by \cite[Proposition 3.7]{BarlakSzabo15}. This in particular implies that
\[
\id_A\otimes\eins: (A,\alpha) \to (A\otimes\CD,\alpha\otimes\id_\CD)
\]
is sequentially split, and so by \cite[Theorem 4.29]{BarlakSzabo15}, we get $\alpha\cc\alpha\otimes\id_\CD$.
\end{proof}

The following is the main result of this section. 
Note that the $KL$-assumption below is necessary by Proposition \ref{model-KK-condition}.

\begin{theorem} \label{approx-rep-ssa-uniqueness}
Let $G$ be a countable, discrete, amenable group. 
Let $\CD$ be a strongly self-absorbing Kirchberg algebra and $\alpha: G\curvearrowright\CD$ a pointwise outer action.
Suppose that the crossed product $\CD\rtimes_\alpha G$ is $\CD$-stable, and that the canonical embedding $\CD\otimes\cstar(G)\to \CD\otimes(\CD\rtimes_\alpha G)$ is a $KL$-equivalence.
Let $\gamma: G\curvearrowright\CO_\infty$ be an action as in Example \ref{the-model-Oinf}. Then the following are equivalent:
\begin{enumerate}[label=\textup{(\roman*)}, leftmargin=*]
\item $\alpha$ is strongly cocycle conjugate to $\id_\CD\otimes\gamma$. \label{ar-ssa-u:1}
\item $\alpha$ is cocycle conjugate to $\id_\CD\otimes\gamma$; \label{ar-ssa-u:2}
\item $\alpha$ is approximately representable; \label{ar-ssa-u:3}
\item $\alpha$ has approximately $G$-inner half-flip and $\alpha\cc\alpha\otimes\id_\CD$; \label{ar-ssa-u:4}
\item $\alpha$ is semi-strongly self-absorbing and $\alpha\cc\alpha\otimes\id_\CD$. \label{ar-ssa-u:5}
\end{enumerate}
Moreover, in this case the inclusion $\CD\otimes\cstar(G)\subset \CD\otimes(\CD\rtimes_\alpha G)$ induces a $KK$-equivalence and $\alpha$ is very strongly cocycle conjugate to $\id_\CD\otimes\gamma$.
\end{theorem}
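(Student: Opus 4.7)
The plan is to establish the cycle $\text{(i)}\Rightarrow\text{(ii)}\Rightarrow\text{(iii)}\Rightarrow\text{(i)}$, together with $\text{(i)}\Rightarrow\text{(v)}\Rightarrow\text{(iv)}\Rightarrow\text{(iii)}$, thereby closing all five equivalences, and then derive the moreover part from Propositions~\ref{KK-or-KL} and~\ref{model-KK-condition}. The trivial implications are (i)\,$\Rightarrow$\,(ii) and (v)\,$\Rightarrow$\,(iv). For (ii)\,$\Rightarrow$\,(iii), it suffices to verify that $\gamma=\bigotimes_{n}\ad(u_n)$ is itself approximately representable: each factor $\ad(u_n)$ is exactly representable via $u_n$, a diagonal reindexation lifts this to the infinite tensor product, and approximate representability is preserved by tensoring with $\id_{\CD}$ and by cocycle conjugacy.

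The core step is (iii)\,$\Rightarrow$\,(i). Assuming $\alpha$ is approximately representable, Proposition~\ref{approx-rep-D-stable} combined with $\CD$-stability of $\CD\rtimes_\alpha G$ gives $\alpha\cc\alpha\otimes\id_{\CD}$, and in particular a unital embedding $\CD\hookrightarrow F_\omega(\CD)^{\tilde\alpha_\omega}$. Pointwise outerness of $\alpha$ together with Theorem~\ref{model-absorption} produces an approximately equivariant unital $*$-homomorphism $(\CO_\infty,\gamma)\to(F_\omega(\CD),\tilde\alpha_\omega)$, and a standard reindexation arranges the two embeddings with commuting images, giving an approximately equivariant unital $*$-homomorphism $(\CD,\id_{\CD}\otimes\gamma)\to(F_\omega(\CD),\tilde\alpha_\omega)$. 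In the reverse direction, the hypotheses of Proposition~\ref{embedding-into-model} are in force for $\alpha$: for every $K$-trivial $u:G\to\CU(\CD)$ inducing a monomorphism on $\cstar(G)$ there is an approximately equivariant unital $*$-homomorphism $(\CD,\alpha)\to(\CD,\ad(u))$. Iterating this construction through the defining infinite tensor structure of $\gamma$, with a reindexation at each step, produces an approximately equivariant unital embedding $(\CD,\alpha)\to(\CD,\id_{\CD}\otimes\gamma)$. Since $\id_{\CD}\otimes\gamma$ is (semi-)strongly self-absorbing, the equivariant McDuff-type intertwining theorem \cite[Theorem~3.6]{Szabo16ssa} applied to these two-sided embeddings now yields $\alpha\scc\id_{\CD}\otimes\gamma$.

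For (i)\,$\Rightarrow$\,(v), the action $\id_{\CD}\otimes\gamma$ is visibly semi-strongly self-absorbing and $\CD$-absorbing, both properties being preserved under strong cocycle conjugacy. For (iv)\,$\Rightarrow$\,(iii), approximately $G$-inner half-flip combined with Kirchberg--Phillips classification and Proposition~\ref{approx-rep-uniqueness} lets one deduce that the canonical embedding $(\CD,\alpha)\to(\CD\rtimes_\alpha G,\ad(\lambda^\alpha))$ is equivariantly sequentially split, with the $KL$-equivalence hypothesis supplying the matching $KL$-classes at the crossed-product level; this is the equivariant analogue of the Toms--Winter argument identifying half-flip-type conditions with approximate representability in the $\CD$-stable setting. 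For the moreover part, Proposition~\ref{KK-or-KL}(2) upgrades (i) to $\alpha\vscc\id_{\CD}\otimes\gamma$, and combined with Proposition~\ref{model-KK-condition}(2), tensored with $\CD$ and transported along the very strong cocycle conjugacy, this shows that the inclusion $\CD\otimes\cstar(G)\subset\CD\otimes(\CD\rtimes_\alpha G)$ is a $KK$-equivalence. The main obstacle I anticipate is orchestrating the iterated reindexation in the (iii)\,$\Rightarrow$\,(i) step so that Proposition~\ref{embedding-into-model}'s single-step embedding genuinely promotes to an approximately equivariant embedding into the infinite tensor product model action, along with the careful $KL$-theoretic bookkeeping needed in the (iv)\,$\Rightarrow$\,(iii) step.
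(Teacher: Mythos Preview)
Your implication structure has two genuine gaps that the paper's routing avoids.

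\textbf{The step (iv)\,$\Rightarrow$\,(iii) is circular.} You invoke Proposition~\ref{approx-rep-uniqueness} to deduce that the inclusion $(\CD,\alpha)\hookrightarrow(\CD\rtimes_\alpha G,\ad(\lambda^\alpha))$ is equivariantly sequentially split, but that proposition has approximate representability of the target action as a \emph{hypothesis}, not as a conclusion. There is no evident direct path from approximately $G$-inner half-flip to approximate representability. The paper proves the reverse implication (iii)\,$\Rightarrow$\,(iv): assuming approximate representability, one uses the $KL$-hypothesis and the commutative diagram with $\rho_l,\rho_r$ to get $KL(\rho_l)=KL(\rho_r)$, then $KL$-uniqueness in Kirchberg--Phillips gives $\rho_l\sim_{\mathrm{au}}\rho_r$, and \emph{now} Proposition~\ref{approx-rep-uniqueness} applies (its hypothesis being precisely (iii)) to deduce the approximately $G$-inner half-flip.

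\textbf{The step (iii)\,$\Rightarrow$\,(i) miscites the absorption theorem and is missing a hypothesis.} Your ``equivariant McDuff-type intertwining theorem \cite[Theorem~3.6]{Szabo16ssa} applied to these two-sided embeddings'' is not how that theorem works: Theorem~3.6 is one-directional McDuff absorption of a strongly self-absorbing action. Your forward embedding $(\CD,\id_\CD\otimes\gamma)\to(F_\omega(\CD),\tilde\alpha_\omega)$ correctly gives $\alpha\scc\alpha\otimes\id_\CD\otimes\gamma$ via Theorem~3.6. But the reverse conclusion $\id_\CD\otimes\gamma\scc\alpha\otimes\id_\CD\otimes\gamma$ requires \cite[Theorem~2.7]{Szabo16ssa}, and that theorem needs $(\CD,\alpha)$ itself to have approximately $G$-inner half-flip --- which is condition (iv), not yet established under (iii) alone. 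This is precisely why the paper routes through (iii)\,$\Rightarrow$\,(iv)\,$\Rightarrow$\,(i): the half-flip obtained in (iv) is then available for the invocation of Theorem~2.7 in the (iv)\,$\Rightarrow$\,(i) step.

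The remedy is simply to rearrange: prove (iii)\,$\Rightarrow$\,(iv) as the paper does (Proposition~\ref{approx-rep-D-stable} gives $\alpha\cc\alpha\otimes\id_\CD$; the $KL$-diagram plus Proposition~\ref{approx-rep-uniqueness} gives the half-flip), and then your argument for the embeddings becomes the paper's (iv)\,$\Rightarrow$\,(i). The moreover part is as you wrote.
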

\begin{proof}
The implication \ref{ar-ssa-u:1}$\implies$\ref{ar-ssa-u:2} is trivial, and \ref{ar-ssa-u:2}$\implies$\ref{ar-ssa-u:3} follows from the approximate representability of $\gamma$ and the well-known fact that approximate representability is closed under cocycle conjugacy.
The implications \ref{ar-ssa-u:1}$\implies$\ref{ar-ssa-u:5}$\implies$\ref{ar-ssa-u:4} are also trivial; see \cite[Theorem 4.6]{Szabo16ssa}. Let us now show the other implications.

\ref{ar-ssa-u:3}$\implies$\ref{ar-ssa-u:4}: It follows from our assumptions and Proposition \ref{approx-rep-D-stable} that $\alpha\cc\alpha\otimes\id_\CD$.
Denote
\[
\rho_l= (\id_\CD\otimes\eins)\rtimes G: \CD\rtimes_\alpha G \to (\CD\otimes\CD)\rtimes_{\alpha\otimes\alpha} G
\]
and
\[
\rho_r= (\eins\otimes\id_\CD)\rtimes G: \CD\rtimes_\alpha G \to (\CD\otimes\CD)\rtimes_{\alpha\otimes\alpha} G.
\]
Consider the following commutative diagram:
\[
\xymatrix@R+3mm@C+8mm{
& (\CD\otimes\CD)\rtimes_{\alpha\otimes\alpha} G \ar[r]^{\eins_\CD\otimes\id} & \CD\otimes\Big((\CD\otimes\CD)\rtimes_{\alpha\otimes\alpha} G \Big) \\
\CD\rtimes_\alpha G \ar[r]^{\eins_\CD\otimes\id} \ar[ru]_{\rho_l} \ar[rd]^{\rho_r} & \CD\otimes (\CD\rtimes_\alpha G) \ar[ru]_{\id_\CD\otimes\rho_l} 
\ar[rd]^{\id_\CD\otimes\rho_r} & \CD\otimes\mathrm{C}^*(G) \ar[l]_{\id_\CD\otimes j_\alpha} \\
& (\CD\otimes\CD)\rtimes_{\alpha\otimes\alpha} G \ar[r]^{\eins_\CD\otimes\id} & \CD\otimes\Big((\CD\otimes\CD)\rtimes_{\alpha\otimes\alpha} G \Big)
}
\]
We know that all the horizontal maps in this diagram induce $KL$-equivalences. It is moreover obvious that
\[
(\id_\CD\otimes\rho_l)\circ(\id_\CD\otimes j_\alpha)=(\id_\CD\otimes\rho_r)\circ(\id_\CD\otimes j_\alpha).
\]
This implies $KL(\rho_l)=KL(\rho_r)$ due to the above diagram.
By the $KL$-uniqueness in Kirchberg-Phillips classification (see \cite[Theorem 3.14]{Lin05}), it follows that $\rho_l$ and $\rho_r$ are approximately unitarily equivalent.
Since $\alpha$ is approximately representable, Proposition \ref{approx-rep-uniqueness} implies that $\id_\CD\otimes\eins_\CD$ and $\eins_\CD\otimes\id_\CD$ are approximately $G$-unitarily equivalent.
In other words, $\alpha$ has approximately $G$-inner half-flip.

\ref{ar-ssa-u:4}$\implies$\ref{ar-ssa-u:1}: By our assumption and Theorem \ref{model-absorption}, it follows that $\alpha\cc\alpha\otimes\id_\CD\otimes\gamma$.
Let $u: G\to\CU(\CO_\infty)$ be a $KK$-trivial unitary representation that induces a $*$-monomorphism from $\mathrm{C}^*(G)$ to $\CO_\infty$.
Without loss of generality, we may assume that $u$ is the unitary representation used to construct $\gamma$ in Example \ref{the-model-Oinf}. Then
\[
\id_\CD\otimes\gamma ~\cong~ \bigotimes_\IN \big( \id_\CD\otimes\ad(u) \big).
\]
By Proposition \ref{embedding-into-model}, $(\CD,\alpha)$ admits unital and approximately equivariant embeddings into $(\CD\otimes\CO_\infty,\id_\CD\otimes\ad(u))$.
In particular, $(\CD,\alpha)$ admits unital and approximately central and equivariant embeddings into $(\CD\otimes\CO_\infty,\id_\CD\otimes\gamma)$.
It then follows from \cite[Theorem 2.7]{Szabo16ssa} that $\id_\CD\otimes\gamma\scc\alpha\otimes\id_\CD\otimes\gamma$. 

The last part of the claim follows from Proposition \ref{KK-or-KL}\ref{KK-or-KL:2} and the argument analogous to Proposition \ref{model-KK-condition}\ref{model-KK:2}.
\end{proof}

\begin{cor} \label{abstract-models-Oinf}
Let $G$ be a countable, discrete and amenable group.
Let $\alpha: G\curvearrowright\CO_\infty$ be a pointwise outer action. 
Then $\alpha$ is strongly cocycle conjugate to one of the model actions in Example \ref{the-model-Oinf} if and only if the canonical inclusion $\mathrm{C}^*(G)\subset \CO_\infty\rtimes_\alpha G$ induces a $KL$-equivalence, and $\alpha$ is either approximately representable or semi-strongly self-absorbing.
If this is the case, then the inclusion in fact induces a $KK$-equivalence.
\end{cor}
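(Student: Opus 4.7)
The plan is to derive this corollary as an immediate specialization of Theorem \ref{approx-rep-ssa-uniqueness} to the case $\CD = \CO_\infty$. Three reductions are needed to match the statements: (a) the $\CD$-stability hypothesis on $\CD\rtimes_\alpha G$ is automatic; (b) the condition that $\id_\CD\otimes j_\alpha$ is a $KL$-equivalence is equivalent to the same condition on $j_\alpha$ alone; and (c) the strong cocycle conjugacy class of $\id_\CD\otimes\gamma$ coincides with that of $\gamma$, while the auxiliary condition $\alpha\cc\alpha\otimes\id_\CD$ appearing in items (iv)--(v) of the theorem is automatic.

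For (a), since $\alpha$ is pointwise outer on a simple \cstar-algebra and $G$ is amenable, the crossed product $\CO_\infty\rtimes_\alpha G$ is a Kirchberg algebra, and hence is $\CO_\infty$-absorbing by Kirchberg-Phillips. For (c), Theorem \ref{equ-Oinf-absorption} gives $\beta\scc\beta\otimes\id_{\CO_\infty}$ for any cocycle action $\beta$ on a Kirchberg algebra; applied to $\beta=\alpha$ this yields the automaticity of the auxiliary condition, and applied to $\beta=\gamma$ it yields $\gamma\scc\id_{\CO_\infty}\otimes\gamma$. For (b), the unital embedding $\IC\hookrightarrow\CO_\infty$ is a $KK$-equivalence by Kirchberg-Phillips classification, and therefore the first-factor embedding $A\to\CO_\infty\otimes A$ is a $KK$-equivalence for every nuclear \cstar-algebra $A$. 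Applied to both $\cstar(G)$ and $\CO_\infty\rtimes_\alpha G$---both nuclear, and both in the UCT class by Tu's theorem \cite{Tu99} (and its application to crossed products of UCT algebras by amenable group actions for the second)---the resulting naturality square shows that $j_\alpha$ is a $KL$-equivalence if and only if $\id_{\CO_\infty}\otimes j_\alpha$ is, and likewise for $KK$-equivalences.

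With these three reductions in place, items (i), (iii), and (v) of Theorem \ref{approx-rep-ssa-uniqueness} translate directly into the stated equivalences of the corollary, the ``only if'' direction being further reinforced by Proposition \ref{model-KK-condition} to obtain the necessity of the $KL$-equivalence. The final $KK$-equivalence assertion follows from the ``moreover'' clause of Theorem \ref{approx-rep-ssa-uniqueness} combined with the reduction in (b). There is no genuine obstacle at this stage: all the substantive work is encapsulated in Theorem \ref{approx-rep-ssa-uniqueness}, with (b) being the only point requiring a brief verification of the UCT for the crossed product and a careful handling of the $\CO_\infty$-tensoring on the $KK$- and $KL$-level.
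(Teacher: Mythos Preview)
Your argument follows essentially the same route as the paper's own proof: specialize Theorem \ref{approx-rep-ssa-uniqueness} to $\CD=\CO_\infty$, observe that $\alpha\cc\alpha\otimes\id_{\CO_\infty}$ is automatic by Theorem \ref{equ-Oinf-absorption}, and translate the $KL$-condition using the $KK$-equivalence $\IC\subset\CO_\infty$. One minor correction in step (b): your appeal to the UCT for $\CO_\infty\rtimes_\alpha G$ is both unnecessary and not obviously justified---Tu's theorem is about amenable groupoids, and the UCT for crossed products of UCT algebras by arbitrary amenable actions is not a direct corollary of it. You do not need this: since $\IC\subset\CO_\infty$ is a $KK$-equivalence, the first-factor embedding $A\to\CO_\infty\otimes A$ is a $KK$-equivalence for \emph{any} separable $A$ via the external Kasparov product, and the commuting square with $j_\alpha$ and $\id_{\CO_\infty}\otimes j_\alpha$ then gives the desired equivalence of $KL$- (and $KK$-) conditions without any UCT assumption on the crossed product.
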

\begin{proof}
The ``only if'' part is clear from Proposition \ref{model-KK-condition}\ref{model-KK:1}, so what is left to show is the ``if'' part.
Since the inclusion $\IC\subset\CO_\infty$ is a $KK$-equivalence, it is clear that the assumption implies that the inclusion $\CO_\infty\otimes\cstar(G)\subset\CO_\infty\otimes(\CO_\infty\rtimes_\alpha G)$ is a $KL$-equivalence.
Note that we have $\alpha\cc\alpha\otimes\id_{\CO_\infty}$ automatically by Theorem \ref{equ-Oinf-absorption}.
If $\alpha$ is now either approximately representable or semi-strongly self-absorbing, then the claim follows from Theorem \ref{approx-rep-ssa-uniqueness}.
\end{proof}

\begin{rem} \label{Z2-example}
The assumption on an action $\alpha$ that $\mathrm{C}^*(G)\subset\CO_\infty\rtimes_\alpha G$ induces a $KL$-equivalence has a certain dichotomy to it.
This assumption is necessary if $G$ has torsion.
For instance, if $p\in\CO_\infty$ is a projection with $[p]_0=2$ in the $K_0$-group, then the action
\[
\bigotimes_\IN \ad(2p-\eins): \IZ_2\curvearrowright\bigotimes_\IN\CO_\infty
\]
has a crossed product with $K$-theory different from $\mathrm{C}^*(\IZ_2)=\IC^2$. This is an observation of Izumi and follows from a direct computation.
On the other hand, for actions of groups such as $\IZ^N$, the redundance of this assumption has been a key step in works such as \cite[Lemma 5.3]{IzumiMatui10}.
A related observation plays an important role for strongly outer $\IZ^N$-actions on UHF algebras of infinite type, see \cite[Lemma 3.1]{Matui10}. 
\end{rem}

\begin{rem} \label{conj:ar-redundant}
It remains open whether the conditions listed in Theorem \ref{approx-rep-ssa-uniqueness}, in particular condition \ref{ar-ssa-u:3}, are redundant. 
It is even unclear whether it is redundant if one assumes that $\alpha$ is $KK^G$-equivalent to the trivial $G$-action on $\CD$. 
This latter condition is known to hold if $\alpha$ is a quasi-free action on $\CO_\infty$; see \cite[Section 4]{Pimsner97}. 
Indeed, the main result of \cite{GoldsteinIzumi11} shows that for finite groups, any faithful and quasi-free action on $\CO_\infty$ is conjugate to the model action in Example \ref{the-model-Oinf}. 
However, the proof uses certain techniquess that are not readily available for infinite groups. 
It would thus be natural to check whether any faithful and quasi-free action on $\CO_\infty$ is approximately representable. 
It should also be noted that there is no example of any group action of any exact group on $\CO_\infty$ or $\CO_2$ that is known not to be approximately representable. 
\end{rem}

In what comes next, we shall give a general argument, using Baum-Connes for amenable groups, that for groups without torsion, the initial assumptions in Theorem \ref{approx-rep-ssa-uniqueness} about crossed products are redundant.
The key fact that we get from Baum-Connes is the following:

\begin{theorem}[see {\cite[Theorem 8.5]{MeyerNest06}}] \label{KK-vanish}
Let $G$ be a countable, discrete, amenable and torsion-free group.
If $A$ is a separable \cstar-algebra satisfying $KK(A,A)=0$, then $KK^G(A,A)=0$ with respect to every $G$-action on $A$.
\end{theorem}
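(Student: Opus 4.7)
Since the excerpt cites this statement as Meyer-Nest's Theorem~8.5, the plan is to derive it from the general framework of the Baum-Connes conjecture via localization of triangulated categories. The idea is to combine the Meyer-Nest description of $KK^G$ for amenable groups with Frobenius reciprocity to reduce the vanishing of $KK^G(A,A)$ to the hypothesis $KK(A,A)=0$.

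First I would invoke the main external input: for amenable $G$, the Higson-Kasparov theorem establishes the Baum-Connes conjecture with arbitrary coefficients, which in the Meyer-Nest reformulation says that every separable $G$-\cstar-algebra lies in the localizing subcategory $\langle \mathcal{CI}_G \rangle \subseteq KK^G$ generated by compactly induced objects $\operatorname{Ind}_H^G B$, where $H$ runs through finite subgroups of $G$ and $B$ through separable $H$-\cstar-algebras. Since $G$ is torsion-free, the only finite subgroup is trivial, so $\langle \mathcal{CI}_G \rangle$ is generated by the objects $\operatorname{Ind}_1^G D = C_0(G)\otimes D$ (with $G$-action by translation on $C_0(G)$ and trivial on $D$) as $D$ ranges over separable \cstar-algebras.

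Next, for any fixed $G$-action on $A$, I would consider the cohomological functor $F := KK^G_*(-,A)\colon KK^G \to \mathrm{Ab}$. Its kernel
\[
\ker F \;=\; \{X \in KK^G : KK^G_*(X,A) = 0\}
\]
is a localizing subcategory of $KK^G$, being closed under suspensions, triangles (by the long exact sequence), countable direct sums (which $F$ converts to direct products), and retracts. On the generators of $\langle \mathcal{CI}_G\rangle$, Frobenius reciprocity for induction from the trivial subgroup yields
\[
KK^G(\operatorname{Ind}_1^G D,\, A) \;\cong\; KK(D,\, \operatorname{Res}_1^G A) \;=\; KK(D,\, A)
\]
for every separable \cstar-algebra $D$. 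The hypothesis $KK(A,A)=0$ forces $[\id_A]=0$, which means that $A$ is isomorphic to the zero object in the $KK$-category, so $KK(D,A)=0$ for every separable $D$. Hence each generator $\operatorname{Ind}_1^G D$ lies in $\ker F$, so $\langle \mathcal{CI}_G\rangle \subseteq \ker F$; combined with the first step, this forces $\ker F$ to be all of $KK^G$, and in particular $KK^G(A,A)=0$.

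The only genuinely deep input is the Meyer-Nest/Higson-Kasparov result that amenability suffices to make the compactly induced localizing subcategory exhaust $KK^G$; once this is granted, the remainder is formal triangulated-category bookkeeping powered by Frobenius reciprocity.
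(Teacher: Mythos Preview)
Your argument is correct: the combination of Higson--Kasparov (amenable groups satisfy Baum--Connes with coefficients), the Meyer--Nest reformulation identifying $KK^G$ with the localizing subcategory generated by compactly induced algebras, the torsion-free hypothesis reducing the generators to $\operatorname{Ind}_1^G D$, and Frobenius reciprocity together yield the claim exactly as you describe. The paper itself offers no proof of this statement and simply invokes \cite[Theorem~8.5]{MeyerNest06} as a black box, so there is nothing to compare against on the level of argument; what you have written is essentially the standard unpacking of that citation.
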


This has the following consequence:

\begin{prop} \label{homotopic-actions}
Let $G$ be a countable, amenable and torsion-free group.
Let $A$ be a \cstar-algebra and $\set{ \alpha^{(t)}: G\curvearrowright A }_{t\in [0,1]}$ a continuous family of $G$-actions on $A$.
Let $A[0,1]=\CC[0,1]\otimes A$ and $\alpha: G\curvearrowright A[0,1]$ the obvious action induced by this family. 
Then for $i=0,1$, the evaluation map $\ev_i: A[0,1]\to A$ induces a $KK^G$-equivalence between $\big( A[0,1], \alpha\big)$ and $(A,\alpha^{(i)})$.
In particular, $(A,\alpha^{(0)})$ and $(A,\alpha^{(1)})$ are $KK^G$-equivalent via $KK(\ev_0)\otimes KK(\ev_1)^{-1}$. 
\end{prop}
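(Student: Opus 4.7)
The strategy is to show that for each $i \in \set{0,1}$, the equivariant surjection $\ev_i : (A[0,1],\alpha) \to (A,\alpha^{(i)})$ is invertible in $KK^G$; the ``in particular'' statement then follows by composing the resulting $KK^G$-equivalences. Using the triangulated category structure on $KK^G$ due to Meyer-Nest, $\ev_i$ is invertible precisely when its mapping cone $C_{\ev_i}$ is zero in $KK^G$, so the game is to show that $C_{\ev_i}$ is $KK^G$-contractible.

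First, I would observe that $\ev_i$ is a homotopy equivalence of plain \cstar-algebras, with a homotopy inverse given by the constant-function embedding $\sigma_i : A \to A[0,1]$, $\sigma_i(a)(t) = a$. Indeed, $\ev_i \circ \sigma_i = \id_A$, and the formula $h_s(f)(t) = f\bigl((1-s)t + si\bigr)$ defines a continuous path of $*$-endomorphisms of $A[0,1]$ connecting $\id_{A[0,1]}$ at $s=0$ to $\sigma_i \circ \ev_i$ at $s=1$. Consequently $C_{\ev_i}$ is contractible as a plain \cstar-algebra, and in particular $KK(C_{\ev_i}, C_{\ev_i}) = 0$.

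Second, since $\ev_i$ is $G$-equivariant, the mapping cone $C_{\ev_i}$ carries a natural, separable $G$-\cstar-algebra structure. Applying Theorem \ref{KK-vanish} to $C_{\ev_i}$ then upgrades the previous vanishing to $KK^G(C_{\ev_i}, C_{\ev_i}) = 0$, which is to say that $C_{\ev_i} \cong 0$ in $KK^G$. Fitting this into the distinguished Puppe triangle $A[0,1] \xrightarrow{\ev_i} A \to C_{\ev_i} \to \Sigma A[0,1]$ of the equivariant $*$-homomorphism $\ev_i$ (whose mapping cone extension $0 \to SA \to C_{\ev_i} \to A[0,1] \to 0$ is equivariantly c.p.-split via $f \mapsto (f,\ t \mapsto (1-t)\ev_i(f))$), one concludes from the triangulated formalism that $[\ev_i]_{KK^G}$ is an isomorphism.

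The main obstacle is the second step: the passage from $KK$-contractibility of $C_{\ev_i}$ to $KK^G$-contractibility is exactly where the torsion-freeness of $G$ enters, through Meyer-Nest's strong form of Baum-Connes encapsulated in Theorem \ref{KK-vanish}. The homotopy-equivalence observation and the triangulated-category conclusion are routine; the non-trivial input is the $KK$-theoretic rigidity of amenable torsion-free groups.
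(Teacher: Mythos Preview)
Your proof is correct and uses the same essential input as the paper, namely Theorem \ref{KK-vanish}, but packages the argument slightly differently. The paper works with the \emph{kernel} of $\ev_0$, which is immediately identified as $A(0,1]=C_0(0,1]\otimes A$; this is visibly contractible, so Theorem \ref{KK-vanish} forces it to vanish in $KK^G$, and the six-term exact sequence in $KK^G$ associated to the short exact sequence $0\to A(0,1]\to A[0,1]\to A\to 0$ finishes the job. You instead work with the \emph{mapping cone} $C_{\ev_i}$, first observing that $\ev_i$ is a non-equivariant homotopy equivalence so that $C_{\ev_i}$ is contractible, and then invoke the Puppe triangle in the triangulated category $KK^G$. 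The two approaches are formally equivalent---kernel and mapping cone sit in the same distinguished triangle up to suspension---but the paper's route is marginally more direct since the kernel is concretely identifiable without the preliminary homotopy-equivalence step. Your use of the triangulated formalism is perfectly legitimate and perhaps more conceptual; the paper's six-term exact sequence is the more classical phrasing of the same conclusion.
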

\begin{proof}
It clearly suffices to consider the case $i=0$. Denote $A(0,1] = C_0(0,1]\otimes A$ and by slight abuse of notation, we also write $\alpha$ for the restriction of $\alpha$ on this \cstar-subalgebra.
We then have a short exact sequence
\[
\xymatrix{
0 \ar[r] & \big( A(0,1], \alpha \big) \ar[r] & \big( A[0,1], \alpha \big) \ar[r]^{\ev_0} & (A,\alpha) \ar[r] & 0
}
\]
of $G$-\cstar-dynamical systems. Since $A(0,1]$ is a contractible \cstar-algebra, it follows from Theorem \ref{KK-vanish} that $\big( A(0,1], \alpha \big)$ has vanishing $KK^G$-class.
Using the induced six-term exact sequence in $KK^G$, we see that $\ev_0$ must indeed induce a $KK^G$-equivalence.
\end{proof}

\begin{lemma} \label{D-absorbing-KKG}
Let $G$ be a countable, amenable and torsion-free group.
Let $\CD$ be a strongly self-absorbing \cstar-algebra and $\alpha: G\curvearrowright\CD$ an action.
Then the equivariant first-factor embedding 
\[
\id_\CD\otimes\eins: (\CD,\id_\CD)\to (\CD\otimes\CD,\id_\CD\otimes\alpha)
\]
induces a $KK^G$-equivalence.
\end{lemma}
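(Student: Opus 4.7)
The plan is to reduce the desired $KK^G$-equivalence to a vanishing statement in $KK^G$ via a mapping cone argument, and then invoke Theorem \ref{KK-vanish} to bootstrap from the non-equivariant vanishing. First, I would observe that $\iota := \id_\CD\otimes\eins_\CD: \CD\to\CD\otimes\CD$ is, by the very definition of a strongly self-absorbing \cstar-algebra, approximately unitarily equivalent to an isomorphism $\CD\to\CD\otimes\CD$; in particular $\iota$ is a (non-equivariant) $KK$-equivalence on the nose.

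Next, I would form the mapping cone $C_\iota$, which inherits a natural $G$-action $\gamma$ from the equivariant map $\iota: (\CD,\id_\CD)\to(\CD\otimes\CD,\id_\CD\otimes\alpha)$ and fits into a $G$-equivariant short exact sequence
\[
0\to S(\CD\otimes\CD)\to C_\iota\to\CD\to 0.
\]
Via the induced six-term exact sequence in $KK^G$, the statement that $\iota$ is a $KK^G$-equivalence is equivalent to the statement that $(C_\iota,\gamma)$ is $KK^G$-equivalent to zero, i.e., $[\id_{C_\iota}]=0$ in $KK^G(C_\iota,C_\iota)$. Running the same Puppe argument non-equivariantly and using that $\iota$ is a $KK$-equivalence, one obtains $KK(C_\iota,X)=0$ for every \cstar-algebra $X$, and in particular $KK(C_\iota,C_\iota)=0$.

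The final step is to apply Theorem \ref{KK-vanish} to the separable \cstar-algebra $C_\iota$ equipped with the $G$-action $\gamma$, which upgrades the non-equivariant vanishing to $KK^G(C_\iota,C_\iota)=0$, completing the argument. The main obstacle I anticipate is conceptual rather than technical: a direct construction of a $G$-equivariant homotopy inverse to $\iota$ (e.g.\ via some equivariant version of the strongly self-absorbing property) seems out of reach without further hypotheses on $\alpha$. The mapping cone plus Theorem \ref{KK-vanish} strategy is designed precisely to sidestep this and to exploit the torsion-freeness of $G$ exactly where it pays off.
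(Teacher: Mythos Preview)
Your argument is correct and is essentially the paper's approach with more detail filled in. The paper's proof is a one-liner that directly invokes \cite[Theorem 8.5]{MeyerNest06} in its full strength (for torsion-free amenable $G$, an equivariant map is a $KK^G$-equivalence as soon as it is a $KK$-equivalence), whereas you take the specific vanishing statement recorded as Theorem~\ref{KK-vanish} and recover that implication via the mapping cone; this is exactly the standard reduction, and it is the same mechanism the paper uses explicitly in the neighbouring Proposition~\ref{homotopic-actions}.
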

\begin{proof}
This is another straightforward consequence of \cite[Theorem 8.5]{MeyerNest06} as $\id_\CD\otimes\eins_\CD: \CD\to\CD\otimes\CD$ is a $KK$-equivalence.
\end{proof}

\begin{prop} \label{D-stable-homotopic}
Let $G$ be a topological group. Let $A$ be a separable \cstar-algebra and $\CD$ a strongly self-absorbing \cstar-algebra. Suppose that $A\cong A\otimes\CD$.
Let $\alpha: G\curvearrowright A$ be an action.
Then there is a continuous family $\set{ \alpha^{(t)}: G\curvearrowright A }_{t\in [0,1]}$ of actions such that $\alpha^{(0)}=\alpha$ and $\alpha^{(1)}$ is conjugate to $\alpha\otimes\id_\CD$.
\end{prop}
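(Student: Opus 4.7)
My plan is to construct the continuous family of actions by trivializing a suitable continuous-field \cstar-algebra $G$-equivariantly. First, I would fix an isomorphism $\phi: A \otimes \CD \to A$ and set $\alpha^{(1)} := \phi \circ (\alpha \otimes \id_\CD) \circ \phi^{-1}$, which is conjugate to $\alpha \otimes \id_\CD$ by construction. The task then reduces to interpolating between $\alpha$ and $\alpha^{(1)}$ through a continuous family of $G$-actions on $A$.

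A natural candidate for this interpolation is the \cstar-algebra
\[
B := \set{ f \in C\bigl([0, 1], A \otimes \CD \bigr) \mid f(0) \in A \otimes \eins_\CD },
\]
equipped with the pointwise $G$-action inherited from $\alpha \otimes \id_\CD$; this is well-defined because the subalgebra $A \otimes \eins_\CD$ is $(\alpha \otimes \id_\CD)$-invariant. Viewed as a $C([0, 1])$-algebra, $B$ has fiber $(A, \alpha)$ at $t = 0$ and fiber $(A \otimes \CD, \alpha \otimes \id_\CD)$ at $t = 1$, the latter being equivariantly isomorphic to $(A, \alpha^{(1)})$ via $\phi$. Since $A \cong A \otimes \CD$, all fibers of $B$ are abstractly isomorphic to $A$.

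The crux of the proof is then to produce a $G$-equivariant $C([0, 1])$-linear isomorphism $B \cong C([0, 1], A)$ respecting the natural identifications at $t \in \set{0, 1}$. Given such a trivialization, the induced $G$-action on $C([0, 1], A)$ is automatically $C([0, 1])$-linear and decomposes as a continuous family $\set{\alpha^{(t)}}_{t \in [0, 1]}$ of $G$-actions on $A$ with the correct endpoints.

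The main obstacle is establishing this trivialization $G$-equivariantly. Non-equivariantly, it should follow from the fact that continuous fields of separable $\CD$-stable \cstar-algebras over a contractible base space are trivializable, in the spirit of Hirshberg--R\o rdam--Winter. To upgrade to a $G$-equivariant trivialization, one exploits the strong self-absorption of $\CD$: the two unital embeddings $\CD \to \CD \otimes \CD$ given by $d \mapsto d \otimes \eins$ and $d \mapsto \eins \otimes d$ are asymptotically unitarily equivalent via a continuous path of unitaries (Dadarlat--Winter). Combining this asymptotic equivalence with an equivariant Elliott-type intertwining argument along the lines of those developed in \cite{Szabo16ssa} should yield the required trivialization, and hence the desired family $\set{\alpha^{(t)}}_{t \in [0, 1]}$.
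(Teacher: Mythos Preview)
Your continuous-field setup is reasonable, but you have misidentified the main obstacle. You do \emph{not} need a $G$-equivariant trivialization of $B$. Any $C([0,1])$-linear $*$-isomorphism $\Theta: B \to C([0,1],A)$ whose fiber map at $t=0$ is the identity on $A$ already suffices: since the $G$-action on $B$ is $C([0,1])$-linear, the transported action $\Theta\circ(\alpha\otimes\id_\CD)\circ\Theta^{-1}$ on $C([0,1],A)$ is automatically $C([0,1])$-linear as well, and hence decomposes as a continuous family $\set{\alpha^{(t)}}_{t\in[0,1]}$ with $\alpha^{(0)}=\alpha$ and $\alpha^{(1)}$ conjugate to $\alpha\otimes\id_\CD$. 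The appeal to an ``equivariant Elliott-type intertwining'' from \cite{Szabo16ssa} is therefore a red herring, and it is not clear how those techniques would even apply to an arbitrary continuous action of an arbitrary topological group.

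The paper's proof is considerably more direct and bypasses the bundle language entirely. Since $A\cong A\otimes\CD$ with $\CD$ strongly self-absorbing, there exist an isomorphism $\phi: A\to A\otimes\CD$ and a continuous path $v:[0,1)\to\CU(\widetilde{A\otimes\CD})$ such that $\ad(v_t)\circ\phi\to\id_A\otimes\eins$ in point-norm as $t\to 1$. One then simply defines
\[
\alpha^{(t)}_g \ = \ \phi^{-1}\circ\ad(v_{1-t}^*)\circ(\alpha_g\otimes\id_\CD)\circ\ad(v_{1-t})\circ\phi \quad (t>0), \qquad \alpha^{(0)}_g=\alpha_g,
\]
and checks continuity at $t=0$ directly. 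In effect, this \emph{is} the explicit trivialization of your bundle $B$, written down without ever naming $B$. So your approach is not wrong, but once the spurious equivariance requirement is dropped, it collapses to exactly the paper's three-line argument; the detour through Hirshberg--R\o rdam--Winter-type trivialization results is unnecessary.
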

\begin{proof}
Let $\phi: A\to A\otimes\CD$ be an isomorphism and $v: [0,1)\to\CU(\widetilde{A\otimes\CD})$ a continuous map such that $\ad(v_t)\circ\phi \stackrel{t\to 1}{\longrightarrow} \id_A\otimes\eins$ in point-norm. Then
\[
\phi^{-1}\circ\ad(v_t^*)\circ(\psi\otimes\id_\CD)\circ\ad(v_t)\circ\phi \stackrel{t\to 1}{\longrightarrow} \psi
\]
in point-norm for every endomorphism $\psi: A\to A$. In particular, the continuous family of actions $\alpha^{(t)}$ given by 
\[
\alpha^{(t)}_g=\begin{cases} \phi^{-1}\circ\ad(v_{1-t}^*)\circ(\alpha_g\otimes\id_\CD)\circ\ad(v_{1-t})\circ\phi &,\quad t>0 \\
\alpha_g &,\quad t=0
\end{cases}
\]
is as desired. 
\end{proof}

\begin{prop} \label{ssa-all-homotopic}
Let $G$ be a topological group. Let $\CD$ be a strongly self-absorbing \cstar-algebra.
Let $\alpha: G\curvearrowright\CD$ be an action. Then there is a continuous family $\set{ \alpha^{(t)}: G\curvearrowright\CD }_{t\in [0,1]}$ of actions such that $\alpha^{(0)}=\alpha$ and $\alpha^{(1)}=\id_\CD$.
\end{prop}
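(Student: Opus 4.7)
The plan is to recycle the homotopy constructed in the proof of Proposition \ref{D-stable-homotopic}, applying it twice with different asymptotic endpoints, exploiting that a strongly self-absorbing $\CD$ has approximately inner half-flip on top of mere $\CD$-stability. Fix an isomorphism $\phi: \CD \to \CD \otimes \CD$. Given any continuous path $v: [0,1) \to \CU(\widetilde{\CD \otimes \CD})$ with $v_0 = \eins$ and $\ad(v_t) \circ \phi \to \psi$ in point-norm for some unital $*$-monomorphism $\psi: \CD \to \CD \otimes \CD$, the recipe
\[
\alpha^{(s)}_g = \phi^{-1} \circ \ad(v_{1-s}^*) \circ (\alpha_g \otimes \id_\CD) \circ \ad(v_{1-s}) \circ \phi, \qquad s \in (0,1],
\]
defines a continuous family of actions on $\CD$ whose endpoint at $s = 1$ is $\phi^{-1} \circ (\alpha_g \otimes \id_\CD) \circ \phi$, while its limit at $s \to 0^+$ depends on $\psi$. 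For $\psi = \id_\CD \otimes \eins$ this limit is $\alpha_g$, which is exactly the content of Proposition \ref{D-stable-homotopic}. For $\psi = \eins \otimes \id_\CD$, however, an analogous computation using the identity $(\alpha_g \otimes \id_\CD)(\eins \otimes a) = \eins \otimes a$ shows that the limit is $\id_\CD$ instead.

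Armed with this observation, I would assemble the desired family $\alpha^{(t)}$ on $[0,1]$ piecewise. On the left half $[0, 1/2]$ I use the recipe with a path $v^{(1)}$ satisfying $\ad(v^{(1)}_t) \circ \phi \to \id_\CD \otimes \eins$, yielding (after reparametrization) a deformation from $\alpha$ at $t = 0$ to $\phi^{-1} \circ (\alpha \otimes \id_\CD) \circ \phi$ at $t = 1/2$. On the right half $[1/2, 1]$ I run the recipe in the reverse direction using a path $v^{(2)}$ with $\ad(v^{(2)}_t) \circ \phi \to \eins \otimes \id_\CD$, producing a deformation from $\phi^{-1} \circ (\alpha \otimes \id_\CD) \circ \phi$ at $t = 1/2$ back to $\id_\CD$ at $t = 1$. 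Since $v^{(1)}_0 = v^{(2)}_0 = \eins$, both halves evaluate to the same action at the midpoint, so the concatenation is continuous.

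The key technical input will be the existence of the two paths $v^{(1)}, v^{(2)}$, and this is precisely the ingredient implicitly invoked in the proof of Proposition \ref{D-stable-homotopic}. The approximately inner half-flip for strongly self-absorbing $\CD$ ensures that each of the canonical embeddings $\id_\CD \otimes \eins$ and $\eins \otimes \id_\CD$ is approximately unitarily equivalent to the isomorphism $\phi$. To upgrade the approximating sequences of unitaries to continuous paths starting at $\eins$ one invokes $K_1$-injectivity of $\CD$, which is a standard feature of strongly self-absorbing $\cstar$-algebras. I do not anticipate any obstacle beyond those already dispatched in Proposition \ref{D-stable-homotopic}; the conceptual point is simply that toggling the asymptotic endpoint $\psi$ in the recipe toggles the starting action of the homotopy between $\alpha$ and $\id_\CD$, which is exactly what permits the two halves to be spliced into a single path from $\alpha$ to $\id_\CD$.
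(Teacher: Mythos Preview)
Your proof is correct and is essentially the same strategy as the paper's: the paper's one-line proof appeals to Proposition \ref{D-stable-homotopic} together with \cite[Proposition 4.5]{HirshbergPhillips15}, and what you have done is unpack the content of that citation by running the recipe of Proposition \ref{D-stable-homotopic} a second time with the second-factor embedding $\eins\otimes\id_\CD$ in place of $\id_\CD\otimes\eins$, then splicing the two halves at the common midpoint $\phi^{-1}\circ(\alpha\otimes\id_\CD)\circ\phi$. The existence of the continuous path $v^{(2)}$ with $\ad(v^{(2)}_t)\circ\phi\to\eins\otimes\id_\CD$ and $v^{(2)}_0=\eins$ follows, as you note, from the approximately inner half-flip combined with $K_1$-injectivity of strongly self-absorbing \cstar-algebras, which is the same input already used in Proposition \ref{D-stable-homotopic}.
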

\begin{proof}
This follows directly from Proposition \ref{D-stable-homotopic} and \cite[Proposition 4.5]{HirshbergPhillips15}.
\end{proof}

\begin{cor} \label{tf-D-stable}
Let $G$ be a countable, discrete, amenable and torsion-free group.
Let $\CD$ be a strongly self-absorbing Kirchberg algebra and $\alpha: G\curvearrowright\CD$ a pointwise outer action.
Then $\CD\rtimes_\alpha G$ is $\CD$-stable. Moreover, the unital embedding
\[
\id_\CD\otimes j_\alpha: \CD\otimes \mathrm{C}^*(G)\to\CD\otimes (\CD\rtimes_\alpha G)
\]
induces a $KK$-equivalence. 
\end{cor}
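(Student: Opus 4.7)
The strategy is to combine the homotopy results of Propositions~\ref{ssa-all-homotopic} and~\ref{homotopic-actions} and Lemma~\ref{D-absorbing-KKG} with Kasparov descent and the Kirchberg--Phillips classification of unital Kirchberg algebras in the UCT class. As a preliminary observation, $\CD\rtimes_\alpha G$ is a unital Kirchberg algebra: it is separable, unital, nuclear (as $G$ is amenable and $\CD$ is nuclear), simple (as $\alpha$ is pointwise outer on the simple algebra $\CD$ and $G$ is discrete), and $\CO_\infty$-stable by Theorem~\ref{equ-Oinf-absorption}, which yields $\CD\rtimes_\alpha G\cong(\CD\rtimes_\alpha G)\otimes\CO_\infty$. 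The same reasoning shows that $\CD\otimes(\CD\rtimes_\alpha G)$ is also a Kirchberg algebra.

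For the $KK$-equivalence claim, I would apply Lemma~\ref{D-absorbing-KKG} to our action: the first-factor embedding $\iota:(\CD,\id_\CD)\to(\CD\otimes\CD,\id_\CD\otimes\alpha)$ is a $KK^G$-equivalence. Kasparov descent, which in the amenable setting translates $KK^G$-equivalences into $KK$-equivalences of full crossed products, then ensures that
\[
\iota\rtimes G : \CD\rtimes_{\id_\CD} G \longrightarrow (\CD\otimes\CD)\rtimes_{\id_\CD\otimes\alpha} G
\]
is a $KK$-equivalence. Using the canonical identifications $\CD\rtimes_{\id_\CD}G\cong\CD\otimes\cstar(G)$ and $(\CD\otimes\CD)\rtimes_{\id_\CD\otimes\alpha}G\cong\CD\otimes(\CD\rtimes_\alpha G)$, both valid because the action is trivial on the first tensor factor, a direct computation on the canonical generators shows that $\iota\rtimes G$ agrees with $\id_\CD\otimes j_\alpha$, which proves the second claim.

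For $\CD$-stability, Proposition~\ref{ssa-all-homotopic} provides a continuous family $\{\alpha^{(t)}\}_{t\in[0,1]}$ of $G$-actions on $\CD$ with $\alpha^{(0)}=\alpha$ and $\alpha^{(1)}=\id_\CD$. Proposition~\ref{homotopic-actions} then yields a $KK^G$-equivalence between $(\CD,\alpha)$ and $(\CD,\id_\CD)$, implemented by $KK(\ev_0)\otimes KK(\ev_1)^{-1}$, where the $\ev_i$ are the evaluation maps on $\CD[0,1]$. Descent produces a $KK$-equivalence between $\CD\rtimes_\alpha G$ and $\CD\otimes\cstar(G)$, and since $\ev_0$ and $\ev_1$ descend to unital $*$-homomorphisms, the induced $K_0$-isomorphism matches the unit classes on both sides. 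Composing with the unital $KK$-equivalence $\id_\CD\otimes j_\alpha$ from the preceding paragraph, I obtain a $KK$-equivalence between $\CD\rtimes_\alpha G$ and $\CD\otimes(\CD\rtimes_\alpha G)$ that preserves the unit class in $K_0$. Both sides are unital Kirchberg algebras in the UCT class (as $\cstar(G)$ is in UCT by Tu's theorem and UCT is a $KK$-invariant), so by the Kirchberg--Phillips classification the two are isomorphic; this is exactly the asserted $\CD$-stability of $\CD\rtimes_\alpha G$.

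The main technical point to watch is the preservation of the unit class along the chain of $KK$-equivalences, since unital Kirchberg--Phillips classification is sensitive to the pointed $K_0$-datum. This is automatic here because each $KK$-equivalence that appears is implemented by a unital $*$-homomorphism on at least one side.
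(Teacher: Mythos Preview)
Your proof is correct and follows essentially the same approach as the paper: the second claim via Lemma~\ref{D-absorbing-KKG} plus descent, and $\CD$-stability via the homotopy of actions from Propositions~\ref{ssa-all-homotopic} and~\ref{homotopic-actions} combined with Kirchberg--Phillips. The only difference is cosmetic: for $\CD$-stability the paper stops at the $KK$-equivalence $\CD\rtimes_\alpha G \simeq_{KK} \CD\otimes\cstar(G)$ and observes that the latter is $\CD$-stable, whereas you compose further with $\id_\CD\otimes j_\alpha$ and track units. Your detour is unnecessary (and the UCT invocation is not needed either): stable Kirchberg--Phillips gives a stable isomorphism from the $KK$-equivalence alone, and $\CD$-stability is invariant under stable isomorphism. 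But nothing you wrote is wrong.
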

\begin{proof}
Upon combining Propositions \ref{ssa-all-homotopic} and \ref{homotopic-actions}, we see that $\CD\rtimes_\alpha G$ is $KK$-equivalent to the $\CD$-stable \cstar-algebra $\CD\otimes\cstar(G)$.
Then the first part of the claim follows from Kirchberg-Phillips classification. 

The second part of the claim follows directly from Lemma \ref{D-absorbing-KKG} and applying the descent to $KK$-theory, with obvious identifications of the involved crossed products.
\end{proof}

\begin{cor} \label{approx-rep-tf-uniqueness}
Let $G$ be a countable, discrete, torsion-free amenable group.
Let $\CD$ be a strongly self-absorbing Kirchberg algebra.
Let $\gamma$ be a model action as in Example \ref{the-model-Oinf}.
Then for any pointwise outer action $\alpha: G\curvearrowright\CD$, one has $\alpha\scc\gamma\otimes\id_\CD$ if and only if $\alpha$ is approximately representable.
\end{cor}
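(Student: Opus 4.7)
The statement looks like a direct synthesis of two results just proved, so my plan is to apply them in succession rather than redo any hard work. The ``only if'' direction is essentially immediate: approximate representability is preserved under strong cocycle conjugacy, and by construction the model action $\gamma$ is approximately representable (as a tensor product of inner actions of the form $\ad(u)$), hence so is $\gamma\otimes\id_\CD$. So if $\alpha\scc\gamma\otimes\id_\CD$, then $\alpha$ inherits approximate representability.

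For the ``if'' direction, I would verify the hypotheses of Theorem~\ref{approx-rep-ssa-uniqueness} and invoke the implication \ref{ar-ssa-u:3}$\implies$\ref{ar-ssa-u:1}. Since $G$ is torsion-free, amenable and $\alpha$ is pointwise outer with $\CD$ a strongly self-absorbing Kirchberg algebra, Corollary~\ref{tf-D-stable} applies and furnishes exactly what is needed: the crossed product $\CD\rtimes_\alpha G$ is $\CD$-stable, and the canonical unital embedding $\id_\CD\otimes j_\alpha: \CD\otimes\cstar(G)\to\CD\otimes(\CD\rtimes_\alpha G)$ is a $KK$-equivalence, hence in particular a $KL$-equivalence. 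With these two conditions established, Theorem~\ref{approx-rep-ssa-uniqueness} gives the equivalence of approximate representability with strong cocycle conjugacy to $\id_\CD\otimes\gamma$, which is the desired conclusion after the harmless reordering of tensor factors $\id_\CD\otimes\gamma\cong\gamma\otimes\id_\CD$.

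There is no real obstacle here; the corollary is a packaging statement. The only thing worth checking carefully is that Theorem~\ref{approx-rep-ssa-uniqueness} is applied with the correct model action---namely, that the $\gamma$ appearing in the present statement (constructed as in Example~\ref{the-model-Oinf}) matches the one used in Theorem~\ref{approx-rep-ssa-uniqueness}. This is automatic, since both references point to the same construction. So the proof reduces to two sentences: one to invoke Corollary~\ref{tf-D-stable} to discharge the $\CD$-stability and $KL$-equivalence hypotheses, and one to then quote the equivalence \ref{ar-ssa-u:1}$\Leftrightarrow$\ref{ar-ssa-u:3} in Theorem~\ref{approx-rep-ssa-uniqueness}.
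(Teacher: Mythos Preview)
Your proposal is correct and follows exactly the paper's approach: the paper's proof is the single sentence ``This follows directly from Corollary~\ref{tf-D-stable} and Theorem~\ref{approx-rep-ssa-uniqueness}.'' Your elaboration of how these two results combine---using Corollary~\ref{tf-D-stable} to discharge the $\CD$-stability and $KL$-equivalence hypotheses, then invoking the equivalence \ref{ar-ssa-u:1}$\Leftrightarrow$\ref{ar-ssa-u:3} of Theorem~\ref{approx-rep-ssa-uniqueness}---is precisely the intended unpacking.
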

\begin{proof}
This follows directly from Corollary \ref{tf-D-stable} and Theorem \ref{approx-rep-ssa-uniqueness}.
\end{proof}


\section{Uniqueness of $\CO_2$-absorbing outer actions}

In this section, we show that pointwise outer actions of amenable groups on $\CO_2$ that equivariantly absorb the trivial action on $\CO_2$ are unique up to strong cocycle conjugacy.
For this purpose, we need to observe a few general facts about certain saturated and pointwise outer actions on simple and purely infinite \cstar-algebras.
The main ingredients come from Section 2, with the only addition here being that we specialize to cases where the fixed point algebras of the actions in question are in Cuntz standard form.

\begin{lemma} \label{O2-morph-uniqueness-1}
Let $G$ be a countable, discrete and amenable group. Let $D$ be a unital, simple and purely infinite \cstar-algebra.
Let $\alpha: G\curvearrowright D$ be a saturated action. Assume that for every separable, $\alpha$-invariant \cstar-subalgebra $A\subset D$, the induced action of $\alpha$ on $D\cap A'$ is pointwise non-trivial and the fixed point algebra $(D\cap A')^\alpha$ contains a unital copy of $\CO_2$.

Let $B$ be a separable, unital, simple and nuclear \cstar-algebra with an action $\beta: G\curvearrowright B$. Then any two unital and equivariant $*$-homomorphisms from $(B,\beta)$ to $(D,\alpha)$ are $G$-unitarily equivalent.
\end{lemma}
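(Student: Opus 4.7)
The plan is to prove the lemma by the classical $2\times 2$-matrix trick, reducing the construction of the intertwining unitary to a Murray--von Neumann equivalence of projections in a suitable fixed-point algebra. Form the equivariant unital $*$-homomorphism
\[
\Phi = \phi_1\oplus\phi_2: (B,\beta)\to (M_2(D), \alpha^{(2)}), \quad \Phi(b)=\diag(\phi_1(b),\phi_2(b)),
\]
where $\alpha^{(2)}=\id_{M_2}\otimes\alpha$. A direct matrix calculation shows that specifying a unitary $v\in\CU(D^\alpha)$ with $\phi_2=\ad(v)\circ\phi_1$ is the same datum as a partial isometry $R\in F:=(M_2(D)\cap\Phi(B)')^{\alpha^{(2)}}$ with $R^*R=e_{11}\otimes 1_D$ and $RR^*=e_{22}\otimes 1_D$, since any such $R$ is forced to take the form $e_{21}\otimes v$ with $v$ a unitary in $D^\alpha$ intertwining $\phi_1$ with $\phi_2$. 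It therefore suffices to show that $F$ is unital, simple, and purely infinite; any two non-zero projections will then automatically be Murray--von Neumann equivalent.

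I would deduce this by invoking Lemma \ref{fixed point} applied to the induced action $\alpha^{(2)}|_{M_2(D)\cap\Phi(B)'}$. First, $M_2(D)\cap\Phi(B)'$ is unital, simple and purely infinite: $M_2(D)$ is semi-saturated (Proposition \ref{semi-sat-permanence}\ref{semi-sat-permanence:6}), simple and purely infinite, and $\Phi(B)\cong B$ is a separable, simple, nuclear, unital \cstar-subalgebra, so Theorem \ref{F() pis} gives that $F(\Phi(B),M_2(D))$ is simple and purely infinite. Unitality of $\Phi$ makes the annihilator trivial, so $M_2(D)\cap\Phi(B)'$ coincides with this generalised relative commutant. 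Saturation of the restricted action follows from Proposition \ref{semi-sat-permanence}\ref{semi-sat-permanence:5},\ref{semi-sat-permanence:6} and Corollary \ref{spi-sat}.

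The main obstacle is verifying pointwise outerness of $\alpha^{(2)}|_{M_2(D)\cap\Phi(B)'}$. Suppose for contradiction that $\alpha^{(2)}_g=\ad(u)$ on this relative commutant for some $g\in G\setminus\{1_G\}$ and unitary $u=(u_{ij})$ in $M_2(D)\cap\Phi(B)'$. Testing with elements $\diag(x,0)$ for $x\in D\cap\phi_1(B)'$ and comparing the entries of $u\diag(x,0)u^*$ with $\diag(\alpha_g(x),0)$ (the case $x=1_D$ suffices to force $u_{21}u_{21}^*=0$ and $u_{11}u_{11}^*=1$, which combined with unitarity of $u$ forces $u_{12}=0$ and $u_{11},u_{22}$ unitary) shows that $u=\diag(u_{11},u_{22})$ with $u_{11}\in\CU(D\cap\phi_1(B)')$ implementing $\alpha_g|_{D\cap\phi_1(B)'}$. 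To derive a contradiction I would enlarge $\phi_1(B)$ to the separable $\alpha$-invariant \cstar-subalgebra
\[
A_1:=\cstar\bigl(\phi_1(B)\cup\{\alpha_h(u_{11}):h\in G\}\bigr)\subset D.
\]
For any $x\in D\cap A_1'\subseteq D\cap\phi_1(B)'$ we have $[x,u_{11}]=0$, so $\alpha_g(x)=u_{11}xu_{11}^*=x$, contradicting the hypothesis that $\alpha$ acts pointwise non-trivially on $D\cap A_1'$. With pointwise outerness established, Lemma \ref{fixed point} gives that $F$ is simple and purely infinite, $e_{11}\otimes 1_D\sim e_{22}\otimes 1_D$ in $F$, and the proof is complete.
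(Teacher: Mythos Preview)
Your overall strategy---the $2\times 2$ matrix trick, passing to the relative commutant $M_2(D)\cap\Phi(B)'$, verifying saturation via Proposition \ref{semi-sat-permanence} and simplicity/pure infiniteness via Theorem \ref{F() pis}, the outerness argument by enlarging to $A_1$, and then Lemma \ref{fixed point}---matches the paper's proof closely and is correct up to the final step.

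There is, however, a genuine gap at the end. You assert that once $F=(M_2(D)\cap\Phi(B)')^{\alpha^{(2)}}$ is shown to be unital, simple and purely infinite, ``any two non-zero projections will then automatically be Murray--von Neumann equivalent.'' This is false: by Cuntz's result \cite{Cuntz81}, in a unital simple purely infinite \cstar-algebra two non-zero projections are Murray--von Neumann equivalent if and only if they have the same $K_0$-class, and $K_0$ need not be trivial (think of $\CO_\infty$). Your argument never uses the hypothesis that $(D\cap A')^\alpha$ contains a unital copy of $\CO_2$, and that hypothesis is precisely what is needed here. The paper's proof fills this gap as follows: the assumption gives a unital copy of $\CO_2$ inside $\big(D\cap(\phi_1(B)\cup\phi_2(B))'\big)^\alpha$; applying the diagonal embedding $D\to M_2(D)$ places this copy inside $F\cap\{e_{11}\otimes\eins, e_{22}\otimes\eins\}'$. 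Since $K_0(\CO_2)=0$, it follows that $[e_{11}\otimes\eins]=[e_{22}\otimes\eins]=0$ in $K_0(F)$, and only then does Cuntz's theorem yield the required Murray--von Neumann equivalence.
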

\begin{proof}
Let us first observe that it follows from our assumptions on $(D,\alpha)$ that the induced action on any relative commutant of the above form is actually pointwise outer:
If the restriction of $\alpha_g$, for some $g\neq 1$, on $D\cap A'$ were induced by a unitary $w$, then the restriction of $\alpha_g$ on $D\cap A_0'$, for $A_0=\cstar\big( A\cup\set{\alpha_g(w)}_{g\in G} \big)$, would be trivial, which contradicts our assumption.

Let 
\[
\phi_1,\phi_2: (B,\beta)\to (D,\alpha)
\]
be two unital and equivariant $*$-homomorphisms. Consider the unital and equivariant $*$-homomorphism
\[
\pi: (B,\beta)\to (M_2\otimes D, \id_{M_2}\otimes\alpha),\quad b\mapsto\matrix{ \phi_1(b) & 0 \\ 0 & \phi_2(b) }.
\]
By Proposition \ref{semi-sat-permanence}\ref{semi-sat-permanence:6} and Corollary \ref{spi-sat}, the action on the target is saturated.
It also follows from our assumptions on $\alpha$ that the action induced on the relative commutant $(M_2\otimes D)\cap\pi(B)'$ is pointwise outer.
This \cstar-algebra is moreover simple and purely infinite by Theorem \ref{F() pis}. It thus follows from Lemma \ref{fixed point} that the fixed point algebra
\[
C:=\Big( (M_2\otimes D)\cap\pi(B)' \Big)^{\id_{M_2}\!\otimes\alpha}
\]
is simple and purely infinite. By our assumptions on $\alpha$, there exists a unital copy of $\CO_2$ in the \cstar-algebra $\Big( D\cap \big(\phi_1(B)\cup\phi_2(B) \big)' \Big)^\alpha$. 
Applying the diagonal embedding from $D$ to $M_2\otimes D$ to this yields a unital copy of $\CO_2$ in $C$ that also commutes with the projections $e_{1,1}\otimes\eins, e_{2,2}\otimes\eins\in C\subset M_2\otimes D$. 
It follows that these projections have trivial $K$-theory class. Since $C$ is simple and purely infinite, it follows from \cite{Cuntz81} that $e_{1,1}\otimes\eins$ and $e_{2,2}\otimes\eins$ are Murray-von-Neumann equivalent in $C$. 
Letting $r\in C$ be a partial isometry with $r^*r=e_{1,1}\otimes\eins$ and $rr^*=e_{2,2}\otimes\eins$, we see that $r=e_{2,1}\otimes v$ for a unitary $v\in D^\alpha$ satisfying $\phi_2(b)=v\phi_1(b)v^*$ for all $b\in B$. This shows our claim.
\end{proof}

\begin{lemma} \label{O2-morph-uniqueness-2}
Let $G$ be a countable, discrete and amenable group. Let $A$ be a Kirchberg algebra and $\alpha: G\curvearrowright A$ a pointwise outer action that is equivariantly $\CO_2$-absorbing. Let $\omega\in\beta\IN\setminus\IN$ be a free ultrafilter. Let $S\subset A_\omega$ be a separable, unital, simple, nuclear and $\alpha_\omega$-invariant \cstar-algebra.
Then the induced action $\tilde{\alpha}_\omega: G\curvearrowright F(S,A_\omega)$ satisfies the conditions in Lemma \ref{O2-morph-uniqueness-1} in place of $(D,\alpha)$.

In particular, if $\beta: G\curvearrowright B$ is an action on a separable, unital, simple, nuclear \cstar-algebra, then any two unital and equivariant $*$-homomorphisms from $(B,\beta)$ to $\big( F(S,A_\omega), \tilde{\alpha}_\omega \big)$ are $G$-unitarily equivalent.
\end{lemma}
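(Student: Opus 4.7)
The plan is to verify the three hypotheses of Lemma~\ref{O2-morph-uniqueness-1} for $(F(S,A_\omega), \tilde\alpha_\omega)$, after which the ``in particular'' statement follows by direct application of that lemma. Saturation will be immediate from the general theory of Section~1: by Example~\ref{ex:ultra} the system $(A_\omega,\alpha_\omega)$ is saturated, and $A_\omega$ is simple and purely infinite since $A$ is a Kirchberg algebra. Proposition~\ref{F(A)-saturated} then gives semi-saturation of $(F(S,A_\omega),\tilde\alpha_\omega)$, Theorem~\ref{F() pis} gives that $F(S,A_\omega)$ is simple and purely infinite (using that $S$ is separable, simple, nuclear), and Corollary~\ref{spi-sat} promotes semi-saturation to saturation. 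For the remaining two hypotheses, I would fix a separable $\tilde\alpha_\omega$-invariant \cstar-subalgebra $B\subset F(S,A_\omega)$, lift a countable $G$-invariant dense subset of $B$ through $\pi_S$, and close under $\alpha_\omega$ to obtain a separable $\alpha_\omega$-invariant $B_0\subset A_\omega\cap S'$ with $\pi_S(B_0)\supseteq B$. Setting $C=\cstar(S\cup B_0)$, the canonical unital equivariant $*$-homomorphism $F(C,A_\omega)\to F(S,A_\omega)$ then factors through $F(S,A_\omega)\cap B'$.

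For the unital copy of $\CO_2$ in the fixed points, the equivariant $\CO_2$-absorption of $\alpha$ together with the equivariant McDuff-type theorem (Theorem~3.6 of~\cite{Szabo16ssa}) yields a unital equivariant $*$-homomorphism $(\CO_2,\id_{\CO_2})\to (F(C,A_\omega),\tilde\alpha_\omega)$. Composing with the map into $F(S,A_\omega)\cap B'$ produces the required unital copy of $\CO_2$ in $(F(S,A_\omega)\cap B')^{\tilde\alpha_\omega}$; non-zero-ness, and hence injectivity, follow from simplicity of $\CO_2$ combined with unitality of the composite.

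The hard part will be the pointwise non-triviality of $\tilde\alpha_\omega$ on $F(S,A_\omega)\cap B'$. Fix $g\neq 1_G$. Since $\alpha_g$ is outer on $A$, the Kishimoto--Akemann--Pedersen technique underlying the proof of Theorem~\ref{central seq outer} produces a bounded sequence $(y_n)\subset A_1$ asymptotically commuting with $A$ and satisfying $\liminf_n \|(y_n-\alpha_g(y_n))b\|\geq 1/2$ for some $b\in A$. Lifting $\eins_S$ to a sequence $(e_n)$ of positive contractions in $A$ and applying Kirchberg's $\eps$-test to the saturated system $(A_\omega,\alpha_\omega)$, I would combine the Kishimoto sequence with the corner projections $(e_n)$ and reindex against a countable dense subset of $C$ to produce a contraction $x\in A_\omega\cap C'$ with $\eins_S x \eins_S = x$ (so that $x$ is supported in the corner $\eins_S A_\omega\eins_S$) and $\|\alpha_{\omega,g}(x)-x\|\geq 1/2$. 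Under the natural identification of $F(S,A_\omega)$ with $\eins_S(A_\omega\cap S')\eins_S$, the class $\pi_S(x)$ lies in $F(S,A_\omega)\cap B'$ and is not fixed by $\tilde\alpha_{\omega,g}$, since $\eins_S$ is $\alpha_\omega$-fixed and the non-triviality $\alpha_{\omega,g}(x)\neq x$ transfers directly through the quotient. The main obstacle is to orchestrate all these constraints simultaneously---commutation with the separable algebra $C$, containment in the corner by $\eins_S$ (a projection in $A_\omega$ but not in $A$, which resists direct manipulation), and the Kishimoto-type non-commutation witness---which is precisely where the saturation of the ambient ultrapower plays its decisive role through the $\eps$-test, packaging these demands as a countable family of $\alpha_\omega$-$*$-polynomial conditions whose approximate solvability is exactly the content of the local Kishimoto construction in $A$.
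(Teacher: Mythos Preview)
Your overall strategy coincides with the paper's: verify the hypotheses of Lemma~\ref{O2-morph-uniqueness-1} using the saturation machinery of Section~1, Theorem~\ref{central seq outer} for pointwise non-triviality, and the equivariant McDuff characterization of $\CO_2$-absorption, passing from $F_\omega(A)$ to $F(S,A_\omega)$ by reindexation. The paper simply invokes ``a standard reindexation argument'' (and \cite[Lemma~1.14]{Szabo16ssa2} for the $\CO_2$-embedding) where you attempt a more explicit construction.

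There is, however, a genuine gap in your non-triviality step. The Kishimoto witness $b$ lives in $A$, and what you know is $\|(\alpha_{\omega,g}(y)-y)b\|\geq 1/2$ for the reindexed element $y\in A_\omega\cap C'$. You then want the \emph{compressed} element $x=\eins_S y\eins_S$ to satisfy $\|\alpha_{\omega,g}(x)-x\|\geq 1/2$. Since $\eins_S$ is $\alpha_\omega$-fixed and commutes with $y$, one has $\alpha_{\omega,g}(x)-x=\eins_S(\alpha_{\omega,g}(y)-y)\eins_S$, and there is no a priori reason this compression cannot vanish: the witness $b\in A$ need not lie in the corner $\eins_S A_\omega\eins_S$, so the lower bound does not transfer. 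Your claim that ``approximate solvability is exactly the content of the local Kishimoto construction in $A$'' is therefore not justified---the corner constraint and the norm lower bound are not simultaneously witnessed by $e_n y_{m} e_n$.

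The fix uses simplicity of $A_\omega$: since $\eins_S$ is full, choose finitely many $r_i,s_i\in A_\omega$ with $\|b-\sum_i r_i\eins_S s_i\|<1/8$, and enlarge $C$ to contain the $\alpha_\omega$-orbits of the $r_i,s_i$ \emph{before} producing $y\in A_\omega\cap C'$. Then $(\alpha_{\omega,g}(y)-y)\eins_S=0$ would force $\|(\alpha_{\omega,g}(y)-y)b\|\leq 2\cdot 1/8<1/2$, a contradiction; hence $\pi_S(y)$ is the desired non-fixed element of $F(S,A_\omega)\cap B'$. (No corner compression of $y$ is needed.) A minor related point: the McDuff-type theorem yields $\CO_2\hookrightarrow F_\omega(A)^{\tilde\alpha_\omega}$, not directly $F(C,A_\omega)^{\tilde\alpha_\omega}$; you should enlarge $C$ to contain $A$ and then invoke the same reindexation to land in the larger relative commutant.
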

\begin{proof}
It follows from Theorem \ref{F() pis} that $F(S,A_\omega)$ is simple and purely infinite. 
The action $\tilde{\alpha}_\omega$ induced on it is saturated by Example \ref{ex:ultra}, Proposition \ref{F(A)-saturated} and Corollary \ref{spi-sat}. 
By Theorem \ref{central seq outer}, the induced action $\tilde{\alpha}_\omega: G\curvearrowright F_\omega(A)$ is pointwise non-trivial. 
Since $S$ is separable, it then follows from a standard reindexation argument that the restriction of $\tilde{\alpha}_\omega$ to any relative commutant of a separable, $\tilde{\alpha}_\omega$-invariant \cstar-subalgebra of $F(S,A_\omega)$ is also pointwise non-trivial.
By \cite[Theorem 3.7]{Szabo16ssa}, the fact that $\alpha$ is equivariantly $\CO_2$-absorbing is equivalent to the existence of a unital copy of $\CO_2$ in the fixed point algebra $F_\omega(A)^{\tilde{\alpha}_\omega}$.
Again, a standard reindexation argument, cf.\ \cite[Lemma 1.14]{Szabo16ssa2}, implies that a unital copy of $\CO_2$ can be found in every fixed point algebra of a relative commutant of a separable, $\tilde{\alpha}_\omega$-invariant \cstar-subalgebra in $F(S,A_\omega)$.
The rest follows from Lemma \ref{O2-morph-uniqueness-1}.
\end{proof}

\begin{prop} \label{O2-morph-ex-1}
Let $G$ be a countable, discrete and amenable group.
Let $D$ be a unital, simple and purely infinite \cstar-algebra and $\alpha: G\curvearrowright D$ a saturated and pointwise outer action.
Assume that the fixed point algebra $D^\alpha$ is in Cuntz standard form, i.e., it contains a unital copy of $\CO_2$.
Then for every action $\beta: G\curvearrowright B$ on a separable, unital, exact \cstar-algebra, there exists a unital and equivariant $*$-homomorphism from $(B,\beta)$ to $(D,\alpha)$.
\end{prop}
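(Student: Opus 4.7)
The plan is to reduce the problem, via Kirchberg's $\CO_2$-embedding theorem, to the special case $B=\CO_2$ (which is already essentially handled by Lemma~\ref{O embeddings}).

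First, since $G$ is amenable and $B$ is exact and unital, the crossed product $B\rtimes_\beta G$ is exact and unital. By Kirchberg's $\CO_2$-embedding theorem, there exists a unital embedding $\iota: B\rtimes_\beta G \hookrightarrow \CO_2$. Let $\psi: B\to\CO_2$ be the restriction of $\iota$ to $B\subset B\rtimes_\beta G$, and define a unitary representation $u: G\to\CU(\CO_2)$ by $u_g = \iota(\lambda_g^\beta)$. By construction one has $u_g\psi(b)u_g^* = \psi(\beta_g(b))$ for all $g\in G$ and $b\in B$, so $\psi$ is a unital equivariant $*$-homomorphism from $(B,\beta)$ to $(\CO_2,\ad(u))$.

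Next, I would observe that this $u$ satisfies the hypotheses of Lemma~\ref{O embeddings} taken with the unital \cstar-algebra $\CO_2$ in place of $B$ and $D^\alpha$ as target. Indeed, since $G$ is amenable and $B$ is unital, the canonical map $\cstar(G)\to B\rtimes_\beta G$ is an embedding; composing with $\iota$ shows that $u$ induces an embedding $\cstar(G)\hookrightarrow\CO_2$. Moreover, $K$-triviality of $u$ is automatic: both $K$-groups of $\CO_2$ vanish (indeed $\CO_2$ is $KK$-contractible), so every unital $*$-homomorphism $\cstar(G)\to\CO_2$ has the same $KL$-class as the character. The hypothesis that $D^\alpha$ contains $\CO_2$ unitally provides the required unital embedding $\CO_2\hookrightarrow D^\alpha$.

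Applying Lemma~\ref{O embeddings}, we obtain a unital equivariant $*$-homomorphism $\theta: (\CO_2,\ad(u))\to (D,\alpha)$. Composing with $\psi$ yields the desired unital equivariant $*$-homomorphism $\theta\circ\psi: (B,\beta)\to(D,\alpha)$.

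The only non-trivial step is the equivariant Kirchberg embedding trick in the first paragraph, but this is a standard consequence of the amenability of $G$ combined with Kirchberg's unital embedding theorem for separable exact unital \cstar-algebras into $\CO_2$; all deeper work (cohomology vanishing on the level of cocycles, existence of $\CO_2$ in the fixed-point algebra, etc.) has already been packaged into Lemma~\ref{O embeddings}.
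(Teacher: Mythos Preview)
Your proof is correct and follows essentially the same approach as the paper: embed $B\rtimes_\beta G$ unitally into $\CO_2$ via Kirchberg's embedding theorem, extract the unitary representation $u_g=\iota(\lambda_g^\beta)$ to obtain an equivariant map $(B,\beta)\to(\CO_2,\ad(u))$, and then apply Lemma~\ref{O embeddings} (using the unital copy of $\CO_2$ in $D^\alpha$) to pass into $(D,\alpha)$. If anything, you are slightly more explicit than the paper in verifying the hypotheses of Lemma~\ref{O embeddings} (that $u$ induces an embedding of $\cstar(G)$ and is automatically $K$-trivial since $\CO_2$ is $KK$-contractible).
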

\begin{proof}
Since $B$ is exact and $G$ is amenable, the crossed product $B\rtimes_\beta G$ is again separable, unital and exact.
Let $\iota: B\to B\rtimes_\beta G$ be the inclusion. By the Kirchberg-Phillips $\CO_2$-embedding theorem \cite{KirchbergPhillips00}, we find a unital embedding $\kappa: B\rtimes_\beta G\to\CO_2$.
Consider the unitary representation given by $w_g=\kappa(\lambda^\beta_g)\in\CU(\CO_2)$ for all $g\in G$.
Because $D^\alpha$ contains a unital copy of $\CO_2$, it follows from Lemma \ref{O embeddings} that there exists a unital and equivariant $*$-homomorphism $\psi: (\CO_2,\ad(w))\to (D,\alpha)$.
Then $\phi=\psi\circ\kappa\circ\iota$ defines a unital and equivariant $*$-homomorphism from $(B,\beta)$ to $(D,\alpha)$ because one has for all $g\in G$ and $b\in B$ that
\[
(\alpha_g\circ\phi)(b)=\psi(\ad(w_g)(\kappa(\iota(b)))) = (\psi\circ\kappa)(\ad(\lambda^\beta_g)(b)) = (\phi\circ\beta_g)(b).
\]
\end{proof}

\begin{cor} \label{O2-morph-ex-2}
Let $G$ be a countable, discrete and amenable group.
Let $A$ be a Kirchberg algebra and $\alpha: G\curvearrowright A$ a pointwise outer action that is equivariantly $\CO_2$-absorbing.
Let $\omega\in\beta\IN\setminus\IN$ be a free ultrafilter.
Then for every action $\beta: G\curvearrowright B$ on a separable, unital, exact \cstar-algebra, there exists a unital and equivariant $*$-homomorphism from $(B,\beta)$ to $\big( F_\omega(A), \tilde{\alpha}_\omega \big)$.
\end{cor}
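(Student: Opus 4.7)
The plan is to deduce this corollary directly by applying Proposition \ref{O2-morph-ex-1} to the induced system $\big( F_\omega(A), \tilde{\alpha}_\omega \big)$ in place of $(D,\alpha)$. To do so, I need to verify that this system satisfies all the hypotheses of that proposition; once this is done, the conclusion follows immediately by feeding the given $(B,\beta)$ into Proposition \ref{O2-morph-ex-1}.

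First I would check that $F_\omega(A)$ is a unital, simple and purely infinite \cstar-algebra. Unitality is automatic (see Remark \ref{product map}\ref{product map:1} applied with $C=A$), while simplicity and pure infiniteness follow from Proposition \ref{F(A)-sat}, which in turn builds on Theorem \ref{F() pis}. Next, $\tilde{\alpha}_\omega$ is saturated by Proposition \ref{F(A)-sat}, and it is pointwise outer by Theorem \ref{central seq outer} (using that $\alpha$ is pointwise outer and that $A$ is separable and simple).

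The remaining condition is that the fixed point algebra $F_\omega(A)^{\tilde{\alpha}_\omega}$ contains a unital copy of $\CO_2$. This is precisely what equivariant $\CO_2$-absorption buys us:\ by the characterization in \cite[Theorem 3.7]{Szabo16ssa} (of the McDuff-type absorption criterion for strongly self-absorbing actions), the hypothesis that $\alpha$ is equivariantly $\CO_2$-absorbing is equivalent to the existence of a unital $*$-homomorphism $\CO_2\to F_\omega(A)^{\tilde{\alpha}_\omega}$. Hence $F_\omega(A)^{\tilde{\alpha}_\omega}$ is in Cuntz standard form.

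With all hypotheses of Proposition \ref{O2-morph-ex-1} verified for $\big( F_\omega(A), \tilde{\alpha}_\omega \big)$, that proposition directly produces, for any action $\beta: G\curvearrowright B$ on a separable, unital, exact \cstar-algebra, a unital and equivariant $*$-homomorphism $(B,\beta)\to \big( F_\omega(A), \tilde{\alpha}_\omega \big)$, as required. There is no genuine obstacle here; the work has already been done in assembling Theorem \ref{central seq outer}, Proposition \ref{F(A)-sat} and Proposition \ref{O2-morph-ex-1}, and this corollary is essentially a bookkeeping step that records the specialization to the central-sequence setting that will be needed in the proof of the $\CO_2$-absorbing uniqueness theorem.
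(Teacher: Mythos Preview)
Your argument is correct and matches the paper's intended approach: the corollary is stated without proof precisely because it is the direct specialization of Proposition \ref{O2-morph-ex-1} to $\big( F_\omega(A), \tilde{\alpha}_\omega \big)$, with the hypotheses verified exactly as you outline via Proposition \ref{F(A)-sat}, Theorem \ref{central seq outer}, and the McDuff-type characterization of equivariant $\CO_2$-absorption.
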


\begin{theorem} \label{O2-absorbing-uniqueness}
Let $G$ be a countable, discrete and amenable group.
Let $\alpha: G\curvearrowright \CO_2$ be a pointwise outer and equivariantly $\CO_2$-absorbing action. Then $\beta\otimes\alpha\scc\alpha$ for every action $\beta: G\curvearrowright B$ on a separable, unital, simple and nuclear \cstar-algebra.

In particular, there exists, up to strong cocycle conjugacy, precisely one pointwise outer $G$-action on $\CO_2$ that is equivariantly $\CO_2$-absorbing.
\end{theorem}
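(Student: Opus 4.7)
Write $C := B\otimes\CO_2$ and $\gamma := \beta\otimes\alpha$. Since $B$ is separable, unital, simple, and nuclear, $C$ is a unital Kirchberg algebra with vanishing $K$-theory (as $\CO_2$ is $KK$-trivial), so $C\cong\CO_2$ by Kirchberg--Phillips classification. The action $\gamma$ is pointwise outer (inherited from $\alpha$) and equivariantly $\CO_2$-absorbing, since
\[
\gamma\otimes\id_{\CO_2}=\beta\otimes(\alpha\otimes\id_{\CO_2})\scc\beta\otimes\alpha=\gamma
\]
by the hypothesis on $\alpha$. Thus both $(\CO_2,\alpha)$ and $(C,\gamma)$ satisfy the hypotheses of the theorem, and the plan is to establish $\alpha\scc\gamma$ directly. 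For the ``in particular'' part, given two pointwise outer equivariantly $\CO_2$-absorbing $G$-actions $\alpha_0,\alpha_1$ on $\CO_2$ (existence is provided by Example \ref{the-model-O2}), applying the main claim with $\beta=\alpha_1$ yields $\alpha_1\otimes\alpha_0\scc\alpha_0$, while with the roles of $\alpha_0$ and $\alpha_1$ reversed it yields $\alpha_0\otimes\alpha_1\scc\alpha_1$; composing these via the flip isomorphism $(\CO_2\otimes\CO_2,\alpha_0\otimes\alpha_1)\cong(\CO_2\otimes\CO_2,\alpha_1\otimes\alpha_0)$ gives $\alpha_0\scc\alpha_1$.

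To show $\alpha\scc\gamma$, I would run a two-sided equivariant approximate intertwining of Elliott--R{\o}rdam type. The key inputs are Corollary \ref{O2-morph-ex-2}, which provides unital equivariant $*$-homomorphisms $\Phi\colon (C,\gamma)\to(F_\omega(\CO_2),\tilde\alpha_\omega)$ and $\Psi\colon (\CO_2,\alpha)\to(F_\omega(C),\tilde\gamma_\omega)$ (applicable since $C$ and $\CO_2$ are separable, unital, nuclear, hence exact), and Lemma \ref{O2-morph-uniqueness-2}, which guarantees that any two unital equivariant $*$-homomorphisms from a separable, unital, simple, nuclear $G$-system into either of these Kirchberg central sequence algebras are $G$-unitarily equivalent. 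By a standard reindexation, $\Phi$ and $\Psi$ unfold into sequences of approximately central, approximately $G$-equivariant unital $*$-homomorphisms $\phi_n\colon C\to\CO_2$ and $\psi_n\colon \CO_2\to C$. The intertwining will then be built inductively along increasing finite sets exhausting $\CO_2$ and $C$, finite subsets of $G$ exhausting the group, and tolerances tending to zero: at each step, the uniqueness from Lemma \ref{O2-morph-uniqueness-2} is invoked to conjugate the compositions $\psi_n\circ\phi_n$ and $\phi_{n+1}\circ\psi_n$ by approximately central, approximately $G$-invariant unitaries so as to bring them close to $\id_C$ and $\id_{\CO_2}$ on the prescribed finite data. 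The resulting limit is an equivariant isomorphism from $(C,\gamma)$ onto $(\CO_2,\alpha^w)$ for some $\alpha$-cocycle $w$.

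The main technical delicacy will be to ensure that the resulting conjugacy is \emph{strong}, i.e.\ that the cocycle $w$ produced by the inductive construction is an approximate coboundary rather than merely a cocycle. This relies on the correcting unitaries furnished by Lemma \ref{O2-morph-uniqueness-2} living in the $G$-fixed point algebras of the respective central sequence algebras; upon reindexation they correspond to approximately central and approximately $G$-invariant unitaries in $\CO_2$ (resp.\ $C$), which is precisely the regularity required for $w$ to be an approximate coboundary.
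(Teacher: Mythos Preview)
Your strategy is workable but differs from the paper's, and contains a couple of imprecisions worth flagging.

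\textbf{How the paper proceeds.} The paper runs a \emph{one-sided} argument in the style of R{\o}rdam, not a two-sided Elliott intertwining. Writing $A=\CO_2$, it first uses Corollary~\ref{O2-morph-ex-2} to embed $(B,\beta)$ equivariantly into $(A_\omega\cap A',\alpha_\omega)$. The clever step is to combine this with the tautological copy of $B$ inside $B\otimes A$ to produce an equivariant map
\[
\phi\colon (B\otimes B,\beta\otimes\beta)\longrightarrow \big((B\otimes A)_\omega\cap(\eins_B\otimes A)',\,(\beta\otimes\alpha)_\omega\big)
\]
with $\phi(b\otimes\eins)\cdot(\eins\otimes a)=b\otimes a$ and $\phi(\eins\otimes b)\in\eins_B\otimes(A_\omega\cap A')$. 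Lemma~\ref{O2-morph-uniqueness-2} (applied with $S=\eins_B\otimes A$) then says the two factor embeddings $\phi\circ(\id_B\otimes\eins)$ and $\phi\circ(\eins\otimes\id_B)$ are $G$-unitarily equivalent. The intertwining unitary, represented by a sequence $v_n\in B\otimes A$, witnesses exactly the hypotheses of \cite[Lemma~2.1]{Szabo16ssa} for the second-factor embedding $\eins_B\otimes\id_A\colon(A,\alpha)\to(B\otimes A,\beta\otimes\alpha)$, yielding $\alpha\scc\beta\otimes\alpha$ directly. No back-and-forth between $C$ and $\CO_2$ is needed.

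\textbf{On your approach.} A two-sided intertwining can certainly be made to work, but two points in your write-up are off. First, to compare $\psi_n\circ\phi_n$ with $\id_C$ you are comparing two equivariant maps $(C,\gamma)\to(C_\omega,\gamma_\omega)$; the relevant instance of Lemma~\ref{O2-morph-uniqueness-2} is $S=\IC$, so $F(\IC,C_\omega)=C_\omega$ and the conjugating unitary lives in $C_\omega^{\gamma_\omega}$, \emph{not} in the fixed points of the central sequence algebra. Thus the correcting unitaries are approximately $G$-invariant (which is what you need for the cocycle to be an approximate coboundary) but there is no reason for them to be approximately central --- indeed they cannot be, since $\id_C$ does not have central range. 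Second, and relatedly, the approximate centrality of your $\phi_n,\psi_n$ plays no role: you could have invoked Proposition~\ref{O2-morph-ex-1} (with $D=(\CO_2)_\omega$) to get maps into the full ultrapower from the start. The approximate $G$-invariance of the unitaries is what drives strong cocycle conjugacy in the equivariant intertwining, and this you do obtain.

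In short: your plan leads to the same conclusion, but the paper's one-sided $B\otimes B$ trick is cleaner and avoids the bookkeeping of a genuine two-sided construction.
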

\begin{proof}
For notational convenience, we will denote in some instances $A=\CO_2$.
The proof is for the most part a straightforward modification of the classical proof of Kirchberg-Phillips, but using the equivariant intertwining methods from \cite[Section 2]{Szabo16ssa} instead of classical intertwining.

Let us show the first part of the statement. Let $\beta: G\curvearrowright B$ be an action on a separable, unital, simple and nuclear \cstar-algebra.

By Corollary \ref{O2-morph-ex-2}, there exists an equivariant, unital $*$-homomor\-phism from $(B,\beta)$ to $\big( A_\omega\cap A', \alpha_\omega \big)$. Consider the canonical inclusions 
\[
A_\omega\cap A' ,~ B ~\subset~ (B\otimes A)_\omega\cap (\eins_B\otimes A)'
\]
which define commuting \cstar-subalgebras. Since these inclusions are natural, they are equivariant with respect to the induced actions of $\alpha$, $\beta$ and $\beta\otimes\alpha$.

By assumption, it follows that we have a unital and equivariant $*$-homo\-morphism
\[
\phi: (B\otimes B,\beta\otimes\beta)\to \big( (B\otimes A)_\omega)\cap(\eins_B\otimes A)' , (\beta\otimes\alpha)_\omega \big)
\]
satisfying $\phi(b\otimes\eins_B)\cdot (\eins_B\otimes a)=b\otimes a$ and $\phi(\eins_B\otimes b)\in\eins_{B}\otimes (A_\omega\cap A')$ for all $a\in A$ and $b\in B$.

Since we assumed that $\alpha$ is equivariantly $\CO_2$-absorbing, the action $\beta\otimes\alpha: G\curvearrowright B\otimes A$ is also pointwise outer and equivariantly $\CO_2$-absorbing.
It then follows from Lemma \ref{O2-morph-uniqueness-2} that $\phi\circ (\id_B\otimes\eins)$ and $\phi\circ (\eins\otimes\id_B)$ are $G$-unitarily equivalent.
Let $v\in \Big( (B\otimes A)_\omega\cap(\eins_B\otimes A)' \Big)^{(\beta\otimes\alpha)_\omega}$ be a unitary with 
\[
\phi(b\otimes\eins_B)=v\phi(\eins\otimes b)v^*
\] 
for all $b\in B$. Then one has
\[
v^*(b\otimes a)v = v^*\phi(b\otimes\eins_B)v^*\cdot (\eins_B\otimes a) = \phi(\eins_B\otimes b)\cdot (\eins\otimes a)\in \eins_B\otimes A_\omega
\] 
for all $a\in A$ and $b\in B$. 
Let $v_n\in B\otimes A$ be a sequence of unitaries representing $v$. It then follows that
\begin{itemize}
\item $\dst\lim_{n\to\omega} \|[v_n, \eins_B\otimes a]\|=0$ for all $a\in A$;
\item $\dst\lim_{n\to\omega} \dist\big( v_n^*(b\otimes a)v_n, \eins_B\otimes A \big) = 0$ for all $b\in B$ and $a\in A$;
\item $\dst\lim_{n\to\omega} \| v_n-(\beta\otimes\alpha)_g(v_n) \| =0$ for all $a\in A$ and $g\in G$.
\end{itemize} 
This implies that the equivariant second-factor embedding
\[
\eins_B\otimes\id_A : (A,\alpha)\to (B\otimes A, \beta\otimes\alpha)
\]
satisfies the requirements of \cite[Lemma 2.1]{Szabo16ssa}. It follows that $\alpha$ and $\beta\otimes\alpha$ are strongly cocycle conjugate, showing the first part.

For the second part, let $\alpha^{(i)}: G\curvearrowright\CO_2$ be two pointwise outer, equivariantly $\CO_2$-absorbing actions for $i=1,2$. Applying the first part twice, we obtain
\[
\alpha^{(1)}\scc\alpha^{(1)}\otimes\alpha^{(2)}\scc\alpha^{(2)}.
\]
This finishes the proof.
\end{proof}

\begin{example} \label{the-model-O2}
As in Example \ref{the-model-Oinf}, let us choose a unitary representation $v: G\to\CU(\CO_2)$ that induces a $*$-monomorphism on $\cstar(G)$. We can then consider
\[
\delta=\bigotimes_\IN\ad(v): G\curvearrowright\bigotimes_\IN\CO_2\cong\CO_2,
\]
which for our purposes can be regarded as {\it the} model of a pointwise outer and equivariantly $\CO_2$-absorbing action.
Indeed, the outerness comes from the infinite tensor product form involving a faithful action, and upon observing that a unitary representation is always a cocycle for the trivial action, we see $\delta\cc\delta\otimes\id_{\CO_2}$.
Thus we recover the main result of this section in the way that it is stated in Theorem \ref{O2-uniqueness-intro}.
Notice that this automatically verifies the claim about $\delta$ in Theorem \ref{choice-independence-intro}.
\end{example}

\begin{rem}
We remark that Proposition \ref{KK-or-KL}\ref{KK-or-KL:2} further shows that the uniqueness in Theorem \ref{O2-absorbing-uniqueness} can be improved with respect to very strong cocycle conjugacy.
\end{rem}

\begin{rem}
Just as the $K$-theoretic condition in Theorem \ref{abstract-models-Oinf} is a necessary assumption in general, the $\CO_2$-absorption does generally not come for free in the context of Theorem \ref{O2-absorbing-uniqueness}.
Basic examples of outer actions of $\IZ_2$ on $\CO_2$ with non-trivial $K$-theory in the crossed product are well-known; see \cite{Izumi04}.
However, Matui's uniqueness result \cite{Matui08} for $\IZ^N$-actions on $\CO_2$ clearly showcases the redundancy of this assumption for $\IZ^N$, and this phenomenon is in fact observed for poly-$\IZ$ groups in ongoing work of Izumi-Matui.
Since every action of a torsion-free amenable group on $\CO_2$ has trivial $KK^G$-class, this leads one to think that all pointwise outer actions of such groups on $\CO_2$ ought to be equivariantly $\CO_2$-absorbing.
It seems unclear whether or not this problem is just as difficult or easier to understand than the question about the redundancy of approximate representability posed in Remark \ref{conj:ar-redundant}.
\end{rem}


\section{A uniqueness theorem for homotopic maps}

In this section, we prove a uniqueness theorem for equivariant $*$-homom\-orphisms with respect to homotopy, leading to Theorem \ref{homotopy-rigid-intro}.

\begin{defi} \label{equivariant-homotopy}
Let $G$ be a countable, discrete group.
Let $\alpha: G\curvearrowright A$ and $\beta: G\curvearrowright B$ be two actions on \cstar-algebras.
We say that two equivariant $*$-homomorphisms $\phi_0, \phi_1: (A,\alpha)\to (B,\beta)$ are $G$-homotopic, if there exists an equivariant $*$-homomorphism $\Phi: (A,\alpha)\to\big( B[0,1], \beta \big)$ such that $\ev_i\circ\Phi = \phi_i$ for $i=0,1$.

We say that $(A,\alpha)$ and $(B,\beta)$ are $G$-homotopy equivalent, if there exist equivariant $*$-homomorphisms $\phi: (A,\alpha)\to (B,\beta)$ and $\psi: (B,\beta)\to (A,\alpha)$ such that $\psi\circ\phi$ is $G$-homotopic to $\id_A$ and $\phi\circ\psi$ is $G$-homotopic to $\id_B$.
\end{defi}

\begin{theorem} \label{O2-abs-map-uniqueness}
Let $G$ be a countable, discrete, amenable group.
Let $A$ be a separable, simple, unital, nuclear \cstar-algebra and let $B$ be a unital Kirchberg algebra.
Consider two actions $\alpha: G\curvearrowright A$ and $\beta: G\curvearrowright B$.
Suppose that $\beta$ is pointwise outer.
Let $\phi, \psi: (A,\alpha)\to (B,\beta)$ be two unital and equivariant $*$-homomorphisms.
Then $\phi\otimes\eins_{\CO_2}$ and $\psi\otimes\eins_{\CO_2}$ are approximately $G$-unitarily equivalent as equivariant $*$-homomorphisms from $(A,\alpha)$ to $(B\otimes\CO_2, \beta\otimes\id_{\CO_2})$.
\end{theorem}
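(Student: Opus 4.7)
The plan is to apply Lemma \ref{O2-morph-uniqueness-2} to the target system $(B \otimes \CO_2, \beta \otimes \id_{\CO_2})$ in place of $(A, \alpha)$ in that statement, with the trivial subalgebra $S = \IC \cdot \eins_{B \otimes \CO_2} \subset (B \otimes \CO_2)_\omega$. With this choice one has $F(S, (B \otimes \CO_2)_\omega) = (B \otimes \CO_2)_\omega$, equipped with the induced action $(\beta \otimes \id_{\CO_2})_\omega$. The original $(A, \alpha)$ from our theorem then plays the role of the separable, unital, simple, nuclear $G$-\cstar-dynamical system in the conclusion of Lemma \ref{O2-morph-uniqueness-2}.

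The hypotheses of Lemma \ref{O2-morph-uniqueness-2} reduce to two claims about the action $\beta \otimes \id_{\CO_2}$ on the unital Kirchberg algebra $B \otimes \CO_2$. Pointwise outerness is immediate: any unitary $u \in B \otimes \CO_2$ implementing $\beta_g \otimes \id_{\CO_2}$ for some $g \neq 1_G$ would have to commute with $\eins_B \otimes \CO_2$, hence $u \in B \otimes Z(\CO_2) = B \otimes \IC$, forcing $\beta_g$ to be inner. Equivariant $\CO_2$-absorption follows from the identification $B \otimes \CO_2 \cong (B \otimes \CO_2) \otimes \CO_2^{\otimes \infty}$: a standard reindexation using the additional $\CO_2$-tensor factors produces a unital $G$-equivariant $*$-homomorphism from $(\CO_2, \id_{\CO_2})$ into the fixed point algebra of $F_\omega(B \otimes \CO_2)$ under the action induced by $\beta \otimes \id_{\CO_2}$, which by the equivariant $\CO_2$-absorption criterion \cite[Theorem 3.7]{Szabo16ssa} is equivalent to equivariant $\CO_2$-absorption.

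Post-composing $\phi \otimes \eins_{\CO_2}$ and $\psi \otimes \eins_{\CO_2}$ with the canonical inclusion of $B \otimes \CO_2$ into its ultrapower then produces two unital equivariant $*$-homomorphisms from $(A, \alpha)$ to $((B \otimes \CO_2)_\omega, (\beta \otimes \id_{\CO_2})_\omega)$. Lemma \ref{O2-morph-uniqueness-2} supplies a unitary $v \in (B \otimes \CO_2)_\omega^{(\beta \otimes \id_{\CO_2})_\omega}$ implementing a $G$-unitary equivalence between them. Finally, lifting $v$ to a sequence of unitaries $(v_n)$ in $B \otimes \CO_2$ and extracting along $\omega$, for any given $F \fin A$, $M \fin G$ and $\eps > 0$ one obtains a single $v_n$ with $\|v_n (\phi(a) \otimes \eins) v_n^* - \psi(a) \otimes \eins\| \leq \eps$ for all $a \in F$ and $\|(\beta_g \otimes \id_{\CO_2})(v_n) - v_n\| \leq \eps$ for all $g \in M$, which is the desired approximate $G$-unitary equivalence. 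The only non-routine step is the verification of equivariant $\CO_2$-absorption; the $2 \times 2$ matrix-trick machinery of Section 5 handles everything else.
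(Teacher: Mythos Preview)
Your proof is correct and follows essentially the same route as the paper. The paper's own proof is a one-liner invoking Lemma \ref{O2-morph-uniqueness-1} directly for $D = (B\otimes\CO_2)_\omega$ with the ultrapower action, whereas you invoke the packaged version Lemma \ref{O2-morph-uniqueness-2} with $S=\IC$; since Lemma \ref{O2-morph-uniqueness-2} is itself just Lemma \ref{O2-morph-uniqueness-1} together with the verification of its hypotheses, the two arguments are the same in content. Your explicit verification of pointwise outerness and equivariant $\CO_2$-absorption for $\beta\otimes\id_{\CO_2}$, as well as the final lifting-from-the-ultrapower step, are exactly the routine details the paper suppresses. One small simplification: equivariant $\CO_2$-absorption of $\beta\otimes\id_{\CO_2}$ is immediate from $\CO_2\cong\CO_2\otimes\CO_2$, so no reindexation argument is needed.
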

\begin{proof}
This follows directly from Lemma \ref{O2-morph-uniqueness-1} applied to $(B\otimes\CO_2)_\omega$ in place of $D$ and $(\beta\otimes\id_{\CO_2})_\omega$ in place of $\alpha$, where $\omega$ is any free ultrafilter on $\IN$.
\end{proof}

The following observation is contained in \cite[Lemma 4.14 and Remark 4.16]{Szabo17Rf} and goes by adapting an argument of Phillips in the proof of \cite[Theorem 3.1]{Phillips97}.
Although this holds in a much more general context, we will only state it for discrete groups here.

\begin{lemma} \label{homotopy-uniqueness-Phillips}
Let $G$ be a countable, discrete group.
Let $A$ and $B$ be \cstar-algebras with actions $\alpha: G\curvearrowright A$ and $\beta: G\curvearrowright B$.
Suppose that $\phi_0, \phi_1: (A,\alpha)\to (B,\beta)$ are equivariant $*$-homomorphisms that are $G$-homotopic through $\Phi: (A,\alpha)\to (B[0,1],\beta)$.
Suppose that for all $s,t\in [0,1]$, the equivariant $*$-homomorphisms $\Phi_t\otimes\eins_{\CO_2}$ and $\Phi_s\otimes\eins_{\CO_2}$ are approximately $G$-unitarily equivalent.
Then $\phi_0\otimes\eins_{\CO_\infty}$ and $\phi_1\otimes\eins_{\CO_\infty}$ are approximately $G$-unitarily equivalent.
\end{lemma}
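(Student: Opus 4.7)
The plan is to carry out a Phillips-style telescoping argument, which is the standard method for upgrading homotopy-level data into approximate unitary equivalence against an $\CO_\infty$-stable target. Fix $F\fin A$, $M\fin G$ and $\eps>0$. By uniform continuity of the homotopy $\Phi$ on the compact set $F\times[0,1]$, I would first choose a partition $0=t_0<t_1<\cdots<t_N=1$ with $\|\Phi_{t_{i-1}}(a)-\Phi_{t_i}(a)\|$ controlled (say at most $\eps/(3N)$) for all $a\in F$ and all $i=1,\dots,N$. Applying the hypothesis successively to each adjacent pair $(t_{i-1},t_i)$, I obtain unitaries $v_i\in B\otimes\CO_2$ that are $G$-approximately invariant on $M$ with error $\eps/N$ and satisfy $\|\ad(v_i)\circ(\Phi_{t_{i-1}}\otimes\eins_{\CO_2})(a)-(\Phi_{t_i}\otimes\eins_{\CO_2})(a)\|\leq \eps/(3N)$ for $a\in F$.

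The heart of the argument is to transfer the information packaged in the $v_i$ into a single unitary $V\in B\otimes\CO_\infty$ that witnesses the approximate $G$-unitary equivalence between $\phi_0\otimes\eins_{\CO_\infty}$ and $\phi_1\otimes\eins_{\CO_\infty}$. Following Phillips, the key device is the rich supply of isometries with mutually orthogonal ranges inside $\CO_\infty$: one would fix $s_1,\dots,s_N\in\CO_\infty$ with $s_i^*s_j=\delta_{ij}\eins$ and $\sum_i s_is_i^*=\eins$, giving rise to mutually orthogonal projections $p_i=\eins_B\otimes s_is_i^*$ in $B\otimes\CO_\infty$ whose cut-downs are each naturally isomorphic to $B\otimes\CO_\infty$. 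Using these isometries as scaffolding, one places the partial products $v_kv_{k-1}\cdots v_1$ into successive corners and uses the Murray--von Neumann equivalences between adjacent $p_i$ to stitch them into a single unitary $V$, which then realizes all the step-wise approximate conjugations at once and therefore conjugates $\phi_0\otimes\eins_{\CO_\infty}$ into $\phi_1\otimes\eins_{\CO_\infty}$ up to the desired error on $F$.

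The main obstacle is precisely this assembly step. One must carefully account for how the per-step errors telescope additively (rather than multiplicatively) through the construction, and likewise ensure that the equivariance defects of the $v_i$ combine to give a controlled defect for $V$. A further technical subtlety is the passage from $\CO_2$-valued unitaries in the hypothesis to $\CO_\infty$-valued unitaries in the conclusion: this is made feasible by the observation that only a finite-dimensional piece of each $v_i$ is actually being used, namely its conjugation action on $\Phi_{t_{i-1}}(F)\cup\Phi_{t_i}(F)$, and such finite-dimensional data can be absorbed into the appropriate corner of $B\otimes\CO_\infty$. This is the content of \cite[Lemma 4.14 and Remark 4.16]{Szabo17Rf}, which adapts Phillips' original scheme from \cite[Theorem 3.1]{Phillips97} to the equivariant setting, and the proof here is essentially a quotation of that result.
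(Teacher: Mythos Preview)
Your approach is essentially the same as the paper's: the paper does not actually prove this lemma but simply cites \cite[Lemma 4.14 and Remark 4.16]{Szabo17Rf} as an equivariant adaptation of Phillips' argument from \cite[Theorem 3.1]{Phillips97}, which is exactly the reference you invoke at the end. So on the level of strategy there is nothing to compare.

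That said, your sketch of the assembly step contains a genuine technical slip that would derail the argument if taken literally. You propose to choose isometries $s_1,\dots,s_N\in\CO_\infty$ with $s_i^*s_j=\delta_{ij}\eins$ and $\sum_i s_is_i^*=\eins$. Such isometries do not exist in $\CO_\infty$: one has $K_0(\CO_\infty)\cong\IZ$ with $[\eins]=1$, so $\sum_{i=1}^N s_is_i^*=\eins$ would force $N\cdot[\eins]=[\eins]$, i.e., $N=1$. The actual Phillips mechanism instead uses $N{+}1$ mutually orthogonal projections in $\CO_\infty$ each Murray--von Neumann equivalent to $\eins$ but summing to a \emph{proper} subprojection of $\eins$; the leftover complement is what allows one to correct the partial unitary built from the corner pieces into an honest unitary. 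Without this, your ``stitching'' step cannot produce a unitary $V$, and the construction breaks down. The fix is standard and is precisely what is carried out in the cited reference, but as written your description of the scaffolding is incorrect.
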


\begin{theorem} \label{homotopy-map-uniqueness}
Let $G$ be a countable, discrete, amenable group.
Let $A$ be a separable, simple, unital, nuclear \cstar-algebra and let $B$ be a unital Kirchberg algebra.
Consider two actions $\alpha: G\curvearrowright A$ and $\beta: G\curvearrowright B$.
Suppose that $\beta$ is pointwise outer.
Let $\phi_0, \phi_1: (A,\alpha)\to (B,\beta)$ be two unital and equivariant $*$-homomorphisms.
If $\phi_0$ and $\phi_1$ are $G$-homotopic, then they are approximately $G$-unitarily equivalent.
\end{theorem}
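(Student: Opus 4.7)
The strategy combines three ingredients already in place: Theorem \ref{O2-abs-map-uniqueness} ($\CO_2$-stable uniqueness of unital equivariant morphisms), Lemma \ref{homotopy-uniqueness-Phillips} (Phillips' bootstrap from $\CO_2$-stable uniqueness along a continuous family to $\CO_\infty$-stable uniqueness at the endpoints), and equivariant $\CO_\infty$-absorption of $(B,\beta)$ as manifested in Proposition \ref{fixed point csa}. Fix a $G$-homotopy $\Phi:(A,\alpha)\to(B[0,1],\beta)$ with $\ev_i\circ\Phi=\phi_i$ for $i=0,1$, and write $\Phi_t=\ev_t\circ\Phi$. Each $\Phi_t$ is a unital equivariant $*$-homomorphism $(A,\alpha)\to(B,\beta)$, so under the standing hypotheses Theorem \ref{O2-abs-map-uniqueness} yields approximate $G$-unitary equivalence of $\Phi_s\otimes\eins_{\CO_2}$ and $\Phi_t\otimes\eins_{\CO_2}$ for all $s,t\in[0,1]$. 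Feeding this into Lemma \ref{homotopy-uniqueness-Phillips} then gives that $\phi_0\otimes\eins_{\CO_\infty}$ and $\phi_1\otimes\eins_{\CO_\infty}$ are approximately $G$-unitarily equivalent as equivariant maps from $(A,\alpha)$ into $(B\otimes\CO_\infty,\beta\otimes\id_{\CO_\infty})$.

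It remains to descend from $B\otimes\CO_\infty$ back to $B$. By Proposition \ref{fixed point csa}, the algebra $F_\omega(B)^{\tilde\beta_\omega}$ is simple and purely infinite; since $B$ is unital one has $F_\omega(B)=B_\omega\cap B'$, and this contains a unital copy of $\CO_\infty$. This yields a unital equivariant $*$-homomorphism $\mu:(\CO_\infty,\id_{\CO_\infty})\to(B_\omega\cap B',\beta_\omega)$ whose image sits inside the fixed-point algebra. The formula $b\otimes x\mapsto b\,\mu(x)$ then defines a unital equivariant $*$-homomorphism $\eta:(B\otimes\CO_\infty,\beta\otimes\id_{\CO_\infty})\to(B_\omega,\beta_\omega)$ satisfying $\eta\circ(\phi_i\otimes\eins_{\CO_\infty})=\phi_i$ for $i=0,1$. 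The conclusion follows by applying $\eta$, through a diagonal reindexation, to the unitaries witnessing the previous paragraph, producing approximately $G$-invariant unitaries in $B$ that approximately conjugate $\phi_0$ to $\phi_1$.

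The main obstacle is precisely this final reindexation: $\eta$ is defined on $B\otimes\CO_\infty$ rather than on its ultrapower, so one cannot feed a witnessing unitary directly into $\eta$. The standard workaround is to fix a tolerance $(F\fin A,\ K\fin G,\ \eps>0)$, pick a unitary $u\in\CU(B\otimes\CO_\infty)$ meeting this tolerance, approximate $u$ in norm by a finite sum $\sum_i b_i\otimes x_i$, and then choose a unital $*$-homomorphism $\mu_n:\CO_\infty\to B$ that is simultaneously approximately central on $\{b_i\}\cup\phi_0(F)\cup\phi_1(F)$ and approximately $\beta_g$-invariant on $\{x_i\}$ for every $g\in K$; the existence of such $\mu_n$ follows by representing $\mu$ through a sequence. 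A small perturbation of $\sum_i b_i\mu_n(x_i)$ to a nearby unitary in $B$ then yields the required $v\in\CU(B)$. The first two steps, by contrast, are essentially direct invocations of the cited earlier results.
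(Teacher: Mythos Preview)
Your proof is correct and follows essentially the same route as the paper: invoke Theorem~\ref{O2-abs-map-uniqueness} along the homotopy, feed this into Lemma~\ref{homotopy-uniqueness-Phillips} to obtain approximate $G$-unitary equivalence of $\phi_0\otimes\eins_{\CO_\infty}$ and $\phi_1\otimes\eins_{\CO_\infty}$, and then descend via an equivariant map $(B\otimes\CO_\infty,\beta\otimes\id)\to(B_\omega,\beta_\omega)$ splitting the canonical inclusion of $B$.

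The only difference is packaging of the last step. The paper obtains the map $\eta$ in one stroke from Theorem~\ref{equ-Oinf-absorption} (i.e., $\beta\cc\beta\otimes\id_{\CO_\infty}$), whereas you rebuild it from Proposition~\ref{fixed point csa}; these are equivalent. More importantly, the ``main obstacle'' you flag is not really there: since approximate $G$-unitary equivalence hands you, for each tolerance $(F,K,\eps)$, a \emph{single} unitary $u\in\CU(B\otimes\CO_\infty)$, you can apply $\eta$ to $u$ directly to get a unitary in $B_\omega$ with the same tolerances, and then lift to $B$ by representing $\eta(u)$ as a sequence of unitaries and picking one sufficiently far along $\omega$. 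There is no need to pass through an ultrapower of $B\otimes\CO_\infty$ or to approximate $u$ by elementary tensors, so your final paragraph can be shortened considerably.
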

\begin{proof}
By Theorem \ref{equ-Oinf-absorption}, we have $\beta\cc\beta\otimes\id_{\CO_\infty}$.
In particular, if we fix a free ultrafilter $\omega$ on $\IN$, there is a commutative diagram of equivariant $*$-homomorphisms of the form
\[
\xymatrix{
(B,\beta) \ar[dr] \ar[rr] && (B_\omega,\beta_\omega) \\
& (B\otimes\CO_\infty, \beta\otimes\id_{\CO_\infty}) \ar[ur] &
}
\]
It thus suffices to show that $\phi_0\otimes\eins_{\CO_\infty}$ and $\phi_1\otimes\eins_{\CO_\infty}$ are approximately $G$-unitarily equivalent as equivariant $*$-homomorphisms from $(A,\alpha)$ to $(B\otimes\CO_\infty,\beta\otimes\id_{\CO_\infty})$.

The claim then follows directly from Lemma \ref{homotopy-uniqueness-Phillips} and Theorem \ref{O2-abs-map-uniqueness}.
\end{proof}

The following confirms Theorem \ref{homotopy-rigid-intro}:

\begin{cor} \label{homotopy-rigidity}
Let $G$ be a countable, discrete, amenable group.
Let $A$ and $B$ be two unital Kirchberg algebras with pointwise outer actions $\alpha: G\curvearrowright A$ and $\beta: G\curvearrowright B$.
If $(A,\alpha)$ and $(B,\beta)$ are $G$-homotopy equivalent, then they are conjugate.
\end{cor}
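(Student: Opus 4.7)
The plan is to first reduce to the case where the equivariant $*$-homomorphisms realizing the $G$-homotopy equivalence are unital, then apply Theorem \ref{homotopy-map-uniqueness} to upgrade the $G$-homotopies of composites with identities to approximate $G$-unitary equivalences, and finally run an equivariant two-sided Elliott approximate intertwining to produce an exactly equivariant $*$-isomorphism.

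Let $\phi:(A,\alpha)\to(B,\beta)$ and $\psi:(B,\beta)\to(A,\alpha)$ be equivariant $*$-homomorphisms realizing the $G$-homotopy equivalence, together with a $G$-homotopy $\Phi:(A,\alpha)\to(B[0,1],\beta)$ from $\psi\circ\phi$ to $\id_A$. Evaluating $\Phi$ at $1_A$ yields a norm-continuous path of $\alpha$-invariant projections in $A$ joining $(\psi\circ\phi)(1_A)$ to $1_A$, hence a unitary equivalence of these two projections inside $A$. Since any projection unitarily equivalent to the unit of a unital \cstar-algebra must itself equal the unit, we conclude $(\psi\circ\phi)(1_A)=1_A$; combined with $\phi(1_A)\le 1_B$ and positivity of $\psi$, this forces $\psi(1_B)=1_A$, and the symmetric argument yields $\phi(1_A)=1_B$. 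Thus both $\phi$ and $\psi$ may be taken unital.

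Since $A,B$ are unital Kirchberg algebras and $\alpha,\beta$ are pointwise outer, Theorem \ref{homotopy-map-uniqueness} applies to the $G$-homotopic pairs $(\psi\circ\phi,\id_A)$ and $(\phi\circ\psi,\id_B)$, yielding that they are approximately $G$-unitarily equivalent. Explicitly, for each $F\fin A$, $M\fin G$ and $\eps>0$ there exists a unitary $u\in\CU(A)$ with $\|\alpha_g(u)-u\|\le\eps$ for $g\in M$ and $\|\ad(u)\circ(\psi\circ\phi)(a)-a\|\le\eps$ for $a\in F$, and analogously inside $B$.

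The final step is a standard two-sided equivariant approximate intertwining: fix increasing exhausting $F_n\fin A$, $H_n\fin B$, $M_n\fin G$ and a summable sequence $\eps_n\searrow 0$, and inductively produce approximately $\beta$-fixed unitaries $v_n\in\CU(B)$ and approximately $\alpha$-fixed unitaries $w_n\in\CU(A)$ so that the cumulative perturbations $\phi_n:=\ad(v_n\cdots v_1)\circ\phi$ and $\psi_n:=\ad(w_n\cdots w_1)\circ\psi$ satisfy $\|\psi_n\circ\phi_n(a)-a\|<\eps_n$ on $F_n$, $\|\phi_n\circ\psi_n(b)-b\|<\eps_n$ on $H_n$, along with summable Cauchy estimates. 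The limits $\phi_\infty:=\lim_n\phi_n$ and $\psi_\infty:=\lim_n\psi_n$ then exist in point-norm and are mutually inverse $G$-equivariant $*$-isomorphisms, yielding the desired conjugacy of $(A,\alpha)$ with $(B,\beta)$. The principal obstacle is ensuring \emph{exact} $G$-equivariance of the limit map: the equivariance defects $\|\beta_g(v_k)-v_k\|$ accumulate along the products $v_n\cdots v_1$, so a naive summability condition on the individual unitaries does not suffice. This is circumvented by executing the back-and-forth construction inside the ultrapower $(B_\omega,\beta_\omega)$, whose fixed point algebra is simple and purely infinite by Proposition \ref{fixed point csa}, where the approximate $G$-unitary equivalences produce genuinely $\beta_\omega$-fixed implementing unitaries; a final reindexation then extracts an equivariant $*$-isomorphism $A\cong B$ conjugating $\alpha$ to $\beta$.
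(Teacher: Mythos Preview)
Your approach matches the paper's: apply Theorem \ref{homotopy-map-uniqueness} to obtain approximate $G$-unitary equivalence of $\psi\circ\phi$ with $\id_A$ and $\phi\circ\psi$ with $\id_B$, then run an equivariant two-sided intertwining. The paper simply cites \cite[Corollary 1.16]{Szabo16ssa} for this second step, whereas you attempt to sketch it. Your unitality reduction is fine (modulo the typo $B[0,1]$, which should read $A[0,1]$).

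You correctly identify the genuine subtlety: the equivariance defects $\|\beta_g(v_k)-v_k\|$ accumulate in the product $V_n=v_n\cdots v_1$, and summability only guarantees that $\beta_g(V_n)V_n^*$ converges to some unitary $W_g$ with $\|W_g-\eins\|\le\sum_k\eps_k$, yielding a priori only cocycle conjugacy via $\{W_g\}$ rather than conjugacy. However, your proposed fix does not work. One cannot execute an \emph{infinite} back-and-forth construction inside the ultrapower and then extract a genuine isomorphism $A\cong B$ by reindexation: each step of the intertwining in $(A_\omega,B_\omega)$ introduces new ultrapower elements, and after infinitely many steps nothing forces the limit to lie in $A$ or $B$. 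Reindexation arguments recover witnesses to \emph{finitary} conditions, not outputs of transfinite constructions. Also, Proposition \ref{fixed point csa} concerns $F_\omega(A)^{\tilde{\alpha}_\omega}$, not $(B_\omega)^{\beta_\omega}$, so the invocation is misplaced.

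The honest resolution is the careful equivariant intertwining recorded as \cite[Corollary 1.16]{Szabo16ssa}: one organizes the choices so that the equivariance is controlled at every finite stage in a way compatible with the inductive construction, and this is what forces the limiting isomorphism to be genuinely equivariant. You should either cite that result directly, or reproduce its proof in full rather than gesture at an ultrapower workaround.
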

\begin{proof}
This follows directly from Theorem \ref{homotopy-map-uniqueness} and \cite[Corollary 1.16]{Szabo16ssa}.
\end{proof}

Next, we shall apply Theorem \ref{homotopy-map-uniqueness} to some non-trivial examples of actions on $\CO_\infty$.

\begin{defi}
Let $G$ be a countable, discrete group.
Let $I$ be a countable index set and $\sigma: G\curvearrowright I$ an action.
Let $\CD$ be a strongly self-absorbing Kirchberg algebra.
We consider $\beta^\sigma: G\curvearrowright\bigotimes_I \CD$ the noncommutative Bernoulli shift induced by $\sigma$, which is given by
\[
\beta^\sigma_g\big( \bigotimes_{i\in I} d_i \big) = \bigotimes_{i\in I} d_{\sigma_g(i)}.
\]
Note that when $I$ is infinite, then all but finitely many of the elements $d_i\in\CD$ are assumed to be equal to $\eins_\CD$.
\end{defi}

\begin{cor} \label{bern-ssa}
Let $G$ be a countable, discrete, amenable group.
Let $\CD$ be a strongly self-absorbing Kirchberg algebra.
Then all noncommutative Bernoulli shift actions of $G$ on tensor products of $\CD$ are strongly self-absorbing.
\end{cor}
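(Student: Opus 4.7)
The plan is to realize $\beta^\sigma$ as strongly self-absorbing by producing an equivariant isomorphism $B\otimes B \cong B$ and showing that the first-factor embedding is approximately $G$-unitarily equivalent to its inverse, with the key input being the homotopy uniqueness theorem of this section.

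Write $B := \bigotimes_I \CD$ and $\beta := \beta^\sigma$. Fix a $*$-isomorphism $\mu: \CD\otimes\CD \to \CD$ and assemble the equivariant isomorphism
\[
\Psi: (B\otimes B, \beta\otimes\beta) \xrightarrow{\cong} \Bigl(\bigotimes_I(\CD\otimes\CD), \beta^\sigma \Bigr) \xrightarrow{\bigotimes_i\mu} (B,\beta)
\]
induced by the canonical identification $I \sqcup I \cong I \times \{0,1\}$. Strong self-absorption amounts to showing that the first-factor embedding $\iota_1: b\mapsto b\otimes\eins$ is approximately $G$-unitarily equivalent to $\Psi^{-1}$, and composing with $\Psi$ this reduces to the assertion that the equivariant unital endomorphism $\phi := \Psi\circ\iota_1 = \bigotimes_i \rho$ of $B$ is approximately $G$-unitarily equivalent to $\id_B$, where $\rho(d) := \mu(d\otimes\eins_\CD)$.

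Before invoking Theorem \ref{homotopy-map-uniqueness} I would reduce to the case where $\sigma$ is faithful by passing to $G/\ker(\sigma)$ if needed; this changes neither the action on $B$ nor the notion of approximate $G$-invariance, since the kernel acts trivially. Under this reduction $\beta$ is pointwise outer on the unital Kirchberg algebra $B\cong\CD$. The main step is to construct a $G$-homotopy between $\id_B$ and $\phi$: since $\CD$ is a strongly self-absorbing Kirchberg algebra satisfying the UCT, every unital endomorphism of $\CD$ is approximately unitarily equivalent to $\id_\CD$ and therefore has the same $KK$-class; Kirchberg-Phillips classification then supplies a continuous path $\rho_t: \CD\to\CD$ of unital $*$-endomorphisms with $\rho_0=\id_\CD$ and $\rho_1=\rho$. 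Applying $\rho_t$ at every tensor factor yields a continuous path $\bigotimes_i \rho_t: B\to B$ of unital $*$-endomorphisms, each equivariant because the same map is applied at every index and therefore commutes with the permutation action $\beta$; together these assemble into an equivariant $*$-homomorphism $(B,\beta)\to (B[0,1],\beta)$ witnessing the $G$-homotopy.

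Theorem \ref{homotopy-map-uniqueness}, applied with source and target both equal to $(B,\beta)$, then yields $\phi\sim_{au,G}\id_B$, and hence $\iota_1\sim_{au,G}\Psi^{-1}$, which is precisely strong self-absorption of $\beta^\sigma$. The chief obstacle is the upgrade from approximate unitary equivalence of $\rho$ and $\id_\CD$ to a bona fide continuous path of unital endomorphisms; this is precisely what the UCT for $\CD$ combined with the homotopy form of Kirchberg-Phillips classification delivers, and thereafter the machinery from Section 6 handles the equivariant uniqueness for free.
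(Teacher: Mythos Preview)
Your overall strategy is sound and runs parallel to the paper's: both arguments build a $G$-homotopy out of the Dadarlat--Winter phenomenon for strongly self-absorbing $\CD$ and then feed it into Theorem~\ref{homotopy-map-uniqueness}. The paper, however, applies the homotopy to the \emph{flip} on $B\otimes B$ (tensoring the Dadarlat--Winter flip homotopy on $\CD\otimes\CD$ over $I$), concludes that the flip is approximately $G$-inner, and then invokes the criterion \cite[Proposition 3.4]{Szabo16ssa} together with the observation $\beta^\sigma\cong\bigotimes_\IN\beta^\sigma$. Your route instead verifies the definition directly by comparing $\Psi\circ\iota_1=\bigotimes_i\rho$ to $\id_B$; this is a legitimate and arguably more transparent variant, and it sidesteps the need for the infinite tensor-power decomposition.

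There is, however, a genuine flaw in your justification of the homotopy $\rho_t$. First, you impose the UCT on $\CD$, which is not part of the hypothesis. Second, and more seriously, the implication ``approximately unitarily equivalent $\Rightarrow$ same $KK$-class'' is false in general (it only yields the same $KL$-class), so your appeal to the homotopy form of Kirchberg--Phillips does not go through as written. The fix is to use what the paper actually uses: by \cite{DadarlatWinter09}, for any strongly self-absorbing $\CD$ the first-factor embedding $\id_\CD\otimes\eins_\CD$ is \emph{asymptotically} unitarily equivalent to any isomorphism $\CD\to\CD\otimes\CD$; equivalently, the flip on $\CD\otimes\CD$ is genuinely homotopic to the identity. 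Composing with $\mu$ gives $\rho$ asymptotically unitarily equivalent to $\id_\CD$, and an asymptotic unitary equivalence already yields a point-norm continuous path of unital $*$-homomorphisms from $\rho$ to $\id_\CD$ (reparametrize $[0,\infty)$ onto $[0,1)$ and extend by $\id_\CD$ at $1$). No UCT and no $KK$-computation are needed. With this correction in place your argument goes through.
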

\begin{proof}
Let $I$ be a countable index set and $\sigma: G\curvearrowright I$ an action.
Consider the associated Bernoulli shift $\beta^\sigma: G\curvearrowright\bigotimes_I \CD$.
Since $\CD$ is strongly self-absorbing, one has $\CD\cong\bigotimes_\IN\CD$, and thus a straightforward rearrangement of tensor factors shows that $\beta^\sigma$ is conjugate to $\bigotimes_\IN \beta^\sigma$.

In particular, an automorphism of the form $\beta^\sigma_g$ for $g\in G$ is either trivial or outer.
Thus, by dividing out the kernel of $\sigma$, we may assume without loss of generality that it is faithful, so $\beta^\sigma$ is pointwise outer.

Using again that $\CD$ is strongly self-absorbing, the flip automorphism on $\CD\otimes\CD$ is homotopic to the identity map; see \cite{DadarlatWinter09}. 
Applying this homotopy on each individual opposed pair of tensor factors in the product $(\bigotimes_I\CD)\otimes (\bigotimes_I\CD)$, we obtain a $G$-homotopy between the identity map and the flip automorphism with respect to the action $\beta^\sigma\otimes\beta^\sigma$; see \cite[Lemma 2.6]{KerrLupiniPhillips15} for a related argument.
Thus it follows from Theorem \ref{homotopy-map-uniqueness} that the flip automorphism on $\big( (\bigotimes_I\CD)\otimes (\bigotimes_I\CD), \beta^\sigma\otimes\beta^\sigma \big)$ is approximately $G$-inner.

The claim now follows from \cite[Proposition 3.4]{Szabo16ssa}.
\end{proof}

Combining this fact with another $K$-theoretic computation using Baum-Connes, we will show that our model action on $\CO_\infty$ from Example \ref{the-model-Oinf} can be realized as any faithful noncommutative Bernoulli shift.
For this purpose, we need to make some observations.

\begin{lemma} \label{bern-izumi}
Let $\Gamma$ be a finite group.
Let $A$ and $B$ be two separable, nuclear \cstar-algebras.
Let $I$ be a finite index set and $\sigma: \Gamma\curvearrowright I$ an action.
Suppose that $\phi: A\to B$ is a $*$-homomorphism inducing a $KK$-equivalence.
Then
\[
\phi^{\otimes I}: A^{\otimes I}\to B^{\otimes I}
\]
induces a $KK^\Gamma$-equivalence, where we equip both $A^{\otimes I}$ and $B^{\otimes I}$ with the noncommutative Bernoulli shift induced by $\sigma$. 
\end{lemma}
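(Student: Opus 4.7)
The plan is to exhibit an explicit $KK^\Gamma$-inverse of $[\phi^{\otimes I}]$ by forming a symmetric $I$-fold external product of a Kasparov bimodule representing a $KK$-inverse of $\phi$. The Bernoulli structure of the $\Gamma$-action makes this construction automatically $\Gamma$-equivariant, so no hypothesis on $A$, $B$, or $\sigma$ beyond those stated in the lemma is needed.

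The first step is to use that $\phi$ is a $KK$-equivalence to fix an inverse $\psi\in KK(B,A)$ with $[\phi]\cdot\psi=\eins_A$ and $\psi\cdot[\phi]=\eins_B$, represented by some Kasparov $(B,A)$-bimodule $(E,F)$. The second step is to form the $I$-fold external Kasparov product $(E^{\otimes I},F_I)$, a Kasparov $(B^{\otimes I},A^{\otimes I})$-bimodule in which $F_I$ is the standard symmetric sum of copies of $F$ in each tensor slot (with the usual grading bookkeeping). The permutation action of $\Gamma$ on $E^{\otimes I}$ induced by $\sigma$ is covariant for the Bernoulli actions on $B^{\otimes I}$ and $A^{\otimes I}$, and $F_I$ is $\Gamma$-invariant by the symmetry of its definition. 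Hence $(E^{\otimes I},F_I)$ defines an equivariant class $\Psi\in KK^\Gamma(B^{\otimes I},A^{\otimes I})$.

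The final step is to verify that $\Psi$ inverts $[\phi^{\otimes I}]$ in $KK^\Gamma$. Using multiplicativity of the (equivariant) external product, one computes
\[
[\phi^{\otimes I}]\cdot\Psi \;=\; \bigl([\phi]\cdot\psi\bigr)^{\boxtimes I} \;=\; \eins_A^{\boxtimes I} \;=\; \eins_{A^{\otimes I}}
\]
in $KK^\Gamma(A^{\otimes I},A^{\otimes I})$, and symmetrically $\Psi\cdot[\phi^{\otimes I}]=\eins_{B^{\otimes I}}$. All the intermediate identifications happen at the level of Kasparov bimodules via manifestly $\Gamma$-equivariant natural isomorphisms, so they are valid in $KK^\Gamma$ as well, and yield the lemma.

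The main difficulty, which is purely technical, is the bookkeeping for the symmetric external product: one must pick $F_I$ so that it is simultaneously a valid Kasparov operator on the $I$-fold tensor product (requiring analytical care with grading conventions and operator norms) and manifestly $\Gamma$-invariant, and one must check that the Kasparov product formula survives the equivariant enhancement. These are standard facts in Kasparov's theory, but writing them out in full detail is tedious.
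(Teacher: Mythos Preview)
Your approach is sound and more self-contained than the paper's, which simply defers to Izumi's argument for the flip case ($\Gamma=\IZ_2$, $|I|=2$) and asserts that it generalizes. The idea of equipping $E^{\hat\otimes I}$ with the permutation $\Gamma$-action and exhibiting a $\Gamma$-invariant Fredholm operator is exactly the right one, and your verification that $[\phi^{\otimes I}]\cdot\Psi = (\,[\phi]\cdot\psi\,)^{\boxtimes I}=\eins$ works at the level of cycles once $\Psi$ is constructed.

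One caution: the step you flag as ``purely technical bookkeeping'' is a bit more than that. The naive symmetric choice $F_I=|I|^{-1/2}\sum_i M_i$ (with $M_i$ the copy of $F$ in slot $i$) generally fails the Kasparov conditions: for an elementary tensor $b=b_1\otimes\cdots\otimes b_n$ one gets $[M_i,b]=\pm\, b_1\otimes\cdots\otimes[F,b_i]\otimes\cdots\otimes b_n$, and an operator of the form $(\text{bounded})\otimes(\text{compact})\otimes(\text{bounded})$ is not compact on $E^{\hat\otimes I}$ unless the bounded factors happen to be compact. The same obstruction hits $b(F_I^2-\eins)$. Likewise, averaging an arbitrary valid $F_I$ over $\Gamma$ only yields a Kasparov operator once you know the $\Gamma$-translates differ by locally compact perturbations, which is not automatic. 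None of this is fatal --- one can repair it by choosing the connection via Kasparov's technical theorem applied to $\Gamma$-invariant input data (so the output is $\Gamma$-invariant by construction), or by using the uniqueness-up-to-operator-homotopy of connections to justify the averaging --- but it is a genuine argument, not just signs and norms. It would strengthen your write-up to name which of these routes you take.
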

\begin{proof}
This follows from a straightforward generalization of the proof of \cite[Theorem 2.1]{Izumi17}, which essentially treats the case $\Gamma=\IZ_2$ and $\sigma$ being the flip on the two-point set.
\end{proof}

\begin{cor} \label{bern-O2-Oinf}
Let $G$ be a countable, discrete, amenable group.
Let $I$ be a countable index set and $\sigma: G\curvearrowright I$ an action.
If we equip $\CO_\infty \cong \CO_\infty^{\otimes I}$ with the noncommutative Bernoulli shift $\beta^\sigma$ induced by $\sigma$, then the unital inclusion $(\IC,\id)\subset (\CO_\infty,\beta^\sigma)$ induces a $KK^G$-equivalence.
\end{cor}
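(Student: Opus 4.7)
The plan is to combine Lemma \ref{bern-izumi} with detection of $KK^G$-equivalences via restriction to finite subgroups, the latter being available for amenable $G$ through the Baum--Connes machinery of Meyer--Nest.

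First I would treat the case in which $G$ acts on $I$ with only finite orbits, which in particular covers the situation where $G$ itself is finite. Decomposing $I=\bigsqcup_j O_j$ into finite $G$-orbits yields a $G$-equivariant identification $\CO_\infty^{\otimes I}\cong\bigotimes_j\CO_\infty^{\otimes O_j}$, and each factor carries a $G$-action that factors through the finite quotient $\Gamma_j=G/\ker(G\to\mathrm{Sym}(O_j))$. Lemma \ref{bern-izumi}, applied to $\Gamma_j$ acting on the finite set $O_j$, shows that the unit map $\IC\to\CO_\infty^{\otimes O_j}$ is a $KK^{\Gamma_j}$-equivalence, and inflation along $G\to\Gamma_j$ promotes it to a $KK^G$-equivalence. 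Expressing $\CO_\infty^{\otimes I}$ as the $G$-equivariant inductive limit of the finite subproducts $\bigotimes_{j\leq n}\CO_\infty^{\otimes O_j}$, one observes that each transition map is obtained by tensoring with a $KK^G$-equivalence (in the spirit of Lemma \ref{D-absorbing-KKG}), so that the unit $\IC\to\CO_\infty^{\otimes I}$ is itself a $KK^G$-equivalence by stability of $KK$-equivalences under inductive limits of equivalences.

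For the general case, in which some orbits may be infinite, I would invoke the amenable Baum--Connes machinery of \cite{MeyerNest06}: for an amenable discrete group $G$, a $G$-equivariant $\ast$-homomorphism induces a $KK^G$-equivalence provided its restriction to every finite subgroup $H\leq G$ is a $KK^H$-equivalence. For each such finite $H$, the restricted action $H\curvearrowright I$ automatically has only finite orbits, so the first step, now applied with $H$ in place of $G$, shows that the restricted unital inclusion $\IC\to\mathrm{Res}^G_H\CO_\infty^{\otimes I}$ is a $KK^H$-equivalence. Hence the original inclusion is a $KK^G$-equivalence.

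The main obstacle is articulating the precise detection principle from Meyer--Nest in the morphism-level form required above, rather than the object-level vanishing form in which it is most commonly stated and which underlies Theorem \ref{KK-vanish}. Should this detection not be readily available in the desired generality, a fall-back is to restrict attention to those actions $\sigma:G\curvearrowright I$ with only finite orbits, which already covers the most important applications (including all actions of finite groups and the Bernoulli shifts on finite-orbit index sets), and for which the first step suffices without any further invocation of Baum--Connes.
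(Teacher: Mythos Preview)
Your approach is essentially the paper's: reduce to finite subgroups via \cite[Theorem 8.5]{MeyerNest06}, then apply Lemma \ref{bern-izumi} together with an inductive limit argument; the paper merely reverses your order of operations (Baum--Connes reduction first, then the finite-group case directly), which slightly streamlines things by avoiding your quotient-and-inflate detour. Your stated obstacle is not one --- \cite[Theorem 8.5]{MeyerNest06} is precisely the morphism-level detection principle you need (a $G$-equivariant map is a $KK^G$-equivalence as soon as its restriction to every finite subgroup is), so no fall-back is necessary.
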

\begin{proof}
Denote by $\iota: \IC\to\CO_\infty$ the unital inclusion.
Then the unital inclusion $(\IC,\id)\subset (\CO_\infty,\beta^\sigma)$ may naturally be identified with $\iota^{\otimes I}$.
Using Baum-Connes in the form of \cite[Theorem 8.5]{MeyerNest06}, we only need to show that $\iota^{\otimes I}$ induces a $KK^\Gamma$-equivalence for all finite subgroups $\Gamma\subseteq G$, where we restrict the $G$-\cstar-algebras to $\Gamma$-\cstar-algebras.
Note that these restrictions are also noncommutative Bernoulli shifts of $\Gamma$.
Thus, our claim reduces to the fact where $G=\Gamma$ is a finite group.

If $I$ is finite, then this is a special case of Lemma \ref{bern-izumi}.

If $I$ is infinite, then write $I$ as a disjoint union $I=\bigsqcup_{n\in\IN} I_n$ so that each $I_n$ is finite and $\sigma$-invariant.
Then we have a tensor product decomposition
\[
\CO_\infty^{\otimes I} = \bigotimes_{n\in\IN} \CO_\infty^{\otimes I_n},
\]
which is compatible with respect to the Bernoulli subshifts.
Consider the following commutative diagram:
\[
\xymatrix@R+5mm@C+5mm{
\CO_\infty^{\otimes I_1} \ar[r] & \CO_\infty^{\otimes I_1}\otimes\CO_\infty^{\otimes I_2} \ar[r] & \cdots \ar[r] & \bigotimes_{n\leq m} \CO_\infty^{\otimes I_n} \ar[r] & \CO_\infty^{\otimes I} \\
\IC \ar[u]^{\iota^{\otimes I_1}} \ar[ur]^{\iota^{\otimes I_1}\otimes\iota^{\otimes I_2}} \ar[urrr] \ar[urrrr]_{\iota^{\otimes I}} &&&
}
\]
We know that all intermediate horizontal maps are $KK^\Gamma$-equivalences as well as all maps of the form $\bigotimes_{n\leq m} \iota^{\otimes I_n}$ from the bottom to one of the building blocks.
Thus $\phi^{\otimes I}$ itself must be a $KK^\Gamma$-equivalence.\footnote{This is analogous to \cite[Proposition 2.4]{Dadarlat09}. One uses that the diagram implies the vanishing of $\dst {\lim_{\longleftarrow}}^1 KK^\Gamma\big( \_, \bigotimes_{n\leq m} \CO_\infty^{\otimes I_n} \big)$, which one combines with the Milnor sequence \cite[Theorem 21.5.2]{BlaKK} for equivariant $KK$-theory.}
\end{proof}

\begin{cor} \label{bern-models}
Let $G$ be a countable, discrete, amenable group. 
Then every faithful Bernoulli $G$-shift on $\CO_\infty$ is strongly cocycle conjugate to the model action $\gamma: G\curvearrowright\CO_\infty$ in Example \ref{the-model-Oinf}.
\end{cor}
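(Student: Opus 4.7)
The plan is to apply Corollary \ref{abstract-models-Oinf} to a given faithful Bernoulli shift $\beta=\beta^\sigma: G\curvearrowright\bigotimes_I\CO_\infty\cong\CO_\infty$ arising from a faithful action $\sigma: G\curvearrowright I$. This requires verifying three conditions: (a) $\beta$ is pointwise outer; (b) $\beta$ is either approximately representable or semi-strongly self-absorbing; and (c) the canonical inclusion $\cstar(G)\subset\CO_\infty\rtimes_\beta G$ induces a $KL$-equivalence.

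Condition (a) follows immediately from the observation recorded in the proof of Corollary \ref{bern-ssa} that each automorphism $\beta_g$ is either trivial or outer, so faithfulness of $\sigma$ upgrades to pointwise outerness of $\beta$. Condition (b) is then a direct instance of Corollary \ref{bern-ssa}, which asserts that every noncommutative Bernoulli shift on a tensor product of copies of a strongly self-absorbing Kirchberg algebra is strongly self-absorbing; in particular it is semi-strongly self-absorbing.

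The decisive input is condition (c). For this I would appeal to Corollary \ref{bern-O2-Oinf}, which shows that the unital inclusion $(\IC,\id_\IC)\subset(\CO_\infty,\beta)$ is a $KK^G$-equivalence. Applying the Kasparov descent functor to this equivariant $KK$-element and identifying $\IC\rtimes G$ with $\cstar(G)$ in the canonical way, one obtains a $KK$-equivalence between $\cstar(G)$ and $\CO_\infty\rtimes_\beta G$ whose underlying $*$-homomorphism is precisely the inclusion $j_\beta$. This is in particular a $KL$-equivalence, so condition (c) is verified.

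With all three hypotheses of Corollary \ref{abstract-models-Oinf} established, one concludes $\beta\scc\gamma$, as desired. Since all of the genuinely nontrivial input has been isolated in Corollaries \ref{bern-ssa} (strong self-absorption of Bernoulli shifts, relying on Theorem \ref{homotopy-map-uniqueness}) and \ref{bern-O2-Oinf} (the $KK^G$-triviality of Bernoulli shifts on $\CO_\infty$, relying on Baum--Connes via \cite{MeyerNest06} and an Izumi-type tensor product argument), the present statement really is an assembly step and presents no serious further obstacle.
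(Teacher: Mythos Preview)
Your proposal is correct and follows exactly the same approach as the paper, which simply states that the result follows by combining Corollaries \ref{bern-ssa}, \ref{bern-O2-Oinf} and \ref{abstract-models-Oinf}. You have faithfully unpacked how these three ingredients fit together, including the passage from the $KK^G$-equivalence of Corollary \ref{bern-O2-Oinf} to the required $KL$-equivalence via descent.
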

\begin{proof}
This follows directly from combining Corollaries \ref{bern-ssa}, \ref{bern-O2-Oinf} and  \ref{abstract-models-Oinf}.
\end{proof}


\section{Finiteness of Rokhlin dimension}

In this section, we show that pointwise outer actions of amenable, residually finite groups on Kirchberg algebras have Rokhlin dimension at most one in the sense of \cite{HirshbergWinterZacharias15, SzaboWuZacharias15}. 
Note that this has been observed for symmetries in \cite{BarlakEndersMatuiSzaboWinter} and for finite groups in \cite{Gardella17_R}.
See also \cite{Liao16, Liao17} for results in this direction in the context of finite \cstar-algebras.

For the purpose of obtaining the aforementioned goal, we first examine the prevalence of Rokhlin towers in saturated and pointwise outer \cstar-dynamical systems on simple and purely infinite \cstar-algebras;
 the main effort is again already dealt with in Section 2 and we merely have to specialize the setting a little bit. 
 
Let us recall a useful well-known fact, which was implicitely used for example in \cite{BarlakEndersMatuiSzaboWinter} and \cite{MatuiSato14UHF}.

\begin{prop}[see {\cite[Lemma 1.2]{Szabo16spi}}] \label{full spectrum}
Let $B$ be a unital, simple and purely infinite \cstar-algebra.
Then any two positive contractions in $B$ with full spectrum $[0,1]$ are approximately unitarily equivalent.
\end{prop}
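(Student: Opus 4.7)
The plan is to translate the question into one about unital $*$-homomorphisms from $\CC([0,1])$ and then apply Kirchberg's one-step approximate intertwining theorem (Theorem~\ref{1-step ai}) together with pure infiniteness of $B$.

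First, given positive contractions $a, b \in B$ with $\sp(a) = \sp(b) = [0,1]$, continuous functional calculus provides unital injective $*$-homomorphisms $\phi_a, \phi_b : \CC([0,1]) \to B$ with $\phi_a(\id_{[0,1]}) = a$ and $\phi_b(\id_{[0,1]}) = b$. The claim that $a$ and $b$ are approximately unitarily equivalent in $B$ is manifestly equivalent to the claim that $\phi_a$ and $\phi_b$ are approximately unitarily equivalent as unital $*$-homomorphisms, so the problem reduces to the latter.

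Next, I would apply Theorem~\ref{1-step ai} to the unital $*$-homomorphism $V := \phi_b \circ \phi_a^{-1} : \phi_a(\CC([0,1])) \to B$, which is c.p.c.\ and nuclear since its domain is abelian. This yields sequences of contractions $d_n, e_n \in B$ with
\[
\phi_b(f) = \lim_{n\to\infty} d_n^* \phi_a(f) d_n = \lim_{n\to\infty} e_n^* \phi_a(f) e_n \quad \text{for all } f \in \CC([0,1]),
\]
and with $e_n^* d_n \to 0$. Specializing to $f = \eins$ gives $d_n^* d_n, e_n^* e_n \to \eins_B$, so after replacing $d_n,e_n$ by small perturbations for large $n$, we may assume that they are genuine isometries in $B$ with almost-orthogonal ranges while still implementing the above intertwining.

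The main obstacle is to upgrade this intertwining by (almost orthogonal) isometries into one by unitaries. The strategy is an Elliott-style two-sided approximate intertwining: by applying Theorem~\ref{1-step ai} symmetrically to the $*$-homomorphism $\phi_a \circ \phi_b^{-1}$ one obtains analogous data in the opposite direction, and alternating between the two sides together with pure infiniteness of $B$ (so that every non-zero projection is properly infinite and the defect projection $\eins_B - d_n d_n^*$ can be absorbed via appropriate partial isometries, cf.\ \cite{Cuntz81}) produces a sequence of unitaries $u_n \in B$ with $u_n^* \phi_a(f) u_n \to \phi_b(f)$ uniformly on finite subsets of $\CC([0,1])$. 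Specializing to $f = \id_{[0,1]}$ yields $u_n^* a u_n \to b$, which finishes the proof.
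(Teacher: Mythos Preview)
The paper does not provide its own proof of this proposition; it simply cites \cite[Lemma 1.2]{Szabo16spi} as a reference. So there is no ``paper's proof'' to compare against here. That said, your attempt has a genuine gap that is worth pointing out.

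Your first two steps are fine: reducing to approximate unitary equivalence of the unital embeddings $\phi_a,\phi_b:\CC([0,1])\to B$, and invoking Theorem~\ref{1-step ai} to obtain isometries $d_n$ with $d_n^*\phi_a(f)d_n\to\phi_b(f)$. The problem is the final paragraph. Having isometries that intertwine $\phi_a$ and $\phi_b$ does not by itself yield unitaries doing the same job: if you extend $d_n$ to a unitary $u_n$ by filling in the defect $\eins-d_nd_n^*$ via some partial isometry, then $u_n^*au_n$ agrees with $d_n^*ad_n\approx b$ only on the corner $d_n^*Bd_n$, while on the complementary corner it is completely uncontrolled. Your appeal to an ``Elliott-style two-sided approximate intertwining'' does not address this, because Elliott intertwining produces isomorphisms between inductive \emph{limits}, not approximate unitary equivalences inside a fixed algebra. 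The phrase ``the defect projection can be absorbed via appropriate partial isometries'' is exactly the step that needs an argument, and you have not supplied one.

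A cleaner route that avoids this issue: observe that $\CC([0,1])$ is separable, nuclear, and satisfies the UCT, with $K_0=\IZ$ generated by $[\eins]$ and $K_1=0$. Since both $\phi_a$ and $\phi_b$ are unital, they induce the same map on $K$-theory, hence the same $KL$-class. One then invokes a $KL$-uniqueness theorem for unital $*$-monomorphisms from such domains into unital simple purely infinite \cstar-algebras (for instance in the spirit of \cite[Theorem 3.14]{Lin05}, already used elsewhere in this paper) to conclude approximate unitary equivalence. Alternatively, one can argue more directly via the contractibility of $[0,1]$: the homotopy $f\mapsto f(s\,\cdot\,)$ shows that $\phi_a$ and $\phi_b$ are homotopic through unital $*$-homomorphisms, and then one appeals to a homotopy-implies-approximate-unitary-equivalence principle in the simple purely infinite setting.
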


\begin{lemma} \label{Rokhlin tower}
Let $G$ be a discrete, countable and amenable group and $H\subset G$ a subgroup with finite index.
Let $D$ be a unital, simple and purely infinite \cstar-algebra and let $\alpha: G\curvearrowright D$ be a saturated and pointwise outer action. Then 
\begin{enumerate}[label=\textup{(\arabic*)},leftmargin=*]
\item there exists a non-zero equivariant $*$-homomorphism 
\[
\phi: (\CC(G/H), G\text{-shift}) \to (D,\alpha).
\] \label{Rokhlin tower:1}
\item there exist two equivariant c.p.c.~order zero maps 
\[
\phi_0,\phi_1: (\CC(G/H), G\text{-shift})\to (D,\alpha)
\] 
with $\phi_0(\eins)+\phi_1(\eins)=\eins$. \label{Rokhlin tower:2}
\end{enumerate}
\end{lemma}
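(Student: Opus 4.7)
The plan for part~(1) is to adapt the proof of Lemma~\ref{fixed point} to the subgroup $H\leq G$. First, applying Proposition~\ref{Nakamura Lemma} to the saturated pointwise-outer action yields a non-zero projection $q\in D$ with $q\alpha_g(q)=0$ for every $g\in G\setminus\{1_G\}$; since $D$ is simple and purely infinite, one fixes an isometry $t\in D$ satisfying $t^*qt=\eins$. As subgroups of amenable groups are amenable, one can pick F{\o}lner sets $K\fin H$ and form
\[
s_K := |K|^{-1/2}\sum_{h\in K}\alpha_h(qt).
\]
The orthogonality of $\{\alpha_h(q)\}_{h\in K}$ gives $s_K^*s_K=\eins$, and the F{\o}lner property ensures approximate $H$-invariance of $s_K$. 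The crucial new observation, which distinguishes this from the proof of Lemma~\ref{fixed point}, is that
\[
s_K^*\alpha_g(s_K) = |K|^{-1}\sum_{h,h'\in K}\alpha_h(t^*q)\alpha_{gh'}(qt) = 0 \quad\text{for every } g\in G\setminus H,
\]
because every middle factor $\alpha_h(q)\alpha_{gh'}(q)$ vanishes: for $g\notin H$ and $h,h'\in H$ we have $gh'\in gH\neq H\ni h$, forcing $gh'\neq h$. Invoking saturation of $\alpha$, one extracts a genuine $H$-invariant isometry $s\in D$ with $s^*\alpha_g(s)=0$ for all $g\in G\setminus H$. Then $p:=ss^*$ is an $H$-invariant projection satisfying $p\alpha_g(p)=0$ for every $g\in G\setminus H$, and $\phi(\delta_{gH}):=\alpha_g(p)$ gives the desired non-zero equivariant $*$-homomorphism.

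The plan for part~(2) is to upgrade the single projection-valued tower from~(1) to a pair of c.p.c.\ order-zero towers jointly partitioning $\eins_D$. Letting $P:=\sum_{x\in G/H}\alpha_{g_x}(p)\in D^\alpha$ be the orbit-sum projection coming from~(1), the case $P=\eins$ is immediate by taking $\phi_0:=\phi$ and $\phi_1:=0$. In general, apply the construction of~(1) inside the $\alpha$-invariant corner $(\eins-P)D(\eins-P)$, which is itself unital, simple, and purely infinite, is saturated by Proposition~\ref{semi-sat-permanence}\ref{semi-sat-permanence:1}, and on which the induced action remains pointwise outer---the latter being a consequence of fullness of $\eins-P$ in the simple algebra $D$ combined with the Morita-invariance of inner-ness of automorphisms on full corners. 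This produces a second $H$-invariant projection $p'\in(\eins-P)D(\eins-P)$ with orbit sum $P'\leq\eins-P$, yielding two orthogonal projection-valued Rokhlin towers with $P+P'\leq\eins$. Finally, a continuous-functional-calculus interpolation---applied to positive contractions in $D^{\alpha|_H}$ of full spectrum built from $p$ and $p'$ via F{\o}lner averaging, and using a pair $f_0,f_1\in\CC([0,1])$ with $f_0(0)=f_1(0)=0$ and $f_0+f_1=\id$---combined with one more application of saturation replaces the two projection-valued towers by c.p.c.\ order-zero towers whose total orbit sum equals exactly $\eins$.

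The main obstacle in part~(2) is the last step: turning an approximate partition of unity into an exact one. My plan is to encode the defining relations for the pair $(\phi_0,\phi_1)$---namely the order-zero relations, equivariance, and the partition-of-unity relation $\phi_0(\eins)+\phi_1(\eins)=\eins$---as a countable family of $\alpha$-$*$-polynomial equations; saturation of $\alpha$ then reduces the problem to producing an approximate solution, which is achieved via the F{\o}lner averaging from part~(1) combined with the functional-calculus interpolation. A secondary but nontrivial point is the verification that the induced action on the complementary corner $(\eins-P)D(\eins-P)$ is still pointwise outer, which must be established carefully via a Morita-type argument before (1) can be invoked there.
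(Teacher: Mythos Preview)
Your proof of part~(1) is correct and in fact more direct than the paper's. The paper reduces to normal $H$, embeds $(\CC(G/H), G\text{-shift})$ into $(\CO_\infty,\ad(u))$ for a $KK$-trivial representation $u$ built from the left-regular representation of $G/H$, and then invokes Corollary~\ref{model-sat-embedding}. Your argument bypasses this machinery entirely: you adapt the F{\o}lner construction from Lemma~\ref{fixed point} to the subgroup $H$ and observe the extra orthogonality $s_K^*\alpha_g(s_K)=0$ for $g\notin H$, which after saturation yields the projection $p$ directly. This is a nice self-contained alternative that avoids the model-action embedding and does not require $H$ to be normal.

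Your plan for part~(2), however, has a genuine gap. After producing two orthogonal projection-valued towers with $P+P'\leq\eins$, you invoke a ``continuous-functional-calculus interpolation'' to push the total to exactly $\eins$, but you never specify what positive element the calculus is applied to, nor why the result satisfies $\phi_0(\eins)+\phi_1(\eins)=\eins$. The vague reference to ``F{\o}lner averaging'' and functions $f_0,f_1$ with $f_0+f_1=\id$ does not produce a mechanism that fills the gap $\eins-P-P'$; iterating (1) in successive complementary corners does not obviously make the defect vanish either. Encoding the relations and appealing to saturation is fine in principle, but you still owe an approximate solution, and nothing in your outline supplies one.

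The paper's argument for~(2) is considerably simpler and does not need a second tower at all. Starting from your $\phi$ and $p=\phi(e_H)$, one picks a positive contraction $h'$ with full spectrum $[0,1]$ inside the simple purely infinite corner $p\,D^{\alpha|_H}\,p$, and sets $h_0=\sum_{\bar g\in G/H}\alpha_g(h')\in D^\alpha$. Then $h_0$ commutes with $\phi(\CC(G/H))$, lies below $\phi(\eins)$, and has full spectrum; since $D^\alpha$ is simple purely infinite (Lemma~\ref{fixed point}) and saturated, Proposition~\ref{full spectrum} gives a unitary $v\in D^\alpha$ with $vh_0v^*=\eins-h_0$. Now $\phi_0:=h_0\cdot\phi$ and $\phi_1:=\ad(v)\circ\phi_0$ are equivariant c.p.c.\ order-zero maps with $\phi_0(\eins)+\phi_1(\eins)=h_0+(\eins-h_0)=\eins$. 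You could graft this onto your own part~(1) with no difficulty.
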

\begin{proof}
Since $H$ has finite index, it also contains some normal subgroup $N\subset G$ of finite index.
One then has a natural inclusion $(\CC(G/H), G\text{-shift})\subset(\CC(G/N), G\text{-shift})$.
In particular, we may assume without loss of generality that $H$ is normal to begin with.

\ref{Rokhlin tower:1}: Let us consider some inclusions
\[
\CC(G/H) \subset M_{|G/H|} \subset \CO_2 \stackrel{\id\oplus 0}{\subset} \CO_2\oplus\IC \subset \CO_\infty,
\]
where the first inclusion is the obvious one, and all of them are unital except for the middle one.
The $(G/H)$-shift on $\CC(G/H)$ is implemented by the left-regular representation of $G/H$ inside $M_{|G/H|}$.
By inducing the representation to $G$ and using the above chain of inclusions, we can thus find some $KK$-trivial unitary representation $u: G\to\CO_\infty$ and a non-zero equivariant $*$-homomorphism from $ (\CC(G/H), G\text{-shift})$ to $(\CO_\infty,\ad(u))$.
The claim then follows directly from Corollary \ref{model-sat-embedding}.

\ref{Rokhlin tower:2}: Let $\phi: (\CC(G/H), G\text{-shift}) \to (D,\alpha)$ be a non-zero $*$-homomorphism, which exists by \ref{Rokhlin tower:1}.
The \cstar-algebra $D^{\alpha|_H}$ is simple and purely infinite by Lemma \ref{fixed point}, and so is the corner $B=p(D^{\alpha|_H})p$ given by $p=\phi(e_1)$. 
Let us choose a positive contraction $h'\in B$ with full spectrum $[0,1]$.
Consider $h_0=\sum_{\bar{g}\in G/H} \alpha_g(h')$, which is then a well-defined positive contraction of $D^\alpha$ with full spectrum and satisfies $h_0\in D\cap\phi(\CC(G/H))'$ and $\phi(\eins)h_0=h_0$.
By Lemma \ref{fixed point} applied to $\alpha$, Proposition \ref{full spectrum} and saturation, it follows that there exists a unitary $v\in\CU(D^\alpha)$ with $vh_0v^*=\eins-h_0$.
Then $\phi_0=h_0\cdot\phi$ and $\phi_1=\ad(v)\circ\phi_0$ satisfy the desired properties.
\end{proof}

\begin{theorem}
Let $A$ be a Kirchberg algebra and $G$ a countable, discrete, amenable group and $H\subset G$ a subgroup with finite index.
Let $(\alpha,w): G\curvearrowright A$ be a pointwise outer cocycle action. Then $\dimrok(\alpha, H)\leq 1$.
In particular, if $G$ is moreover assumed to be residually finite, then $\dimrok(\alpha)\leq 1$.
\end{theorem}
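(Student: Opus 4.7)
The strategy is to lift the technical result of Lemma \ref{Rokhlin tower} from the abstract setting to the central sequence algebra, whose action is automatically saturated and pointwise outer. The heavy lifting has already been done, so the argument is essentially a packaging.

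First, I would fix a free ultrafilter $\omega$ on $\IN$ and pass to $\big( F_\omega(A), \tilde{\alpha}_\omega \big)$ with the induced action coming from the cocycle action $(\alpha,w)$. By the stability of the central sequence algebra combined with the Packer-Raeburn trick (exactly as used in the proof of Proposition \ref{F(A)-sat}), we may replace $(\alpha,w)$ by a genuine $G$-action $\delta$ on $A\otimes\CK$ without affecting $F_\omega(A)$ up to equivariant isomorphism. Then Proposition \ref{F(A)-sat} tells us that $\tilde{\alpha}_\omega$ is saturated and that $F_\omega(A)$ is simple and purely infinite; Theorem \ref{central seq outer} upgrades the pointwise outerness of $\alpha$ to pointwise outerness of $\tilde{\alpha}_\omega$.

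With those hypotheses in place, Lemma \ref{Rokhlin tower}\ref{Rokhlin tower:2} applied to $D=F_\omega(A)$ (and with the subgroup $H\subset G$ of finite index) furnishes two equivariant c.p.c.\ order zero maps
\[
\phi_0,\phi_1: \big( \CC(G/H), G\text{-shift}\big) \to \big( F_\omega(A), \tilde{\alpha}_\omega\big)
\]
satisfying $\phi_0(\eins)+\phi_1(\eins)=\eins$. By the central-sequence characterization of Rokhlin dimension for actions with finite-index subgroup \cite{HirshbergWinterZacharias15, SzaboWuZacharias15}, this is exactly the statement $\dimrok(\alpha,H)\leq 1$. For the second part, when $G$ is residually finite we may choose a decreasing sequence $H_n\subset G$ of finite-index normal subgroups with trivial intersection; by definition $\dimrok(\alpha)$ is then bounded above by $\sup_n \dimrok(\alpha,H_n)\leq 1$.

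There is no genuine obstacle at this stage: the essential work is Lemma \ref{Rokhlin tower}, which in turn rests on Corollary \ref{model-sat-embedding} (producing a tower of projections via a $K$-trivial unitary representation on $\CO_\infty$) and on the pure infiniteness of the fixed-point algebra $D^{\alpha|_H}$ established in Lemma \ref{fixed point}. The only mild care needed in writing it up is to confirm that all the permanence results we invoke are compatible with cocycle actions, which the Packer-Raeburn reduction handles once and for all.
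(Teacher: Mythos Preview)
Your proposal is correct and follows essentially the same route as the paper: pass to $(F_\omega(A),\tilde{\alpha}_\omega)$, invoke Proposition~\ref{F(A)-sat} and Theorem~\ref{central seq outer} to verify the hypotheses of Lemma~\ref{Rokhlin tower}\ref{Rokhlin tower:2}, and read off $\dimrok(\alpha,H)\leq 1$ from the central-sequence characterization. The only cosmetic difference is in the residually finite clause: since you have already established $\dimrok(\alpha,H)\leq 1$ for \emph{every} finite-index subgroup $H$, there is no need to select a decreasing sequence of normal subgroups---the supremum defining $\dimrok(\alpha)$ is automatically bounded by~$1$.
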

\begin{proof}
It follows from Proposition \ref{F(A)-sat} and Theorem \ref{central seq outer} that $\tilde{\alpha}_\omega: G\curvearrowright F_\omega(A)$ is saturated, pointwise outer, and the underlying \cstar-algebra is simple and purely infinite.
Applying Lemma \ref{Rokhlin tower}\ref{Rokhlin tower:2}, there exist two equivariant c.p.c.\ order zero maps
\[
\phi_0,\phi_1: (\CC(G/H),G\text{-shift}) \to (F_\omega(A), \tilde{\alpha}_\omega)
\]
with $\phi_0(\eins)+\phi_1(\eins)=\eins$.
This directly shows $\dimrok(\alpha, H)\leq 1$, cf.\ \cite[Section 4]{SzaboWuZacharias15}. If $G$ is moreover residually finite, then it follows that 
\[
\dimrok(\alpha) \stackrel{\text{def}}{=} \sup\set{ \dimrok(\alpha,H) \mid H\leq G,\ [G:H]<\infty } \leq 1.
\]
\end{proof}


\bibliographystyle{gabor}
\bibliography{master}

\end{document}